\documentclass[]{article}
\usepackage{setspace}
\usepackage{pdfpages}
\usepackage{float}
\usepackage[english]{babel}
\usepackage[utf8]{inputenc}
\usepackage{babel,csquotes,xpatch}
\usepackage[margin=1in]{geometry}
\usepackage{fancyhdr}
\usepackage{amsmath}
\usepackage{amssymb}
\usepackage{centernot}
\usepackage{mathtools}
\usepackage{tikz-cd}
\usepackage{tkz-euclide}
\usepackage[toc,page]{appendix}
\usetikzlibrary{decorations.markings}
\usepackage{esint}
\usepackage{cases}
\usepackage{nicematrix}

\usepackage{url}
\usepackage{listings}
\usetikzlibrary{intersections}
\usetikzlibrary{calc}
\usepackage{amsthm}
\usepackage{dsfont}
\usepackage{makecell}
\usepackage{color} %
\definecolor{mygreen}{RGB}{28,172,0} %
\definecolor{mylilas}{RGB}{170,55,241}
\usepackage{upquote} %
\usepackage{eurosym} %
\usepackage{grffile}
\usepackage{subcaption}
\subcaptionsetup{justification=centering}%
\setcounter{MaxMatrixCols}{20}
\setcounter{secnumdepth}{4}
\usepackage{hyperref}
\hypersetup{
    colorlinks=true,
    citecolor=blue,
    linkcolor=blue,
    filecolor=magenta,      
    urlcolor=blue,
}
\usepackage{longtable} %
\usepackage{booktabs}  %
\usepackage[inline]{enumitem} %
\usepackage[normalem]{ulem} %
\usepackage{mathrsfs}
\usepackage[linesnumbered, ruled, lined, shortend]{algorithm2e}
\usepackage[style=trad-abbrv, backend=biber, doi = false, url = false, isbn = false, giveninits=true]{biblatex}
\addbibresource{main.bib}
\AtEveryBibitem{\clearfield{issn}}

\usepackage{titlesec}

\titleformat*{\section}{\large\bfseries}
\titleformat*{\subsection}{\bfseries}

\title{Numerical Analysis of differential equations on weighted Sobolev spaces: beyond classical orthogonal polynomials}
\author{Maxime Breden\thanks{
CMAP, CNRS, \'Ecole polytechnique, Institut Polytechnique de
Paris, 91120 Palaiseau, France. \href{mailto:maxime.breden@polytechnique.edu}{maxime.breden@polytechnique.edu}, \href{mailto:hugo.chu@polytechnique.edu}{hugo.chu@polytechnique.edu}} $\quad$ Hugo Chu\footnotemark[1]$\;$\thanks{Department of Mathematics, Imperial College London, London SW7 2AZ, United Kingdom.} }

\newtheorem{thm}{Theorem}

\newtheorem{notation}[thm]{Notation}
\newtheorem{cor}[thm]{Corollary}
\newtheorem{rmk}[thm]{Remark}
\newtheorem{lem}[thm]{Lemma}

\newtheorem{ex}[thm]{Example}

\newtheorem{prop}[thm]{Proposition}

\renewcommand{\d}{\mathrm{d}}

\newcommand{\pdiff}[2]{\frac{\partial#1}{\partial#2}}

\newcommand{\Ind}[1]{\mathds{1}_{#1}}
\newcommand{\R}{\mathbb{R}}

\newcommand{\Z}{\mathbb{Z}}

\newcommand{\iPL}{\Pi_{>n}^{L^2}}
\newcommand{\fPL}{\Pi_{\leq n}^{L^2}}
\newcommand{\iPH}{\Pi_{>n}^{H^1}}
\newcommand{\fPH}{\Pi_{\leq n}^{H^1}}
\newcommand{\fh}{h_{\leq n}}
\newcommand{\ih }{h_{>n}}

\newcommand{\T}{\mathbb{T}}

\renewcommand{\d}{\mathrm{d}}

\newcommand{\N}{\mathbb{N}}
\newcommand{\bu}{\bar{u}}
\newcommand{\bv}{\bar{v}}

\newcommand{\cL}{\mathcal{L}}
\newcommand{\cP}{\mathcal{P}}
\newcommand{\cQ}{\mathcal{Q}}
\newcommand{\cX}{\mathcal{X}}
\newcommand{\cC}{\mathcal{C}}
\newcommand{\cJ}{\mathcal{J}}
\newcommand{\cF}{\mathcal{F}}
\newcommand{\cZ}{\mathcal{Z}}
\newcommand{\cE}{\mathcal{E}}
\newcommand{\cR}{\mathcal{R}}

\newcommand{\tL}{\tilde{\mathcal{L}}}
\newcommand{\id}{\mathrm{id}}
\newcommand{\Diag}{\mathrm{Diag}}

\begin{document}

\maketitle


\begin{abstract}
    We lay mathematical foundations for the Numerical Analysis of differential equations on Sobolev spaces weighted by a Gibbs probability measure $\nu(\d x) = e^{-V(x)}\d x/\mathcal{Z}$ on the real line. Over recent decades, the Functional Analysis of these spaces has been thoroughly developed to study Schrödinger-type equations and diffusion processes. While such equations should therefore be amenable to a numerical resolution with respect to orthogonal polynomials, this feat has only ever been achieved with respect to classical bases. We bridge this gap by showing that such equations can be solved with respect to suitable bases by factorising their leading linear component. In particular, we propose a new natural notion of Sobolev orthogonal polynomials, simpler and more tractable than those arising from the usual Sobolev inner product.
    
    In the case of $V$ being an even polynomial, we further establish quantitative estimates for the compactness of the embedding $H^1(\nu)\hookrightarrow L^2(\nu)$, uncovering a connection with the growth of the Jacobi recurrence coefficients, which are solutions of corresponding Painlevé-type discrete equations. As an application, we rigorously and tightly enclose solutions of the Gross--Pitaevskii equation with sextic potential and rigorously demonstrate the phenomenon of stochastic resonance via a computer-assisted proof.
\end{abstract}

\begin{center}
{\bf \small Keywords} \\ \vspace{.05cm}
{ \small Sobolev orthogonal polynomials $\cdot$ Weighted Sobolev spaces $\cdot$ Painlevé equations $\cdot$ Freud weight \\ Compactness estimates $\cdot$ Computer-assisted proofs $\cdot$ Stochastic resonance}
\end{center}

\begin{center}
{\bf \small Mathematics Subject Classification (2020)}  \\ \vspace{.05cm}
{\small   	35B45 $\cdot$ 42C05 $\cdot$ 46E35 $\cdot$ 47B36 $\cdot$ 41A81 $\cdot$ 35J61 $\cdot$ 65G20 $\cdot$ 65Q30 $\cdot$ 65N15 $\cdot$ 37H30 } 
\end{center}


\section{Introduction}

For an even potential $V\in \mathcal{C}^2(\R)$ growing at least linearly at infinity, consider the Gibbs probability measure $\nu(\d x) := e^{-V}\d x/\cZ$. It is then well known~\cite{Ernst2012OnExpansions} that one may construct a Hilbert basis of orthonormal polynomials $\mathcal{P} = \{p_n\}_{n\in\N}$ on $(L^2(\nu),\langle \cdot, \cdot\rangle)$, that is
$$\langle p_n,p_m\rangle = \int_{\R}p_np_m \d\nu = \delta_{mn}, \qquad \cZ :=\int_{\R}e^{-V(x)}\d x. $$
Orthogonal polynomials~\cite{Abramowitz1970HandbookSeries,Deift2000OrthogonalApproach, Gautschi2004OrthogonalPolynomials,1990OrthogonalPolynomials,Szego1939OrthogonalPolynomials} are among the oldest and most fundamental objects in Numerical Analysis, and much is known about their asymptotic behaviour, zeroes, relations with special functions and random matrices, etc. Moreover, they admit a simple construction via a three-term recurrence relation
\begin{equation}\label{eqn:jacobi-rec-rel_intro}
    xp_n = a_{n+1}p_{n+1} +a_n p_{n-1}, \qquad n\geq 1.
\end{equation}
However, somewhat disappointingly, almost all the resolutions of differential equations using orthogonal polynomials rely on the classical families (Fourier, Hermite, Jacobi and Laguerre)~\cite{Driscoll2014}. The main reason is that these classical families have the property that the derivative of a classical orthogonal polynomial is a polynomial in the same family. Therefore, these bases allow for a simple representation of the differentiation operator $\partial_x$ (typically diagonal, maybe up to a shift), which makes them very convenient for the study of differential equations.

Nevertheless, even if $\nu$ is not a classical weight, it is known~\cite{Bakry1994LhypercontractiviteSemigroupes,Bakry2014AnalysisOperators} that if $V$ grows fast enough at infinity, the Sobolev embedding $H^1(\nu)\hookrightarrow L^2(\nu)$ is compact (one actually expects that the faster $V$ grows, the better the compactness). In particular, such compactness results have been useful in the pen-and-paper analysis of (possibly nonlinear) Fokker--Planck equations~\cite{Escobedo1987VariationalEquation} and Schr\"odinger type equations~\cite{Kavian1994Self-similarEquation}. These equations can be reformulated~\cite{Pavliotis2014StochasticApplications} as
\begin{equation}\label{eqn:Lu=f(u)}
    \mathcal{L}u = f(u), \qquad \mathcal{L} := V'(x)\partial_x -\partial_{xx},
\end{equation}
which is then naturally posed in $H^1(\nu)$, where $\mathcal{L}$ is also a well-studied operator in Probability Theory~\cite{Bakry2014AnalysisOperators,Kavian1993SomeUltracontractivity,Pavliotis2014StochasticApplications} (as the generator of a diffusion process). 
In principle, the compactness of the embedding $H^1(\nu)\hookrightarrow L^2(\nu)$ also allows one to approximate compact operators like $\cL^{-1}$ on these spaces with finite-dimensional matrices on the computer, and therefore to numerically solve such equations.
%
%
Ideally, for both theoretical and practical purposes, we would like $\mathcal{L}$ to be diagonal with respect to the associated orthonormal polynomial basis $\cP =\{p_n\}_{n\in \N^*}$. Unfortunately, it is established~\cite{Bakry2014AnalysisOperators,Bakry2003CharacterizationPolynomials,Bakry2022OrthogonalOperators,Mazet1997ClassificationOrthogonaux} that this only occurs if the $p_n$'s belong to a classical family, suggesting that working with respect to non-classical weights is quite more challenging. 

The goal of this work is to overcome the apparent hurdles of making use of non-classical orthogonal polynomials, and to lay some mathematical foundations for the numerical resolution of problems of the form~\eqref{eqn:Lu=f(u)} when $V$ is not necessarily quadratic. We will show that such bases, because well-adapted to the problem, can be remarkably successful at solving problems of the form~\eqref{eqn:Lu=f(u)}, clearly outperforming some classical bases such as Hermite polynomials.


\subsection{A Numerical Analysis on weighted Sobolev spaces} 

Numerical methods for differential equations often require solutions to be sought in function spaces with some regularity, such as $H^1(\nu)$, and thus a Galerkin representation with respect to bases of such spaces. When $V(x)=x^2/2$, the $p_n$'s (which are then Hermite polynomials) enjoy the convenient property that they remain orthogonal with respect to $H^1(\nu)$-inner products of the form 
\begin{equation}\label{eqn:usual-ip}\int_{\R}u'v'\d\nu +\Lambda \int_{\R}uv\d\nu, \qquad \Lambda>0.\end{equation}
Unfortunately, as soon as $\nu$ is not a classical weight, the $p_n$'s are no longer orthogonal for such $H^1(\nu)$-inner products.
There has therefore been a great effort since the 1960s to construct polynomial bases orthogonal with respect to~\eqref{eqn:usual-ip}, so-called Sobolev orthogonal polynomials, leading to a very rich literature (see~\cite{Iserles1991OnProducts,Marcellan2017WHATPolynomial, Marcellan2015OnPolynomials, Martnez-Finkelshtein2001AnalyticRevisited, VanBuggenhout2023OnPolynomials} and references therein). While this is a fundamental question, an immediate problem one encounters is that these polynomials are not easy to construct in practice~\cite{Gautschi1995ComputingSpaces, VanBuggenhout2023OnPolynomials}, and that they depend nontrivially on the chosen value of $\Lambda$~\cite{Marcellan2015OnPolynomials}. This has led to many algebraic studies of these bases, but their use has remained limited and impractical in their original intended field of application: the resolution of differential equations. In 2015, Marcell\'an and Xu write in their seminal survey~\cite{Marcellan2015OnPolynomials}:

\begin{quote}``It is now time to go back to the beginning, studying the Fourier expansions and approximation by polynomials in Sobolev spaces, [...] to solve problems in other fields. We end this survey with this call of action.''
\end{quote}


It seems this call of action has yet to be heeded, presumably in part because these Sobolev orthogonal polynomials prove difficult to work with in practice for studying expansions, getting a priori truncation error estimates, and therefore solving differential equations. We propose in this work a very simple but seemingly unexplored alternative, which is to use a different scalar product on $H^1(\nu)$, rooted in the functional analysis of weighted Sobolev spaces. One of the motivations for our construction is to obtain orthogonal polynomials on $H^1(\nu)$ which relate in a simple way to the orthogonal polynomials $p_n$ on $L^2(\nu)$. A naive but natural approach would therefore be to take $\Lambda=0$ in~\eqref{eqn:usual-ip}, because then the antiderivatives of the $p_n$'s become orthogonal. However,~\eqref{eqn:usual-ip} is not a scalar product on $H^1(\nu)$ when $\Lambda=0$ (all the constants would have norm zero), which stems from the fact that there does not hold a classical Poincaré inequality on $H^1(\nu)$: the semi-norm $\|\partial_x \cdot\|_{L^2(\nu)}$ does not dominate the $L^2(\nu)$-norm $\|\cdot\|_{L^2(\nu)}$. Fortunately, for a wide class of potentials $V$ (see Proposition~\ref{prop:fonct-poinc}) a modified Poincaré inequality holds on $H^1(\nu)$:
\begin{equation}\label{eqn:poincaré-eq}\tag{PI}
\mbox{there exists }C_P>0 ,\qquad \int_{\R}u^2\d\nu - \left(\int_{\R}u \d \nu\right)^2\leq C_P\int_{\R}(u')^2\d \nu \quad\mbox{for all } u\in H^1(\nu).
\end{equation}
From now on, we will simply refer to~\eqref{eqn:poincaré-eq} as the Poincaré inequality on $H^1(\nu)$, a fundamental tool in the Functional Analysis of weighted Sobolev spaces and their associated equations and stochastic processes~\cite{Bonnefont2022PoincareDimension, Bakry2014AnalysisOperators, Pavliotis2014StochasticApplications}. As soon as~\eqref{eqn:poincaré-eq} holds, we can consider the following inner product on $H^1(\nu)$
\begin{equation}\label{eqn:good-inner-product}
    (u, v) := \int_{\R} u'v'\d\nu +\int_{\R}u\d\nu \int_{\R}v\d\nu,
\end{equation}
whose induced norm (henceforth denoted $\|\cdot\|_{H^1(\nu)}$) can be shown to be Lipschitz equivalent to the standard norm on $H^1(\nu)$; demonstrating that $(H^1(\nu),(\cdot, \cdot))$ is indeed a Hilbert space.

While this new inner product~\eqref{eqn:good-inner-product} is not of the form~\eqref{eqn:usual-ip} and may thus appear unusual at first sight, it is actually very natural; for instance, it reduces to the inner product $\langle \partial_x\cdot, \partial_x\cdot\rangle$ on the subspace of $\nu$-mean zero functions of $H^1(\nu)$. Moreover, the construction of orthogonal polynomials $\mathcal{Q} =\{q_n\}_{n\in\N}$ on $H^1(\nu)$ with respect to $(\cdot, \cdot)$ becomes very simple and elegant.
\begin{prop}\label{prop:Q-construct} 
    Assume the potential $V$ is such that~\eqref{eqn:poincaré-eq} holds. For all $n\geq 1$, the polynomials $\mathcal{Q} =\{q_n\}_{n\in\N}$ orthonormal with respect to $(\cdot,\cdot)$ are uniquely defined
    by the properties
    $$q_n' = p_{n-1} \qquad \textrm{and} \qquad \int_{\R}q_n\d\nu = 0.$$
\end{prop}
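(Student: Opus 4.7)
The plan is to verify the statement by direct construction: first establish that the two listed conditions uniquely characterize a polynomial $q_n$ of degree $n$ for each $n\geq 1$, then check that the resulting family is orthonormal with respect to $(\cdot,\cdot)$, and finally invoke the standard uniqueness of orthonormal polynomial systems to conclude.

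\emph{Construction step.} Since $p_{n-1}$ is a polynomial of degree $n-1$, its antiderivatives form a one-parameter family of polynomials of degree $n$. As $\nu$ has finite moments of all orders, the integral $\int_{\R} q_n\,\d\nu$ is finite for any such antiderivative, and the constraint $\int_{\R} q_n\,\d\nu = 0$ fixes the additive constant uniquely. This yields, for each $n\geq 1$, a unique polynomial $q_n$ of degree $n$, with positive leading coefficient inherited from $p_{n-1}$ (divided by $n$).

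\emph{Orthonormality.} This reduces to a one-line calculation using~\eqref{eqn:good-inner-product}: for $n,m\geq 1$,
\[
(q_n,q_m) = \int_{\R} q_n' q_m' \,\d\nu + \left(\int_{\R} q_n \,\d\nu\right)\left(\int_{\R} q_m \,\d\nu\right) = \int_{\R} p_{n-1} p_{m-1}\,\d\nu = \delta_{n,m},
\]
by the orthonormality of $\{p_k\}$ in $L^2(\nu)$. (Setting $q_0 \equiv 1$ completes the system: $(q_0,q_0) = \nu(\R)^2 = 1$ since $\nu$ is a probability measure, and $(q_0,q_n) = 0$ for $n\geq 1$ because $q_0' \equiv 0$ and $\int_{\R} q_n\,\d\nu = 0$.) To conclude, I invoke the standard fact that Gram--Schmidt applied to the monomials in $(H^1(\nu),(\cdot,\cdot))$ produces, up to sign, a unique orthonormal polynomial family with $q_n$ of degree $n$; this must coincide with the family just constructed, the sign being fixed by the positivity of the leading coefficients.

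There is no really hard step in this proof: the whole content is the observation that the inner product~\eqref{eqn:good-inner-product} was designed precisely so that differentiation sends the mean-zero orthonormal polynomial basis $\{q_n\}_{n\geq 1}$ of $(H^1(\nu),(\cdot,\cdot))$ onto the orthonormal basis $\{p_{n-1}\}_{n\geq 1}$ of $L^2(\nu)$. If anything, the most subtle point is to make explicit that the $n=0$ case, which is not covered by the two listed properties, requires the separate definition $q_0 \equiv 1$ to complete the Hilbert basis.
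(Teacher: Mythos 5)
Your proposal is correct and follows essentially the same route as the paper's (very terse) proof: verify orthonormality of $(\cdot,\cdot)$ directly from~\eqref{eqn:good-inner-product} and the orthonormality of the $p_n$'s, then appeal to the Gram--Schmidt uniqueness (up to sign) of orthonormal polynomial systems. You simply spell out the construction step and the $n=0$ case more explicitly than the paper does.
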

\begin{proof}
    The $q_n$'s are clearly orthonormal with respect to $(\cdot, \cdot)$, and the uniqueness (up to sign) follows from the Gram--Schmidt algorithm.
\end{proof}
While the basis $\mathcal{Q} =\{q_n\}_{n\in \N}$ is not orthogonal with respect to an inner product of the form~\eqref{eqn:usual-ip}, we will from now on refer to the $q_n$'s as Sobolev orthogonal polynomials.

Our goal is to use the bases $\mathcal{P}$ and/or $\mathcal{Q}$ in order to solve problems of the form~\eqref{eqn:Lu=f(u)}. The following properties show that these bases are indeed well suited, both for numerical and for theoretical purposes, even though they do not make $\cL$ diagonal.
\begin{prop}
\label{prop:decompo-intro}
    Let $\mathcal{P}^*= \cP \backslash\{1\}= \{p_n\}_{n\in \N^*}$ and $Q^* = \cQ \backslash\{1\}= \{q_n\}_{n\in \N^*}$ and let $D_{>0} = [\partial_x]_{\cP^*\to \cP}$ denote the Galerkin representation of $\partial_x$ with respect to $\cP^*$ and $\cP$, which is upper triangular. Then the Galerkin representations of $\mathcal{L} = V'\partial_x -\partial_{xx}$ have the following factorisations
    $$[\cL]_{\cP^*\to\cP^*} = D_{>0}^TD_{>0}, \qquad [\cL]_{\cQ^*\to\cQ^*} = D_{>0}D_{>0}^T,\qquad \text{and}\qquad[\mathcal{L}]_{\cQ^*\to\cP^*} = D_{>0}^T.$$
    Furthermore, if $V$ is an even polynomial of degree $2k$, then $D_{>0}$ is banded with $k$ non-zero superdiagonals, and its entries have an explicit dependence on the coefficients $(a_n)_{n\geq 0}$ of the three-term recurrence relation~\eqref{eqn:jacobi-rec-rel_intro} (with the $a_n$'s themselves satisfying a polynomial recurrence relation).
\end{prop}

The first decomposition, which provides us with the Cholesky (or LU) decomposition of $\cL$ with respect to $\cP$, is in fact simply the matrix reformulation of the well-known identity $\langle \cL u,v\rangle = \langle \partial_x u,\partial_x v\rangle$. The second one is more remarkable, as it provides us with a \emph{reverse Cholesky} (or UL) decomposition of $\cL$ with respect to $\cQ$. Moreover, in the case where $V$ is an even polynomial of degree $2k$, we show in this work that the banded triangular operator $D_{>0}$ can be inverted rather explicitly, provided we are able to quantify the growth of the coefficients $(a_n)_{n\geq 0}$ appearing in the three-term recurrence relation~\eqref{eqn:jacobi-rec-rel_intro}. This shall then enable the ``inversion'' of $\mathcal{L}$, estimation of the Poincaré constant $C_P$ in~\eqref{eqn:poincaré-eq} and more importantly to quantify the compactness of the embedding $H^1(\nu)\hookrightarrow L^2(\nu)$ (or equivalently of ``$\mathcal{L}^{-1}$'') which is crucial to justify the numerical resolution of PDEs of the form~\eqref{eqn:Lu=f(u)} on weighted Sobolev spaces. This establishes a \emph{direct connection} between the growth of the sequence $a_n$ (for which we actually have $a_n\asymp n^{1/{2k}}$~\cite{Deift1999StrongWeights}) and the optimal rate of compactness/approximation estimates on $H^1(\nu)$ (see Theorem~\ref{thm:compact}).

In this paper, for clarity, and motivated by the examples driving this work, we mostly focus on the case where $V$ is a quartic (or double-well) potential, i.e.
\begin{equation}
    \label{eq:Vquartic}
    V(x) = \frac{x^4}{4}-\kappa\frac{x^2}{2},\qquad \kappa \in \R.
\end{equation}
The treatment of this quartic case represents the first step departing from the classical paradigm (for which $D_{>0}$ is diagonal). We will show that this quartic case already leads to a rich theory bringing together several ideas in Numerical and Functional Analysis. However, we stress that analogous ideas also enable us to treat the general case of $V$ being a polynomial of degree $2k$. We elaborate on this in Section~\ref{sec:gen-poly-case}, where we show that inverting $D_{>0}$ and studying its entries reduces to proving the invertibility of a Toeplitz matrix, which we achieve by proving the positivity of a finite Taylor series.
\begin{figure}[h]
\begin{subfigure}{0.5\textwidth}
\includegraphics[width=0.9\linewidth]{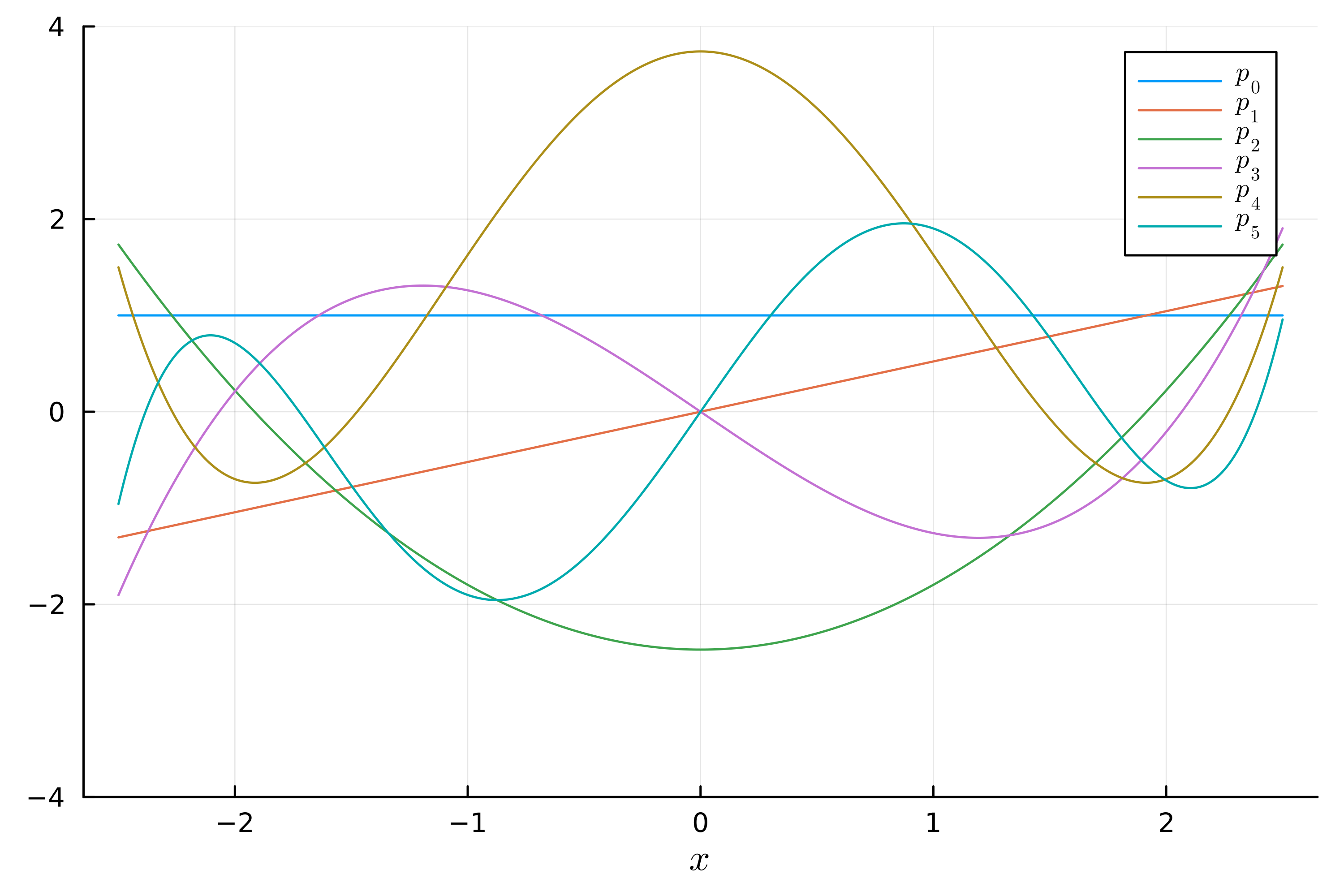} 
\caption{$p_0, \ldots, p_5$ orthonormal with respect to $\langle\cdot, \cdot\rangle$}
\end{subfigure}
\begin{subfigure}{0.5\textwidth}
\includegraphics[width=0.9\linewidth]{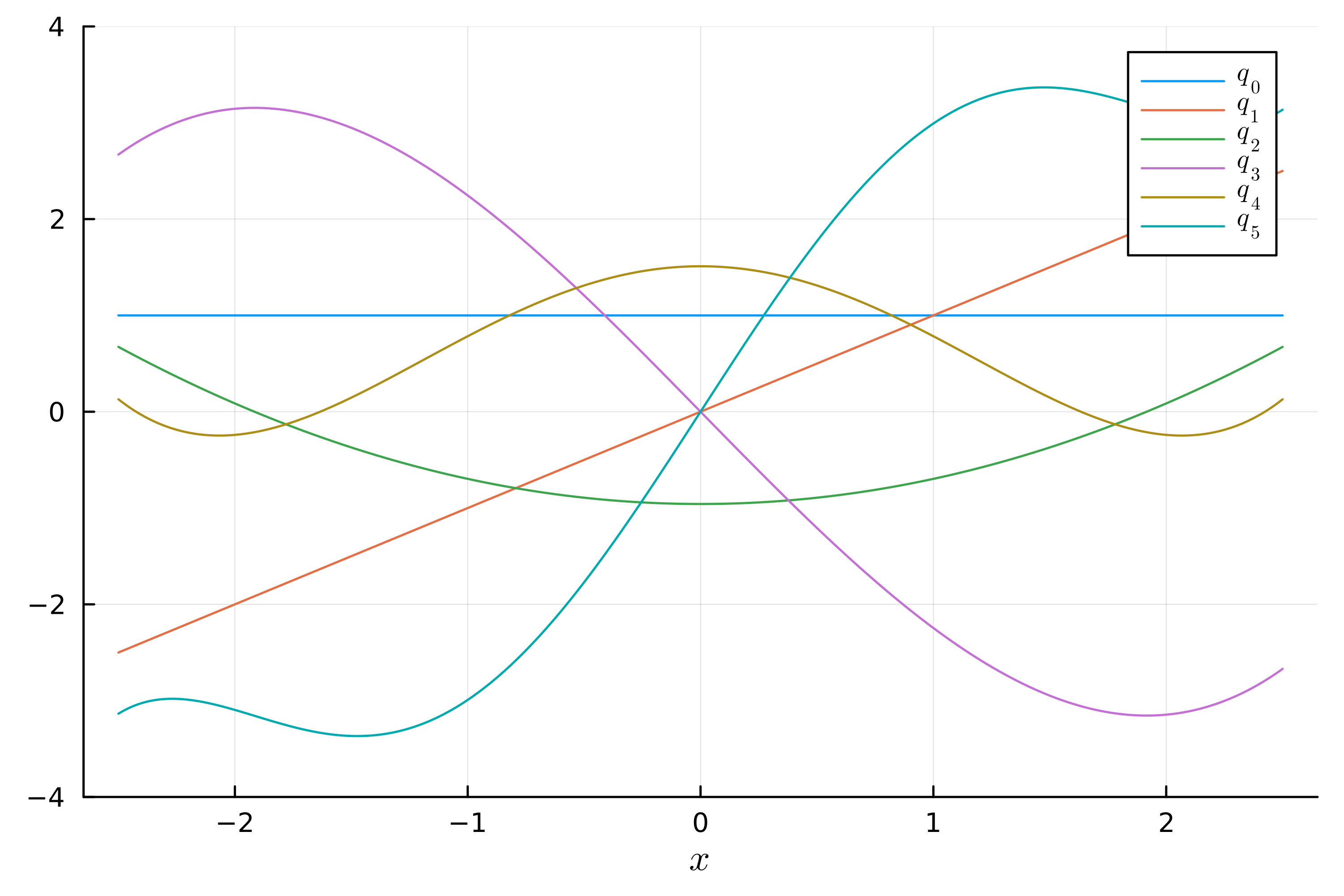} 
\caption{$q_0, \ldots, q_5$ orthonormal with respect to $(\cdot, \cdot)$}
\end{subfigure}
\caption{\centering Plots of orthonormal polynomials with respect to $\nu(\d x) = e^{-V(x)}\d x/\cZ$ with $V(x) = x^4/4-\kappa x^2/2$ and $\kappa = 4$.}
\end{figure}

With the quartic potential~\eqref{eq:Vquartic}, the operator $\mathcal{L} = -\partial_{xx} + V'\partial_x$ of Eq.~\eqref{eqn:Lu=f(u)} becomes
$$\mathcal{L} = (x^3-\kappa x)\partial_x -\partial_{xx},$$
and appears in several models of \emph{noise-induced phenomena} in Random Dynamics such as: the Hopf normal form with additive noise~\cite{Baxendale2025LyapunovNoise, DeVille2011StabilitySystem, Chemnitz2023PositiveNoise}, the Desai--Zwanzig model~\cite{Desai1978StatisticalModel} or the (non)-destruction of the pitchfork bifurcation by additive noise~\cite{BlessingNeamtu2025, Callaway2017TheNoise, Crauel1998AdditiveBifurcation}.

Our approach to give a thorough and precise Numerical Analysis on the Sobolev space $H^1(\nu)$ will rely on exploiting the structure of the bidiagonal differentiation operator $D_{>0}$, and some of our estimates are inspired from the related work~\cite{Breden2015RigorousPart} on tridiagonal operators. Our framework enables the computation of tight enclosures for the optimal Poincaré constant $C_P$ on $H^1(\nu)$, as stated in the following theorem.


\begin{thm}\label{thm:quant-poinc}
    Consider $V(x) = x^4/4 -\kappa x^2/2$ with $\kappa = 4$. The best Poincaré inequality constant $C_P$ in~\eqref{eqn:poincaré-eq} is given by
    \begin{equation}\label{eqn:cp-val}
        C_P = \|D_{>0}^{-1}\|_{\ell^2\to \ell^2}^2 = 33.58004242 \pm 2.3\times 10^{-7}.
    \end{equation}
\end{thm}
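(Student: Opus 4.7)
The plan is to recast~\eqref{eqn:poincaré-eq} as a spectral problem in the Sobolev basis $\cQ$ and carry out a computer-assisted enclosure. Restricting~\eqref{eqn:poincaré-eq} to the codimension-one subspace $\{u\in H^1(\nu) : \int_\R u\,\d\nu = 0\}$, which is spanned by $\{q_n\}_{n\geq 1}$, and expanding $u = \sum_{n\geq 1} c_n q_n$, the identity $q_n' = p_{n-1}$ from Proposition~\ref{prop:Q-construct} combined with the $L^2(\nu)$-orthonormality of $\{p_n\}$ yields $\int_\R (u')^2\,\d\nu = \sum c_n^2$ and $\int_\R u^2\,\d\nu = \langle Gc,c\rangle_{\ell^2}$ with $G_{nm}\deq\int_\R q_n q_m\,\d\nu$. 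Hence the optimal constant is
\[
    C_P = \|G\|_{\ell^2\to\ell^2},
\]
i.e.\ the top eigenvalue of the self-adjoint positive semi-definite operator $G$ on $\ell^2$.

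Next, I would compute $G$ explicitly by exploiting the semi-classical structure. Since $V$ is even, the $p_n$'s satisfy a three-term recurrence $xp_n = \alpha_{n+1}p_{n+1} + \alpha_n p_{n-1}$ whose coefficients $\alpha_n$ (tied to the Freud/Painlevé hierarchy advertised in the abstract, hence rigorously computable). Writing each $q_n$ as a finite linear combination of $p_k$'s via the banded triangular system coming from $q_n' = p_{n-1}$ together with $\int q_n\,\d\nu = 0$ --- precisely the system alluded to after Proposition~\ref{prop:Q-construct} --- the entries $G_{nm}$ are then recovered from the $L^2(\nu)$-orthonormality of $\{p_n\}$. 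In particular $G$ inherits rapidly decaying off-diagonal entries, reflecting the compactness of the embedding $H^1(\nu)\hookrightarrow L^2(\nu)$.

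Working entirely in interval arithmetic, I would then form the principal $N\times N$ block $G^{(N)}$, rigorously enclose its top eigenvalue $\lambda_1(G^{(N)})$ within some interval $[\underline{\lambda},\overline{\lambda}]$ (via the Rayleigh quotient of a numerically computed approximate eigenvector together with an a posteriori residual bound, e.g.\ Lehmann--Goerisch style), and take $N$ large enough that the width $\overline{\lambda}-\underline{\lambda}$ lies well below the tolerance $2.3\times 10^{-7}$ appearing in~\eqref{eqn:cp-val}.

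The main obstacle is the tail estimate. The variational principle only guarantees $\lambda_1(G^{(N)}) \leq \|G\|$, so the finite-dimensional computation yields a rigorous lower bound for $C_P$; upgrading it to a rigorous upper bound requires quantitative control of $G$ restricted to $\mathrm{span}(q_n)_{n>N}$ together with the off-block coupling. This is precisely where the quantitative Sobolev-compactness estimates promised in the abstract must enter, producing an explicit $\varepsilon_N$ such that $\|G\|\leq \lambda_1(G^{(N)}) + \varepsilon_N$ via a Schur-complement- or perturbation-type argument. Making $\varepsilon_N$ sharp enough to reach the $10^{-7}$ precision of~\eqref{eqn:cp-val}, while keeping $N$ tractable, is the delicate quantitative heart of the proof.
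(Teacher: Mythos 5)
Your reformulation is sound and is, in substance, the paper's own: on the mean-zero subspace one has $[u]_{L^2} = D_0^{-1}[u]_{H^1}$, so your Gram matrix is $G = (D_0^{-1})^T D_0^{-1}$ and $C_P = \|G\|_{\ell^2} = \|D_0^{-1}\|_{\ell^2}^2$, which is exactly the quantity the paper encloses. Your finite principal block gives a rigorous lower bound (the paper encloses it via Gershgorin circles plus a Rayleigh quotient with the numerical eigenvector), and the upper bound is indeed obtained by a Schur-complement carving of $P^{-1}$ with explicit tail constants $C_{12}n^{-3/4}$, $C_{22}n^{-3/4}$, just as you anticipate.

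The genuine gap is that the explicit $\varepsilon_N$ you defer is not a routine add-on but the actual content of the theorem, and it cannot be produced by the route you sketch. Your tail bound needs inequalities on the recurrence data valid for \emph{all} indices beyond the truncation, concretely $\alpha_n = n/a_n \geq C_{\alpha} n^{3/4}$ and $\beta_n/\alpha_n \leq \theta < 1$ for all $n\geq N$ with explicit $C_{\alpha}$, $\theta$, $N$ (Corollary~\ref{cor:theta-bound}); declaring the coefficients ``rigorously computable'' only yields finitely many values and says nothing about the infinite tail, which is where the compactness lives. Worse, the coefficients $b_n = a_n^2$ solve the discrete Painlev\'e~I recurrence~\eqref{eqn:bread}, which is violently unstable: the orthogonal-polynomial solution is the \emph{unique} non-negative one, and forward iteration, even in interval arithmetic, quickly diverges from it, so no finite recursion can certify the asymptotic regime. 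The paper supplies the missing ingredient through Proposition~\ref{prop:bread-bounds}, a separate computer-assisted fixed-point argument (monotone lattice bounds, Schauder--Tychonoff, $\varepsilon$-inflation, plus a high-precision Bessel-function evaluation of $b_1$) giving $c^-\sqrt{n}/\sqrt{3} \leq b_n \leq c^+\sqrt{n}/\sqrt{3}$ for all $n\geq N$. Without that input your upper bound on $\|G\|$, and hence the two-sided enclosure~\eqref{eqn:cp-val}, cannot be established; once it is available, reaching the $2.3\times 10^{-7}$ width is not delicate at all, since the tail contributes only $O(n^{-3/2})$ to $C_P$ on top of the finite-block value $\bar{C}_P$.
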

Note that, in general, even giving a non-sharp upper bound on $C_P$ is not trivial~\cite{Bakry2014AnalysisOperators,Bobkov1999IsoperimetricMeasures,Bonnefont2022PoincareDimension, Brascamp1976OnEquation, Kannan1995IsoperimetricLemma, Veysseire2010AManifolds} especially when $V$ is not convex (which is the case if $\kappa>0$). In contrast, in this particular case, we are able to give a tight enclosure for $C_P$.

In fact, we can do much better by considering projections of the operator $D_{>0}^{-1}$, and explicitly quantify the compactness of the embedding $H^1(\nu)\hookrightarrow L^2(\nu)$, a crucial estimate to justify the spectral approach with respect to the basis $\mathcal{Q} = \{q_n\}_{n\in \N}$.

\begin{thm}\label{thm:quant-compact}
    Consider $V(x) = x^4/4 -\kappa x^2/2$. There exist $C>0$ and $N\in \mathbb{N}$ such that for all $n\geq N$
    \begin{equation}\label{eqn:intro=cmopact-est}
        \| u\|_{L^2(\nu)}\leq \frac{C}{n^{3/4}}\|u\|_{H^1(\nu)}, \qquad \mbox{for all $u\in \overline{\mathrm{Span}\{q_j\}_{j>n}}^{H^1(\nu)}$}.
    \end{equation}
    In particular, if $\kappa = 4$, the above holds for $C = 1.63$ and $N = 50$. 
\end{thm}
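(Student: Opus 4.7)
The plan is to leverage an explicit two-term recursion relating $\{q_n\}$ and $\{p_n\}$, combined with Freud's theorem on the asymptotics of the recurrence coefficients $\alpha_n$, and then control the change-of-basis matrix on tail indices via a Schur-type operator norm bound.

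First, since $V'(x) = x^3 - \kappa x$ has degree $3$ and $V$ is even, integration by parts combined with the three-term recurrence $xp_n = \alpha_{n+1}p_{n+1} + \alpha_n p_{n-1}$ and parity gives
\[
p_n' = d_{n,n-1}\,p_{n-1} + d_{n,n-3}\,p_{n-3},
\]
with $d_{n,n-1} = \alpha_n(\alpha_{n+1}^2 + \alpha_n^2 + \alpha_{n-1}^2 - \kappa) = n/\alpha_n$, the last equality being Freud's discrete (string) equation, and $d_{n,n-3} = \alpha_{n-2}\alpha_{n-1}\alpha_n$. Combined with $q_n' = p_{n-1}$ and $\int_\R q_n\,\d\nu = 0$ from Proposition~\ref{prop:Q-construct}, antidifferentiation yields
\[
q_n = \frac{1}{d_{n,n-1}}\,p_n \;-\; \frac{d_{n,n-3}}{d_{n,n-1}}\,q_{n-2}.
\]
Iterating, $q_n = \sum_{i \geq 0} R_{n-2i,n}\,p_{n-2i}$ where $R_{k,n}$ is an alternating product of the ratios $d_{\cdot,\cdot-3}/d_{\cdot,\cdot-1}$ divided by $d_{k,k-1}$. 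Invoking Freud's theorem, $\alpha_n \sim (n/3)^{1/4}$, so $d_{n,n-1} \sim 3^{1/4}\,n^{3/4}$ and $d_{n,n-3}/d_{n,n-1} \to 1/3$; hence $|R_{k,n}| \lesssim 3^{-(n-k)/2}\,k^{-3/4}$ for large $k,n$ of the same parity.

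Now for $u = \sum_{j>m} c_j q_j$, the $H^1$-orthonormality gives $\|u\|_{H^1(\nu)}^2 = \sum c_j^2$ and
\[
\|u\|_{L^2(\nu)}^2 = \|R_{>m}\,c\|_{\ell^2}^2 \leq \bigl\|R_{>m}^{\ast}R_{>m}\bigr\|_{\mathrm{op}}\,\|u\|_{H^1(\nu)}^2,
\]
where $R_{>m}$ denotes the matrix $R$ restricted to columns $j > m$. The Gram matrix $M_{n,n'} = \sum_k R_{k,n}R_{k,n'}$ is nonzero only for $n \equiv n' \pmod 2$, and summing the convergent geometric series in $k$ gives $|M_{n,n'}| \lesssim 3^{-|n-n'|/2}/\min(n,n')^{3/2}$. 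Applying the Schur row-sum test to the symmetric matrix $M$ restricted to $n,n' > m$ then yields
\[
\|M\|_{\mathrm{op}} \leq \sup_{n > m}\,\sum_{n'>m} |M_{n,n'}| \lesssim \frac{1}{m^{3/2}},
\]
which is the claimed $\|u\|_{L^2(\nu)} \leq C\,m^{-3/4}\,\|u\|_{H^1(\nu)}$.

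The main obstacle is the quantitative refinement needed to pin down $C = 1.2233$ and $N = 2{,}187$ for $\kappa = 4$. Freud's theorem provides only an asymptotic statement, so one must couple it with sharp non-asymptotic enclosures of $\alpha_n$ (and thereby of the ratios $d_{n,n-3}/d_{n,n-1}$) obtained by rigorously propagating the nonlinear Freud equation $\alpha_n^2(\alpha_{n+1}^2 + \alpha_n^2 + \alpha_{n-1}^2 - \kappa) = n$---a discrete Painlev\'e-type recursion, as hinted at in the abstract---up to and beyond $n \sim N$. Carefully tracking every constant in the Schur estimate, separately handling the finite head $n \leq N$ by explicit computation, and invoking a computer-assisted interval-arithmetic component is plausibly how the specific threshold $N = 2{,}187$ and the tight constant $C = 1.2233$ are extracted.
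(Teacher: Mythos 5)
Your proposal recovers the paper's argument essentially verbatim: the two-term antidifferentiation recursion $q_n = (p_n - a_n a_{n-1}a_{n-2}\,q_{n-2})/d_{n,n-1}$ giving the columns of the inverse change-of-basis matrix, the Painlev\'e~I asymptotic $a_n\sim(n/3)^{1/4}$ which makes the ratios $d_{n,n-3}/d_{n,n-1}$ tend to $1/3$ and hence $|R_{k,n}|\lesssim 3^{-(n-k)/2}k^{-3/4}$, and the recognition that the explicit $C$ and $N$ require rigorous, interval-arithmetic enclosures of the recurrence coefficients obtained by propagating the nonlinear string equation (the paper's Proposition~\ref{prop:bread-bounds}). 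The only difference is cosmetic bookkeeping in the operator-norm bound: you form the Gram matrix $M=R^*R$ and apply a Schur row-sum test, whereas the paper carves $P^{-1}$ via the block Schur complement into a finite head block $\bar P^{-1}$, the tail $D_n^{-1}$, and a rank-one cross-block $\beta_n(\bar P^{-1})_{:,-1}(D_n^{-1})_{1,:}$, then bounds each piece using $\|A\|_{\ell^2}\le\|A\|_{\ell^1}^{1/2}\|A\|_{\ell^\infty}^{1/2}$; for a symmetric matrix the Schur row-sum test coincides with this $\ell^1$--$\ell^\infty$ interpolation, and your ``separately handle the finite head'' remark is exactly the role played by the rank-one cross-term, so the two routes produce the same constants.
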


\begin{rmk}
    The explicit value of $C$ obtained in Theorem~\ref{thm:quant-compact}, which influences the proofs of some statement to come, like Theorem~\ref{thm:GP1} and~\ref{thm:GP2}, is not sharp in the asymptotic regime $n\to\infty$. If need be, one could obtain a smaller constant $C$, by taking $N$ larger.
\end{rmk}

It is well-established~\cite{Allaire2007NumericalOptimization, Brenner2008TheMethods,Canuto1982ApproximationSpaces,Ciarlet2002TheProblems,Funaro1991ApproximationFunctions, Nakao1988AProblems} that estimates like Theorem~\ref{thm:quant-compact} are useful in the numerical resolution of partial differential equations (PDEs), as they are key to determining the required truncation level to approximate a differential equation by its finite Galerkin approximation. To the best of our knowledge, this is the first derivation of such a fully explicit compactness estimate with respect to a non-classical Sobolev orthogonal polynomial basis. Indeed, while in classical polynomial settings, these estimates can be obtained trivially from a diagonal differentiation operator (e.g.~see Example~\ref{ex:hermite}), much work is needed to obtain Theorem~\ref{thm:quant-compact}. 
As will be discussed below, our derivation of an explicit constant $C$ in Theorem~\ref{thm:quant-compact} relies on the fact that we focused on a specific potential $V$. However, most of the ideas used in the proof can be generalised for any polynomial potential $V$, allowing us to quantify the fact that the compactness of the embedding $H^1(\nu)\hookrightarrow L^2(\nu)$ becomes better as the potential $V$ grows faster at infinity.
\begin{thm}\label{thm:compact}
    Consider $V(x) = \sum_{j = 1}^k c_j\frac{x^{2j}}{2j}$ with $c_k=1$.
    There exists $C>0$ such that for all $n\geq 1$
    \begin{equation}\label{eqn:conj-compact}
        \| u\|_{L^2(\nu)}\leq \frac{C}{n^{\frac{2k-1}{2k}}}\|u\|_{H^1(\nu)}, \qquad \mbox{for all $u\in \overline{\mathrm{Span}\{q_j\}_{j>n}}^{H^1(\nu)}$}.
    \end{equation}
\end{thm}
\noindent We will see in the proof of Theorem~\ref{thm:compact} that the rate $n^{-\frac{2k-1}{2k}}$ is optimal.

%

All the previous theorems require understanding the growth of the coefficients $(a_n)_{n\in \N}$ in the three-term recurrence relation~\eqref{eqn:jacobi-rec-rel_intro}, which themselves solve a polynomial recurrence equation. In the case of $V$ being a quartic potential of the form~\eqref{eq:Vquartic}, the sequence $(b_n)_{n\in\N} = (a^2_n)_{n\in\N}$ solves the so-called \emph{discrete Painlevé I equation}~\cite{Bonan1984OrthogonalI}
\begin{equation}
    \frac{n}{b_n} = b_{n-1}+b_n +b_{n+1} - \kappa, \qquad n\geq 1.\label{eqn:intro-bread}
\end{equation}
A quick heuristic calculation might suggest that $b_n\sim \sqrt{n}/\sqrt{3}$ regardless of the initial conditions, but it turns out that~\eqref{eqn:intro-bread} has high sensitivity on initial conditions with a unique positive solution. Therefore, while the coefficients $a_n^2$ do in fact behave asymptotically like $\sqrt{n}/\sqrt{3}$, obtaining quantitative estimates is challenging. We nonetheless give a rigorous computational method to explicitly control the growth of this solution.
\begin{prop}\label{prop:bread-bounds}
    Let $(a^2_n)_{n\in\N}$ be the unique positive solution to the recurrence relation~\eqref{eqn:intro-bread}. For all $c^-<1$ and $c^+>1$, there exists $N_0\in \N$ such that for all $n\geq N_0$
    \begin{equation}\label{eqn:intro-bn_bounds}
    c^-\frac{\sqrt{n}}{\sqrt{3}}\leq a^2_n\leq c^+\frac{\sqrt{n}}{\sqrt{3}}.
    \end{equation}
    In particular, if $\kappa = 4$, $c^-=0.987$ and $c^+ = 1.177$, the above holds for $N_0 = 50$. 
\end{prop}

In summary, starting from the scalar product~\eqref{eqn:good-inner-product}, we obtain a natural and mathematically rigorous framework for the resolution of differential equations of the form~\eqref{eqn:Lu=f(u)}. We now present two examples, for which we actually used the bases $\cP$ and $\cQ$ in order to discretise the problem and obtain numerical solutions. To showcase that our approach does indeed provide an excellent theoretical basis beyond purely numerical experiments, we incorporate our estimates into computer-assisted proofs (CAPs)~\cite{vandenBerg2015RigorousDynamics,Nakao2019NumericalEquations,Gomez-Serrano2019Computer-assistedSurvey}, which yield the existence of a \emph{true} solution to~\eqref{eqn:Lu=f(u)} in a  \emph{certified} explicit neighbourhood of our approximate solutions.

\subsection{The Gross--Pitaevskii equation with sextic potential}

As a first example, we consider the Gross--Pitaevskii equation with sextic potential
\begin{equation}\label{eqn:para-GP}
    i\partial_t v = - \partial_{xx} v +W(x)v +|v|^2v,
\end{equation}
where $W(x) = x^6/4 - \kappa x^4/2 + cx^2 +d$. We look for standing wave solutions $v(t,x) = e^{i\omega t}\varphi(x)$ of Eq.~\eqref{eqn:para-GP} where $\varphi$ satisfies the elliptic semilinear equation

\begin{equation}\label{eqn:GP-intro}
-\partial_{xx}\varphi + W(x)\varphi +\varphi^3 = \omega \varphi,
\end{equation}
which can be reformulated as an equation of the form~\eqref{eqn:Lu=f(u)}.
Within our framework which makes use of Sobolev orthogonal polynomials and estimates such as~\eqref{eqn:intro=cmopact-est}, we not only obtain accurate approximate solutions to this equation, plotted in Figure~\ref{fig:GP}, but also prove the existence of \emph{true} solutions of Eq.~\eqref{eqn:GP-intro} nearby. The very small error bounds obtained below showcase that using Sobolev orthogonal polynomials to numerically solve such equations leads to very efficient numerical methods.

\begin{thm}
\label{thm:GP1}
    Let $c= 5/2$, $d=2$, $\kappa = 4$, $\omega = 8$ and $\bar{\varphi}_1:\R\to\R$ be the function whose restriction to the interval $[-5,5]$ is depicted in Figure~\ref{fig:GP}(a), and whose precise description is available at~\cite{Chu2025Huggzz/Freud} in the file \texttt{GP\_eq/ubar1}. There exists an even \emph{strictly positive} solution $\varphi^{\star}_1\in H^1(\R)$ to Eq.~\eqref{eqn:GP-intro} such that
    $$\|\varphi^{\star}_1- \bar{\varphi}_1\|_{H^1(\R)}\leq 2.9 \times10^{-100},$$
    where $\|\cdot\|_{H^1(\R)}$ is the standard $H^1$-norm on $\R$ equipped with the Lebesgue measure $\d x$, i.e.~$\|\varphi\|_{H^1(\R)} = \left(\int_\R \varphi^2 \d x + \int_\R (\varphi')^2 \d x\right)^{1/2}$. 
\end{thm}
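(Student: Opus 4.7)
The plan is to carry out a computer-assisted Newton--Kantorovich fixed-point argument in the Freud-weighted Sobolev space $H^1(\nu)$ with $V(x) = x^4/4 - 2x^2$, so as to leverage the Sobolev polynomial basis $\mathcal{Q}$ and the compactness estimate of Theorem~\ref{thm:quant-compact}. The first step is to recast Eq.~\eqref{eqn:GP-intro} into the form~\eqref{eqn:Lu=f(u)} via the ground-state substitution $\varphi(x) = u(x)\,e^{-V(x)/2}$. A direct computation shows
\begin{equation*}
    e^{V/2}\bigl(-\partial_{xx} + W - \omega\bigr)(u e^{-V/2}) = \mathcal{L}u + \left(\frac{V''}{2} - \frac{(V')^2}{4} + W - \omega\right)u,
\end{equation*}
and with $\kappa = 4$, $c = 5/2$, $d = 2$ the polynomial bracket collapses to the constant $-\omega$. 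Hence the equation reduces to $F(u) := \mathcal{L}u - \omega u + e^{-V} u^3 = 0$, which is naturally posed in $H^1(\nu)$ since $\mathcal{L}$ is (essentially) self-adjoint with respect to $(\cdot,\cdot)$ and the cubic nonlinearity gains an extra Gaussian-type factor $e^{-V}$ making it tame in the Freud weight.

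Next, I would compute a finite-dimensional approximation $\bar{u}_n = \sum_{k=0}^{n} a_k q_k$ by a Galerkin projection of $F$ onto $\mathrm{Span}\{q_0,\ldots,q_n\}$, solved in high-precision interval arithmetic. The bidiagonal action $q_n' = p_{n-1}$ from Proposition~\ref{prop:Q-construct} makes the matrix representation of $\mathcal{L}$, and thus of $DF(\bar{u}_n)$, explicit and amenable to rigorous numerics. One then constructs an approximate inverse $A$ of $DF(\bar{u}_n)$ that inverts the $n \times n$ block exactly (numerically) and acts as the asymptotic resolvent of $\mathcal{L} - \omega\,\mathrm{Id}$ on the complementary tail $\overline{\mathrm{Span}\{q_k\}_{k>n}}^{H^1(\nu)}$.

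The existence of $\varphi^\star_1$ then reduces to verifying the three standard Newton--Kantorovich inequalities on a ball $B_r(\bar{u}_n)$: a defect bound $\|AF(\bar{u}_n)\|_{H^1(\nu)} \leq Y$, a linearized spectral bound $\|\mathrm{Id} - A\,DF(\bar{u}_n)\|_{H^1(\nu)} \leq Z_1 < 1$, and a local Lipschitz bound of order $Z_2 r$. The main obstacle is $Z_1$: the cubic correction $3 e^{-V} \bar{u}_n^2 \,\cdot\,$ contributes a non-diagonal perturbation that must be controlled on the entire tail, and quantifying it rigorously relies decisively on Theorem~\ref{thm:quant-compact}: the explicit constants $C = 1.2233$, $N = 2187$ convert the qualitative compactness of $H^1(\nu) \hookrightarrow L^2(\nu)$ into the explicit $m^{-3/4}$ tail decay required to close the inequality. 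The striking $10^{-101}$ radius reflects the super-algebraic (nearly exponential) decay of the Sobolev--Fourier coefficients $a_k$ of smooth, rapidly decaying solutions, combined with carrying out the CAP in very high-precision interval arithmetic.

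Finally, the bound is transferred from $H^1(\nu)$ to $H^1(\R)$ through the identity, valid for $v := u^\star_1 - \bar{u}_n$,
\begin{equation*}
    \|\varphi^\star_1 - \bar{\varphi}_1\|_{H^1(\R)}^2 = \int_{\R} v^2 e^{-V}\,\d x + \int_{\R}\left(v' - \tfrac{V'}{2}v\right)^2 e^{-V}\,\d x,
\end{equation*}
which is controlled by $\|v\|_{H^1(\nu)}^2$ up to constants involving the weighted moments $\int (V')^2 v^2\,\d\nu$; these are finite because the Freud weight kills the polynomial growth of $V'$ and can be estimated explicitly in the $\{p_n\}$ basis. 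The strict positivity of $\varphi^\star_1$ is obtained by first verifying $\bar{\varphi}_1 > 0$ pointwise on $\R$ (a rigorous range check on a finite interval plus an exponential-decay argument outside it, since $\bar{\varphi}_1 = \bar{u}_n e^{-V/2}$ with $\bar{u}_n$ polynomial), and then invoking the embedding $H^1(\nu) \hookrightarrow L^\infty$ to turn the $H^1(\nu)$-smallness of $v$ into a uniform smallness of $\varphi^\star_1 - \bar{\varphi}_1$ far below $\min_{\R}\bar{\varphi}_1$.
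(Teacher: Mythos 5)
Most of your outline coincides with the paper's actual proof: the substitution $\varphi = e^{-V/2}u$ with $\kappa=4$, $c=5/2$, $d=2$ killing the residual polynomial $r(x)$, a Galerkin approximation $\bar u$ in the even span of the $q_k$'s, an approximate inverse $A$ that is a computed finite block plus a simple tail action, a Newton--Kantorovich contraction argument whose tail bounds are closed with the explicit constants of Theorem~\ref{thm:quant-compact} (and Corollary~\ref{cor:theta-bound}), and the transfer from $H^1(\nu)$ to $H^1(\R)$ via the identity $(e^{-V/2}u)' = \tfrac12 e^{-V/2}(\mathcal{J}u + u')$. The differences on this part are cosmetic rather than substantive: the paper preconditions first, working with $F(u) = u - \tL^{-1}f(u)$ where $\tL = \cL + \Pi_0$ removes the constant kernel, takes $A = A_n + \iPH$ (identity on the tail), and uses bounds $Y, Z_1, Z_2, Z_3$; also note that your ``finite weighted moments'' remark for the $H^1(\R)$ transfer is exactly the content of Lemma~\ref{lem:annoying-bounds}, whose constant $c$ (a bound on $\|\mathcal{J}\|_{H^1\to L^2} = \|D^TP^{-1}\|_{\ell^2}$) requires its own quantitative computation in Appendix~\ref{app:annoying-bounds}, not merely an observation that the weight dominates $V'$.

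The genuine gap is in your positivity argument. Since $\bar{\varphi}_1 = \bar u\, e^{-V/2}$ tends to $0$ as $|x|\to\infty$, we have $\inf_{\R}\bar{\varphi}_1 = 0$, so there is no uniform bound on $\varphi^{\star}_1-\bar{\varphi}_1$ that is ``far below $\min_{\R}\bar{\varphi}_1$'': the $L^\infty$ estimate~\eqref{eqn:annoying-bounds1} controls $\|e^{-V/2}(u^\star-\bar u)\|_\infty$ uniformly, and on the unbounded region where $\bar{\varphi}_1$ has decayed below that error the sign of $\varphi^{\star}_1$ is simply not determined by this comparison. (One cannot upgrade the bound to a decaying envelope either, since the sup-norm controls the product $e^{-V/2}(u^\star-\bar u)$, not $u^\star-\bar u$ itself.) The paper closes this with a maximum-principle step (Appendix~\ref{app:positivity}, Lemma~\ref{lem:max-princ}): choose $r_0$ with $W(x)+\varphi^{\star}_1(x)^2-\omega \geq W(x)-\omega > 0$ for $|x|\geq r_0$, verify $\varphi^{\star}_1>0$ on $[-r_0,r_0]$ by the rigorous evaluation of $\bar{\varphi}_1$ there together with the uniform error bound, and use $\varphi^{\star}_1(x)\to 0$ at infinity (from $\varphi^{\star}_1\in H^1(\R)$) to conclude positivity on $|x|\geq r_0$. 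Without this (or some comparable argument controlling the sign of the true solution in the tail), your proof establishes positivity only on a compact set, not the strict positivity on all of $\R$ claimed in the theorem.
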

\begin{thm}
\label{thm:GP2}
    Let $c= 5/2$, $d=2$, $\kappa = 4$, $\omega = 8$ and $\bar{\varphi}_2:\R\to\R$ be the function whose restriction to the interval $[-5,5]$ is depicted in Figure~\ref{fig:GP}(b), and whose precise description is available at~\cite{Chu2025Huggzz/Freud} in the file \texttt{GP\_eq/ubar2}. There exists an even solution $\varphi^{\star}_2\in H^1(\R)$ to Eq.~\eqref{eqn:GP-intro} such that
    $$\|\varphi^{\star}_2- \bar{\varphi}_2\|_{H^1(\R)}\leq 4.9\times 10^{-140}.$$
\end{thm}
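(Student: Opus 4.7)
The plan is a computer-assisted proof of the radii-polynomial / Newton--Kantorovich flavour, carried out in $H^1(\nu)$ with $\nu(\mathrm{d}x) = e^{-V}\mathrm{d}x/Z$ and $V(x) = x^4/4 - \kappa x^2/2$, using the Sobolev basis $\{q_n\}$ and the compactness estimate of Theorem~\ref{thm:quant-compact}. The first step is to conjugate away the weight: setting $\varphi = e^{-V/2} u$ one finds
$$-\varphi'' + W\varphi = e^{-V/2}\!\left(\mathcal{L} u + \left(W - \tfrac{(V')^2}{4} + \tfrac{V''}{2}\right)u\right).$$
For the specific parameters $c=5/2$, $d=2$, $\kappa=4$, a direct computation shows $W - (V')^2/4 + V''/2 \equiv 0$, so Eq.~\eqref{eqn:GP-intro} reduces exactly to
$$\mathcal{L} u = \omega u - e^{-V}u^3,$$
which is of the form~\eqref{eqn:Lu=f(u)} and is naturally posed on $H^1(\nu)$. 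The even sector is invariant; moreover the identities $\int_\R \varphi^2\mathrm{d}x = Z\|u\|_{L^2(\nu)}^2$ and $\int_\R(\varphi')^2\mathrm{d}x = Z\|u'-\tfrac{1}{2}V' u\|_{L^2(\nu)}^2$ furnish an explicit Lipschitz transfer from $H^1(\nu)$-bounds on $u$ to the desired $H^1(\R)$-bound on $\varphi$.

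Expanding $u = \sum_n u_n q_n$, Proposition~\ref{prop:Q-construct} combined with the bidiagonal action of $\partial_x$ on $\{p_n\}$ gives $\mathcal{L}$ and its derivative a sparse banded structure on coefficient space. I would then define $F(u) = \mathcal{L}u - \omega u + e^{-V}u^3$ restricted to the even subspace, compute a high-order Galerkin approximation $\bar{u}\in\mathrm{Span}\{q_1,\ldots,q_N\}$ of $F=0$ in sufficiently large precision (the announced error $8.1614\times 10^{-141}$ forces both a large $N$ and multi-precision arithmetic), and assemble an approximate inverse $A$ of $DF(\bar{u})$ by inverting the finite Galerkin block exactly and using a simple diagonal (or identity) preconditioner on the tail. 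The existence of $\varphi^{\star}_2$ near $\bar{\varphi}_2$ then follows from the negativity at some $r>0$ of a radii polynomial $p(r) = Z_2(r) r^2 + (Z_1-1)r + Y_0$ built from rigorous enclosures of $Y_0 \geq \|AF(\bar{u})\|_{H^1(\nu)}$, $Z_1 \geq \|I-ADF(\bar{u})\|_{H^1(\nu)\to H^1(\nu)}$ and a Lipschitz bound $Z_2(r)\geq \sup_{v\in B(\bar{u},r)}\|A(DF(v)-DF(\bar{u}))\|$, all computed in interval arithmetic. Multiplying the resulting $r$ by the transfer factor from the previous paragraph yields the stated $H^1(\R)$-enclosure.

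The hard part is the $Z_1$ estimate, which must quantitatively control the action of $DF(\bar{u}) = \mathcal{L} - \omega\mathrm{Id} + 3e^{-V}\bar{u}^2$ on tail modes $q_n$ with $n>N$, where no finite inverse is available and $A$ acts essentially as the identity. This is exactly where Theorem~\ref{thm:quant-compact} is decisive: on the tail, the perturbations $\omega\mathrm{Id}$ and $3e^{-V}\bar{u}^2\cdot$ map $H^1(\nu)$ into $L^2(\nu)$, and the bound $\|u\|_{L^2(\nu)} \leq Cm^{-3/4}\|u\|_{H^1(\nu)}$ with the explicit constants $C=1.2233$, $N\geq 2187$ turns them into genuinely small compact perturbations of $\mathcal{L}$, whose contribution to $Z_1$ vanishes as $N\to\infty$. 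A secondary difficulty is the rigorous enclosure of the multiplication operator by $e^{-V}$ acting on products of three Sobolev series: this can be handled either by a Banach-algebra-type convolution bound in a weighted $\ell^1$-space built from the $\{q_n\}$-coefficients, or by expanding $e^{-V}\bar{u}^2$ in the $\{p_n\}$-basis, where multiplication by a polynomial is sparse, and controlling the expansion tails via the rapid decay of the coefficients of $\bar{u}$. Since Theorem~\ref{thm:GP2} only asks for an even solution, no additional positivity certificate is required and the proof closes once the radii polynomial admits an admissible root.
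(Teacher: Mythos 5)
Your overall strategy is the paper's: the same conjugation $\varphi = e^{-V/2}u$ with the observation that $r(x) = \big(6+4c-\kappa^2\big)x^2/4 + (2d-\kappa)/2 \equiv 0$ for $c=5/2$, $d=2$, $\kappa=4$, reduction to $\mathcal{L}u + e^{-V}u^3 - \omega u = 0$ on the even sector of $H^1(\nu)$, a Galerkin approximation in the $\{q_n\}$ basis computed in very high precision, a quasi-Newton/radii-polynomial argument (the paper's Theorem~\ref{thm:N-K}) with tail contributions controlled by the compactness estimate of Theorem~\ref{thm:quant-compact}, and a final transfer from $\|u^\star-\bar u\|_{H^1(\nu)}$ to $\|\varphi^\star_2-\bar\varphi_2\|_{H^1(\R)}$; the remark that no positivity certificate is needed for this theorem is also correct.

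There is, however, one step that fails as literally written. You keep $F$ in strong form, $F(u)=\mathcal{L}u-\omega u+e^{-V}u^3$, and propose $A$ to be the exact inverse of the finite Galerkin block together with ``a simple diagonal (or identity) preconditioner on the tail.'' With the identity on the tail, $\|I-ADF(\bar u)\|_{H^1(\nu)\to H^1(\nu)}$ is not even finite: $\mathcal{L}$ does not map $H^1(\nu)$ into itself (it is an unbounded operator landing in $H^{-1}(\nu)$), so $Z_1$ cannot be bounded this way; and a genuinely diagonal tail inverse is not available either, since $[\mathcal{L}]$ is banded but not diagonal in these non-classical bases (the off-diagonal ratio $\beta_n/\alpha_n\to 1/3$ is not negligible). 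The paper resolves exactly this point by preconditioning the equation itself: it sets $F(u)=u-\tL^{-1}\big(\omega u+u_0-e^{-V}u^3\big)$ with $\tL=\mathcal{L}+\Pi_0$ (killing the constant kernel), so that $F$ is identity plus compact and one may take $A=A_n+\iPH$; the tail bounds then reduce to the estimates $\|\iPH\tL^{-1}\|_{L^2\to H^1}\leq Cn^{-3/4}$ and $\|\iPH\tL^{-1}\iPL\|_{L^2\to H^1}\leq C_{22}n^{-3/4}$ of Corollary~\ref{cor:tL}, which is precisely the mechanism your third paragraph appeals to. So your intent is right, but the construction of $A$ must be replaced by this preconditioned formulation (or, equivalently, the tail of $A$ must itself be $\tL^{-1}$ with the correct domain/codomain bookkeeping). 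Two further ingredients you gloss over are also genuinely needed and are supplied by the paper: the transfer to $H^1(\R)$ and the $L^\infty$ control of the cubic term require the quantitative bound $\|\mathcal{J}u\|_{L^2(\nu)}\leq c\,\|u\|_{H^1(\nu)}$ of Lemma~\ref{lem:annoying-bounds} (the identity $\int_\R(\varphi')^2\d x = Z\|u'-\tfrac12 V'u\|_{L^2(\nu)}^2$ alone does not give a Lipschitz transfer), and the rigorous evaluation of quantities like $\langle p_m, e^{-V}\bar u^2 q_j\rangle$ is done by a certified Gaussian quadrature with respect to the heavier weights $e^{-2V}$, $e^{-3V}$ (your ``$p$-basis expansion'' route amounts to the same computation), rather than by a Banach-algebra bound, which is not readily available for these bases.
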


\begin{figure}[h]
\begin{subfigure}{0.5\textwidth}
\includegraphics[width=0.9\linewidth]{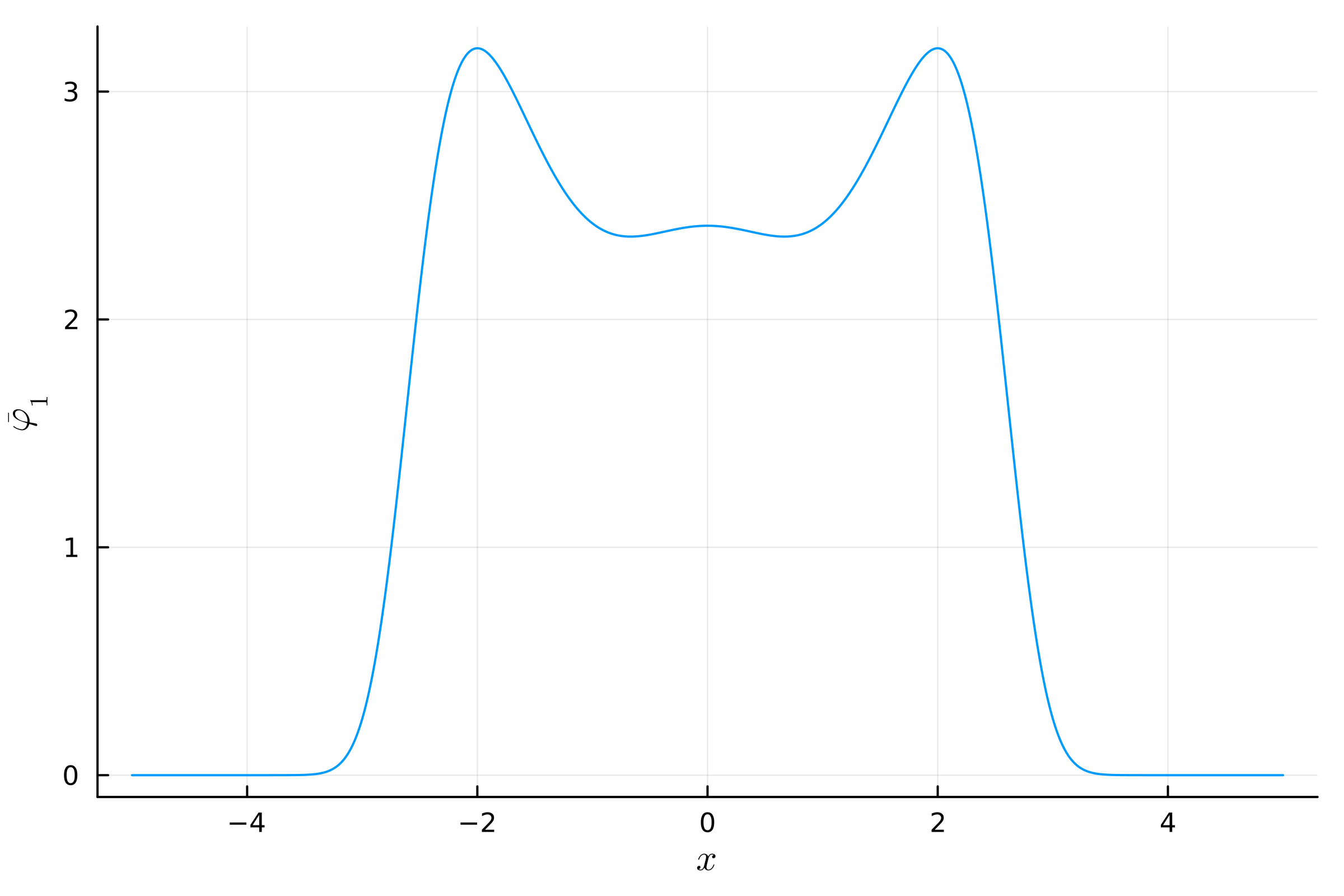} 
\caption{$\bar{\varphi}_1$ in Theorem~\ref{thm:GP1}}
\label{fig:GP1}
\end{subfigure}
\begin{subfigure}{0.5\textwidth}
\includegraphics[width=0.9\linewidth]{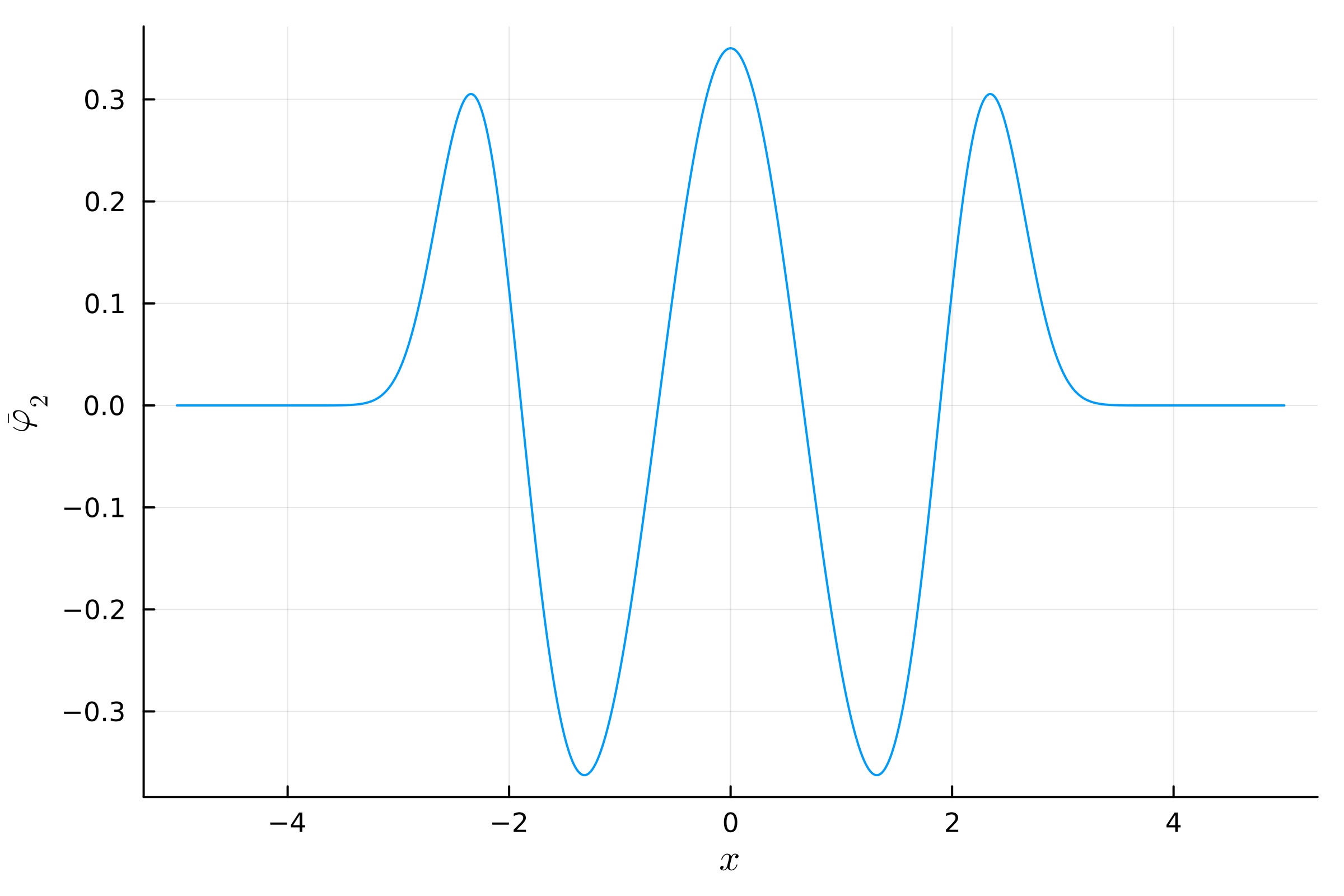} 
\caption{$\bar{\varphi}_2$ in Theorem~\ref{thm:GP2}}
\label{fig:GP2}
\end{subfigure}
\caption{Plots of the numerical approximations $\bar{\varphi}_1$ and $\bar{\varphi}_2$ in Theorems~\ref{thm:GP1} and~\ref{thm:GP2}.\label{fig:GP}}
\end{figure}

Such computer-assisted proofs have become more and more common in the study of differential equations, and we refer to the expository article~\cite{vandenBerg2015RigorousDynamics}, the surveys~\cite{Gomez-Serrano2019Computer-assistedSurvey,CAPD}, the books~\cite{Nakao2019NumericalEquations,Tuc11} and the references therein for an overview of the field. A notable feature of the computer-assisted proofs performed in this work is that they deal with problems on unbounded domains, which pose additional challenges (see~\cite[Part II]{Nakao2019NumericalEquations}, and the more recent~\cite{Breden2025ConstructiveRd,Cadiot2023RigorousMethods, Dahne2024Self-SimilarEquations, Donninger2026SelfSimilarEquation,Hou2025NonuniquenessEquation, vandenBerg2023ConstructiveRd} for some examples). Furthermore, while our proofs belong to the family of CAPs based on spectral methods, they feature the unusual twist that we work between two different bases $\cP = \{p_n\}_{n\in \N}$ and $\cQ = \{q_n\}_{n\in \N}$, and we have to deal with the fact that $\cL$ is not diagonal (even using different bases).  



\subsection{A computer-assisted proof of stochastic resonance}
\label{sec:intro-resonance}
As a second application, we turn to the study of the dynamics of the stochastic Duffing oscillator
\begin{equation}\label{eqn:resonance-intro} \d x_t =(-V'(x_t) +\eta\cos(\omega t))\d t + \sigma \d B_t, \qquad V(x) = \frac{x^4}{4}- \kappa \frac{x^2}{2},\end{equation}
i.e., a Brownian particle evolving in a double-well potential $V$ periodically tilted by the forcing $\eta \cos(\omega t)$. In the absence of noise ($\sigma = 0$), if the periodic forcing is weak ($\eta \ll \sqrt{\kappa}$), then the particle is restricted to remain in a single well. However, in their seminal papers~\cite{Benzi1981TheResonance, Benzi1983AChange}, Benzi, Parisi, Sutera and Vulpiani observe that for a well-chosen noise strength $\sigma$, the noise can amplify the weak periodic forcing instead of perturbing it, making the signal $(x_t)_{t\geq 0}$ appear almost periodic at an amplitude around $\sqrt{\kappa}\gg \eta$. They therefore named this noise-induced phenomenon \emph{stochastic resonance}.

Stochastic resonance has since been observed in other models, and equation~\eqref{eqn:resonance-intro} has been studied in various areas of application (see~\cite{McNamara1989TheoryResonance, Wiesenfeld1995StochasticSQUIDs} and the references therein). However, previous investigations of stochastic resonance have mostly relied on numerical simulations and informal perturbative arguments, and mathematically rigorous evidence for this phenomenon is still lacking.

The phenomenon of stochastic resonance is usually characterised via the periodic stationary probability density function $\varrho :(\T/\omega)\times\R\to\R$ solving the Fokker--Planck equation
\begin{equation}\label{eqn:FP-intro}
\pdiff{\varrho}{t} = -\pdiff{}{x}\Big((-V'(x)+\eta\cos\omega t)\varrho\Big)+\frac{\sigma^2}{2}\pdiff{^2\varrho}{x^2}\qquad \mbox{on $(\T/\omega)\times\R \simeq [0, 2\pi/\omega)\times \R$},
\end{equation}
which gives the asymptotic periodic statistics of $(x_t)_{t\geq 0}$. This is well illustrated in Figure~\ref{fig:resonance-density} which shows that, for a well-chosen value of $\sigma$, almost all of the probability mass of $(x_t)_{t\geq 0}$ is concentrated in the same well at a given time, and switches well periodically.
\begin{figure}[h]
\begin{subfigure}{0.32\textwidth}\centering
\includegraphics[height=0.9\linewidth]{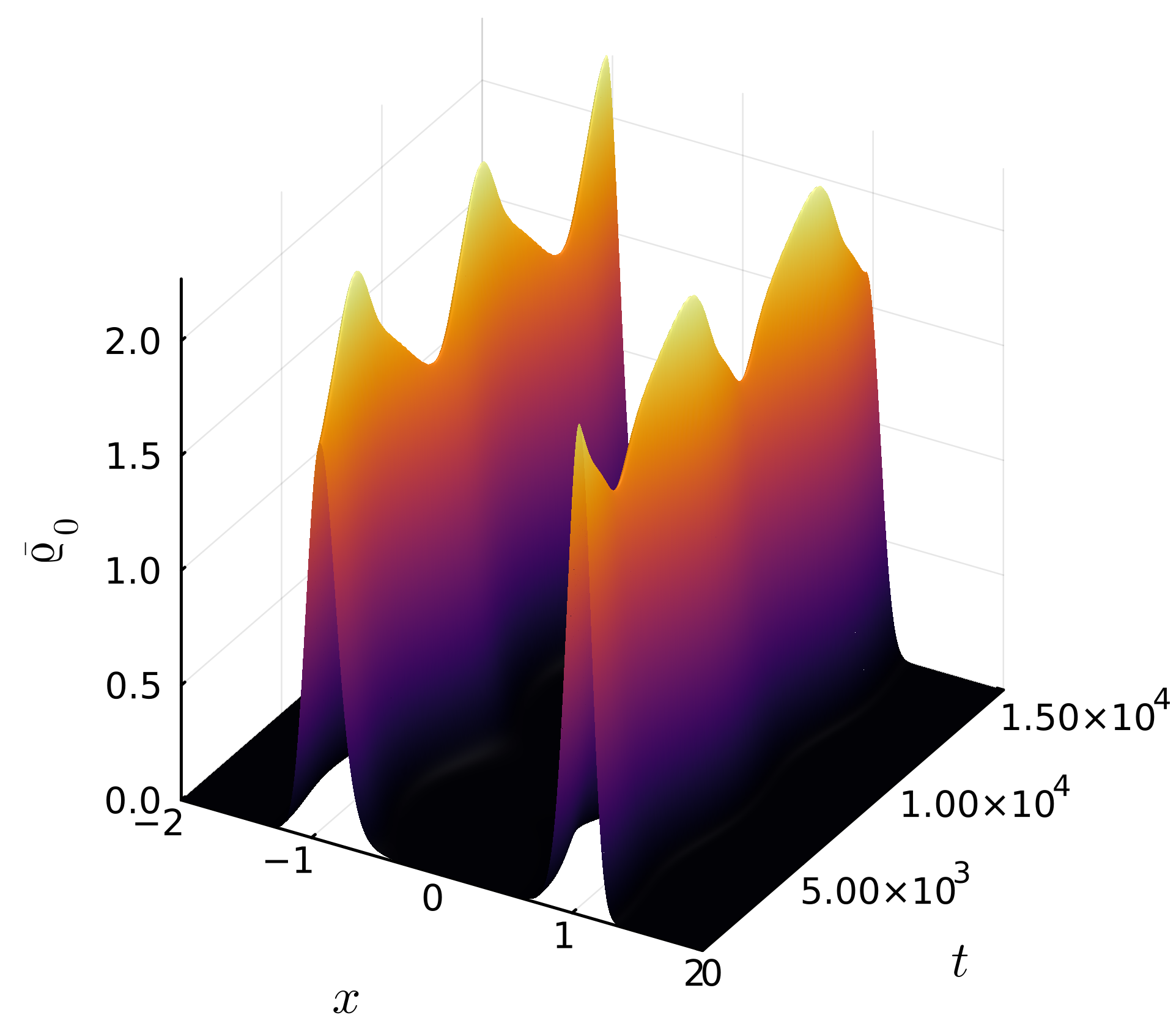} 
\caption{$\sigma_0 = 0.2$.}
\label{fig:resonance-paths-left}
\end{subfigure}
\begin{subfigure}{0.32\textwidth}\centering
\includegraphics[height=0.9\linewidth]{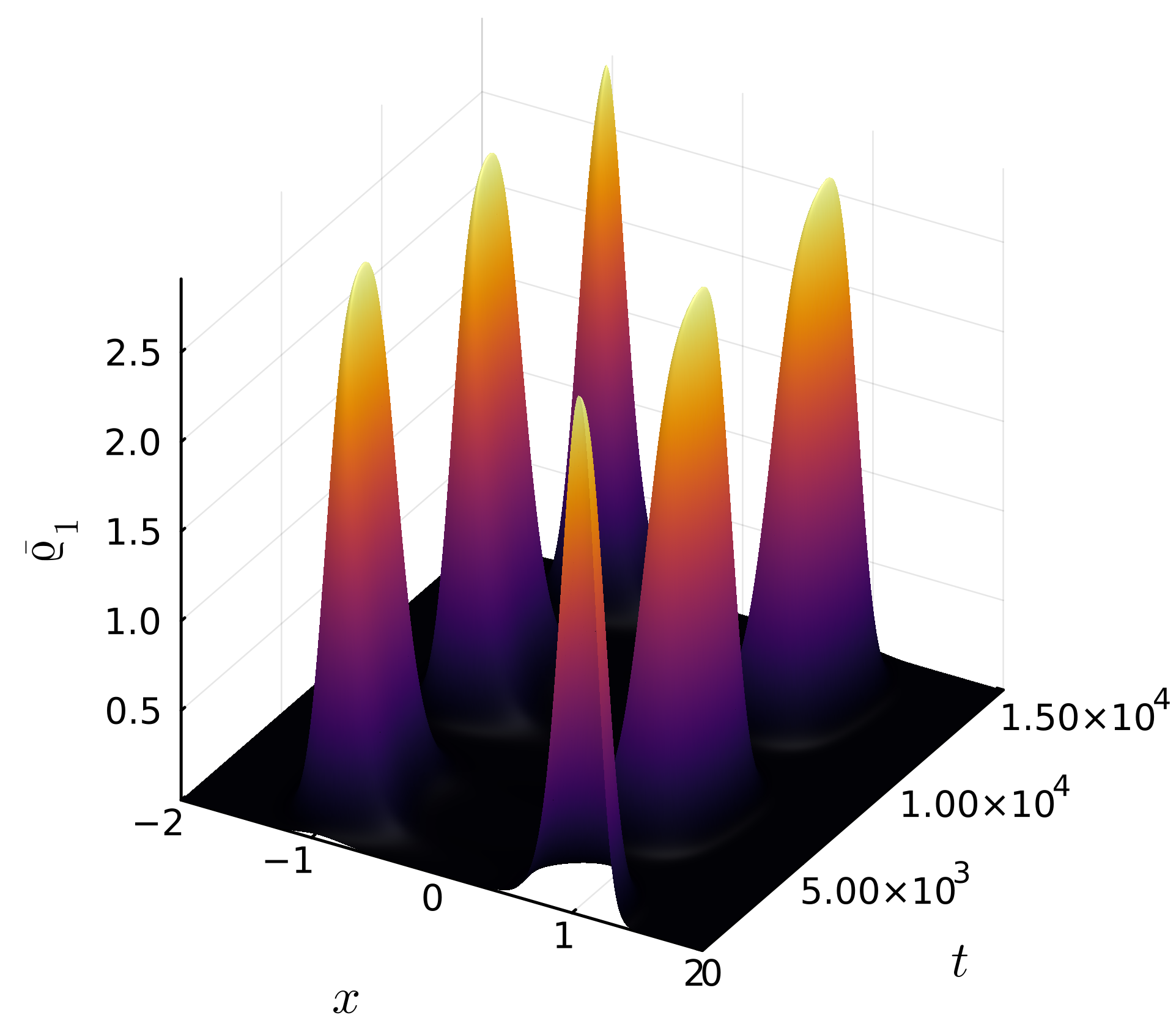} 
\caption{$\sigma_1 = 0.287129152$.}
\label{fig:resonance-density-middle}
\end{subfigure}
\begin{subfigure}{0.32\textwidth}\centering
\includegraphics[height=0.9\linewidth]{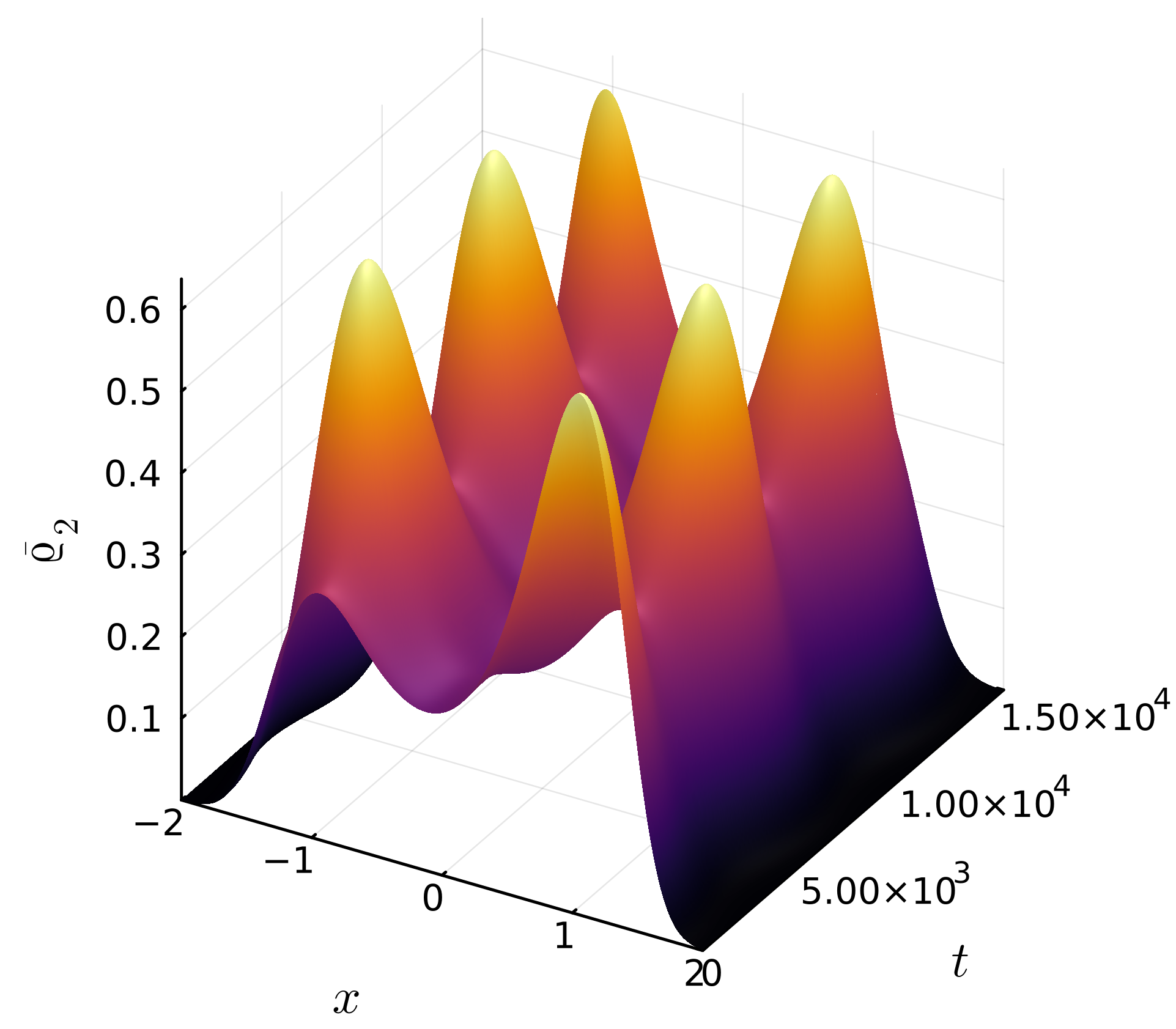} 
\caption{$\sigma_2 = 0.8$.}
\label{fig:resonance-density-right}
\end{subfigure}
\caption{\centering Numerical periodic solutions to the Fokker--Planck equation~\eqref{eqn:FP-intro} for $\eta = 0.12$, $\kappa = 1$ and $\omega = 0.001$ and various values of $\sigma$. Stochastic resonance is strongest for $\sigma$ around $\sigma_1 = 0.287129152$.\label{fig:resonance-density}}
\end{figure}

We therefore aim to demonstrate rigorously the occurrence of stochastic resonance in the system~\eqref{eqn:resonance-intro} by computing the density $\varrho$ with explicit error bounds. However, the Numerical Analysis of Eq.~\eqref{eqn:FP-intro} is more intricate than the one of~\eqref{eqn:GP-intro} given the extra dependence in the time variable: now, we need to derive compactness estimates in space and time simultaneously. Nevertheless, we show that the core ideas behind the proof of Theorem~\ref{thm:quant-compact} carry over and we obtain:
\begin{thm}\label{thm:rho-intro} For $i=0, 1, 2$, let $\sigma_i$ be as in Figure~\ref{fig:resonance-density}, let $\varrho_i :(\T/\omega)\times \R \to \R$ be the unique probability solution of Eq.~\eqref{eqn:FP-intro} with $\kappa = 1$, $\eta = 0.12$, $\omega = 0.001$, $\sigma = \sigma_i$, and let $\bar{\varrho}_i : (\T/\omega)\times \R \to \R$ be the function depicted in Figure~\ref{fig:resonance-density} (and whose precise description is available at~\cite{Chu2025Huggzz/Freud} in the file~\texttt{stoc\_res/u\_bars\_plots}). Then, for $i=0, 1, 2$
$$\sup_{(t, x)\in (\T/\omega)\times\R}|e^{V/2}(\varrho_i-\bar{\varrho}_i)|\leq 3 \times 10^{-22}.$$
\end{thm}

In Section~\ref{sec:stoc-res}, we give a more detailed description of the phenomenon of stochastic resonance, together with a rigorous quantitative statement concerning the range for $\sigma$ at which its effect is maximised.

Theorem~\ref{thm:rho-intro} amounts to a computer-assisted proof of time-periodic solutions of the Fokker--Planck equation~\eqref{eqn:FP-intro}. We note that, while the literature on computer-assisted proofs for PDEs contains several techniques allowing to study time-periodic solutions~\cite{AriKoc10bis,CasGamLes18,FigLla17,GamLes17,BerBreLesVee21,BerQue22,Zgl04,Zgl10}, all these works rely quite heavily on the fact that they only deal with solutions which are also periodic in space (because the spatial domain is either a torus, or a compact interval with boundary conditions and symmetries allowing for a periodic expansion of the solution). In contrast, in this work we deal with a problem posed on the whole real line. In particular, we use a Galerkin representation of the problem with Fourier series in time and the $L^2(\nu)$-orthogonal basis $\mathcal{P} = \{p_n\}_{n\in \N}$ in space, rather than using space-time Fourier series as in several of the aforementioned works, which makes the derivation of the estimates required for the computer-assisted proof significantly more challenging.

\subsection*{Organisation of the paper}

In Section~\ref{sec:weighted-Sobolev-spaces}, we first recall some results about weighted Sobolev spaces and Poincaré inequalities, and then discuss our construction of Sobolev orthogonal polynomials and the structure of operators with respect to orthogonal polynomial bases. Building on these insights, we focus in Section~\ref{sec:freud-sobolev} on the case of the quartic potential $V(x) = x^4/4-\kappa x^2/2$, and prove embedding estimates for $H^1(\nu)\hookrightarrow L^2(\nu)$, i.e. Theorems~\ref{thm:quant-poinc} and~\ref{thm:quant-compact}, as well as Proposition~\ref{prop:bread-bounds} on which they rely crucially. The case of an arbitrary even polynomial potential is then discussed in Section~\ref{sec:gen-poly-case}, where we prove Theorem~\ref{thm:compact}. In Section~\ref{sec:GP_eq}, we make use of the estimates of Theorems~\ref{thm:quant-poinc} and~\ref{thm:quant-compact} to solve the Gross--Pitaevskii equation with sextic potential and prove Theorems~\ref{thm:GP1} and~\ref{thm:GP2}. Finally, Section~\ref{sec:stoc-res} is dedicated to our study of stochastic resonance. It contains some more background on this problem, and several quantitative statements complementing Theorem~\ref{thm:rho-intro}. Some of the most technical part of the required estimates for the computer-assisted proofs are collected in the Appendix.

\section{Weighted Sobolev spaces}\label{sec:weighted-Sobolev-spaces}

We start by recalling known results about the operator $\cL$ and the associated weighted Sobolev spaces in Section~\ref{sec:FA}. Then, we construct orthogonal polynomials on $H^1(\nu)$ in Section~\ref{sec:Sobolevpoly}, and discuss some generalisations and connections with existing works in Section~\ref{sec:gene}. Finally, in Section~\ref{subsec:diff-op}, we express the differential operator $\partial_x$ and $\cL$ in these orthogonal polynomial bases. 

\subsection{The Functional Analytic point of view}
\label{sec:FA}

The present work is motivated by the study of the overdamped Langevin (or Smoluchowski) dynamics

\begin{equation}\label{eqn:sde}
    \d X_t = -V'(X_t) \d t+\sqrt{2}\d B_t,
\end{equation}
for a potential $V\in \mathcal{C}^2(\R)$ which induce a reversible diffusion process $(X_t)_{t\geq 0}$ with stationary probability (Gibbs) measure
$$\nu(\d x) = \frac{e^{-V(x)}}{\cZ}\d x, \qquad \cZ = \int_{\R}e^{-V}\d x,$$
where $V$ and $\nu$ will always be of this form unless stated otherwise.
The Markov semi-group of $(X_t)_{t\geq 0}$ is generated by $-\mathcal{L}$, where
$$\mathcal{L} = V'(x)\partial_x -\partial_{xx}$$
is a positive self-adjoint operator with respect to the $L^2$-inner product $\langle \cdot, \cdot \rangle$. Indeed, by parts, one has that
\begin{equation}\label{eqn:adjoint-L}
    \langle u, \mathcal{L}v\rangle = \int_{\R} u (\mathcal{L}v)\d \nu = \int_{\R}u'v'\d\nu = \int_{\R}(\mathcal{L}u)v\d \nu = \langle \mathcal{L} u, v\rangle, \qquad \mbox{for all $u, v\in H^1(\nu)$,} 
\end{equation}
which can thus also be written as a Dirichlet form. An accessible introduction to this theory can be found in~\cite[§4]{Pavliotis2014StochasticApplications} (see also~\cite{Bonnefont2022PoincareDimension} for a nice introduction to Poincaré inequalities) but we emphasise that various aspects of this theory can be generalised~\cite{Bakry1994LhypercontractiviteSemigroupes, Bakry2014AnalysisOperators}, to higher dimensions and to various geometries. 

A central tool in this theory is the Poincaré inequality~\eqref{eqn:poincaré-eq} on $H^1(\nu)$.
For instance~\cite[Theorem 4.4]{Pavliotis2014StochasticApplications}, such a Poincaré inequality immediately implies the exponential ergodicity of the process $(X_t)_{t\geq 0}$ evolving under~\eqref{eqn:sde}. A criterion for the Poincaré inequality~\eqref{eqn:poincaré-eq} to hold and which will be sufficient for our purposes is given by the following proposition. See~\cite{Bonnefont2022PoincareDimension} for various functional analytic aspects related to this Poincaré inequality.
\begin{prop}\label{prop:fonct-poinc}
    Suppose that the potential $V$ is such that
    \begin{equation}\label{eq:assumption_V}
        \lim_{x\to\pm\infty} \frac{(V'(x))^2}{2} - V''(x) = +\infty,
    \end{equation}
    then there holds a Poincaré inequality of the form~\eqref{eqn:poincaré-eq} on $H^1(\nu) = H^1(e^{-V}/\cZ)$.
\end{prop}

\begin{proof}
    See~\cite[Theorem A.1]{Villani2009Hypocoercivity}.
\end{proof}

This criterion essentially amounts to $V$ having to grow strictly faster than linearly at infinity. Note that a more sophisticated way to verify the Poincaré inequality is via the Muckenhoupt criterion~\cite[Theorem 4.5.1]{Bakry2014AnalysisOperators} which is actually an equivalent statement and gives (large) bounds on the optimal Poincaré constant $C_P$ (see also~\cite[§A.19]{Villani2009Hypocoercivity}).

\begin{ex}\label{ex:hermite}
    Consider the case of the Gaussian measure, i.e. $V(x) = x^2/2$ such that $\nu(\d x) = e^{-x^2/2}\d x/\sqrt{2\pi}$. Then, we have the following Poincaré inequality, for all $u\in H^1(\nu)$,
    \begin{equation*}
        \int_{\R}u^2\d \nu - \left(\int_{\R}u\d \nu\right)^2 \leq \int_{\R}(u')^2 \d \nu.
    \end{equation*}
    This is easily shown by decomposing $u$ in terms of the orthonormal Hermite polynomials $\mathcal{P} = \{p_n\}_{n\in\mathbb{N}}$~\cite[§22]{Abramowitz1970HandbookSeries}. One can actually improve this further and obtain the compactness estimates
    $$\|u\|_{L^2(\nu)}\leq \frac{1}{\sqrt{n}}\|u\|_{H^1(\nu)},$$
    for all $u \in \overline{\mathrm{Span}\{p_j\}_{j\geq n}}^{H^1(\nu)}$.
\end{ex}
The Hermite case is actually one of the few settings for which such fully explicit constructive compactness estimates had so far been proved, thanks to the diagonal nature of the operator $\partial_x$ when represented with respect to $\mathcal{P}$ (see~\cite{Bakry2003CharacterizationPolynomials,Bakry2022OrthogonalOperators,Mazet1997ClassificationOrthogonaux}). This is a shame, as from the probability theoretic point of view, the Hermite case (associated to the so-called Ornstein--Uhlenbeck process) is a limiting case in the classification of diffusion semi-groups generated by $-\mathcal{L}$ and associated to $\nu$ (see~\cite[§7.7]{Bakry2014AnalysisOperators} for a discussion, and also~\cite{Bakry1994LhypercontractiviteSemigroupes, Kavian1993SomeUltracontractivity}). Nonetheless, the embedding $H^1(\nu)\hookrightarrow L^2(\nu)$ is known to be compact in a much broader setting.
\begin{thm}\label{thm:fonct-compact}
    If $V$ satisfies~\eqref{eq:assumption_V}, $H^1(\nu) = H^1(e^{-V}/\cZ)$ is compactly embedded in $L^2(\nu)$ and $\mathcal{L}$ has a discrete spectrum.
\end{thm}
\begin{proof}
    This is a standard result, although typically not stated this way. Assumption~\eqref{eq:assumption_V} implies that the Witten Laplacian 
    \begin{align*}
        \Delta_V := -\partial_{xx} + \frac{(V'(x))^2}{2} - V''(x)
    \end{align*}
    has compact resolvent on $L^2(\R)$ (see, e.g.,~\cite[Theorem 3.1]{Berezin1991TheEquation}), and thus so does $\cL = e^{V/2}\Delta_V e^{-V/2}$ on $L^2(\nu)$. The compactness of the embedding $H^1(\nu)\hookrightarrow L^2(\nu)$ then follows from ~\eqref{eqn:adjoint-L}.
\end{proof}
As shown in Theorem~\ref{thm:compact} in the polynomial case, we expect that if $V(x) \sim |x|^p$ as $x\to\infty$ for $p\in(1,\infty)$, then the embeddings should get quantitatively better as $p$ increases. We note that in the limiting case $p=1$, while the Poincaré inequality~\eqref{eqn:poincaré-eq} holds, the compactness of the embedding $H^1(\nu)\hookrightarrow L^2(\nu)$ does not (see~\cite[§4.4.1]{Bakry2014AnalysisOperators}).

\begin{rmk}\label{rmk:poinc0}
    The Poincaré inequality~\eqref{eqn:poincaré-eq} is in fact equivalent to the standard Poincaré inequality on the space of $\nu$-mean-zero functions: Denoting
    $$H^1_0(\nu) := \left\{u \in H^1(\nu)\Big|\int_{\R}u\d\nu = 0\right\},$$
    we have 
        \begin{equation}\label{eqn:rmk-poinc-ineq}
        \int_{\R}u^2\d \nu \leq C_P\int_{\R}(u')^2 \d \nu \quad \text{for all }u\in H^1_0(\nu)
    \end{equation}
    if and only if~\eqref{eqn:poincaré-eq} holds.
\end{rmk}

In summary, once the Poincaré inequality~\eqref{eqn:poincaré-eq} is available, all the Functional Analysis on $H^1(\nu)$ is reduced to textbook analysis of elliptic PDEs on Hilbert spaces~\cite{Brezis2011FunctionalEquations, Evans2010PartialEquations}. 
In particular, the problem $\cL u = g$ is then well-posed for all $g\in L^2(\nu)$ with $\nu$-zero mean by the Lax--Milgram Theorem.

\subsection{Sobolev orthogonal polynomials}
\label{sec:Sobolevpoly}
A problem naturally arising in Numerical Analysis is the construction of \emph{Sobolev orthonormal polynomials} on the Sobolev space $H^1(\nu)$ and their use in the resolution of partial differential equations~\cite{Marcellan2015OnPolynomials}. 

While orthogonal polynomials with respect to ``standard'' inner products of the form~\eqref{eqn:usual-ip} (and to variants including discrete measures) have been heavily studied, actually using them for the Numerical Analysis of PDEs seems very challenging (except in the classical case $V(x) = x^2/2$). We propose to consider a different inner product, which leads to a very natural construction of orthogonal polynomials on $H^1(\nu)$.

\begin{thm}\label{thm:inner-product}
    Suppose that $\nu$ is a probability measure such that the Poincaré inequality~\eqref{eqn:poincaré-eq} holds.
    Then the inner product $(\cdot, \cdot)$ defined by
    $$(u,v) = \int_{\R}u'v'\d \nu+\int_{\R}u\d \nu\int_{\R}v\d \nu\qquad u, v\in H^1(\nu)$$
    defines a Hilbert structure on $H^1(\nu)$ inducing its topology. In particular, $H^1(\nu)$ is complete under the induced norm given by $\|\cdot\|_{H^1(\nu)}:= \sqrt{(\cdot, \cdot)}$.
\end{thm}
\begin{proof}
It is straightforward to check that $(\cdot, \cdot)$ is indeed an inner product on $H^1(\nu)$. Moreover, for all $u\in H^1(\nu)$,
    \begin{align*}
        \left( \int_\R u \d \nu\right)^2 \leq \int_\R u^2 \d \nu \leq \max(1, C_P) \left\{\int_{\R}(u')^2\d\nu+\left(\int_{\R}u\d\nu\right)^2\right\},
    \end{align*}
    where the second inequality follows from the Poincaré inequality~\eqref{eqn:poincaré-eq}, and thus $\|\cdot\|_{H^1(\nu)}= \sqrt{(\cdot, \cdot)}$ is Lipschitz-equivalent to the standard Sobolev $H^1(\nu)$-norm $\sqrt{\langle \partial_x\cdot,\partial_x \cdot\rangle + \langle \cdot, \cdot\rangle}$.
\end{proof}

We recall that $\cQ = \{q_n\}_{n\in\N}$ denotes the basis of orthonormal polynomials with respect to $(\cdot, \cdot)$, which are characterised by Proposition~\ref{prop:Q-construct}. From now on, we refer to them as Sobolev orthogonal polynomials, even though they are not obtained with a scalar product of the form~\eqref{eqn:usual-ip}. We show in Section~\ref{subsec:diff-op} and Section~\ref{sec:Vpoly} that these Sobolev orthogonal polynomials have valuable properties that make them helpful for obtaining compactness estimates and solving differential equations.

\begin{rmk}
Based on Remark~\ref{rmk:poinc0}, one can think of the construction of the mean-zero Sobolev orthonormal polynomials $\{q_n\}_{n=1}^{\infty}$ as the orthogonalisation of the basis $\{p_n\}_{n=1}^{\infty}$ of $H^1_0(\nu)$ with respect to the inner product
$$\int_{\R}u'v'\d \nu,\qquad \mbox{for }u, v\in H^1_0(\nu),$$
which is another way to show Proposition~\ref{prop:Q-construct}.
\end{rmk}

\begin{rmk}\label{rmk:density}
    Note that the density of the Sobolev orthogonal polynomials $\{q_n\}_{n\in \N}$ in $H^1(\nu)$ follows directly from the density of the $L^2$-orthogonal polynomials $\{p_n\}_{n\in \N}$ in $L^2(\nu)$.
\end{rmk}

\begin{rmk}
    Our Sobolev orthogonal polynomials $\mathcal{Q} = \{q_n\}_{n\in \N}$ can be obtained as the limit as $\Lambda\to 0$ of the orthonormal polynomials with respect to the standard inner product~\eqref{eqn:usual-ip} (since these have $\nu$-zero mean if they are not constant).
\end{rmk}

\begin{rmk}
One could add weights $\Lambda_1,\Lambda_2>0$ and consider seemingly more general inner products than $(\cdot,\cdot)$, of the form
$$\Lambda_1\int_{\R}u'v'\d \nu+\Lambda_2\int_{\R}u\d \nu\int_{\R}v\d \nu\qquad u, v\in H^1(\nu).$$
However, up to normalisation, the obtained $q_n$'s do not depend on such weights, which is not the case for inner products of the form~\eqref{eqn:usual-ip}.
\end{rmk}

\begin{ex}
Consider the case of the Gaussian measure, i.e.~$V(x) = x^2/2$ such that $\nu(\d x) = e^{-x^2/2}\d x/\sqrt{2\pi}$. Then $\partial_x$ acts very simply on the basis $\{p_n\}$, namely $p_n' = \sqrt{n}\, p_{n-1}$ for all $n\geq 1$ and therefore $q_n = p_n/\sqrt{n}$ for $n\geq 1$: In that case, the Sobolev orthonormal polynomials are simply renormalised classical orthogonal polynomials. 
\end{ex}

We emphasize that the previous example is the exception and not the rule: in general, $q_n$ need not be proportional to $p_n$, and indeed this will not be the case in Section~\ref{sec:Vpoly} where we work with non-classical weights.


\subsection{Some generalisations of our Sobolev orthogonal polynomials}
\label{sec:gene}

The main focus of this paper is on measures of the form $\nu(\d x) = e^{-V}\d x/\cZ$ supported on the whole real line, with $V$ satisfying~\eqref{eq:assumption_V}. However, even for other measures, our construction of Sobolev orthogonal polynomials can be used as soon as a form of Poincaré inequality is at hand, and in some specific cases it produces back classical orthogonal polynomials. 

\begin{ex}[The Legendre case]\label{ex:Legendre}
    Consider $\d\nu = \d x/2$ on $[-1,1]$, where the orthonormal polynomials are the Legendre polynomials~\cite[§22]{Abramowitz1970HandbookSeries}. While this example does not sit within the general setting of this paper, it satisfies the assumptions of Theorem~\ref{thm:inner-product}. Indeed on $H^1(\nu)$, we have the Poincaré--Wirtinger inequality~\cite[§5.8.1]{Evans2010PartialEquations} (see also~\cite[§4.5.2]{Bakry2014AnalysisOperators})
    \begin{equation*}
        \int_{-1}^1u^2\d \nu - \left(\int_{-1}^1u\d \nu\right)^2 \leq C_P\int_{-1}^1(u')^2 \d \nu, \qquad C_P = \frac{4}{\pi^2}.
    \end{equation*}
    The Sobolev orthogonal polynomials $\cQ = \{q_n\}_{n\in \N}$ associated to the scalar product~\eqref{eqn:good-inner-product} are then given by
    $$q_0 = 1, \qquad q_1 = \frac{p_1}{\sqrt{3}},\qquad q_2 =\frac{p_2}{\sqrt{15}},\qquad q_n = \frac{1}{\sqrt{2n-1}}\left(\frac{p_n}{\sqrt{2n+1}}-\frac{p_{n-2}}{\sqrt{2n-3}}\right)\quad\mbox{for $n\geq 3$.}$$
    Note that $q_n(-1) = q_n(1) = 0$ for all $n\geq 3$, 
    which furthermore shows that the $q_n$'s cannot simply be some (even rescaled) Jacobi polynomials, as those have interlacing zeros in $(-1,1)$. For $n\geq 3$, the $q_n$'s 
    in fact coincide with some so-called Legendre associated polynomials~\cite[§8]{Abramowitz1970HandbookSeries}, which are the natural extension of the Jacobi polynomials at the parameters $\alpha = \beta = -1$.
\end{ex}

Note that it might sometimes be more natural to construct $H^1$-Sobolev orthogonal polynomials $\{q_n\}_{n\in\N}$ with respect to an inner product with two different measures $\nu$ and $\tilde{\nu}$
\begin{equation}\label{eqn:2-meas-ip}
    \int_{\R}u'v'\d \nu +\int_{\R}u\d\tilde{\nu}\int_{\R}v\d\tilde{\nu},
\end{equation}
if there holds a Poincaré inequality of the form
\begin{equation}\label{eqn:diff-poinc-ineq}\mathrm{Var}_{\tilde{\nu}}(u) = \int_{\R}u^2\d \tilde{\nu} - \left(\int_{\R}u\d \tilde{\nu}\right)^2\leq C_P \int_{\R}(u')^2\d\nu,\qquad \mbox{for all $u\in L^2(\tilde{\nu})\cap H^1(\nu)$}.\end{equation}
Such an inequality can be verified using Muckenhoupt's criterion (see~\cite[Chapter 4.5.1]{Bakry2014AnalysisOperators} and~\cite{Muckenhoupt1972HardysWeights}). This naturally extends our construction of Sobolev orthogonal polynomials, which coincide with known orthogonal polynomials if the measures $\nu$ and $\tilde{\nu}$ belong to classical families. For instance, in the case of beta distributions we have the following. 

\begin{ex}[The Jacobi family]
    Let $\alpha, \beta>0$ and denote $\nu_{\alpha, \beta}(\d x) =(1-x)^{\alpha}(1+x)^{\beta}\d x/Z_{\alpha, \beta}$ on $[-1,1]$, for which the $p_n$'s are (orthonormal) Jacobi polynomials of parameter $(\alpha, \beta)$~\cite[§22]{Abramowitz1970HandbookSeries}. Then, on $H^1(\nu_{\alpha, \beta})\cap L^2(\nu_{\alpha-1, \beta-1})=H^1(\nu_{\alpha, \beta})$, there holds a Poincaré inequality
    $$\int_{-1}^1 u^2 \d\nu_{\alpha-1, \beta-1} - \left(\int_{-1}^1 u \d\nu_{\alpha-1, \beta-1}\right)^2 \leq C_P \int_{-1}^1 (u')^2 \d\nu_{\alpha, \beta},\qquad  C_P = \alpha +\beta.$$
    See~\cite[§2.7.4]{Bakry2014AnalysisOperators} for more details about the associated semi-group. Then, the $q_n$'s associated to the inner product~\eqref{eqn:2-meas-ip} (with $\nu = \nu_{\alpha, \beta}$ and $\tilde{\nu} = \nu_{\alpha-1,\beta-1}$) are nothing but the (rescaled) Jacobi polynomials associated to $\nu_{\alpha-1,\beta-1}$. Observe that the Legendre-Sobolev polynomials on $H^1(\nu_{0,0})$ (from Example~\ref{ex:Legendre}), cannot be constructed via this inner product as we require $\alpha,\beta>0$. 
\end{ex}

Similarly, concerning the gamma distributions, we have the following.

\begin{ex}[The Laguerre family]
    For $\alpha>-1$, denote $\nu_{\alpha} = x^{\alpha}e^{-x}\d x/Z_{\alpha}$ on $\R_+= [0,\infty)$ such that the $p_n$'s are orthonormal Laguerre polynomials of parameter $\alpha$~\cite[§22]{Abramowitz1970HandbookSeries}.
    \begin{itemize}
        \item If $\alpha>0$, on $H^1(\nu_{\alpha})\cap L^2(\nu_{\alpha -1})=H^1(\nu_{\alpha})$, we have the Poincaré inequality~\cite[§4.4.1]{Bakry2014AnalysisOperators}
        $$\int_{0}^{\infty} u^2 \d\nu_{\alpha-1} - \left(\int_{0}^{\infty} u \d\nu_{\alpha-1}\right)^2 \leq \int_{0}^{\infty} (u')^2 \d\nu_{\alpha},$$
        such that the $q_n$'s associated to the inner product~\eqref{eqn:2-meas-ip} (with $\nu = \nu_{\alpha}$ and $\tilde{\nu} = \nu_{\alpha-1}$) are (rescaled) Laguerre polynomials of parameter $(\alpha-1)$.
        \item In the case $\alpha = 0$, we have the Poincaré inequality~\cite[§4.4.1]{Bakry2014AnalysisOperators}
        $$\int_{0}^{\infty} u^2 \d\nu_{0} - \left(\int_{0}^{\infty} u \d\nu_{0}\right)^2 \leq 4\int_{0}^{\infty} (u')^2 \d\nu_{0}.$$
        In a similar way as in the Legendre case, for $n\geq 2$, the $q_n$'s correspond to a natural extension of the Laguerre polynomials at the parameter $\alpha = -1$ (which are not orthogonal with respect to a probability measure). They are for instance obtained by generalising the closed form of the Laguerre polynomials. 
    \end{itemize}
\end{ex}


\begin{rmk}[On coherent pairs] Let $\{\tilde{p}_n\}_{n\in \N}$ denote the $L^2(\tilde{\nu})$-orthonormal polynomials. If for all $n\in \N^*$, $p_n$ is a linear combination of only $\tilde{p}'_{n+1}$ and $\tilde{p}'_{n}$ (resp. $\tilde{p}'_{n-1}$), the measures $\tilde{\nu}$ and $\nu$ are said to be \emph{coherent} (resp. symmetrically coherent). This notion was introduced in~\cite{Iserles1991OnProducts} and has since been thoroughly studied (see~\cite[Section 5.1]{Marcellan2015OnPolynomials} for a survey) to give algebraic relations between the usual Sobolev orthogonal polynomials and the $\tilde{p}_n$'s. However, it can be directly seen that if $p_n$ is a linear combination of only $\tilde{p}'_{n+1}$ and $\tilde{p}'_{n}$ (resp. $\tilde{p}'_{n-1}$), then $q_{n+1}$ associated to~\eqref{eqn:2-meas-ip} is a linear combination of $\tilde{p}_{n+1}$ and $\tilde{p}_n$ (resp. $\tilde{p}_{n-1}$).

For~\eqref{eqn:2-meas-ip} to be inner product on $H^1(\nu)$ which induces its usual topology, it then suffices for the coherent pair $(\tilde{\nu}, \nu)$ to satisfy the Poincaré inequality~\eqref{eqn:diff-poinc-ineq} or equivalently the Muckenhoupt criterion~\cite[Chapter 4.5.1]{Bakry2014AnalysisOperators}. Remarkably, it turns out that this property is satisfied by \emph{all} pairs $(\tilde{\nu},\nu)$ which are classified to be coherent (see~\cite[Section 5.2]{Marcellan2015OnPolynomials} and~\cite{Meijer1997DeterminationPairs} for this classification).

In the context of coherent pairs, the $q_n$'s associated to~\eqref{eqn:2-meas-ip} readily appear in the literature in order to study the usual Sobolev orthogonal polynomials~\cite{Marcellan2015OnPolynomials, Meijer1993CoherentPolynomials}, but were not identified as being themselves orthogonal with respect to a Sobolev inner product.
\end{rmk}

\subsection{The differentiation operator}
\label{subsec:diff-op}

In preparation for the resolution of equations of the form~\eqref{eqn:Lu=f(u)}, we now write out explicitly the ``infinite matrix'' representation of $\partial_x$ and $\cL$ in the bases $\cP$ of $L^2(\nu)$ and $\cQ$ of $H^1(\nu)$. In particular, this section contains the proof of Proposition~\ref{prop:decompo-intro}, which summarizes Proposition~\ref{prop:changeofbasis} and Proposition~\ref{prop:L-decomp} below.


\begin{notation}
    In the rest of this paper, so that all our statements are precise, we denote the $\ell^2$- representation of an element $u$ of a weighted Sobolev space
    \begin{itemize}
    \item by $\left[u\right]_{L^2}$ if we consider its representation with respect to the orthonormal basis $\mathcal{P} = \{p_n\}_{n=0}^{\infty}$ of $L^2(\nu)$,
    \item by $\left[u\right]_{H^1}$ if we consider its representation with respect to the orthonormal basis $\mathcal{Q} = \{q_n\}_{n=0}^{\infty}$ of $H^1(\nu)$.
    \end{itemize}
    That is, $\left[u\right]_{L^2} \in \ell^2$ is the infinite vector of coefficients $\left(\langle p_n, u \rangle\right)_{n\in\N} $, and $\left[u\right]_{H^1}\in \ell^2$ is the infinite vector of coefficients $\left((q_n, u)\right)_{n\in\N}$.
    Furthermore, for the sake of clarity, we may shorten some notations by omitting the $\nu$. Henceforth, $\|\cdot\|_{H^1}$ (resp. $\|\cdot\|_{L^2}$) will always denote $\|\cdot\|_{H^1(\nu)}$ (resp. $\|\cdot\|_{L^2(\nu)}$).

    Given an operator $\mathcal{A}$ acting between two weighted Sobolev spaces, for instance $\mathcal{A}:H^1 \to L^2 $, we denote by $\left[\mathcal{A}\right]_{H^1\to L^2} : \ell^2\to \ell^2$  the operator such that $\left[\mathcal{A}u\right]_{L^2} = \left[\mathcal{A}\right]_{H^1\to L^2} \left[u\right]_{H^1}$. In other words, $\left[\mathcal{A}\right]_{H^1\to L^2}$ is the infinite matrix representing the operator $\mathcal{A}$ with basis $\mathcal{Q}$ for the domain and basis $\mathcal{P}$ for the range.  We use similar arrow notations for operator norms, i.e.,
    $$\|\mathcal{A}\|_{H^1\to L^2} = \sup_{\substack{u\in H^1 \\ u\neq 0}}\frac{\|\mathcal{A}u\|_{L^2(\nu)}}{\|u\|_{H^1(\nu)}},$$
    and note that
    $$\|\mathcal{A}\|_{H^1\to L^2}  =  \left\|\left[\mathcal{A}\right]_{H^1\to L^2}\right\|_{\ell^2\to\ell^2}.$$
    When the two spaces are the same, we sometimes write $\left[\mathcal{A}\right]_{L^2}$ instead of $\left[\mathcal{A}\right]_{L^2\to L^2}$.
\end{notation}

In order to eventually deal with $\cL$, we first consider the (unbounded) differentiation operator $\partial_x$, which we write with respect to $\mathcal{P}$, i.e.,
$$\left[\partial_x\right]_{L^2\to L^2} = D = \begin{pNiceMatrix}[margin]
  0 &\Block[borders={top, left}]{4-4}{}& & & & \\
  \vdots & & D_{>0} & & & \\
  & & & & & \\
  & & & & & \\
\end{pNiceMatrix}.$$
As a differentiation operator acting on a polynomial basis, $D$ is strictly upper triangular. The submatrix $D_{>0}$, which corresponds to $\partial_x$ from the subspace of mean zero functions of $L^2(\nu)$ to $L^2(\nu)$, will play a central role in the analysis to come. In particular, the operator $\partial_x$ (or equivalently $D$) is closely related to the change of basis matrix $P = \left[\id\right]_{L^2\to H^1}$ from $\mathcal{P}$ to $\mathcal{Q}$, i.e. the matrix such that 
$\left[u\right]_{H^1} = P \left[u\right]_{L^2} $.

\begin{prop}
\label{prop:changeofbasis}
The change of basis matrix $P$ from $\mathcal{P}$ to $\mathcal{Q}$ is given by

\begin{align}
\label{eq:defP}
    P  = \left[\mathrm{id}\right]_{L^2\to H^1}  = \begin{pNiceMatrix}[margin]
  1 & 0 & \cdots& & & \\
  0 &\Block[borders={top, left}]{4-4}{}& & & & \\
  \vdots & & D_{>0} & & & \\
  & & & & & \\
  & & & & & \\
\end{pNiceMatrix}.
\end{align}
\end{prop}

\begin{proof}
We normalised the $p_n$ and $q_m$ so that $p_0 = q_0 =1$, which explains the first column of $P$. The first row of zeros is an immediate consequence of the fact that the $p_n$ all have mean $0$ for $n\geq 1$. Finally, using that the $q_m$ also all have mean $0$ for $m\geq 1$, we get

\begin{align*}
    (q_m, p_n) &= \langle q_m', p'_n\rangle + \langle q_m, 1\rangle \langle p_n, 1\rangle\\
    &= \langle p_{m-1}, p'_n\rangle\\
    &=D_{m-1,n}. \qedhere
\end{align*}
\end{proof}


The following observations are useful for the resolution of differential equations involving the operator $\mathcal{L}$ such as~\eqref{eqn:GP-intro}.

\begin{prop}\label{prop:L-decomp}
    We have that 
    $$\left[\mathcal{L}\right]_{L^2} = D^TD \qquad \text{and}\qquad \left[\mathcal{L}\right]_{H^1_0} = D_{>0}D_{>0}^T.$$
\end{prop}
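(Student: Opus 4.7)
The plan is to compute matrix entries directly, relying on the integration-by-parts identity~\eqref{eqn:adjoint-L}, which amounts to the factorization $\mathcal{L} = \partial_x^* \partial_x$ (with $*$ the $L^2(\nu)$-adjoint) at the level of quadratic forms.

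For the first identity, the $(m,n)$-entry of $\left[\mathcal{L}\right]_{L^2}$ is $\langle p_m, \mathcal{L} p_n\rangle$. Applying~\eqref{eqn:adjoint-L} converts this into $\langle p_m', p_n'\rangle$, and expanding $p_j' = \sum_k D_{k,j} p_k$ together with $L^2(\nu)$-orthonormality of the $p_k$'s yields $\sum_k D_{k,m} D_{k,n} = (D^T D)_{m,n}$. This step is elementary.

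For the second identity, I would compute $(q_m, \mathcal{L} q_n)$ for $m, n \geq 1$. First, $\mathcal{L} q_n$ has zero mean: $\int \mathcal{L} q_n\, \d\nu = \langle 1, \mathcal{L} q_n\rangle = \langle (1)', q_n'\rangle = 0$ via~\eqref{eqn:adjoint-L}. Since $q_m$ also has zero mean for $m \geq 1$ by Proposition~\ref{prop:Q-construct}, the product-of-means term in $(\cdot,\cdot)$ vanishes and
$$(q_m, \mathcal{L} q_n) = \int q_m' (\mathcal{L} q_n)'\, \d\nu.$$
Next, the $p_k$-coefficient of $\mathcal{L} q_n$ in the $\mathcal{P}$-expansion is $\langle p_k, \mathcal{L} q_n\rangle = \langle p_k', q_n'\rangle = \langle p_k', p_{n-1}\rangle = D_{n-1,k}$, while $(q_m, p_k) = D_{m-1,k}$ by~\eqref{eq:defP} (equivalently, using $q_m' = p_{m-1}$ with $\int q_m\, \d\nu = 0$). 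Summing gives
$$(q_m, \mathcal{L} q_n) = \sum_k D_{n-1,k}\,(q_m, p_k) = \sum_k D_{m-1,k}\, D_{n-1,k} = (D D^T)_{m-1,n-1}.$$
The first column of $D$ is zero (since $p_0' = 0$), so the $k = 0$ term of this sum is trivial and the expression reduces to $(D_0 D_0^T)_{m-1,n-1}$, which is exactly $\left[\mathcal{L}\right]_{H^1_0}$ at entry $(m,n)$ once the one-off shift between the $H^1_0$-basis $\{q_n\}_{n \geq 1}$ and the $0$-based indexing of $D_0$ is absorbed.

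I do not expect any real obstacle: the proof is bookkeeping with~\eqref{eqn:adjoint-L}, the definition of $D$, and the identity $q_n' = p_{n-1}$. The only points deserving care are the index shift just mentioned, and the implicit requirement that $\mathcal{L} q_n \in L^2(\nu)$ so that the $\mathcal{P}$-expansion is legitimate (which is automatic in our polynomial-$V$ setting, as $\mathcal{L} q_n$ is then itself a polynomial).
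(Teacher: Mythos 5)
Your proposal is correct and takes essentially the same approach as the paper: both rest on the integration-by-parts identity~\eqref{eqn:adjoint-L}, i.e.\ the factorization of $\mathcal{L}$ through $\partial_x$ and its $L^2(\nu)$-adjoint, the paper stating it as a one-line quadratic-form identity and you writing it out entry-wise. Your explicit treatment of the second identity (zero mean of $\mathcal{L}q_n$, the use of $q_n' = p_{n-1}$ and $(q_m,p_k)=D_{m-1,k}$, and the observation that the vanishing first column of $D$ absorbs the index shift to $D_0$) merely spells out what the paper leaves implicit, and is accurate.
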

\begin{proof}
    According to~\eqref{eqn:adjoint-L}, for all $u, v\in H^1$
    \begin{equation*}
        \langle u, \mathcal{L}v\rangle = \langle u', v'\rangle = [\partial_x u]_{L^2}^T[\partial_x v]_{L^2} = (D[u]_{L^2})^T(D[v]_{L^2}) = [u]_{L^2}^TD^TD[v]_{L^2},
    \end{equation*}
    which yields the first identity. In particular, the restriction of $\cL$ to the subspace of $\nu$-mean zero of $L^2$ writes $D_{>0}^T D_{>0}$, and, using Proposition~\ref{prop:changeofbasis},
\begin{equation*}
\left[\mathcal{L}\right]_{H^1_0} = D_{>0}\left(D_{>0}^T D_{>0}\right) D_{>0}^{-1} = D_{>0}D_{>0}^T. \qedhere
\end{equation*}
\end{proof}
Since $D$ is upper triangular, $\left[\mathcal{L}\right]_{L^2} = D^TD$ gives a Cholesky factorisation (LU) of $\left[\mathcal{L}\right]_{L^2}$, whereas $\left[\mathcal{L}\right]_{H^1_0} = D_{>0}D_{>0}^T$ gives a \emph{reverse Cholesky} factorisation (UL) of $\left[\mathcal{L}\right]_{H^1_0} $. UL decompositions are rather uncommon as they are typically non-unique in infinite dimension, but it seems to be canonical in this case.

Proposition~\ref{prop:L-decomp} allows for a simple inversion of the operator $\mathcal{L}$. This is particularly the case if the potential $V$ is polynomial. Indeed, we have

\begin{lem}\label{lem:bandedness}
    Let $V(x) = x^{2k}/(2k) + o(x^{2k})$ be a polynomial of degree $2k$, and $\nu(\d x) = e^{-V}\d x/\cZ$.
    Then, for all $m\leq n-2k$
    $$D_{m,n} = \langle p_m, p'_n\rangle = 0,$$
    such that $D$ (and $P$) are upper triangular operators on $\ell^2$ of upper bandwidth $(2k-1)$.
\end{lem}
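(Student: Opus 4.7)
The plan is to compute $D_{m,n} = \langle p_m, p_n'\rangle$ via integration by parts against the weight $e^{-V}$, which will convert the derivative on $p_n$ into a multiplication by $V'$ (plus lower-order terms) acting on $p_m$. The resulting polynomial will have controlled degree, and then the orthogonality of $p_n$ against all polynomials of degree strictly less than $n$ will force the vanishing.

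More precisely, I would start from
\begin{equation*}
D_{m,n} = \langle p_m, p_n'\rangle = \frac{1}{Z}\int_{\R} p_m(x)\, p_n'(x)\, e^{-V(x)}\, \d x,
\end{equation*}
and integrate by parts. Since $p_m p_n e^{-V}$ decays at $\pm\infty$ (polynomials against a super-Gaussian weight), the boundary terms vanish and we get
\begin{equation*}
\langle p_m, p_n'\rangle = -\frac{1}{Z}\int_{\R}\bigl(p_m(x)e^{-V(x)}\bigr)' p_n(x)\,\d x = \langle V'p_m - p_m',\, p_n\rangle.
\end{equation*}
Now the right-hand side is an $L^2(\nu)$-inner product of $p_n$ against the polynomial $V'p_m - p_m'$. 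Since $V$ has degree $2k$, its derivative $V'$ has degree $2k-1$, so $V'p_m$ has degree $m+2k-1$, while $p_m'$ has degree $m-1$. Therefore $V'p_m - p_m'$ has degree at most $m+2k-1$.

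By definition $p_n$ is orthogonal in $L^2(\nu)$ to every polynomial of degree strictly less than $n$. Thus whenever $m+2k-1 < n$, i.e.\ $m \leq n-2k$, we obtain $D_{m,n} = 0$, which is the claimed vanishing. Equivalently, $D_{m,n}$ can be nonzero only when $n-m \leq 2k-1$ (combined with the strict upper triangularity $D_{m,n}=0$ for $m\geq n$, which just reflects that $p_n'$ has degree $n-1$), giving upper bandwidth $2k-1$. The same bandedness is then inherited by $P$ through~\eqref{eq:defP}, where $D_0$ is the submatrix of $D$ obtained by discarding the zero first row and first column. I do not anticipate a real obstacle here; the only point that needs minor care is justifying the vanishing of the boundary term in the integration by parts, which is immediate because $e^{-V}$ dominates any polynomial at infinity as soon as $k\geq 1$.
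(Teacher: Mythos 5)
Your proposal is correct and follows essentially the same route as the paper: integration by parts to transfer the derivative onto $p_m e^{-V}$, followed by a degree count on $V'p_m - p_m'$ and orthogonality of $p_n$ to polynomials of degree less than $n$. The only cosmetic difference is that you bound the degree of $V'p_m - p_m'$ at once, whereas the paper drops the $p_m'$ term separately by orthogonality before counting the degree of $V'p_m$; the conclusion and the threshold $m\leq n-2k$ are identical.
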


\begin{proof}
    Let $m<n$, then
    \begin{align*}
        \langle p_m, p_n' \rangle &= \frac{1}{\cZ}\int_{-\infty}^{+\infty}p_m(x)p_n'(x)e^{-V(x)}\d x\\
        &=-\frac{1}{\cZ}\int_{-\infty}^{+\infty}p_n(x)\left(p_m(x)e^{-V(x)}\right)'\d x\qquad\mbox{By parts}\\
        &=\frac{1}{\cZ}\int_{-\infty}^{+\infty} p_n(x)p_m(x)V'(x)e^{-V(x)}\d x -\frac{1}{\cZ}\int_{-\infty}^{+\infty}p_n(x)p_m'(x)e^{-V(x)}\d x\\
        &=\int_{-\infty}^{+\infty} p_n(x)p_m(x)V'(x)\nu(\d x)\qquad\mbox{By orthogonality since $\deg(p_m')= m-1<n$}.
    \end{align*}
    Since $V'p_m$ is a polynomial of degree $m+2k-1$, the above term is zero by orthogonality if $m+2k-1<n$, i.e. $m\leq n-2k$.
\end{proof}

\begin{rmk}
     In the case of a polynomial potential $V$, one can think of the basis $\{p_n\}_{n\in \N}$ as semi-classical in the sense that $[\partial_x]_{L^2}$ is banded with upper bandwidth $(2k-1)$, in contrast with the classical case of Hermite polynomial ($\nu(\d x) = e^{-x^2/2}\d x/\cZ$) where $[\partial_x]_{L^2}$ is diagonal. This, in turn, implies that $[\mathcal{L}]_{L^2}$ is banded with bandwidth $(4k-3)$, extending the result of~\cite{Mazet1997ClassificationOrthogonaux} which classified diffusion operators with a polynomial eigenbasis on $\R$.
\end{rmk}

\begin{rmk}\label{rmk:parity}
    Of course, if $V$ is an even polynomial then for all $m = n\mod 2$, $D_{m,n} = \langle p_m, p_n'\rangle = 0 $.
\end{rmk}

\begin{rmk}\label{rmk:regularity}
    Observe that
    $$\mathcal{L} = -\mathcal{J}\circ\partial_x \quad \text{where } \quad \mathcal{J} = -
    V' +\partial_x$$
    is the so-called probability flux (or current) operator~\cite{Pavliotis2014StochasticApplications}. Now, for $u,v\in H^1(\nu)$,
    \begin{align*}
        \langle u, \partial_{x}v\rangle &= \frac{1}{\cZ}\int_{\R} u (\partial_x v)e^{-V}\d x\\
        &=- \frac{1}{\cZ}\int_{\R}v\partial_x(ue^{-V})\d x\\
        &=-\frac{1}{\cZ}\int_{\R}v(-V'u +\partial_x u)e^{-V}\d x\\
        &=\langle -\mathcal{J}u, v\rangle,
    \end{align*}
    which shows that $-\mathcal{J}$ is the $L^2(\nu)$-adjoint of $\partial_x$, and provides another proof of the first identity of Proposition~\ref{prop:L-decomp}. Another consequence of this observation is that 
    \begin{equation}\label{eqn:D-V-relation}
        D^T =  \left[-\mathcal{J}\right]_{L^2} = \left[V'\cdot\right]_{L^2} - D,\qquad \text{and thus} \qquad [V'\cdot]_{L^2} = D + D^T,
    \end{equation}
    where $\left[V'\cdot\right]_{L^2}$ is the multiplication operator by $V'$.
    This means that the coefficients of $[V'\cdot]_{L^2}$ grow at the same speed as the ones of $D$, which is consistent with the expectation that multiplying by $V'$ loses as much regularity as differentiating once on a $\nu$-weighted Sobolev space. Furthermore, when $V$ is a polynomial, note that one can therefore deduce the differentiation matrix $D$ directly from the multiplication by $V'$ operator, and thus from the multiplication by $x$ (Jacobi) operator.
\end{rmk}




\section{The semiclassical case of a polynomial potential $V$}
\label{sec:Vpoly}

In this section, we now focus on the case of $V$ being an even polynomial potential of the form
\begin{equation}
\label{eq:generalV}
    V(x) = \sum_{j=0}^kc_j \frac{x^{2j}}{2j}, \qquad c_k = 1,
\end{equation}
and study the compactness of the embedding $H^1(\nu)\hookrightarrow L^2(\nu)$.

We start in Section~\ref{sec:freud-sobolev} with the quartic case $k=2$, for which $D_{>0}$ is merely bidiagonal. We first explicitly write down the operator $D$ (and $P$) in terms of coefficients $a_n$ from~\eqref{eqn:jacobi-rec-rel_intro} in Section~\ref{sec:Freuddef}. Proposition~\ref{prop:bread-bounds}, which we prove in Section~\ref{sec:bread}, provides us with quantitative and very precise estimates on the coefficients $a_n$. In Section~\ref{subsec:compact-est}, we then use these estimates to obtain a fine control on $D_{>0}^{-1}$, which allows us to quantify the compactness of the embedding $H^1(\nu)\hookrightarrow L^2(\nu)$ (i.e., to prove Theorem~\ref{thm:quant-compact}), and to obtain tight bounds on the Poincaré constant $C_P$ (i.e., to prove Theorem~\ref{thm:quant-poinc}).
Finally, we put all these estimates in slightly broader perspective in Section~\ref{sec:gen-poly-case} , which allows us to study the compactness of the embedding $H^1(\nu)\hookrightarrow L^2(\nu)$ for all $k\in \N$, and to prove Theorem~\ref{thm:compact}.

\subsection{The quartic case}\label{sec:freud-sobolev}
Let us first consider the quartic or double-well potential
\begin{equation}\label{eqn:V-form}
    V(x) = \frac{x^4}{4}-\kappa\frac{x^2}{2},
\end{equation}
so that
$$\d\nu = \frac{e^{-V(x)}}{\cZ}\d x, \qquad \text{with }\cZ = \int_{-\infty}^{+\infty}\exp\left(-\frac{x^4}{4}+\kappa\frac{x^2}{2}\right)\d x,$$
where $\cZ$ can be expressed and computed via the use of Bessel functions.

\subsubsection{Freud-type orthogonal polynomials}
\label{sec:Freuddef}


The orthonormal polynomials with respect to $\nu$ are the so-called \emph{Freud-type} orthonormal polynomials $\{p_n\}_{n\in\N}$ and constitute a family of \emph{semi-classical} orthonormal polynomials.
These $p_n$'s can be constructed recursively via a three-term recurrence formula of the form~\eqref{eqn:jacobi-rec-rel_intro}, where each $a_n>0$, and where there is no $p_n$ term in the right-hand side since $V$ is even.

Furthermore, in the case of Freud-type polynomials, one can give an explicit recurrence formula for the positive sequence $(a_n)_{n\in \N}$~\cite{Bonan1984OrthogonalI}. Indeed, the sequence $(b_n)_{n\in \N} := (a^2_n)_{n\in\N}$ satisfies the so-called discrete Painlevé~I equation~\eqref{eqn:intro-bread}
%
%
\begin{equation}\label{eqn:intro-bread-initial}
   b_0=0 \qquad \text{and} \qquad b_1 = \int_{\R}x^2\nu(\d x) = \frac{\displaystyle{\int_{-\infty}^{+\infty}x^2\exp\left(-\frac{x^4}{4}+\kappa\frac{x^2}{2}\right)\d x}}{\displaystyle{\int_{-\infty}^{+\infty}\exp\left(-\frac{x^4}{4}+\kappa\frac{x^2}{2}\right)\d x}},
\end{equation}
which can also be computed via Bessel functions (see~\eqref{eq:b1}). 

In view of Lemma~\ref{lem:bandedness} and Remark~\ref{rmk:parity}, $p_n'$ should be a linear combination of $p_{n-1}$ and $p_{n-3}$ and indeed have the following:
\begin{prop}
\label{prop:derpn}
Let $V$ be of the form~\eqref{eqn:V-form} and $\{p_n\}_{n\in\N}$ be the basis of orthonormal polynomials of $L^2(\nu) = L^2(e^{-V}/\cZ)$ generated by~\eqref{eqn:jacobi-rec-rel_intro} and~\eqref{eqn:intro-bread}, then
    \begin{equation}
    \label{eq:derpn}
        p_n' = \frac{n}{a_n}p_{n-1}+a_n a_{n-1}a_{n-2}p_{n-3}.
    \end{equation}
\end{prop}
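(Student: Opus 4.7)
The plan is as follows. By Lemma~\ref{lem:bandedness} applied with $k=2$, the matrix $D$ representing $\partial_x$ with respect to $\mathcal{P}$ has upper bandwidth $3$, so that $p_n'$ is a linear combination of $p_{n-1}, p_{n-2}, p_{n-3}$. Since $V$ is even, Remark~\ref{rmk:parity} kills the $p_{n-2}$ term, leaving
\[
    p_n' \;=\; \alpha\, p_{n-1} \;+\; \beta\, p_{n-3},
\]
with $\alpha = \langle p_{n-1}, p_n'\rangle$ and $\beta = \langle p_{n-3}, p_n'\rangle$. It remains to identify $\alpha$ and $\beta$.

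For $\alpha$, I would compare leading coefficients. From the recurrence \eqref{eqn:jacobi-rec}, if $k_n$ denotes the leading coefficient of $p_n$, then $k_n = a_{n+1} k_{n+1}$, hence $k_n = (a_1 a_2 \cdots a_n)^{-1}$ and in particular $k_{n-1} = a_n k_n$. The leading coefficient of $p_n'$ is $n k_n$, so matching the coefficient of $x^{n-1}$ in $p_n' = \alpha p_{n-1} + \beta p_{n-3}$ gives $n k_n = \alpha \cdot a_n k_n$, and therefore $\alpha = n/a_n$.

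For $\beta$, I would follow the integration-by-parts calculation carried out in the proof of Lemma~\ref{lem:bandedness}. For $m < n$, that argument yields
\[
    \langle p_m, p_n'\rangle \;=\; \int_{\R} p_m\, p_n\, V'\,\d\nu,
\]
since the remaining term $\int p_m' p_n\,\d\nu$ vanishes by orthogonality. With $m = n-3$ and $V'(x) = x^3 - \kappa x$, this gives $\beta = \langle p_{n-3},\, x^3 p_n\rangle - \kappa\, \langle p_{n-3},\, x p_n\rangle$. The second inner product is $0$ because $xp_n$ lies in $\mathrm{Span}\{p_{n-1}, p_{n+1}\}$. For the first, I would iterate the three-term recurrence \eqref{eqn:jacobi-rec} three times on $x^3 p_{n-3}$ and, using the symmetry $\langle p_{n-3}, x^3 p_n\rangle = \langle x^3 p_{n-3}, p_n\rangle$, read off the coefficient of $p_n$: applying $x$ three times produces exactly one ``rightward'' path $p_{n-3}\to p_{n-2}\to p_{n-1}\to p_n$, weighted by $a_{n-2} a_{n-1} a_n$. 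This delivers $\beta = a_n a_{n-1} a_{n-2}$, completing the formula \eqref{eq:derpn}.

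There is no serious obstacle here; the only thing requiring care is the bookkeeping of the Jacobi coefficients when expanding $x^3 p_{n-3}$, but since only the component along $p_n$ is needed, only the unique path of three successive ``$p_k \mapsto a_{k+1} p_{k+1}$'' steps contributes, making the computation clean.
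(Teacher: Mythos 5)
Your proof is correct, but it takes a route that is partly different from the paper's. The paper's proof does not first split off $\alpha$ and $\beta$: instead it expands $V'(x)p_n = (x^3-\kappa x)p_n$ by applying the three-term recurrence~\eqref{eqn:jacobi-rec} three times, uses the identity $[V'\cdot]_{L^2}=D+D^T$ from Remark~\ref{rmk:regularity} together with the strict upper-triangularity of $D$ to read off the two components of $p_n'$ at once, obtaining $p_n' = a_n a_{n-1} a_{n-2}\, p_{n-3} + a_n(a_{n+1}^2+a_n^2+a_{n-1}^2-\kappa)\, p_{n-1}$, and only then invokes the Painlev\'e~I recurrence~\eqref{eqn:bread} to rewrite the $p_{n-1}$-coefficient as $n/a_n$. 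You instead establish the two-term structure via Lemma~\ref{lem:bandedness} and Remark~\ref{rmk:parity}, get the $p_{n-1}$-coefficient directly by matching leading coefficients (which avoids~\eqref{eqn:bread} altogether and is exactly the Bonan--Nevai argument the authors mention in Remark~\ref{rmk:nonlin-recs}), and get the $p_{n-3}$-coefficient by the integration-by-parts computation from the proof of Lemma~\ref{lem:bandedness} plus the observation that only one recurrence ``path'' contributes. Both arguments are sound; the paper's version has the side benefit that, once the $p_{n-1}$-coefficient is \emph{also} computed by your leading-coefficient method, equating the two expressions is precisely how one derives the recurrence~\eqref{eqn:bread} in the first place, a point the paper leans on in Remark~\ref{rmk:nonlin-recs}. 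Your version is a touch more elementary in that it does not presuppose~\eqref{eqn:bread} for the $n/a_n$ coefficient, at the modest cost of handling $\alpha$ and $\beta$ by two separate mechanisms.
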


\begin{proof}
This can be obtained rather easily from Remark~\ref{rmk:regularity}. Indeed, applying~\eqref{eqn:jacobi-rec-rel_intro} three times to express $x^3p_n$ yields
\begin{align*}
    V'(x) p_n &= (x^3 - \kappa x)p_n \\
    &= a_{n+1}a_{n+2}a_{n+3} p_{n+3} + a_{n+1}\left(a_{n+2}^2+a_{n+1}^2+a_{n}^2-\kappa\right) p_{n+1} \\
    &\quad + a_{n}a_{n-1}a_{n-2} p_{n-3} + a_{n}\left(a_{n+1}^2+a_{n}^2+a_{n-1}^2-\kappa\right) p_{n-1}.
\end{align*}
Using that $[V']_{L^2} = D + D^T$ and that $D$ is upper triangular allows us to extract the formula for $p_n'$, namely
\begin{align*}
    p_n' = a_{n}a_{n-1}a_{n-2} p_{n-3} + a_{n}\left(a_{n+1}^2+a_{n}^2+a_{n-1}^2-\kappa\right) p_{n-1},
\end{align*}
and using~\eqref{eqn:intro-bread} (with $a_n^2 = b_n$) gives~\eqref{eq:derpn}.
\end{proof}
%


Proposition~\ref{prop:derpn} means that the differential operator $D$ and change of basis operator $P$ are bidiagonal and can be written as
\begin{equation}
    \label{eq:DandP}
D = \begin{pmatrix}
     0& \alpha_{1} & 0 & \beta_{1} & &\\
    & 0&\alpha_{2} & 0 & \beta_2 & &\\
    & & \ddots &\ddots & \ddots &\ddots &\\
    & & & & &\\
    & & & & &
\end{pmatrix}\qquad\text{and}\qquad P = \begin{pmatrix}
    1 & 0 & 0& & & \\
     & \alpha_{1} & 0 & \beta_{1} & & &\\
    &   &\alpha_{2} &0 &\beta_2 &\\
    & &   &\ddots & \ddots& \ddots& \\
    & & & & & 
\end{pmatrix},
\end{equation}
where 
\begin{equation}\label{eqn:alpha-beta-def}
\alpha_n = \frac{n}{a_{n}}, \qquad \beta_n = a_{n}a_{n+1}a_{n+2}.
\end{equation}

\begin{rmk}
    Bonan and Nevai~\cite{Bonan1984OrthogonalI} actually prove that $[\partial_x]_{L^2}$ is bidiagonal if and only if $V$ is of the form~\eqref{eqn:V-form}, up to a translation and scaling (in $x$).
\end{rmk}

In order to justify the invertibility of $P$ (or equivalently, of $D_{>0}$) and to get explicit compactness estimates for its inverse, we need a precise control on the growths of the $\alpha_n$'s and the $\beta_n$'s, and thus on the growth of the $b_n$'s. Indeed, using the triangular structure of $P$ one can easily derive a formal inverse $P^{-1}$, but to prove that this yields a well-defined and bounded operator, and to then get quantitative compactness estimates, we will need an estimate of the form
\begin{align}\label{eqn:cond_theta}
    \frac{\beta_n}{\alpha_n} \leq \theta < 1 \qquad \text{for all }n\geq N_0,
\end{align}
with explicit threshold $N_0$ and constant $\theta$. This condition, which naturally appears in our proof of Theorem~\ref{thm:quant-compact} in Section~\ref{subsubsec:compactness}, was already used in~\cite{Breden2015RigorousPart} in the slightly more general context of tridiagonal operators.

Looking at~\eqref{eqn:intro-bread}, a formal calculation suggests that $b_n\sim \sqrt{n}/\sqrt{3}$ as $n\to\infty$, and this can in fact be established rigorously~\cite[§6]{Alsulami2015ASolutions}. From the formulae~\eqref{eqn:alpha-beta-def} and the fact that $a_n = \sqrt{b_n}$, we get $\beta_n/\alpha_n \to 1/3$, which shows the existence of a $\theta<1$ and of an $N_0$ large enough such that~\eqref{eqn:cond_theta} holds. However, since we require explicit $N_0$ and $\theta$, we need more quantitative estimates on the $a_n$'s (or equivalently on the $b_n$'s), like the ones stated in Proposition~\ref{prop:bread-bounds}, with explicit constants $c^-$ and $c^+$. In order to establish these bounds, we have to overcome the following difficulties. While the recurrence relation~\eqref{eqn:intro-bread} might seem in appearance very innocent, it turns out that it is extremely unstable and sensitive to initial conditions.  More is in fact true~\cite{Alsulami2015ASolutions,Bonan1984OrthogonalI}: the initial condition given by~\eqref{eqn:intro-bread-initial} is the unique one such that $b_n\geq 0$ for all $n\in \N$. In particular, this suggests that one cannot get estimates solely from~\eqref{eqn:intro-bread}, the positivity condition being crucial. This furthermore constitutes a computational challenge as even calculating a finite number of terms in double precision quickly diverges from the positive solution.


We point out the analysis of the recurrence relation~\eqref{eqn:intro-bread} has been the topic of a vast literature~\cite{Alsulami2015ASolutions, Bleher1999SemiclassicalModel, Clarkson2018PropertiesPolynomials,Fokas1991DiscreteGravity,Hajela1987OnRecurrences, Lew1983NonnegativeRecurrence, Lubinsky1986FreudsWeights,Magnus1999FreudsEquations, Magnus1995Painleve-typePolynomials, Nevai1983OrthogonalX4} including the celebrated Freud's conjecture and its proof in the case $\kappa=0$~\cite{Freud1976OnPolynomials} (see~\cite[§2]{Alsulami2015ASolutions} for a nice historical review of the problem). However, quantitative bounds like~\eqref{eqn:bn_bounds} with explicit constants do not seem to be already available, and we obtain those via a computer-assisted proof, which, although based on a fixed-point argument, has some unorthodox features.

Finally, let us emphasise that we do not have access to an explicit form of the $p_n$'s and $q_n$'s, and nor of the $a_n$'s, but we show in this work that these explicit descriptions are in fact not necessary in order to successfully use these bases in practice. This is demonstrated in an unequivocal manner via the establishment of Theorems~\ref{thm:GP1} and~\ref{thm:GP2}, which use these bases both for the obtention of the approximate solutions, and for deriving the compactness estimates required for the proof.

\subsubsection{A precise analysis of the discrete Painlevé I equation}\label{sec:bread}



\begin{proof}[Proof of Proposition~\ref{prop:bread-bounds}]

The qualitative statement, namely that for all $0<c^-<1<c^+$, there exists $N_0$ such that~\eqref{eqn:intro-bn_bounds} holds for all $n\geq N_0$, follows directly from the fact that $b_n\sim \sqrt{n}/\sqrt{3}$, which is for instance proved nicely in~\cite[§6]{Alsulami2015ASolutions}. However, such asymptotic analysis does not provide quantitative bounds on the $b_n$'s. The quantitative statement follows from a computer-assisted proof and adapts a construction first proposed in~\cite{Lew1983NonnegativeRecurrence} (which gives quantitative bounds in the case $\kappa = 0$) and greatly simplified in~\cite{Hajela1987OnRecurrences} in the case of this particular recurrence. We first describe below a procedure where, for a given $\kappa$ and fixed $c^-$ and $c^+$ close enough to $1$, we find the smallest possible $N_1$ such that, for $(b_n)_{n\in \N}$ satisfying~\eqref{eqn:intro-bread} and~\eqref{eqn:intro-bread-initial}, we get
\begin{equation}\label{eqn:bn_bounds}
    c^-\frac{\sqrt{n}}{\sqrt{3}}\leq b_n\leq c^+\frac{\sqrt{n}}{\sqrt{3}},
\end{equation}
for all $n\geq N_1$.

\step{1}[Set-up of a fixed-point problem]\normalfont Adapting the strategy in~\cite{Hajela1987OnRecurrences}, we introduce 
$$f(x) = - x +\sqrt{1+x^2} \qquad \text{and}\qquad g_n(x,y) = \frac{x+y-\kappa}{2\sqrt{n}},\ n\geq1,$$
and the map $S : \R^{\N^*} \to \R^{\N^*}$ such that, for $b\in \R^{\N^*}$,
\begin{align*}
    (Sb)_n &= \sqrt{n}f(g(b_{n-1}, b_{n+1})), \qquad \text{for }n>1,\\
    (Sb)_1 &= f(g(0, b_{2})),
\end{align*}
where $\R^{\N^*}$ is endowed with the product topology.
It is then straightforward to check that, if $Sb = b$, then, defining $b_0=0$, $b$ satisfies the recurrence relation \eqref{eqn:intro-bread}, and we thus seek a fixed point of $S$ in $\R_{+}^{\N^*}$. Note that, by the aforementioned uniqueness of a positive solution of~\eqref{eqn:intro-bread} (see~\cite{Alsulami2015ASolutions, Bonan1984OrthogonalI}), the solution to~\eqref{eqn:intro-bread}-\eqref{eqn:intro-bread-initial} is the unique fixed point of $S$ in $\R_{+}^{\N^*}$.

\step{2}[Upper and lower bounds of a fixed point]\normalfont We keep following the ideas from~\cite[Theorem 2.1.(a)]{Hajela1987OnRecurrences}, which relies on a Banach lattice theory argument, and state the following lemma. 
\begin{lem}\label{lem:bread-proof}
If there are sequences $b^-, b^+\in \mathbb{R}_+^{\N^*}$ such that for all $n\in \N^*$
\begin{equation}
    0\leq b^-_n \leq (Sb^+)_n\leq (Sb^-)_n\leq b^+_n,\label{eqn:bn_ineq}
\end{equation}
then the unique positive solution $b$ of~\eqref{eqn:intro-bread} satisfies $b^-_n \leq b_n \leq b^+_n$ for all $n \in \N^*$.
\end{lem}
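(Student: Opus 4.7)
My plan is to run a coupled two-sided monotone iteration driven by $S$, pass to the limit, and then invoke the Schauder--Tychonoff fixed-point theorem together with the uniqueness of the positive solution of~\eqref{eqn:bread} to pin down the limit.

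The first step is to observe that $S$ is antitone (order-reversing) with respect to the coordinate-wise order on $\R^{\N^*}$: the function $f(x) = -x + \sqrt{1+x^2}$ is strictly decreasing on $\R$, and $g_n$ is increasing in both arguments, so each coordinate $(Sb)_n = \sqrt{n}\,f(g_n(b_{n-1}, b_{n+1}))$ is strictly decreasing in the relevant entries of $b$. I then define the alternating iteration $u^{(0)} := b^-$, $v^{(0)} := b^+$, $u^{(k+1)} := S v^{(k)}$, $v^{(k+1)} := S u^{(k)}$. Using the hypothesis~\eqref{eqn:bn_ineq} as a base case and antitone monotonicity at each inductive step, one obtains the nested chain
\[
b^- = u^{(0)} \leq u^{(1)} \leq \cdots \leq u^{(k)} \leq v^{(k)} \leq \cdots \leq v^{(1)} \leq v^{(0)} = b^+
\]
componentwise. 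Each coordinate is then monotone and bounded in $k$, so it converges to limits $u^*_n \leq v^*_n$ with $b^-_n \leq u^*_n$ and $v^*_n \leq b^+_n$. Since every coordinate of $Sb$ depends on only finitely many entries of $b$, the map $S$ is continuous in the product topology on $\R^{\N^*}$, and passing to the limit yields $S u^* = v^*$ and $S v^* = u^*$.

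The delicate part will be to identify $u^* = v^* = b$: a priori $(u^*, v^*)$ is only a $2$-cycle of $S$, not a single fixed point, and ruling this out requires something more than monotonicity of the iteration. My plan is to consider the order interval $K := \prod_n [u^*_n, v^*_n] \subset \R^{\N^*}$, which is convex as a product of intervals and compact in the product topology by Tychonoff's theorem. Antitone monotonicity of $S$ then yields $Sw \in [Sv^*, Su^*] = [u^*, v^*] = K$ for every $w \in K$, so $S$ is a continuous self-map of $K$. Since $\R^{\N^*}$ endowed with the product topology is a locally convex Hausdorff topological vector space, the Schauder--Tychonoff fixed-point theorem produces a fixed point $b' \in K \subseteq \R_+^{\N^*}$ of $S$. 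The uniqueness of the positive solution of~\eqref{eqn:bread}--\eqref{eqn:bread-initial} invoked in Step~1 then forces $b' = b$, and we conclude $b^- \leq u^* \leq b \leq v^* \leq b^+$, as desired.
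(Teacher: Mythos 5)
Your proof is correct and rests on the same essential ingredients as the paper's: antitonicity of $S$, Tychonoff compactness of the order interval in the product topology, the Schauder--Tychonoff fixed-point theorem, and the uniqueness of the positive solution to pin down the fixed point. However, you take an unnecessary detour. The two-sided monotone iteration $u^{(k+1)} = Sv^{(k)}$, $v^{(k+1)} = Su^{(k)}$ and the passage to the limit to produce the $2$-cycle $(u^*, v^*)$ can be skipped entirely: the hypothesis~\eqref{eqn:bn_ineq} together with antitonicity of $S$ already shows directly that $K := \prod_n [b^-_n, b^+_n]$ is $S$-invariant. Indeed, for $w \in K$ one has $b^- \leq w \leq b^+$, hence $Sb^+ \leq Sw \leq Sb^-$, and combined with $b^- \leq Sb^+$ and $Sb^- \leq b^+$ this yields $Sw \in K$. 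One can then apply Schauder--Tychonoff on $K$ itself, which is exactly what the paper does. Your iteration does produce a potentially smaller invariant interval $[u^*, v^*] \subset [b^-, b^+]$, but since the final enclosure you report is only $b^- \leq b \leq b^+$ anyway, nothing is gained. So the argument is sound, just longer than it needs to be.
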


\begin{proof}The inequality $(Sb^+)_n\leq (Sb^-)_n$ is implied by $b^-_n \leq b^+_n$, as from its definition $S$ is monotone decreasing with respect to the canonical partial order $\leq$ on $\R_+^{\N^*}$. Now, for the same reason, we have that for all $n \in \N^{*}$, 

$$(S(K))_n \subset [b^-_n, b^+_n],\quad\text{where}\quad K = \prod_{n = 1}^{\infty} [b^-_n, b^+_n]\subset \R^{\N^*}.$$
Thus $S(K) \subset K$, which is convex and compact in $\R^{\N^*}$ by Tychonoff's theorem. Since $S : \R^{\N^*} \to \R^{\N^*}$ is continuous and $\R^{\N^*}$ is locally convex, by the Schauder--Tychonoff theorem we have that there exists $b\in K$  such that $Sb = b$. Since $K\subset \R^{\N^*}$, this fixed point is the positive solution of~\eqref{eqn:intro-bread}, and the fact that $b$ belongs to $K$ finishes the proof.
\end{proof}
We are now going to get explicit enclosures of $b$ by constructing explicit sequences $b^-$ and $b^+$ satisfying the assumptions of Lemma~\ref{lem:bread-proof}.

\step{3}[Criterion for very large $n$]\normalfont

We henceforth assume that $\kappa>0$ (the same procedure, with slightly different conditions, can be used if $\kappa<0$ or $\kappa = 0$).
Let $c^+>1$ and $c^-<1$ such that there exists $N_2>1$ satisfying
\begin{equation}\label{eqn:bound1}
\sqrt{12+\left(c^+\sqrt{1-\frac{1}{N_2}}+c^+\sqrt{1+\frac{1}{N_2}}-\frac{\sqrt{3}\kappa}{\sqrt{N_2}}\right)^2}-2c^+ - 2c^-\geq 0,
\end{equation}
and
\begin{equation}\label{eqn:bound2}
\sqrt{12+\left(2c^-\right)^2}+\frac{\sqrt{3}\kappa}{\sqrt{N_2}}-c^-\left(\sqrt{1-\frac{1}{N_2}}+\sqrt{1+\frac{1}{N_2}}\right) - 2c^+\leq 0.
\end{equation}
Note that, if $c^-$ and $c^+$ are both close enough to $1$, such an $N_2$ always exists.
For all $n\geq N_2$, let
\begin{equation}\label{eqn:bn_def}
    b^+_n = c^+\frac{\sqrt{n}}{\sqrt{3}}, \qquad b^-_n = c^-\frac{\sqrt{n}}{\sqrt{3}}.
\end{equation}
Then, for $n>N_2$,
\begin{align*}
    (Sb^+)_n &= \sqrt{n} \left(\sqrt{\frac{\left(\frac{c^+ \sqrt{n-1}}{\sqrt{3}}+\frac{c^+ \sqrt{n+1}}{\sqrt{3}}-\kappa\right)^2}{4 n}+1}-\frac{\frac{c^+
   \sqrt{n-1}}{\sqrt{3}}+\frac{c^+ \sqrt{n+1}}{\sqrt{3}}-\kappa}{2 \sqrt{n}}\right)\\
   &=\frac{\sqrt{n}}{2\sqrt{3}}\left(\sqrt{12\left(c^+\sqrt{1-\frac{1}{n}}+c^+\sqrt{1+\frac{1}{n}}-\frac{\sqrt{3}\kappa}{\sqrt{n}}\right)^2}+\frac{\sqrt{3}\kappa}{\sqrt{n}}-c^+\left(\sqrt{1-\frac{1}{n}}+\sqrt{1+\frac{1}{n}}\right)\right)\\
   &\geq\frac{n}{2\sqrt{3}}\left(\sqrt{12+\left(c^+\sqrt{1-\frac{1}{N_2}}+c^+\sqrt{1+\frac{1}{N_2}}-\frac{\sqrt{3}\kappa}{\sqrt{N_2}}\right)^2}-2c^+\right),
\end{align*}
and similarly, we get that 
\begin{align*}
    (Sb^-)_n &=\frac{\sqrt{n}}{2\sqrt{3}}\left(\sqrt{12+\left(c^-\sqrt{1-\frac{1}{n}}+c^-\sqrt{1+\frac{1}{n}}-\frac{\sqrt{3}\kappa}{\sqrt{n}}\right)^2}+\frac{\sqrt{3}\kappa}{\sqrt{n}}-c^-\left(\sqrt{1-\frac{1}{n}}+\sqrt{1+\frac{1}{n}}\right)\right)\\
    &\leq \frac{\sqrt{n}}{2\sqrt{3}}\left(\sqrt{12+\left(2c^-\right)^2}+\frac{\sqrt{3}\kappa}{\sqrt{N_2}}-c^-\left(\sqrt{1-\frac{1}{N_2}}+\sqrt{1+\frac{1}{N_2}}\right)\right).
\end{align*}
Thanks to~\eqref{eqn:bound1}--\eqref{eqn:bound2}, we do get that
\begin{equation*}
    0\leq b^-_n \leq (Sb^+)_n\leq (Sb^-)_n\leq b^+_n,
\end{equation*}
for all $n>N_2$.

\step{4}[Construction of $K$]\normalfont

We start by defining $b^+$ and $b^-$ as in~\eqref{eqn:bn_def}, for all $n\geq 1$.
\begin{itemize}
    \item Picking $N_2$ as in Step 3, i.e. satisfying~\eqref{eqn:bound1}--\eqref{eqn:bound2}, we already have proven that \eqref{eqn:bn_ineq} holds for all $n>N_2$. 
    \item However, we do not expect the threshold given by~\eqref{eqn:bound1}--\eqref{eqn:bound2} to be sharp. We therefore look for the smallest $N_3\leq N_2$ such that \eqref{eqn:bn_ineq} holds for all $n>N_3$. In principle it could be that $N_3=N_2$, but in practice we typically get $N_3 \ll N_2$.
    \item For $n \leq N_3$, we gradually decrease $b^-_n$ and increase $b^+_n$ by $\varepsilon$-inflation~\cite{Mayer1995Epsilon-inflationAlgorithms}, until~\eqref{eqn:bn_ineq} holds for all $n \geq 1$. 
\end{itemize}
All the calculations are guaranteed via interval arithmetic. If this step succeeds, we have constructed $b^- = (b^-_n)_{n\geq 1}$ and $b^+ = (b^+_n)_{n\geq 1}$ such that \eqref{eqn:bn_ineq} holds for all $n\in \N^*$. Thus, we can apply Lemma~\ref{lem:bread-proof}, which yields that $b_n^-\leq b_n\leq b_n^+$ for all $n\in \N^*$. In particular the estimate \eqref{eqn:bn_bounds} holds for all $n> N_3$. 

\step{5}[Estimates for small $n$]\normalfont
Finally, we compute $b_1$ with extended precision using the formula

\begin{equation}
\label{eq:b1}
b_1 = \int_{\R}x^2\nu(\d x) = \frac{\kappa ^2 I_{-\frac{1}{4}}\left(\frac{\kappa ^2}{8}\right)+\kappa ^2
   \left(I_{\frac{3}{4}}\left(\frac{\kappa
   ^2}{8}\right)+I_{\frac{5}{4}}\left(\frac{\kappa ^2}{8}\right)\right)+\left(\kappa
   ^2+4\right) I_{\frac{1}{4}}\left(\frac{\kappa ^2}{8}\right)}{2 \kappa 
   \left(I_{-\frac{1}{4}}\left(\frac{\kappa
   ^2}{8}\right)+I_{\frac{1}{4}}\left(\frac{\kappa ^2}{8}\right)\right)},
\end{equation}
where $I_{\gamma}$ denotes the modified Bessel function of the first kind with parameter $\gamma$ (and can be evaluated rigorously with \texttt{Arb}~\cite{Johansson2017ArbArithmetic}). Recall that we assumed $\kappa>0$, otherwise a slightly different expression in terms of Bessel functions is obtained for $b_1$. We can then compute the $b_n$'s by recursion, to find the smallest $N_1\in \N^*$ such that the estimate \eqref{eqn:bn_bounds} also holds for all $n \in \{N_1, N_1+1, \ldots, N_3\}$.

With $\kappa = 4$, $c^-=0.987$ and $c^+=1.025$, the entire procedure is successful with $N_2 = 9,000,000$, $N_3 = 9,215$ and $N_1 = 2,186$. If we aim for a value of $N_0<N_1$ (as in Proposition~\ref{prop:bread-bounds}), $c^+$ and $c^-$ are respectively increased and decreased until~\eqref{eqn:bn_bounds} is satisfied for all $n\geq N_0$. The proof of Proposition~\ref{prop:bread-bounds} is performed at~\cite{Chu2025Huggzz/Freud} in the file \texttt{GP\_eq/Painleve\_bounds.ipynb}.
\end{proof}

\begin{figure}[h]
\begin{subfigure}{0.5\textwidth}
\includegraphics[width=0.9\linewidth]{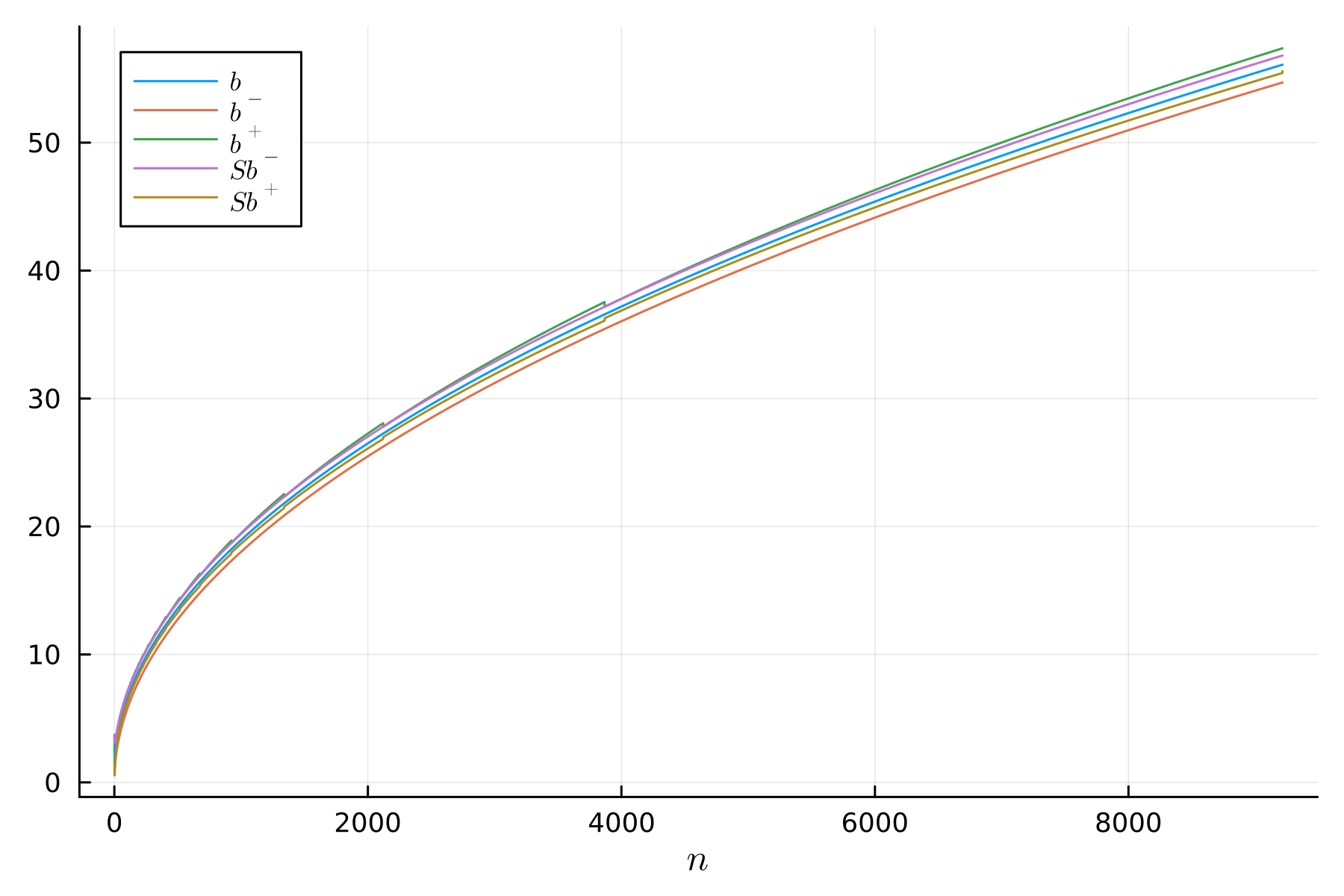} 
\caption{Construction of $(b^-_n)_{n\geq 1}$ and $(b^+_n)_{n\geq 1}$ by\\$\varepsilon$-inflation for $n\leq N_3$}
\end{subfigure}
\begin{subfigure}{0.5\textwidth}
\includegraphics[width=0.9\linewidth]{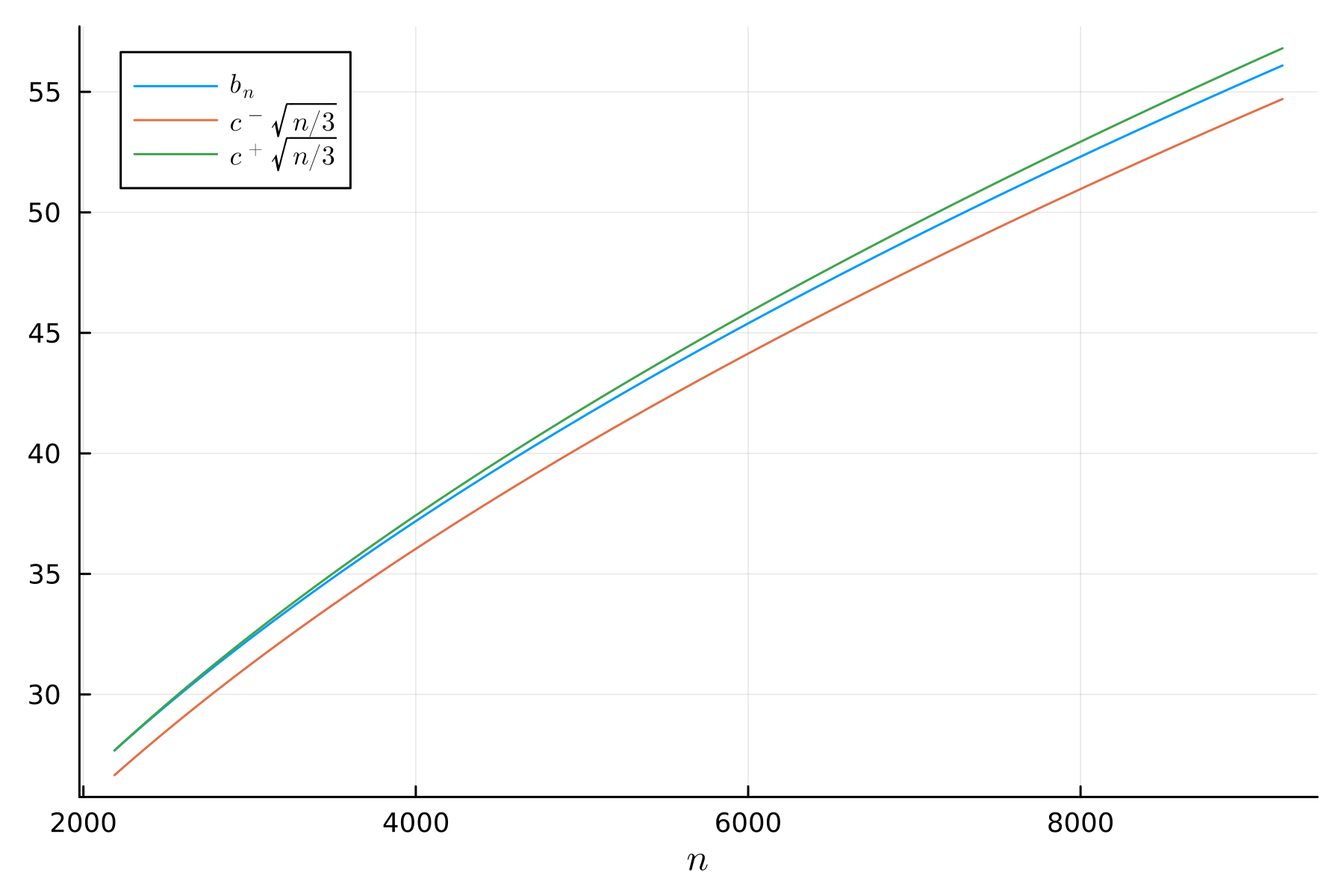} 
\caption{Verification of the bounds~\eqref{eqn:bn_bounds} for\\$n\in \{N, \ldots, N_3\}$}
\end{subfigure}

\caption{Bounds on $b_n$ for $k = 4$, $c^-=0.987$ and $c^+=1.025$.}
\end{figure}


\subsubsection{Compactness estimates}
\label{subsec:compact-est}
We now show how estimates on the growth of the positive solution of the discrete Painlevé I equation can be leveraged to quantify the compactness of the Sobolev embedding $H^1(\nu)\hookrightarrow L^2(\nu)$ and the Poincaré inequality on $H^1(\nu)$, i.e.~we give quantitative versions of Proposition~\ref{prop:fonct-poinc} and Theorem~\ref{thm:fonct-compact} via Theorems~\ref{thm:quant-poinc} and~\ref{thm:quant-compact} respectively.
First, using the upper bound on $b_n = a_n^2$ from Proposition~\ref{prop:bread-bounds}, we now obtain explicit bounds on the coefficients
$$\alpha_n = \frac{n}{a_{n}} \quad \text{and}\quad \beta_n = a_{n}a_{n+1}a_{n+2}$$
of the (unbounded) $\ell^2$-operator $P$ introduced in Section~\ref{subsec:diff-op}, which in particular allow us to satisfy~\eqref{eqn:cond_theta}.

\begin{cor}\label{cor:theta-bound}
For all $c^+>1$, there exists $N_0\in\N$ such that
\begin{equation}
\label{eq:bnc+}
 b_n\leq c^+\frac{\sqrt{n}}{\sqrt{3}},
\end{equation}
for all $n\geq N_0$.
Defining
\begin{align*}
C_{\alpha} =\frac{\sqrt[4]{3}}{\sqrt{c^+}},\qquad  \theta=\frac{(c^+)^2}{3}\left(\left(1+\frac{1}{N_0}\right)\left(1+\frac{2}{N_0}\right)\right)^{1/4}, 
\end{align*}
we get, for all $n\geq N_0$,
    \begin{equation}
        \alpha_n\geq C_{\alpha}n^{3/4} \qquad \text{and}\qquad \frac{\beta_{n}}{\alpha_n}\leq \theta. \label{eqn:alpha-bound}
    \end{equation}
In the case $\kappa = 4$, one can take $c^+ = 1.177$ and $N_0 = 50$, which yields
    $$C_{\alpha} \approx 1.213\qquad \text{and}\qquad\theta \approx 0.469.$$
\end{cor}

\begin{rmk}
\label{rmk:theta}
    Recall that $b_n\sim \sqrt{n/3}$ yields $\alpha_n \sim \sqrt[4]{3}n^{3/4}$ and $\beta_n \sim (n/3)^{3/4}$, and therefore $\beta_n/\alpha_n \sim 1/3$. In particular, up to taking $N_0$ large enough, one can always get $\theta<1$, which is needed for some subsequent estimates. If need be, one could take $N_0$ larger in Corollary~\ref{cor:theta-bound}, which would allow $c^+$ to be closer to $1$, and thus $\theta$ to be closer to $1/3$.

    Note that we only needed the upper-bounds on $b_n$ from Proposition~\ref{prop:bread-bounds} here, but the lower-bounds will also be used, in Appendix~\ref{app:annoying-bounds}.
\end{rmk}

\begin{rmk}
    Of course, these estimates become worse and harder to obtain as $\kappa$ gets larger. Furthermore, if $c^-$ and $c^+$ were chosen to be further from $1$, then $N$ would be smaller, though the quality of $C_{\alpha}$ and $\theta$ would be worse. However, the crucial feature is for $\theta$ be strictly less than $1$.
\end{rmk}


\begin{notation}
    Note that the unbounded operator $\id:H^1(\nu)\subset L^2(\nu)\to H^1(\nu)$ obviously maps even functions to even functions and odd functions to odd functions, which is reflected in the fact that the action of $P$ on even and odd modes is decoupled. In the sequel, we focus on the subspace of even functions (the case of odd functions is treated analogously), which amounts to considering
$$P = \begin{pmatrix}
    1 & 0 & & &\\
     & \alpha_{2} & \beta_{2} & &\\
    &   &\alpha_{4} &\ddots &\\
    & &   &\ddots & &\\
    & & & &
\end{pmatrix}.$$
Furthermore, for a matrix or operator $A$, we use the computational notations for selection of indices, e.g. $A_{1:N, -1}$ denotes the vector made up of the first $N$ indices (i.e. the first index is $1$ and notations are inclusive) of the last column of $A$. 
\end{notation}

\paragraph{Proof of Theorem~\ref{thm:quant-compact}.}
\label{subsubsec:compactness}

Observe that $P^{-1}$ is the $\ell^2$-representation of the embedding $\iota :H^1(\nu)\hookrightarrow L^2(\nu)$, that is

$$P^{-1} = \left[\iota\right]_{H^1\to L^2}.$$

We thus aim to obtain compactness estimates via the analysis of the $\ell^2$-operator $P^{-1}$. We first pick an even integer $N$, $N\geq N_0$, with respect to which we carve up our problem. Let us carve $P$ as follows

$$P := \begin{pNiceMatrix}[margin]
  \Block[borders={bottom, right}]{4-4}{P_{\leq N}} & & & & & & & &\\
  & & & & & & & &\\
  & & & & & & & &\\
  & & & &\beta_N  & & & &\\
  & & & & \Block[borders={top, left}]{5-5}{D_{>N}}& & & &\\
  & & & & & & & &\\
  & & & & & & & &\\
  & & & & & & & &\\
  & & & & & & & &
\end{pNiceMatrix} := \begin{pNiceMatrix}[margin]
  \Block[borders={bottom, right}]{1-1}{1} & & & & & & & &\\
  & \Block[borders={top, bottom, left, right}]{3-3}{D_{\leq M}}& & & & & & &\\
  & & & & & & & &\\
  & & & &\beta_N  & & & &\\
  & & & & \Block[borders={top, left}]{5-5}{D_{>N}}& & & &\\
  & & & & & & & &\\
  & & & & & & & &\\
  & & & & & & & &\\
  & & & & & & & &
\end{pNiceMatrix},$$
%
%
where
$$P_{\leq N} = \begin{pNiceMatrix}
    1 & 0 & \cdots &{} &{} &{}\\
    0 & \Block[borders={top, left}]{5-5}{D_{\leq N}} & & & &\\
    \vdots & & & & & \\
    & & & & & \\
    & & & & & \\
    & & & & &
\end{pNiceMatrix} = \begin{pmatrix}
    1& 0& & & & \\
    & \alpha_2 &\beta_2 & & & \\
    & & \alpha_4& \ddots& & \\
    & & & \ddots& \ddots& \\
    & & & & \ddots&\beta_{N-2} \\
    & & & & &\alpha_N
\end{pmatrix},$$
and
\begin{equation}\label{eqn:D_{>n}-def}
D_{>N} = \begin{pmatrix}
    \alpha_{N+2} & \beta_{N+2} & & &\\
     & \alpha_{N+4} & \beta_{N+4} & &\\
    &   &\ddots &\ddots &\\
    & &   & & &\\
\end{pmatrix}.
\end{equation}
Computing the inverse of $P$ seen as a two-by-two block matrix and using the Schur complement, one thus infer that
\begin{align}
\label{eq:carvedPinv}
P^{-1} = \begin{pNiceMatrix}[margin]
  \Block[borders={bottom, right}]{3-4}{P_{\leq N}^{-1}} & & & &\Block[]{3-1}{-\beta_N(P_{\leq N}^{-1})_{:,-1}(D_{>N}^{-1})_{1,:}}\\
  & & & &\\
  & & & &\\
  & & & & \Block[borders={top, left}]{4-1}{D_{>N}^{-1}}\\
  & & & &\\
  & & & &\\
  & & & &
\end{pNiceMatrix} = \begin{pNiceMatrix}[margin]
  \Block[borders={bottom, right}]{1-1}{1} & & & &\\
  & \Block[borders={top, bottom, left, right}]{2-3}{D_{\leq N}^{-1}}& & & \Block[borders={top}]{2-1}{-\beta_N(D_{\leq N}^{-1})_{:,-1}(D_{>N}^{-1})_{1,:}}\\
  & & & &\\
  & & & & \Block[borders={top, left}]{4-1}{D_{>N}^{-1}}\\
  & & & &\\
  & & & &\\
  & & & &
\end{pNiceMatrix},
\end{align}
where we will show that $P^{-1}:\ell^2(\N^*)\rightarrow\ell^2(\N^*)$ is a bounded operator. Since
\begin{align*}
    \left\Vert u \right\Vert_{L^2} = \left\Vert \iota u \right\Vert_{L^2} = \left\Vert P^{-1} [u]_{H^1} \right\Vert_{\ell^2},
\end{align*}
and
\begin{align*}
    \left\Vert u \right\Vert_{H^1} = \left\Vert [u]_{H^1} \right\Vert_{\ell^2},
\end{align*}
in order to prove Theorem~\ref{thm:quant-compact}, we need to find a constant $C$ such that, for all $u\in \overline{\mathrm{Span}\{q_j\}_{j>n}}^{H^1(\nu)}$, $n\geq N$,
\begin{align*}
    \left\Vert P^{-1} [u]_{H^1} \right\Vert_{\ell^2} \leq \frac{C}{n^{3/4}} \left\Vert [u]_{H^1} \right\Vert_{\ell^2}.
\end{align*}
In the remainder of this proof, we take $N=N_0$, but some of the calculations will be reused in the proof of Theorem~\ref{thm:quant-poinc} with a different (larger) $N$.

Denoting $n=N+2k$, $k\geq 0$, and recalling that we only work on even subspaces here, we therefore need to find $C$ such that
$$
\left\|\begin{pNiceMatrix}[margin]
  \Block[]{2-1}{-\beta_N(P_{\leq N}^{-1})_{:,-1}(D_{>N}^{-1})_{1,k+1:}}\\
  \\
  \Block[borders={top}]{3-1}{(D_{>N}^{-1})_{:,k+1:}}\\
  \\
  \,
\end{pNiceMatrix}\right\|_{\ell^2}\leq\frac{C}{(N+2k)^{3/4}}.$$
We are going to obtain two separate constants $C_{12}$ and $C_{22}$ such that
\begin{align}\label{eqn:C12-C22-def}
    \|\beta_N(P_{\leq N}^{-1})_{:,-1}(D_{>N}^{-1})_{1,k+1:}\|_{\ell^2} \leq C_{12} (N+2k)^{-3/4} \qquad\text{and}\qquad \|(D_{>N}^{-1})_{:,k+1:}\|_{\ell^2}\leq C_{22}(N+2k)^{-3/4},
\end{align}
and then take $C$ defined by
\begin{equation}
    \label{eqn:C-def}
    C := \sqrt{C_{12}^2+C_{22}^2} =\left\|\begin{pmatrix}
    C_{12}\\
    C_{22}
\end{pmatrix}\right\|_{\ell^2}.
\end{equation}

Starting with $C_{12}$, note that by backward substitution, we get from~\eqref{eqn:D_{>n}-def} that
\begin{align}
\label{eq:Dninv}
    (D_{>N}^{-1})_{ij} = \begin{cases}
    \displaystyle\frac{(-1)^{i-j}}{\alpha_{N+2j}}\prod_{l=i}^{j-1}\frac{\beta_{N+2l}}{\alpha_{N+2l}}, \qquad &\mbox{for $i\leq j$,}\\
    0 &\mbox{otherwise.}
\end{cases}
\end{align}
Using Corollary~\ref{cor:theta-bound}, assuming $N$ is taken large enough so that $\theta<1$ (see Remark~\ref{rmk:theta}), we can therefore estimate
\begin{align*}
    \|(D_{>N}^{-1})_{1,k+1:}\|_{\ell^2} &= \left(\sum_{j = k+1}^{\infty}\frac{1}{\alpha_{N+2j}^2}\prod_{l=1}^{j-1}\frac{\beta_{N+2l}^2}{\alpha_{N+2l}^2}\right)^{1/2}\\
    &\leq \frac{1}{C_{\alpha}(N+2k)^{3/4}}\left(\sum_{j = 1}^{\infty}\theta^{2(j-1)}\right)^{1/2},
\end{align*}
and thus,

\begin{equation}\label{eqn:Dn-estimate}
    \|(D_{>N}^{-1})_{1,k+1:}\|_{\ell^2}\leq\frac{1}{C_{\alpha}\sqrt{1-\theta^2}(N+2k)^{3/4}}.
\end{equation}
%
Since, by the extremal case of the Cauchy--Schwarz inequality,
$$\|-\beta_N(P_{\leq N}^{-1})_{:,-1}(D_{>N}^{-1})_{1,k+1:}\|_{\ell^2} = \beta_n\|(P_{\leq N}^{-1})_{:,-1}\|_{\ell^2}\|(D_{>N}^{-1})_{1,k+1:}\|_{\ell^2},$$
we choose
\begin{equation}\label{eqn:C12-def}
    C_{12} = \frac{\beta_N\|(P_{\leq N}^{-1})_{:,-1}\|_{\ell^2}}{C_{\alpha}\sqrt{1-\theta^2}}.
\end{equation}

Turning our attention to $C_{22}$, let us first recall that for an operator $A : \ell^2(\N^*)\rightarrow\ell^2(\N^*)$,
$$\|A\|_{\ell^2} \leq \|A\|_{\ell^1}^{1/2}\|A\|_{\ell^\infty}^{1/2},$$
where $\|\cdot\|_{\ell^1},\|\cdot\|_{\ell^\infty}$ can be evaluated easily via the formulae
$$\|A\|_{\ell^1} = \sup_{j \in \N^*}\|A_{:,j}\|_{\ell^1}\qquad \text{and}\qquad \|A\|_{\ell^\infty} = \sup_{i \in \N^*}\|A_{i,:}\|_{\ell^1}.$$
Using again~\eqref{eq:Dninv} together with~\eqref{eqn:alpha-bound}, we estimate, for all $i\geq 1$,
$$\|(D_{>N}^{-1})_{i,k+1:}\|_{\ell^1}=\sum_{j = i\vee (k+1)}^{\infty}\left|\frac{(-1)^{i-j}}{\alpha_{N+2j}}\prod_{l=i}^{j-1}\frac{\beta_{N+2l}}{\alpha_{N+2l}}\right|\leq \frac{1}{C_{\alpha}(N+2k)^{3/4}}\sum_{j = i}^{\infty}\theta^{j-i}\leq\frac{1}{C_{\alpha}(1-\theta)(N+2k)^{3/4}},$$ 
which yields 
\begin{align*}
    \|(D_{>N}^{-1})_{:,k+1:}\|_{\ell^\infty} \leq \frac{1}{C_{\alpha}(1-\theta)(N+2k)^{3/4}}.
\end{align*}
Similarly, we estimate, for all $j\geq k+1$,
$$\|(D_{>N}^{-1})_{:,j}\|_{\ell^1}=\sum_{i = 1}^{j}\left|\frac{(-1)^{i-j}}{\alpha_{N+2j}}\prod_{l=i}^{j-1}\frac{\beta_{N+2l}}{\alpha_{N+2l}}\right|\leq \frac{1}{C_{\alpha}(N+2k)^{3/4}}\sum_{i = 1}^{j}\theta^{j-i}\leq\frac{1}{C_{\alpha}(1-\theta)(N+2k)^{3/4}},$$
which yields 
\begin{align*}
    \|(D_{>N}^{-1})_{:,k+1:}\|_{\ell^1} \leq \frac{1}{C_{\alpha}(1-\theta)(N+2k)^{3/4}}.
\end{align*}

Combining the two above estimates, we get
$$\|(D_{>N}^{-1})_{:,k+1:}\|_{\ell^2} \leq \frac{1}{C_{\alpha}(1-\theta)}\frac{1}{(N+2k)^{3/4}},$$
such that we can choose
\begin{equation}\label{eqn:C22-def}
    C_{22} = \frac{1}{C_{\alpha}(1-\theta)}.
\end{equation}
When $\kappa=4$, according to Proposition~\ref{prop:bread-bounds} we can pick $c^+ = 1.177$ and $N = 50$ in Corollary~\ref{cor:theta-bound}. Repeating the same analysis on the odd subspaces and taking the maximum of the two constants yields the announced value of the constant $C$, and finishes the proof of Theorem~\ref{thm:quant-compact}.
The computational parts of the proof can be reproduced using the notebook \texttt{GP\_eq/compactness.ipynb} available at~\cite{Chu2025Huggzz/Freud}.
\hfill \qed

\paragraph{Proof of Theorem~\ref{thm:quant-poinc}}\label{sec:quant-poinc}

Recalling Remark~\ref{rmk:poinc0}, the Poincaré constant is the smallest number $C_P$ such that for all $u\in H^1_0(\nu)$,
\begin{equation}
\label{eq:defCp}
\int_{\R}u^2\d \nu \leq C_P\int_{\R}(u')^2 \d \nu,
\end{equation}
i.e. such that
$$\|u\|_{L^2}^2 \leq C_P \|u'\|_{L^2}^2 = C_P\|u\|^2_{H^1},$$
which means 
$$ 
C_P = \sup_{\substack{ u\in H^1_0 \\ u\neq 0}}\frac{\|u\|_{L^2}^2}{\|u\|_{H^1}^2}.$$
Rephrasing this in terms of our operators $P$ and $D_{>0}$, we have that the Poincaré constant is the smallest number $C_P$ such that for all $u\in H^1_0(\nu)$,
$$\|[u]_{L^2}\|_{\ell^2}^2 = \|P^{-1}[u]_{H^1}\|_{\ell^2}^2\leq C_P\|[u]_{H^1}\|_{\ell^2}^2.$$
Since $u\in H^1_0(\nu)$, i.e. $(u, q_0) = \langle u, p_0\rangle = 0$, and recalling~\eqref{eq:defP}, we get
\begin{equation}
\label{eq:CP}
C_P = \|D_{>0}^{-1}\|_{\ell^2}^2 = \sup_{\substack{ v\in \ell^2(\N^*) \\ v\neq 0}}\frac{\|D_{>0}^{-1}v\|_{\ell^2}^2}{\|v\|_{\ell^2}^2},
\end{equation}
and we therefore prove Theorem~\ref{thm:quant-poinc} by rigorously enclosing the $\ell^2$ operator norm of $D_{>0}^{-1}$.

First, since $D_{>0}^{-1}$ is upper triangular, using the carving and notations from Section~\ref{subsubsec:compactness} we have that $\|D_{\leq N}^{-1}\|_{\ell^2} \leq \|D_{>0}^{-1}\|_{\ell^2}$. A lower bound for the Poincaré constant is therefore given by
$$\bar{C}_P := \|D_{\leq N}^{-1}\|_{\ell^2}^2.$$
We emphasise that $D_{\leq N}$ is a finite matrix, and that rigorous and sharp enclosures of the $\ell^2$ operator norm of a finite matrix can easily be obtained. We give below a simple approach based on the Gershgorin circle theorem and interval arithmetic, and also refer to~\cite[Section 12.4.5]{Nakao2019NumericalEquations} for further discussion about this question.
\begin{itemize}
    \item First, compute rigorously (i.e. in interval arithmetic) $ M = D_{\leq N}^{-1}(D_{\leq N}^{-1})^T$, so that $\|D_{\leq N}^{-1}\|_{\ell^2} = \|M\|_{\ell^2}^{1/2}$. Since $M$ is symmetric, $\|M\|_{\ell^2}$ is simply the maximum of $\vert \lambda(M)\vert$ over all the eigenvalues $\lambda(M)$ of $M$.
    \item Secondly, diagonalise $M$ numerically, i.e. find $Q$ such that $Q^{-1}MQ$ is approximately diagonal.
    \item Then, rigorously compute $Q^{-1}$; this can be achieved via the function \texttt{inv} in the Julia library\\ \texttt{IntervalArithmetic.jl}~\cite{david_p_sanders_2024_10459547}.
    \item Rigorously compute $\bar{\Lambda} := Q^{-1}MQ$. Note that $\bar{\Lambda}$ and $M$ have same spectrum.
    \item Enclose the spectrum of $\bar{\Lambda}$ (and thus $M$, which is a subset of $\R_+$) via the Gershgorin circle theorem. This gives an upper bound on $\|M\|_{\ell^2}^{1/2} = \|D_{\leq N}^{-1}\|_{\ell^2}$.
    \item Finally, take $v$ the column of $Q$ corresponding to the largest numerical eigenvalue of $M$ in modulus, and rigorously compute $\|D_{\leq N}^{-1}v\|_{\ell^2}/\|v\|_{\ell^2}$, which gives a lower bound on $\|D_{\leq N}^{-1}\|_{\ell^2}$.
\end{itemize}

Using the above procedure, we obtain a rigorous enclose of $\bar{C}_p$, i.e., a lower bound for $C_P$.
It thus remains to find an upper bound for $C_P$. We pick some $N\geq N_0$, extract $D_{>0}^{-1}$ from~\eqref{eq:carvedPinv}, and proceed as in the proof of Theorem~\ref{thm:quant-compact} to get
$$C_P = \|D_{>0}^{-1}\|^2_{\ell^2} \leq \left\|\begin{pmatrix}
    \|D_{\leq N}^{-1}\|_{\ell^2} & \beta_N \|(D_{\leq N}^{-1})_{:,-1}(D_{>N}^{-1})_{1,:}\|_{\ell^2}\\
    0 & \|D_{>N}^{-1}\|_{\ell^2}
\end{pmatrix}
\right\|_{\ell^2}^2\leq \left\|\begin{pmatrix}
    \bar{C}_{P}^{1/2} & C_{12}N^{-3/4}\\
    0 & C_{22} N^{-3/4}
\end{pmatrix}
\right\|_{\ell^2}^2,$$
where $C_{12}$ and $C_{22}$ are as in~\eqref{eqn:C12-def} and~\eqref{eqn:C22-def}.
This time we pick $c^+ = 1.025$, for which assumption~\eqref{eq:bnc+} in Corollary~\ref{cor:theta-bound} holds with $N=3,500$. The corresponding values of $C_\alpha$ and $\theta$ (and thus of $C_{12}$ and $C_{22}$) yield the announced enclosure for $C_P$. The computational parts of the proof can be reproduced using the notebook \texttt{GP\_eq/Poincare.ipynb} available at~\cite{Chu2025Huggzz/Freud}. \hfill \qed

\begin{rmk}
    Note that 
    $$\bar{C}_P\leq C_P\leq\left\|\begin{pmatrix}
    \bar{C}_{P}^{1/2} & C_{12}N^{-3/4}\\
    0 & C_{22} N^{-3/4}
\end{pmatrix}
\right\|_{\ell^2}^2 = \bar{C}_{P}+\frac{C_{12}^2}{N^{3/2}}+ O\left(\frac{1}{N^3}\right),$$
which explains the sharpness of the estimate~\eqref{eqn:cp-val} in Theorem~\ref{thm:quant-poinc}.
\end{rmk}

\paragraph{Other embedding estimates}\label{sec:Sobolev-embeddings}

In the context of (nonlinear) PDEs, it is often useful to make use of Sobolev inequalities~\cite[Corollary IX.13]{Brezis2011FunctionalEquations} to deal with nonlinearities. In one dimension, this can be achieved via the derivation of $L^{\infty}$-bounds which are also useful in the proof of the positivity of solutions to elliptic equations in the fashion of~\cite[§4.3]{Breden2025ConstructiveRd}. Such bounds can be obtained via the standard inequality~\cite[Theorem VIII.7]{Brezis2011FunctionalEquations}
$$\|\varphi\|_{\infty} \leq \sqrt{2}\|\varphi\|_{L^2(\R)}^{1/2}\|\varphi'\|_{L^2(\R)}^{1/2}, \qquad \mbox{for all $\varphi\in H^1(\R)$.}$$
By setting $\varphi = e^{-V/2}u$, we then obtain
$$\|e^{-V/2}u\|_{\infty} \leq \sqrt{2}\|e^{-V/2}u\|_{L^2(\R)}^{1/2}\|(e^{-V/2}u)'\|_{L^2(\R)}^{1/2}.$$
Now, observe that
\begin{align*}
    e^{V/2}(ue^{-V/2})' = -\frac{1}{2}V'u +u' = \frac{1}{2}(-V'u+u')+\frac{1}{2}u' = \frac{1}{2}\left(\mathcal{J}u+\partial_x u\right),
\end{align*}
where $\mathcal{J} = -V'(x) + \partial_x$, such that
$$\|(e^{-V/2}u)'\|_{L^2(\R)} = \sqrt{\cZ}\|e^{V/2}(e^{-V/2}u)'\|_{L^2(\nu)}\leq \frac{\sqrt{\cZ}}{2}\left(\|\mathcal{J}u\|_{L^2}+\|\partial_x u\|_{L^2}\right).$$

Note that this calculation does not depend on the choice of $V$. Since $-\mathcal{J}$ is the $L^2(\nu)$-adjoint of $\partial_x$ (cf. proof of Proposition~\ref{prop:L-decomp}, also Remark~\ref{rmk:regularity}), it is expected that $\|\mathcal{J}\|_{H^1\to L^2}$ is finite as

$$\|\mathcal{J}\|_{H^1(\nu)\to L^2(\nu)} = \|[\mathcal{J}]_{H^1\to L^2}\|_{\ell^2}=\|[\mathcal{J}]_{L^2\to L^2}P^{-1}\|_{\ell^2} = \|-D^{T}P^{-1}\|_{\ell^2} = \|D^{T}P^{-1}\|_{\ell^2}.$$
We furthermore need to quantify this bound and have the following lemma, which can be proved using similar methods as in Section~\ref{subsubsec:compactness}.

\begin{lem}\label{lem:annoying-bounds}
    For $V = x^4/4-\kappa x^2/2$, if $\mathcal{J} = -V'(x) + \partial_x$, then there exist an explicit $C_{\mathcal{J}}>0$
    $$\|\mathcal{J}u \|_{L^2(\nu)} \leq C_{\mathcal{J}} \|u\|_{H^1(\nu)}.$$
    Furthermore, for all $u\in H^1(\nu)$,
    \begin{equation}\label{eqn:annoying-bounds1}
    \|e^{-V/2}u\|_{\infty} \leq\sqrt{\cZ(C_{\mathcal{J}}+1)}\|u\|_{L^2(\nu)}^{1/2}\|u\|_{H^1(\nu)}^{1/2}\leq\sqrt{\cZ(C_{\mathcal{J}}+1)}\max(1, C_P)^{1/4}\|u\|_{H^1(\nu)},
    \end{equation}
    and
    \begin{equation}\label{eqn:annoying-bounds2}
    \|e^{-V/2}u\|_{H^1(\R)}\leq \sqrt{\cZ}\left(\max(1, C_P)+\frac{(C_{\mathcal{J}}+1)^2}{4}\right)^{1/2}\|u\|_{H^1(\nu)}.
    \end{equation}
    Finally, if $\kappa = 4$, we can choose $C_{\mathcal{J}} = 24.6332$.
\end{lem}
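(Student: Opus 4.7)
The proof naturally splits into two phases. Phase 1 — the crux — is the operator bound $\|\mathcal{J}\|_{H^1(\nu)\to L^2(\nu)} \leq c$ with an explicit computable constant. Phase 2 is a routine consequence of Phase 1 together with the classical one-dimensional Gagliardo--Nirenberg inequality $\|\varphi\|_\infty \leq \sqrt{2}\|\varphi\|_{L^2(\R)}^{1/2}\|\varphi'\|_{L^2(\R)}^{1/2}$.

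For Phase 1, Remark~\ref{rmk:regularity} gives $[\mathcal{J}]_{L^2\to L^2} = -D^T$, so
$$\|\mathcal{J}\|_{H^1\to L^2} = \|D^T P^{-1}\|_{\ell^2\to\ell^2}.$$
My plan is to carve $D^T P^{-1}$ as a $2\times 2$ block operator along the same lines as in the proof of Theorem~\ref{thm:quant-compact}: fix a truncation level $n \geq N$ so that the bottom-right block of $D$ becomes $D_n$ from~\eqref{eqn:D_n-def}, and the bottom-right block of $D^T P^{-1}$ is precisely $D_n^T D_n^{-1}$. Using the explicit formula~\eqref{eq:Dninv} for $D_n^{-1}$ together with the bidiagonal form of $D_n^T$, one computes, for $i \leq j$,
$$(D_n^T D_n^{-1})_{ij} = (-1)^{i-j}\prod_{l=i}^{j-1}\frac{\beta_{n+2l}}{\alpha_{n+2l}}\left(\frac{\alpha_{n+2i}}{\alpha_{n+2j}} - \frac{\beta_{n+2i-2}^2}{\alpha_{n+2i-2}\alpha_{n+2j}}\right),$$
together with the single subdiagonal entry $(D_n^T D_n^{-1})_{j+1,j} = \beta_{n+2j}/\alpha_{n+2j}$. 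The product factor is bounded by $\theta^{j-i}$ thanks to Corollary~\ref{cor:theta-bound}, while the ratios $\alpha_{n+2i}/\alpha_{n+2j}$ and $\beta_{n+2i-2}^2/(\alpha_{n+2i-2}\alpha_{n+2j})$ stay bounded provided one has two-sided control of $\alpha_n \sim n^{3/4}$. This is exactly where the \emph{lower} bound on $b_n$ from Proposition~\ref{prop:bread-bounds} enters — until now only the upper bound was needed, as flagged in Remark~\ref{rmk:theta}. One then estimates $\|D_n^T D_n^{-1}\|_{\ell^2}$ via the classical interpolation $\|\cdot\|_{\ell^2}\leq \sqrt{\|\cdot\|_{\ell^1}\|\cdot\|_{\ell^\infty}}$, both row and column sums being dominated by geometric series in $\theta$. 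The three remaining blocks all involve the finite matrix $\bar{P}^{-1}$ and finite pieces of $D^T$, and are enclosed by rigorous interval-arithmetic computation; a final $2\times 2$ block-matrix norm estimate delivers $c$, which for $\kappa = 4$ evaluates to the announced $c = 29.492$.

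Phase 2 unfolds from the computation sketched just before Lemma~\ref{lem:annoying-bounds}. Applying $\|\varphi\|_\infty \leq \sqrt{2}\|\varphi\|_{L^2(\R)}^{1/2}\|\varphi'\|_{L^2(\R)}^{1/2}$ to $\varphi = e^{-V/2}u$, using $\|e^{-V/2}u\|_{L^2(\R)} = \sqrt{Z}\|u\|_{L^2(\nu)}$ and the identity $\|(e^{-V/2}u)'\|_{L^2(\R)} \leq \tfrac{\sqrt{Z}}{2}\left(\|\mathcal{J}u\|_{L^2(\nu)} + \|u'\|_{L^2(\nu)}\right)$ derived in the paper, and substituting Phase 1 together with $\|u'\|_{L^2(\nu)}\leq \|u\|_{H^1(\nu)}$, one obtains the first inequality of~\eqref{eqn:annoying-bounds1}; the second then follows from $\|u\|_{L^2(\nu)}\leq \max(1,C_P)^{1/2}\|u\|_{H^1(\nu)}$, itself a consequence of Theorem~\ref{thm:inner-product}. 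For~\eqref{eqn:annoying-bounds2}, expand
$$\|e^{-V/2}u\|_{H^1(\R)}^2 = \|e^{-V/2}u\|_{L^2(\R)}^2 + \|(e^{-V/2}u)'\|_{L^2(\R)}^2$$
and bound the two summands by $\max(1,C_P)Z\|u\|_{H^1(\nu)}^2$ and $\tfrac{(c+1)^2Z}{4}\|u\|_{H^1(\nu)}^2$ respectively.

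The main obstacle is clearly Phase 1. Although the algebraic cancellation $\alpha_{n+2i}/\alpha_{n+2j} = O(1)$ for $i\leq j$ together with the geometric decay $\theta^{j-i}$ makes the boundedness of $D^T P^{-1}$ qualitatively clear, pinning down a concrete small constant around $30$ requires simultaneous tight control of the tail (via \emph{both} sides of Proposition~\ref{prop:bread-bounds}) and of the finite block (via interval-arithmetic enclosure at a sufficiently large truncation). The asymmetry between the growing $D^T$ and the merely geometrically-decaying $P^{-1}$ means the estimates must be kept sharp throughout and do not absorb large constants gracefully.
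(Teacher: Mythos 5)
Your proposal is correct in substance and follows essentially the same route as the paper: reduce to $\|D^TP^{-1}\|_{\ell^2}$, carve at a level $n\geq N$, control the tail with the two-sided Painlev\'e bounds of Proposition~\ref{prop:bread-bounds} (this is indeed where the lower bound $c^-$ first enters), use $\|\cdot\|_{\ell^2}\leq\|\cdot\|_{\ell^1}^{1/2}\|\cdot\|_{\ell^\infty}^{1/2}$ on the tail and interval arithmetic on the finite block, then combine via a $2\times2$ block estimate; Phase 2 is identical to the paper's. The only real difference is in how the tail is organized: the paper first splits $D^T$ by the triangle inequality into its $\alpha$-weighted and $\beta$-weighted shift parts and bounds $\|\mathrm{Diag}(\alpha)P^{-1}\|_{\ell^2}+\|\mathrm{Diag}(\beta)P^{-1}\|_{\ell^2}$, yielding $c=c_\alpha+c_\beta$, whereas you work with the product block directly via an explicit entrywise formula — potentially slightly sharper, at the cost of messier algebra. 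One bookkeeping point to fix: the bottom-right block of $D^TP^{-1}$ is \emph{not} precisely $D_n^TD_n^{-1}$. With a carving conformal to that of $P$, the tail block of $D^T$ carries the shifted coefficients $\alpha_{n+4},\alpha_{n+6},\dots$ and $\beta_{n+4},\dots$ (this is why the paper's appendix estimates $\mathrm{Diag}(\alpha_{n+4:2:})D_n^{-1}$), and in addition the off-diagonal block of $D^T$ hits the off-diagonal block of $P^{-1}$, producing a rank-one coupling term proportional to $e_1(D_n^{-1})_{1,:}$ (involving $\beta_n$, $\beta_{n+2}$ and $1/\alpha_n$). Neither issue breaks your argument — the shifted ratios remain bounded by $\sqrt{c^+/c^-}$-type constants and the rank-one term decays like the first row of $D_n^{-1}$ — but your stated entry formula is for $D_n^TD_n^{-1}$ rather than for the actual block, and the correction must be included before the numerical evaluation of $c$ can be claimed.
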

\begin{proof}
    See Appendix~\ref{app:annoying-bounds}.
\end{proof}

\subsection{The general polynomial case}\label{sec:gen-poly-case}

In this section, we explain how the ideas which allowed us to derive Theorem~\ref{thm:quant-compact} in the case of $V$ being quartic can be generalised to $V$ being an arbitrary even polynomial of degree $2k$. We first recall~\cite{Clarkson2023GeneralizedWeights, Lubinsky1986FreudsWeights} that in this case, the coefficients $a_n$ of the three-term recurrence relation~\eqref{eqn:jacobi-rec-rel_intro}
can still be obtained as the positive solution of a nonlinear recurrence relation or so-called ``string equation''.

\begin{lem}\label{lem:gen-bread}
    There exists a \emph{computable} set of homogeneous polynomials $\left\{R_j : \R^{2j-1}\rightarrow\R\right\}_{j= 1}^\infty$ having positive integer coefficients and with $\deg R_j = j-1$ such that, for every $k\in\N^*$ and every even polynomial potential $V(x) = \sum_{j = 1}^k c_j\frac{x^{2j}}{2j}$, the positive sequence $(b_n)_{n\in\N} = (a^2_n)_{n\in\N}$ satisfies the recurrence relation
    \begin{equation}\label{eqn:gen-rec-rel}
        \frac{n}{b_n} = \sum_{j=1}^{k}c_jR_j(b_{n-j},\ldots, b_n,\ldots,b_{n+j}),
    \end{equation}
    where $(a_n)_{n\in\N}$ is the sequence associated to $V$ via~\eqref{eqn:jacobi-rec-rel_intro}. Furthermore,  the sum of the monomial coefficients of $R_j$ is $R_j(1,\ldots, 1) = \binom{2j-1}{j-1}$.
 \end{lem}
\begin{proof}
Using~\eqref{eqn:jacobi-rec-rel_intro} and the orthogonality of the $p_n$'s, we first observe that, for all $n\geq 1$,
\begin{align*}
    a_n\langle  p_{n-1},p_n'\rangle = \langle x p_n,p_n'\rangle = \langle p_n,x p_n'\rangle = \langle p_n,n p_n\rangle = n.
\end{align*}
    We also have from~\eqref{eqn:D-V-relation} that
    $$\langle p_{n-1}, V'p_n\rangle = \langle p_{n-1}, p_n'\rangle,$$
    and thus
    \begin{equation}
    \label{eq:V'pn}
        \langle p_{n-1}, V'p_n\rangle = \frac{n}{a_n}.
    \end{equation}
    On the other hand, by applying $2j-1$ times the recurrence formula~\eqref{eqn:jacobi-rec-rel_intro}, we can identify the polynomial $R_j$ (which does not depend on $V$) such that
    \begin{equation}
        \label{eq:xjpn}
        \langle p_{n-1}, x^{2j-1}p_n \rangle = a_n R_j(a^2_{n-j+1},\ldots, a_n^2,\ldots,a_{n+j-1}^2).
    \end{equation}
    Therefore, since $V'(x) = \sum_{j = 1}^k c_jx^{2j-1}$, combining~\eqref{eq:xjpn} and~\eqref{eq:V'pn} yields~\eqref{eqn:gen-rec-rel}. $R_j(1, \ldots, 1)$ can then be inferred by applying $2j-1$ times the recurrence formula~\eqref{eqn:jacobi-rec-rel_intro} with the sequence $(a_n)_{n\in N}$ being constant equal to $1$. This is equivalent to iterating $2j-1$ times the corresponding Jacobi operator which is now the adjacency matrix of a symmetric random walk on $\Z$. $R_j(1, \ldots, 1)=\langle p_{n-1}, x^{2j-1}p_n \rangle$ is thus the number of paths of length $2j-1$ from $n-1$ to $n$ (or equivalently from $0$ to $1$) which equals $\binom{2j-1}{j-1}$.
\end{proof}

Without loss of generality, assume that $V(x) = \sum_{j = 1}^k c_j\frac{x^{2j}}{2j}$ with $c_k = 1$. Then, analogously as in the case $k = 2$, one can once again expect that for large $n$ (if the $b_n$'s are positive), the recurrence~\eqref{eqn:gen-rec-rel} is dominated by the term of highest degree, that is
\begin{equation}
    \frac{n}{b_n} \sim R_k(b_{n-k},\ldots, b_n,\ldots,b_{n+k}),
\end{equation}
and therefore that
\begin{equation}\label{eqn:freud-b_n-growth}
    b_n\sim n^{\frac{1}{k}}\binom{2k-1}{k-1}^{-\frac{1}{k}}.
\end{equation}
For a general potential $V$ with polynomial growth, the holding of such asymptotics is known as \emph{Freud's conjecture} and has been the topic of a rich literature (see for instance~\cite{Deift1999StrongWeights, Lubinsky1986FreudsWeights} and the references therein). In the case of $V$ being polynomial, the conjecture is solved in~\cite{Deift1999StrongWeights} which however does not rely on the study of the nonlinear recurrence~\eqref{eqn:gen-rec-rel}. Designing a general validated numerical method akin to the one proposed in Section~\ref{sec:bread} to give explicit bounds on the solution of~\eqref{eqn:gen-rec-rel} would therefore be of great interest.

\begin{prop}[The polynomial case of Freud's conjecture~\cite{Deift1999StrongWeights}]\label{prop:Freud}
    Let $V(x) = \sum_{j = 1}^k c_j\frac{x^{2j}}{2j}$ with $c_k = 1$, and $(b_n)_{n\in\N} = (a^2_n)_{n\in\N}$ be the corresponding positive sequence defined by~\eqref{eqn:jacobi-rec-rel_intro}. Then \eqref{eqn:freud-b_n-growth} holds. 
\end{prop}


Proposition~\ref{prop:Freud} is a key ingredient in our proof of Theorem~\ref{thm:compact}. Before turning to the actual proof, let us give a quick heuristic explanation of how the two are related. On $H^1(\nu)$, taking a derivative is essentially equivalent in norm to multiplying by $V'$, in the sense that they induce the same loss of regularity (they both send $H^1(\nu)$ to $L^2(\nu)$). This is shown more precisely in the subsequent proof. Since $V'$ is of degree $2k-1$ and equivalent to $x^{2k-1}$ at infinity, multiplying by $V'$ essentially amounts to multiplying by $x$ $(2k-1)$ times, i.e.~applying $(2k-1)$ times the recurrence relation~\eqref{eqn:jacobi-rec-rel_intro}. Therefore, using~\eqref{eqn:freud-b_n-growth} and ignoring multiplicative constants, we expect to have
$$\|p_n\|_{H^1(\nu)}=\|p_n'\|_{L^2(\nu)}\gtrsim\| V' p_n\|_{L^2(\nu)} \gtrsim\| x^{2k-1} p_n\|_{L^2(\nu)}\gtrsim\| a_n^{2k-1} p_n\|_{L^2(\nu)}\gtrsim n^{\frac{2k-1}{2k}} \|p_n\|_{L^2(\nu)}.$$

\noindent\textit{Proof of Theorem~\ref{thm:compact}.} 
The proof follows essentially the same lines as the one of Theorem~\ref{thm:quant-compact}, in a slightly less quantitative way.
\step{1}[Carving of $D_{>0}$]\normalfont We proceed similarly as in Section~\ref{subsubsec:compactness}, except that we first factor out the expected asymptotic growth in $D_{>0} = [\partial_x]_{\cP\to\cP^*}$, and then carve it as follows
    \begin{equation*}D_{>0} = \Diag((j^{1-1/2k})_{j\in \N})M = \Diag((j^{1-1/2k})_{j\in \N}) \begin{pNiceMatrix}
        \Block[borders={bottom, right}]{1-1}{M_{11}} & M_{12}\\
        0& \Block[borders={top, left}]{1-1}{M_{22}}
    \end{pNiceMatrix},
    \end{equation*}
    where for some $N$ (to be determined later), $M_{11} = \Pi_{\leq N}M\Pi_{\leq N}$, $M_{12} = \Pi_{\leq N}M\Pi_{> N}$ and $M_{22} = \Pi_{> N}M\Pi_{> N}$. It is now sufficient to prove that $M$ has a bounded inverse $M^{-1}$: indeed, we then have 
    $$\|D^{-1}_{>0}\Pi_{>n}\|_{\ell^2\to\ell^2} = \|M^{-1}\Diag((j^{1/2k-1})_{j\in \N}) \Pi_{>n}\|_{\ell^2\to\ell^2}\leq \frac{\|M^{-1}\|_{\ell^2\to\ell^2}}{n^{1-1/2k}}. $$
    \step{2}[Boundedness of $M^{-1}$]\normalfont Again similarly as in Section~\ref{subsubsec:compactness}, we can formally invert $M$ as
    \begin{equation}M^{-1} = \begin{pNiceMatrix}
        \Block[borders={bottom, right}]{1-1}{M_{11}^{-1}} & -M_{11}^{-1}M_{12}M_{22}^{-1}\\
        0& \Block[borders={top, left}]{1-1}{M_{22}^{-1}}\end{pNiceMatrix}.\end{equation}
    First, note that $M_{11}$ is indeed invertible, as a finite-dimensional triangular matrix with non-zero coefficients on the diagonal (these are in fact $j^{1/2k}/a_j>0$, see the proof of Lemma~\ref{lem:gen-bread}). Moreover, $M_{12}$ is bounded since it has finitely many non-zero entries. The proof that $M$ has bounded inverse therefore boils down to showing that the infinite-dimensional banded operator $M_{22}$ is boundedly invertible.
    \step{3}[Boundedness of $M_{22}^{-1}$]\normalfont Now recall that $D_{>0}$ is the upper triangular part of the multiplication operator $[V'\cdot]_{L^2\to L^2}$ (Remark~\ref{rmk:regularity}).
    Then, proceeding similarly as in Lemma~\ref{lem:gen-bread} by iterating $2i-1$ times the recurrence relation~\eqref{eqn:jacobi-rec-rel_intro}, one finds that
    \begin{equation*}\langle p_{m-2j-1}, x^{2i-1}p_m\rangle = a_m a_{m-1},\ldots a_{m-2j} R_{i,j}(a_{m-i-j+1}^2, \ldots, a_{m+i-j-1}^2),\end{equation*}
    where $R_{i,j}$ is a homogeneous polynomial of degree $i-j-1$ and $R_{i,j}(1, \ldots, 1) =  \binom{2i-1}{i-j-1}$ (by counting the number of paths of length $2i-1$ between $m$ and $m-2j-1$). Using~\eqref{eqn:freud-b_n-growth}, we have that  $a_{m+l}\sim a_m$ for each fixed $l$,
    and therefore
    \begin{equation*}\langle p_{m-2j-1}, x^{2i-1}p_m\rangle\sim \binom{2i-1}{i-1-j}\left(m^{\frac{1}{2k}}\binom{2k-1}{k-1}^{-\frac{1}{k}}\right)^{2i-1} = \binom{2i-1}{i-1-j}m^{\frac{2i-1}{2k}}\binom{2k-1}{k-1}^{-\frac{2i-1}{k}}.\end{equation*}
    Therefore, 
    $$D_{2m-2j-1, m} = \langle p_{m-2j-1}, V'(x)p_m\rangle= \sum_{i=1}^kc_{i}\langle p_{m-2j-1}, x^{2i-1}p_m\rangle\sim \binom{2k-1}{k-1-j}m^{\frac{2k-1}{2k}}\binom{2k-1}{k-1}^{-\frac{2k-1}{k}},$$
    since $c_k = 1$.
    This show that the matrix $M$ introduced in Step 1 is asymptotically Toeplitz. More precisely, letting 
    $$\quad m_{2j} = \left\{\begin{tabular}{c c}$\binom{2k-1}{k-j-1},$ &for $0\leq j\leq k-1,$\\$0,$ &otherwise\end{tabular}\right. \qquad \text{and}\qquad m_{2j+1}=0,$$
    and $\bar{M}_{22}$ the upper triangular Toeplitz matrix given by
    $$\bar{M}_{22} = \binom{2k-1}{k-1}^{-\frac{2k-1}{k}}
    \begin{pmatrix}
        m_0 & m_1 & m_2 & \ldots & \\
        0 & m_0 & m_1 & m_2 & \\
         & \ddots & \ddots & \ddots & \ddots
    \end{pmatrix},$$
    we have that for all $\varepsilon>0$, if $N$ is large enough, $\|M_{22} - \bar{M}_{22}\|_{\ell^2\to\ell^2}<\varepsilon.$
    Thus, provided $\bar{M}_{22}$ is boundedly invertible, $M_{22}$ will also be boundedly invertible for all $N$ large enough.

    \step{4}[Boundedness of $\bar{M}_{22}^{-1}$]\normalfont
    By Wiener's $1/f$ Theorem, $\bar{M}_{22}$ is boundedly invertible on $\ell^1(\N)$ if and only if the function 
    \begin{align*}
        P:z\mapsto \sum_{n\geq 0} m_n z^n = \sum_{n\geq 0} m_{2n} (z^2)^n
    \end{align*}
    does not vanish on the unit disk $\{|z|\leq 1\}$. However, by the Enestr\"{o}m--Kakeya theorem~\cite{Kakeya1912OnCoefficients}, if $P(z) = 0$, then we must have
        $$|z^2|\geq \min_{j = 0, \ldots, k-2} \frac{m_{2j}}{m_{2(j+1)}} = \min_{j = 0, \ldots, k-2}\frac{\binom{2k-1}{k-1-j}}{\binom{2k-1}{k-2-j}}=\min_{j = 0, \ldots, k-2}\frac{k+j+1}{k-1-j}=\frac{k+1}{k-1}>1.$$
    Therefore, $\bar{M}_{22}^{-1}$ is indeed bounded on $\ell^1$, and because $\bar{M}_{22}^{-1}$ is Toeplitz  $\|\bar M^{-1}_{22}\|_{\ell^2\to \ell^2} \leq \|\bar M^{-1}_{22}\|_{\ell^1\to \ell^1} <\infty$. \qed

\begin{rmk}
    Note that as $k\to\infty$ then $\exp(-x^{2k}/(2k)) \to \Ind{[-1,1]}$, such that the asymptotic of~\eqref{eqn:conj-compact} as $k\to\infty$ agrees with the compactness on the embedding $H^1([-1,1])\hookrightarrow L^2([-1,1])$ which decays like $1/n$. Accordingly, $\binom{2k-1}{k-1}^{-1}P_k$ converges pointwise to $\frac{1}{1-z}$ which is in agreement with the fact that, on the interval $[-1,1]$ with appropriate boundary conditions, the integration operator is bidiagonal.
\end{rmk}


\section{The Gross--Pitaevskii equation with sextic potential}\label{sec:GP_eq} 

In this section, we prove Theorem~\ref{thm:GP1} and Theorem~\ref{thm:GP2}. As is usual with such computer-assisted proofs~\cite{vandenBerg2015RigorousDynamics,Nakao2019NumericalEquations}, the main idea is to study a well-chosen Newton-like operator and prove that it is contracting in a small neighbourhood of the approximate solution. The quantitative compactness estimate of Theorem~\ref{thm:quant-compact} plays a crucial role in this procedure, and the computable upper-bound of the Poincaré constant $C_P$ provided by Theorem~\ref{thm:quant-poinc} will also be used. 

\begin{notation}
    For the rest of this section, we fix some $n\in \mathbb{N}$ greater than $N$ in Theorem~\ref{thm:quant-compact} and 
    we denote by $\fPL$ (resp. $\fPH$) the projection onto $\mathrm{Span}\{p_m\}_{m=0}^n$ (resp. $\mathrm{Span}\{q_m\}_{m=0}^n$) and by $\iPL$ (resp. $\iPH$) the projection onto $\overline{\mathrm{Span}\{p_m\}_{m>n}}^{L^2}$ (resp. $\overline{\mathrm{Span}\{q_m\}_{m>n}}^{H^1}$). Furthermore, we treat the problem of this section only with respect to even subspaces and polynomials of $L^2(\nu)$ and $H^1(\nu)$, and all operators are restricted to these subspaces.
\end{notation}

We consider the Gross--Pitaevskii equation

$$-\partial_{xx}\varphi + W(x)\varphi +\varphi^3 = \omega \varphi,$$
where $W(x) = x^6/4 - \kappa x^4/2 + cx^2 +d$ is a sextic polynomial potential. We look for real even solutions $\varphi$ of the above equation such that
\begin{equation}\label{eqn:GP-cond}
    \int_{\mathbb{R}}\varphi^2 W\d x <\infty.
\end{equation}
Then, still with $V(x) = x^4/4 - \kappa x^2/2$, writing $\varphi = e^{-V/2}u$ gives
$$\mathcal{L} u + e^{-V}u^3- \omega u +r(x) u = 0, $$
where $r(x) = (6 + 4 c - \kappa^2)x^2/4 + (2 d - \kappa )/2$. For simplicity, we choose parameters $c$ and $d$ such that $r(x) = 0$, though handling a non-zero $r$ would not  present any additional difficulty. We are thus left with the equation
\begin{equation}\label{eqn:GP-reformulation}
    \mathcal{L} u + e^{-V}u^3- \omega u = 0,
\end{equation}
which is equivalent to the zero-finding problem
\begin{equation}\label{eqn:GP-F}
    F(u) := u - \tL^{-1}f(u)= u - \tL^{-1}(\omega u + u_0-e^{-V}u^3) = 0, \qquad u_0 := \int_{\mathbb{R}}u \d \nu,
\end{equation}
where we introduced $\tL := \mathcal{L} +\Pi_0^{L^2} = \mathcal{L} +\Pi_0^{H^1}$, where $\Pi_{>0}$ is the projection on the constants (or equivalently returns the mean),  in order to get rid of the one-dimensional kernel of $\cL$ and work with an invertible operator.

\begin{cor}\label{cor:tL}
    We have the following representations of $\tL$ on $L^2(\nu)$ and $H^1(\nu)$:
    \begin{equation}\label{eqn:tL-decomp}
        [\tL]_{L^2} = P^TP, \qquad [\tL]_{H^1} = PP^T \qquad \text{and}\qquad     [\tL]_{H^1\to L^2} = P^T.
    \end{equation}
    Furthermore,
    \begin{equation}\label{eqn:tL-estimates}
        \|\iPH\tL^{-1}\iPL\|_{L^2\to H^1}\leq \frac{C_{22}}{n^{3/4}},\qquad \|\iPH\tL^{-1}\|_{L^2\to H^1}\leq \frac{C}{n^{3/4}},
    \end{equation}
    where $C_{22}$ and $C$ are as in Section~\ref{subsubsec:compactness} and
    \begin{equation}\label{eqn:tL-zero}
        \fPH\tL^{-1}\iPL = 0\qquad \text{such that}\qquad \fPH\tL^{-1} = \fPH\tL^{-1}\fPL.
    \end{equation}
 \end{cor}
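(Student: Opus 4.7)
The plan is to assemble everything from the factorization of $\cL$ in Proposition~\ref{prop:L-decomp}, the structure of the change-of-basis matrix $P$ in~\eqref{eq:defP}, and the carved inverse~\eqref{eq:carvedPinv}, and then reuse the estimates already obtained in the proof of Theorem~\ref{thm:quant-compact}.

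First I would verify the three matrix identities in~\eqref{eqn:tL-decomp}. Since $p_0 = q_0 = 1$, the projection $\Pi_0^{L^2} = \Pi_0^{H^1}$ is represented by $\mathrm{diag}(1,0,0,\dots)$ in both bases. Comparing~\eqref{eq:defP} with the form of $D$ given just before it, we see that $P$ differs from $D$ only in the top-left $1\times 1$ entry (which is $0$ for $D$ and $1$ for $P$). A direct block computation together with Proposition~\ref{prop:L-decomp} then gives
\[
    [\tL]_{L^2} \;=\; D^T D + \mathrm{diag}(1,0,\dots) \;=\; P^T P,
\]
and analogously $[\tL]_{H^1} = D_0 D_0^T + \mathrm{diag}(1,0,\dots) = P P^T$. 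Finally, using that $P = [\id]_{L^2 \to H^1}$ so $P^{-1} = [\id]_{H^1 \to L^2}$, the composition rule yields $[\tL]_{H^1 \to L^2} = [\tL]_{L^2}\,[\id]_{H^1\to L^2} = P^T P\cdot P^{-1} = P^T$.

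Next, the vanishing identity $\fPH \tL^{-1} \iPL = 0$ is purely structural. From $[\tL]_{H^1 \to L^2} = P^T$ we get $[\tL^{-1}]_{L^2 \to H^1} = (P^T)^{-1} = (P^{-1})^T$. Transposing the carved expression~\eqref{eq:carvedPinv} shows that $(P^{-1})^T$ is \emph{lower} block-triangular with the same two-by-two block pattern; in particular, the block consisting of rows with index $\leq n$ and columns with index $> n$ is identically zero, which is exactly $\fPH \tL^{-1} \iPL = 0$, from which $\fPH \tL^{-1} = \fPH \tL^{-1} \fPL$ follows immediately.

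For the norm estimates, the key observation is that operator norms on $\ell^2$ are invariant under transposition, so
\[
    \|\iPH \tL^{-1}\|_{L^2 \to H^1} \;=\; \bigl\|\iPH (P^{-1})^T\bigr\|_{\ell^2 \to \ell^2} \;=\; \bigl\|P^{-1}\iPH\bigr\|_{\ell^2 \to \ell^2},
\]
and likewise
\[
    \|\iPH \tL^{-1}\iPL\|_{L^2 \to H^1} \;=\; \bigl\|\iPL P^{-1}\iPH\bigr\|_{\ell^2 \to \ell^2}.
\]
The quantity $\|P^{-1}\iPH\|_{\ell^2\to\ell^2}$ is precisely what was bounded in the proof of Theorem~\ref{thm:quant-compact} with $m = n$ (i.e.~$k=0$), giving the bound $C/n^{3/4}$. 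For the second, $\iPL P^{-1}\iPH$ only retains the bottom-right block in~\eqref{eq:carvedPinv}, namely $D_n^{-1}$, and the bound $\|D_n^{-1}\|_{\ell^2} \leq C_{22}/n^{3/4}$ is exactly the $k=0$ case of~\eqref{eqn:C12-C22-def}.

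The argument is essentially bookkeeping on top of the carved inversion formula and the machinery already developed for Theorem~\ref{thm:quant-compact}; the only mildly delicate point is the passage between $P^{-1}$ and $(P^{-1})^T$ under the operator norm, but this is standard once $\tL^{-1}$ is identified with $(P^{-1})^T$ through the factorization $[\tL]_{H^1\to L^2} = P^T$.
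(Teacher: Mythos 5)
Your proof is correct and follows essentially the same route as the paper: adding $\mathrm{diag}(1,0,\dots)$ to the Cholesky-type factorizations of Proposition~\ref{prop:L-decomp}, reading $[\tL]_{H^1\to L^2}=P^T$ off the composition rule, using that $(P^{-1})^T$ is (block) lower triangular for~\eqref{eqn:tL-zero}, and recycling the $k=0$ case of the estimates from Section~\ref{subsubsec:compactness} for~\eqref{eqn:tL-estimates}. You are actually a bit more explicit than the paper in spelling out the transposition-invariance step that converts bounds on $P^{-1}$ into bounds on $\tL^{-1}=(P^{-1})^T$, which the paper leaves implicit.
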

\begin{proof}
    The identities in~\eqref{eqn:tL-decomp} follow from Proposition~\ref{prop:L-decomp} by adding the action of $\tL$ on constants. Combining~\eqref{eqn:tL-decomp} with the estimates of Section~\ref{subsubsec:compactness}, gives~\eqref{eqn:tL-estimates}. The identities in~\eqref{eqn:tL-zero} follow from the fact that $P^T$ and thus $(P^{-1})^T$ is lower triangular. Note that we also have that for $u,v\in H^1(\nu)$,
    \begin{equation*}
    \langle u, \tL v\rangle = \int_{\R}u'v'\d\nu +\int_{\R}u\d\nu\int_{\R}v\d\nu = (u,v).\qedhere
    \end{equation*}
\end{proof}
Since $[\tL^{-1}]_{L^2\to H^1} = (P^{-1})^T$ which is a bounded (actually compact from Theorem~\ref{thm:quant-compact}) $\ell^2$-operator, $\tL^{-1}$ is bounded and compact as an operator from $L^2(\nu)$ to $H^1(\nu)$. Moreover, by the Sobolev embeddings recalled in Section~\ref{sec:Sobolev-embeddings}, $u\in H^1(\nu)$ implies $e^{-V/2}u\in L^{\infty}(\R)$ and thus $e^{-V}u^3\in L^2(\nu)$. Therefore, $F:H^1(\nu)\rightarrow H^1(\nu)$ is bounded and a compact perturbation of the identity. Given an approximate solution $\bar{u}$ to Eq.~\eqref{eqn:GP-F}, this motivates the introduction of an operator $A:H^1(\nu) \rightarrow H^1(\nu)$ defined as
\begin{equation}\label{eqn:A-form}A = A_n +\iPH =     \begin{pmatrix}
       A_n & 0 \\
       0 & \id
    \end{pmatrix},\end{equation}
where $A_n \approx (\fPH DF(\bar{u})\fPH)^{-1}$ (note that $\fPH DF(\bar{u})\fPH$ is finite-dimensional, hence this approximate inverse can be computed numerically), so that $A\approx DF(\bar{u})^{-1}$. This approximate inverse $A$ can then be used to build a quasi-Newton (fixed-point) operator
\begin{equation}\label{eqn:T-def}
\begin{tabular}{c c c c}
    $T : $&$\mathcal{H}$& $\longrightarrow$ & $\mathcal{H}\qquad\qquad$\\
    &$\;u$ &$\longmapsto$ &$u- AF(u)$,
\end{tabular}
\end{equation}
where
$$\mathcal{H} = \left\{u\in H^1(\nu)\mid u\text{ is even}\right\}.$$

This is a common reformulation for computer-assisted proofs, and we will show that $T:\mathcal{H}\rightarrow\mathcal{H}$ is a contraction around $\bar{u}$, such that there exists a true solution $u$ to Eq.~\eqref{eqn:GP-reformulation} in a tight and explicit neighbourhood around $\bar{u}$. Note that by Lemma~\ref{lem:annoying-bounds}, if we find a solution $u\in \mathcal{H}\subset H^1(\nu)$ to Eq.~\eqref{eqn:GP-reformulation}, then $\varphi = e^{-V/2}u$ satisfies the integrability condition~\eqref{eqn:GP-cond}. Finally, the positivity of a solution $u$ may be checked using the criterion of Appendix~\ref{app:positivity}.




\begin{rmk}
    Any Schr\"odinger operator with sextic potential
    $$-\Delta + \frac{1}{4}x^6 -\frac{\kappa}{2}x^4 +ax^2 +b, $$
    is a tridiagonal symmetric (i.e.~Jacobi) operator with respect to the weighted basis $\{e^{-V/2}p_n\}_{n = 0}^{\infty}$, where $V(x) = x^4/4-\kappa x^2/2$.
\end{rmk}


\subsection{The Newton--Kantorovich theorem}

In order to show that $T$ is actually a contraction on a neighbourhood of $\bar{u}$, we make use of the following statement, which is used in many computer-assisted proofs, and can be interpreted as a kind of Newton--Kantorovich theorem when its assumptions are expressed in terms of $F$ rather than $T$.

\begin{thm}\label{thm:N-K}
    Let $T$ be as in~\eqref{eqn:T-def} and suppose that for some $Y, Z_1, Z_2, Z_3$
    \begin{align}
    \|T(\bar{u})-\bar{u}\|_{H^1} &\leq Y,\label{eqn:Ycond}\\
    \|D^kT(\bar{u})h^k\|_{H^1} &\leq Z_k\|h\|^k, \qquad \text{for all $h\in H^1(\nu)$, with  $k = 1,2, 3$.}\label{eqn:Zcond}
    \end{align}
    If there exists $\delta>0$ such that
    \begin{align}
        Y +Z_1 \delta + \frac{1}{2}Z_2\delta^2 + \frac{1}{6}Z_3\delta^3 \leq \delta, \label{eq:condNK1}\\
        Z_1 + Z_2\delta + \frac{1}{2}Z_3 \delta^2  < 1,\label{eq:condNK2}
    \end{align}
       then $T$ has a unique fixed point $u^{\star} \in \bar{B}(\bar{u},\delta) \subset \mathcal{H}$.
\end{thm}
\begin{proof}
    See for instance~\cite[Theorem 2.1 and Corollary 4.4]{Breden2019RigorousPaths}, where this version of the theorem is proved in a more general form, and where further historical references are given.
\end{proof}

\subsection{The bounds}\label{subsec:bounds}
We first compute an approximate solution $\bar{u} \in\fPH(\mathcal{H})\subset \mathcal{C}^{\infty}(\R)$, for $n=2500$ (such that $n>N = 2,187$ in the case $\kappa = 4$ of Theorem~\ref{thm:quant-compact}).
Note that this implies that $\fPL\bar{u} = \fPH \bar{u} = \bar{u}$ and $\iPL\bar{u} = \iPH \bar{u} = 0$. We then derive below appropriate bounds $Y, Z_1, Z_2, Z_3$ to prove the existence of an actual solution $u$ to Eq.~\eqref{eqn:GP-F} (and thus to Eq.~\eqref{eqn:GP-reformulation}) close to $\bar{u}$ via Theorem~\ref{thm:N-K}. Our estimates rely on a carving of the problem in the fashion of Section~\ref{subsubsec:compactness}. Indeed, we choose $Z_1$ by treating finite and infinite dimensional parts separately, and therefore take
    \begin{equation}\label{eqn:Z1-def}Z_1 := \left\|\left(\begin{matrix}
        Z^{11} & Z^{12}\\
        Z^{21} & Z^{22}
    \end{matrix}\right)\right\|_{\ell^2}\end{equation}
    where the $Z^{ij}$ are real numbers satisfying
    \begin{align}
    \label{eq:Z1ij}
        \begin{cases}
        \|\fPH DT(\bar{u})\fPH\|_{H^1} &\leq Z^{11},\\
        \|\iPH DT(\bar{u})\fPH\|_{H^1} &\leq Z^{21},\\
        \|\fPH DT(\bar{u})\iPH\|_{H^1} &\leq Z^{12},\\
        \|\iPH DT(\bar{u})\iPH\|_{H^1} &\leq Z^{22}.\\
        \end{cases}
    \end{align}
    By Lemma~\ref{lem:seq-norm}, that implies $\|DT(\bar{u})h\|_{H^1}\leq Z_1$. To apply Theorem~\ref{thm:N-K}, we can use the following bounds which rely on the crucial estimates derived in Section~\ref{subsec:compact-est}.
\begin{prop}\label{prop:GP-bounds}
    Consider $T = \id - AF$, with $F$ as in~\eqref{eqn:GP-F} and $A$ as in~\eqref{eqn:A-form}. Let
    \begin{align*}
        Y&:=\Bigg\{\| A_n(\bar{u} -\fPH\tL^{-1}\fPL f(\bar{u}))\|_{H^1}^2\\
        &\qquad+\frac{1}{n^{3/2}}\left\{\frac{\beta_n}{C_{\alpha}\sqrt{1-\theta^2}}\left|(P_{\leq n}^{-1})_{:,-1}^{T}\left[\fPL f(\bar{u})\right]_{L^2}\right|+C_{22}\left(\| e^{-V}\bar{u}^3\|_{L^2}^2-\|\fPL (e^{-V}\bar{u}^3)\|_{L^2}^2\right)^{1/2}\right\}^2\Bigg\}^{1/2},\\
        Z^{11} &:=\|[\fPH]_{H^1}- [A_n]_{H^1} [\fPH DF(\bar{u})\fPH]_{H^1}\|_{\ell^2},\\
        Z^{21} &:=\frac{1}{n^{3/4}}\Bigg\{\frac{\beta_n}{C_{\alpha}\sqrt{1-\theta^2}} \left\|((P_{\leq n}^{-1})_{:,n})^T\left[\fPL Df(\bar{u})\fPH\right]_{H^1\to L^2}\right\|_{\ell^2}\\
        &\qquad+3C_{22}\left(\sum_{m=0}^n\|e^{-V}\bar{u}^2 q_m\|_{L^2}^2-\|\fPL(e^{-V}\bar{u}^2 q_m)\|_{L^2}^2\right)^{1/2}\Bigg\},\\
        Z^{12} &:=\frac{1}{n^{3/4}}\Bigg\{\frac{\beta_n}{C_{\alpha}\sqrt{1-\theta^2}}\left\|\left[A_n\right]_{H^1}(P_{\leq n}^{-1})^{T}\left[\fPL Df(\bar{u})\fPL\right]_{L^2}(P_{\leq n}^{-1})_{:,-1}\right\|_{\ell^2}+3C_{22}\left\|\left[A_n\right]_{H^1}(P_{\leq n}^{-1})^T w\right\|_{\ell^2}\Bigg\},\\
        Z^{22} &:= \frac{C^2}{n^{3/2}}\left\{3\|e^{-V/2}\bar{u}\|_{\infty}^2 +|\omega|\right\},\\
        Z_2 &:=6Z(C_{\cJ}+1)\max(1,C_P)^{3/2}\|A\|_{H^1}\|\bar{u}\|_{L^2},\\
        Z_3 &:=6Z(C_{\cJ}+1)\max(1,C_P)^{5/2}\|A\|_{H^1},
    \end{align*}
    where $\beta_n$ is given in~\eqref{eqn:alpha-beta-def}, $\theta$ and $C_\alpha$ in Corollary~\ref{cor:theta-bound}, $C_{22}$ in~\eqref{eqn:C22-def}, $C$ in~\eqref{eqn:C-def}, $C_{\cJ}$ in Lemma~\ref{lem:annoying-bounds}, and $C_P$ is the Poincaré constant (see Section~\ref{sec:quant-poinc}).
    If $Z_1$ be as in~\eqref{eqn:Z1-def}, then the bounds $Y, Z_1, Z_2, Z_3$ satisfy the conditions~\eqref{eqn:Ycond} and~\eqref{eqn:Zcond}.
\end{prop}
\begin{proof}
    See Appendix~\ref{app:GP-bounds}.
\end{proof}

Note that the actual calculation of these bounds relies on rigorously computing integrals of polynomials against Freud-weights. This is achieved using a quadrature rule given in Appendix~\ref{app:quad} and implemented in the file \texttt{GP\_eq/quadrature.jl} at~\cite{Chu2025Huggzz/Freud}.
We now have all the required ingredients for proving Theorem~\ref{thm:GP1} and Theorem~\ref{thm:GP2}.

\medskip

\noindent\textit{Proof of Theorem~\ref{thm:GP1}.} Using the file \texttt{GP\_eq/proof1.ipynb}  and the approximate solution $\bu_1$ stored in \texttt{GP\_eq/ubar1}, both available at~\cite{Chu2025Huggzz/Freud}, we compute the bounds $Y, Z_1, Z_2, Z_3$ given in Proposition~\ref{prop:GP-bounds}. Still using \texttt{GP\_eq/proof1.ipynb}, we then check that~\eqref{eq:condNK1} and~\eqref{eq:condNK2} hold, with $\delta = 2\times 10^{-102}$. Theorem~\ref{thm:N-K} then yields the existence of a unique zero $u_1^\star$ of $F$ (defined in~\eqref{eqn:GP-F}) such that $\Vert u_1^\star - \bu_1\Vert_{H^1(\nu)}\leq \delta$. Defining $\varphi_1^\star = e^{-V/2}u^\star_1$ and $\bar{\varphi}_1 = e^{-V/2}\bu_1$, we get that $\varphi^\star_1$ solves~\eqref{eqn:GP-intro}, and, according to Lemma~\ref{lem:annoying-bounds},
\begin{equation*}
    \Vert \varphi_1^\star - \bar{\varphi}_1\Vert_{H^1(\R)} \leq \sqrt{\cZ}\left(\max(1, C_P)+\frac{(C_{\cJ}+1)^2}{4}\right)^{1/2} \delta \leq 3.89\times 10^{-101}.
\end{equation*}
Finally, the strict positivity of $\varphi_1^\star$ is obtained using Lemma~\ref{lem:pos-GP} \hfill\qed

\medskip

\noindent\textit{Proof of Theorem~\ref{thm:GP2}.} The proof is the same as the one of Theorem~\ref{thm:GP1}, except we use the approximate solution $\bu_2$ stored in \texttt{GP\_eq/ubar2}, and then the file \texttt{GP\_eq/proof2.ipynb} to compute the corresponding bounds. \hfill\qed





%

\section{A computer-assisted proof of stochastic resonance}
\label{sec:stoc-res}
\subsection{Description of the problem}

In their early 1980s seminal papers~\cite{Benzi1981TheResonance, Benzi1983AChange}, Benzi, Parisi, Sutera and Vulpiani introduced the notion of \emph{stochastic resonance} and gave heuristics for its occurrence in the noisy Duffing oscillator
\begin{equation}\label{eqn:resonance}\tag{SR} \d x_t =(-x^3_t +\kappa x_t +\eta\cos(\omega t))\d t + \sigma \d B_t,\end{equation}
i.e., a Brownian particle evolving in a double-well potential tilted by a periodic forcing. They consider the situation in which this periodic forcing is weak, meaning that, in the absence of noise, $\eta$ is too small for the solution to switch periodically between the two wells (that is, $\eta < 2\kappa^{3/2}/3\sqrt{3})$. However, the addition of noise can counterintuitively enable the appearance of periodic-like behaviour with large amplitude. Indeed,
\begin{itemize}
    \item if $\sigma$ is large enough, the dynamics are mostly Brownian and the particle explores both wells at all times;
    \item if $\sigma$ is small enough, the particle remains in the same well for long time intervals, a behaviour close to the case $\sigma = 0$, and switches well irregularly; 
    \item there exists a ``sweet spot'' for $\sigma$, as illustrated in Figure~\ref{fig:resonance-paths} and Figure~\ref{fig:resonance-density}, at which the noise enhances the periodic forcing, making the particle switch well at very regular time intervals, with a ``period'' corresponding to that of the weak forcing.
\end{itemize}
\begin{figure}[h]
    \centering
    \includegraphics[width=0.6\textwidth]{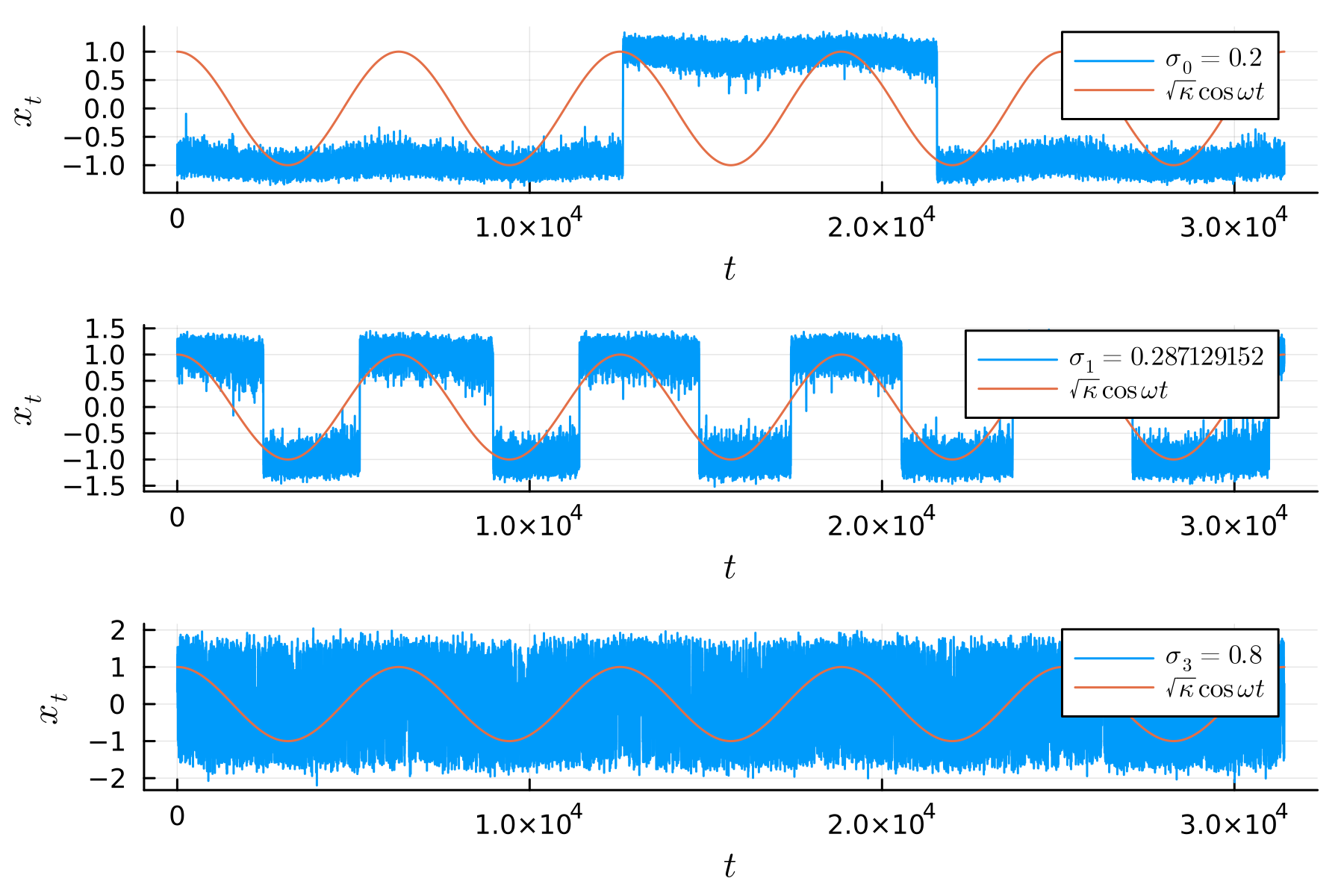}
    \caption{\centering Sample paths of $(x_t)_{t\geq 0}$ for $\eta = 0.12$, $\kappa = 1$ and $\omega = 0.001$ and various noise levels $\sigma$.\label{fig:resonance-paths}}
\end{figure}
Stochastic resonance was initially introduced in climate modelling; it has since been relevant in many fields of application such as ecology, engineering, finance, neuroscience, physics etc. (see for instance~\cite{McNamara1989TheoryResonance, Wiesenfeld1995StochasticSQUIDs} for reviews of applications). Although stochastic resonance is well understood heuristically (e.g. see~\cite[§7.4]{Pavliotis2014StochasticApplications} or~\cite[§1]{Herrmann2013StochasticResonance}) and has been extensively studied in numerical simulations~\cite{Benzi1983AChange, Cherubini2017AResonance, Gammaitoni1998StochasticResonance}, there has been no rigorous validation of this phenomenon for the SDE~\eqref{eqn:resonance}. To the best of our knowledge, the only rigorous mathematical analysis of stochastic resonance for~\eqref{eqn:resonance} has been conducted by Berglund and Gentz~\cite{Berglund2002APotential} when the system is close to forming a single-well potential at times in $\pi\N$ (that is $\eta\approx 2\kappa^{3/2}/3\sqrt{3}$) and $\sigma$ is small so that trajectories of the random system can be compared with those of the deterministic one. Let us also mention the work~\cite{Herrmann2013StochasticResonance} and references therein which show how systems of the type~\eqref{eqn:resonance} can be approximated by a two-state continuous-time Markov chain in the small noise limit $\sigma \to 0$.


Since stochastic resonance is characterised by the probability that a particle is located in a particular well at a given time, it can be captured by the probability density function $p(t, x)$ of $x_t$, which solves the Fokker--Planck equation
\begin{equation}
\label{eqn:FP-p}\pdiff{p}{t} = -\pdiff{}{x}\Big((-x^3+\kappa x+\eta\cos\omega t)p\Big)+\frac{\sigma^2}{2}\pdiff{^2p}{x^2}.
\end{equation}
Asymptotically, the statistics of $(x_t)_{t\geq 0}$ follow a so-called \emph{stationary periodic density} function $\varrho_{\sigma}$ solving Eq.~\eqref{eqn:FP-p} on the time-periodic domain $(\T/\omega)\times \R$. Indeed~\cite{Feng2023ExistenceEquations}, there exists $\lambda>0$ such that
$$\lim_{t\to\infty}\|p(t,\cdot)-\varrho_{\sigma}(t, \cdot)\|_{L^1(\R)}\lesssim e^{-\lambda t}.$$
Figure~\ref{fig:resonance-density} illustrates that some kind of local optimum with respect to the noise intensity $\sigma$ can also be observed at the level of the stationary periodic densities.


In the literature, several quantities have been introduced with the aim of characterising stochastic resonance~\cite{Benzi1983AChange,Gammaitoni1998StochasticResonance}, sometimes in a much broader context than the one of~\eqref{eqn:resonance}. In this work, we study the indicator proposed in~\cite{Gammaitoni1998StochasticResonance}, 
\begin{equation}
\label{eq:defR}
    \mathcal{R}(\varrho_{\sigma}) = \max_{t\in \T/\omega}\left|\int_{\R}x\varrho_{\sigma}(t, x)\d x\right|.
\end{equation}
If stochastic resonance occurs, this indicator should be maximised when the periodicity of the dynamics of $(x_t)_{t\geq 0}$ is the most enhanced, and closer to zero when $\sigma$ is too large or too small. Using some of the tools introduced in Section~\ref{sec:weighted-Sobolev-spaces} and~\ref{sec:Vpoly}, we prove the existence of $\sigma_{SR}>0$ such that $\sigma\mapsto \cR(\varrho_\sigma)$ has a local maximum at $\sigma=\sigma_{SR}$, thereby proving that~\eqref{eqn:resonance} exhibits stochastic resonance. We also give a rigorous enclosure for $\sigma_{SR}$, as well as a rigorous lower bound for $\cR(\varrho_{\sigma_{SR}})$. 
\begin{figure}[h]
    \centering
    \includegraphics[width=0.6\textwidth]{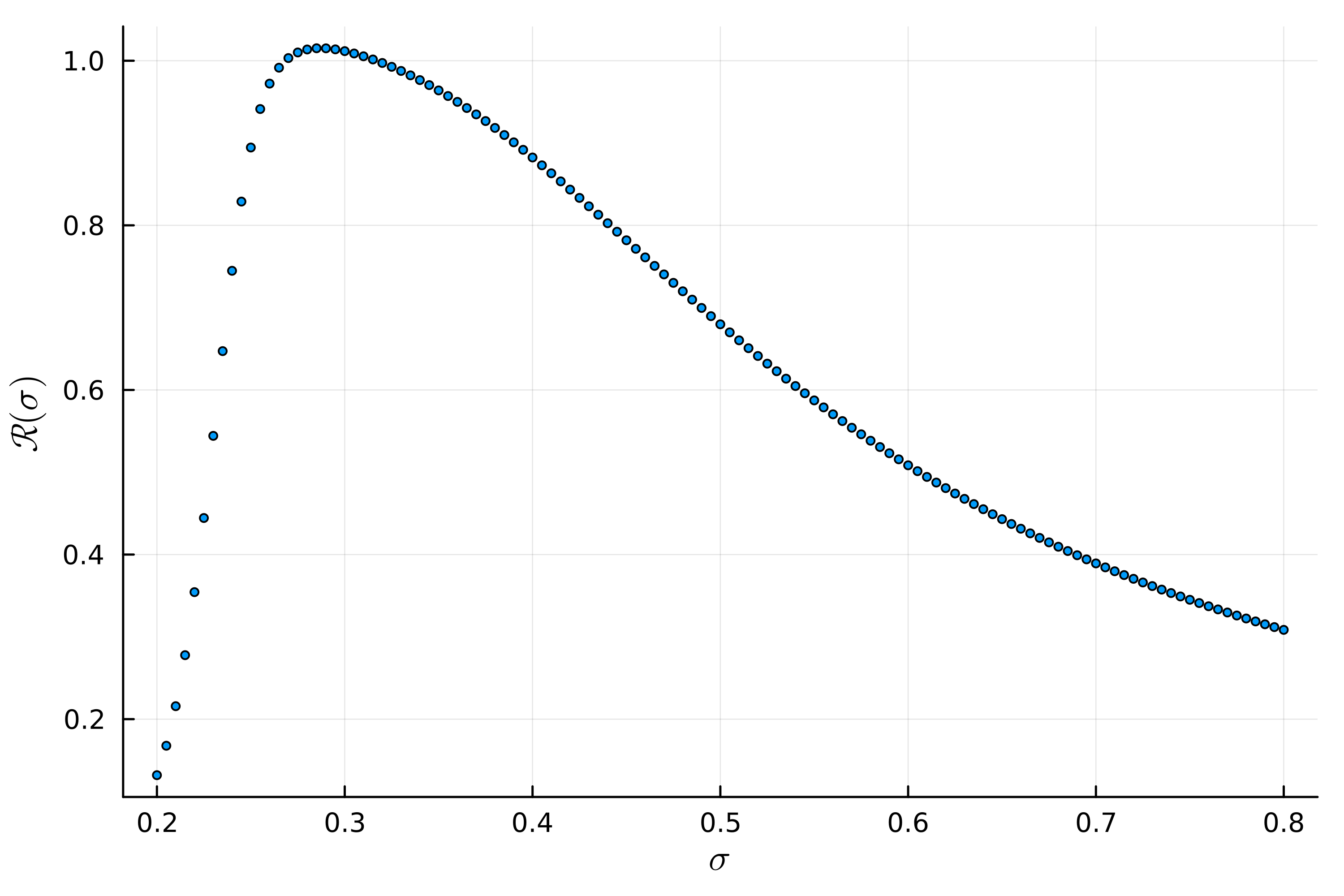}
    \caption{\centering The stochastic resonance indicator $\mathcal{R}_{\sigma}$, rigorously enclosed for $\eta = 0.12$, $\kappa = 1$, $\omega = 0.001$ and at $121$ equally spaced noise levels $\sigma\in [0.2, 0.8]$.\label{fig:indicator}}
\end{figure}
\begin{thm}\label{thm:stoc-res}
    Let $\eta = 0.12$, $\kappa = 1$ and $\omega = 0.001$, then there exists a local maximum 
    $$\sigma_{SR}\in(0.28712915\mathbf{1}, 0.28712915\mathbf{3})$$ for the function $\sigma \mapsto \cR(\varrho_{\sigma})$. Furthermore, $\cR(\varrho_{\sigma_{SR}})\geq 1.0152811464$.
\end{thm}
In fact, we are able to obtain much more quantitative information on $\cR$, summarised in the following statement, which provides a quantitative but rigorous description of the stochastic resonance in~\eqref{eqn:resonance}.
\begin{thm}\label{thm:stoc-res-pic}
    All the values of $\cR(\varrho_\sigma)$ depicted in Figure~\ref{fig:indicator} are correct up to $1.2\times 10^{-16}$. The precise values $(\sigma, \cR(\varrho_\sigma))$ used to produce this plot can be found in the file~\texttt{stoc\_res/Rsigmas} at~\cite{Chu2025Huggzz/Freud}.
\end{thm}
In order to prove Theorem~\ref{thm:stoc-res} and Theorem~\ref{thm:stoc-res-pic}, we rigorously enclose the stationary periodic density $\varrho_\sigma$ for various values of $\sigma$. That is, we obtain an approximate stationary periodic density $\bar\varrho_\sigma$ and an explicit error bound, as exemplified in Theorem~\ref{thm:rho-intro} for a specific value of $\sigma$. Since $\bar\varrho_\sigma$ is known explicitly, we can then rigorously compute $\cR(\bar\varrho_\sigma)$
and the error bound between $\bar\varrho_\sigma$ and $\varrho_\sigma$ then yields an error bound between $\bar\cR(\varrho_\sigma)$ and $\cR(\bar\varrho_\sigma)$. We emphasise that our approach allows for a much finer analysis than the mere computation of $\cR$, as we obtain a rigorous enclosure of the stationary periodic density, and therefore we could also get rigorous enclosures for different indicators of stochastic resonance, such as the spectral power amplification~\cite[§1.3]{Herrmann2013StochasticResonance}, as long as they can be explicitly computed from $\varrho_\sigma$.
\begin{rmk}
\label{rem:Rnoabs}
    Note that the absolute value in the definition of $\cR$ can be removed, i.e.,
    \begin{equation*}
        \mathcal{R}(\varrho) = \max_{t\in \T/\omega}\int_{\R}x\varrho(t, x)\d x,    \end{equation*}
        see Remark~\ref{rem:Rabsolutevalue}, which makes it easy to rigorously compute the maximum in time for a given $\bar\varrho$.
\end{rmk}

To the best of our knowledge, the stationary periodic density $\varrho$ has only been approximated numerically before using Monte Carlo simulations, which is by far the most widely used method to compute time-dependent averages of observables for SDEs. However, it is clear that our method provides bounds on $\varrho$ orders of magnitude smaller than what one can expect with a Monte-Carlo approximation. Furthermore, our method is much simpler computationally both in terms of memory and computation time, as using a Monte Carlo method for this problem would require to sample many realisations of $(x_{2\pi n/\omega +\tau })_{n\in\N}$ for each phase $\tau>0$.

In Section~\ref{sec:stoc-res}, we explain how the orthogonal polynomials introduced in Section~\ref{sec:weighted-Sobolev-spaces} and the quantitative study of their properties conducted in Section~\ref{sec:freud-sobolev} enable us to prove Theorem~\ref{thm:rho-intro}, Theorem~\ref{thm:stoc-res} and Theorem~\ref{thm:stoc-res-pic}. More precisely, in Section~\ref{sec:setting-resonance} we first rescale the problem and introduce suitable spaces for conducting a computer-assisted proof for $\varrho$. A high-level outline of the proof of Theorem~\ref{thm:rho-intro}, which highlights the main difficulties, is then given in Section~\ref{sec:outlineproof}, whereas the main quantitative estimates are presented in Section~\ref{sec:estimates-resonance}. All these ingredients are then combined in Section~\ref{sec:resonance-proofs} where the computer-assisted proofs of Theorem~\ref{thm:rho-intro}, Theorem~\ref{thm:stoc-res} and Theorem~\ref{thm:stoc-res-pic} are presented, whereas the most technical steps are postponed to Appendix~\ref{sec:appendix_resonance}.

\subsection{Setting for the computer-assisted proof}
\label{sec:setting-resonance}

Our main task is to find an explicit and rigorous enclosure for a time-periodic solution $\varrho=\varrho(t,x)$, with frequency $\omega$,
of the Fokker-Planck equation~\eqref{eqn:FP-p}.
In order to reuse the framework introduced in Section~\ref{sec:weighted-Sobolev-spaces} and Section~\ref{sec:freud-sobolev}, we first rescale the problem, using the change of variables
\begin{equation}\label{eqn:change-var}
\tilde{\eta} =\frac{2^{3/4}}{\sigma^{3/2}}\eta, \qquad \tilde{\kappa} =\frac{2^{1/2}}{\sigma}\kappa, \qquad \tilde{\omega} = \frac{2^{1/2}}{\sigma}\omega,\qquad s=\omega t,\qquad y= \frac{2^{1/4}}{\sigma^{1/2}}x,\end{equation}
and the change of unknown
\begin{equation}\label{eqn:resonance-u-def}
    \varrho(t, x) = (1+u(s, y))\frac{\sqrt[4]{2}}{\sqrt{\sigma}}\frac{e^{-y^4/4+\tilde{\kappa} y^2/2}}{\cZ}, \qquad \cZ = \int_\R e^{-y^4/4+\tilde{\kappa} y^2/2} \d y,
\end{equation}
which yields the following equation for $u$:
\begin{equation}
\label{eq:utilde}
    -\tilde{\omega} \pdiff{u}{s} + (-y^3+ \tilde{\kappa} y)\pdiff{u}{y} +\pdiff{^2u}{y^2}+\tilde{\eta}\cos s\left[(y^3-\tilde{\kappa} y) u -\pdiff{u}{y}\right] = \tilde{\eta}(-y^3+\tilde{\kappa} y)\cos s.
\end{equation}

\begin{rmk}
    The normalisation factor $\cZ$, which appears in several estimates to come, can be expressed as
\begin{equation*}
    \cZ = \frac{\pi}{2}e^{\tilde{\kappa}^2/8}\sqrt{\tilde{\kappa}}\left(\mathcal{I}_{-1/4}(\tilde{\kappa}^2/8)+\mathcal{I}_{1/4}(\tilde{\kappa}^2/8)\right),
\end{equation*}
where $\mathcal{I}_{\gamma}$ denotes the modified Bessel function of the first kind with parameter $\gamma$, and thus can be rigorously computed using \texttt{Arb}~\cite{Johansson2017ArbArithmetic}. 
\end{rmk}


With the operator
\begin{equation*}
    \cL = V'(y)\pdiff{ }{y}  - \pdiff{^2}{y^2},\qquad V(y) = \frac{y^4}{4}-\tilde{\kappa}\frac{y^2}{2},
\end{equation*}
and the so-called probability flux (or current) operator
$$\mathcal{J} = -V'(y) +\pdiff{}{y} = - \mathcal{D}^T,$$
both already introduced in Section~\ref{sec:weighted-Sobolev-spaces}, the equation satisfied by $u$ writes 
\begin{equation}\label{eqn:resonance-u-eq}
    \tilde{\omega} \pdiff{u}{s} + \mathcal{L} u+\tilde{\eta}\cos s\mathcal{J}u = 
    \tilde{\eta} V'(y)\cos s.
\end{equation}
In order to enforce that $\rho$ has mass one, we also impose that
\begin{equation}\int_{-\infty}^{+\infty} u(s, y) \nu(y) \d y = 0 \qquad \mbox{for all $s\in \T$},\end{equation}
where $\nu(\d y) = (e^{-V(y)}/\cZ)\d y$.
With this zero-mean condition, we instead choose to solve for $v = (\tilde{\omega}\partial_s +\mathcal{L})u$ such that Eq.~\eqref{eqn:resonance-u-eq} is equivalent to the linear problem
%
%
%
\begin{equation}
\label{eq:Fresonance}
    \cF v = g,
\end{equation}
where $\mathcal{F} = \id + \tilde{\eta} \cos{s}\mathcal{J}(\tilde{\omega}\partial_s+\mathcal{L})^{-1}$, and $g(s,y) = \tilde{\eta} V'(y)\cos s$. It is for this problem~\eqref{eq:Fresonance} that we are going to obtain computable error bounds between the exact solution and an approximate one. 

\begin{rmk}
    Solving for $v$ instead of for $u$ can be interpreted as changing the topology in which we solve the problem. This is not mandatory, but leads to slightly less cumbersome calculations in practice, essentially because some of the finer estimates for $\mathcal{J}(\tilde{\omega}\partial_s+\mathcal{L})^{-1}$ turned out to be somewhat simpler than their counterparts for $(\tilde{\omega}\partial_s+\mathcal{L})^{-1}\mathcal{J}$.
\end{rmk}


We now introduce the Banach space in which we are going to solve~\eqref{eq:Fresonance}. We consider the sequence space
\begin{equation*}
    X := \ell^1\left(\ell^2(\N^*)\right)\subset \ell^2(\N^*)^{\Z},
\end{equation*}
that is, 
\begin{equation*}
    X = \left\{ a = \left( a_{m,n} \right)_{m\in\Z,n\in\N^*},\ \left\Vert a\right\Vert_X := \sum_{m\in\Z}\left(\sum_{n\in\N^*}a_{m,n}^2\right)^{1/2}<\infty \right\},
\end{equation*}
and the corresponding function space, incorporating some symmetries of the problem:
\begin{align*}
\mathcal{X} := \left\{u:(s,y)\mapsto \sum_{m\in2\Z}\sum_{n\in\N^*}a_{m, n}e^{i m s}p_{2n}(y)+\sum_{m\in(2\Z+1)}\sum_{n\in\N^*}a_{m,n}e^{i m s}p_{2n-1}(y) \mid a\in X\right\} \subset \cC^0(\T,L^2(\nu)).
\end{align*}
The norm on $\cX$ is the one inherited from $X$, that is, $\left\Vert u \right\Vert_{\cX} := \left\Vert a \right\Vert_X$. Note that for all $u\in \mathcal{X}$
$$\|u\|_{\mathcal{C}^0(\mathbb{T}, L^2(\nu))} := \sup_{s\in \T}\|u(s, \cdot)\|_{L^2(\nu)} \leq \|u\|_{\mathcal{X}}.$$
The basis associated to $\cX$ is given by
\begin{equation*}
    \mathcal{E} = \bigcup_{m \in \Z}\{e^{ims}\}\otimes \mathcal{P}_{[m]}, \qquad \mathcal{P}_{[m]} := \{p_{[m]+2n-2}\}_{n\in\N^*},
\end{equation*}
where
\begin{equation*}
    [m] := 
    \begin{cases}
         1\quad \text{if }m\text{ is odd},\\
         2\quad \text{if }m\text{ is even}.
    \end{cases}
\end{equation*}
Furthermore, for an element of $\ell^2(\N^*)$ we denote by $\Pi_{\leq N}$ the projection on its first $N$ components. In particular, if $a_{m, \cdot} = (a_{m, n})_{n\in \N^*}\in \ell^2(\N^*)$ corresponds to an element of $\overline{\mathrm{Span}\, \mathcal{P}_{[m]}}$, then $\Pi_{\leq n} a_{m, \cdot} = (a_{m,n})_{1\leq n\leq N}$ corresponds to an element of $\mathrm{Span}\{p_{[m]+2n-2}\}_{n=1}^N$, and similarly $\Pi_{>N}a_{m, \cdot}= (a_{m,n})_{n> N}$ corresponds to an element of $\overline{\mathrm{Span}\{p_{[m]+2n-2}\}_{n>N}}$.



We consider an approximate solution $\bar{v}$ of~\eqref{eq:Fresonance}, obtained numerically, and then derive a computable error bound between $\bar{v}$ and the exact solution $v$. Note that, in contrast to Section~\ref{sec:GP_eq}, the problem considered here is linear, which means we only need computable bounds on the residual error and on $F^{-1}$, where $F = [\mathcal{F}]_{\mathcal{E}}$ is the representation of $F$ with respect to the basis $\mathcal{E}$. In practice, it turns out to be slightly sharper to first introduce an approximate inverse $A:X\to X$ of $F^{-1}$, as was already done in Section~\ref{sec:GP_eq}. Our main task is then to prove that
\begin{equation}
\label{eq:Tresonance}
    T = \id - AF :X\to X
\end{equation}
has norm strictly less than $1$. This then yields an error bound for $v$, and thus error bounds for the density $\varrho$ and for the stochastic resonance indicator $\cR(\varrho)$, as summarised in the following statement.
\begin{thm}
\label{thm:linearNK}
Let $\varrho$ the solution of~\eqref{eqn:FP-p} of mass one, and $v=v(s,y)$ given by 
$$ 
v(s,y)= (\tilde{\omega} \partial_s  + \mathcal{L})\left(\frac{\cZ \sqrt{\sigma}}{\sqrt[4]{2}}e^{V(y)}\varrho\left(\frac{s}{\omega},\frac{\sqrt{\sigma}y}{\sqrt[4]{2}}\right) -1\right).
$$
    Let $\bar{v}\in \mathcal{X}$ and $Y, Z>0$ be such that $\|A(F[\bar{v}]_{\mathcal{E}}-[g]_{\mathcal{E}})\|_{X} \leq  Y$ and $\|T\|_{X}\leq Z$. 
    If $Z<1$, then
    \begin{equation}\label{eqn:res-v-enclosure}\|v - \bar{v}\|_{\mathcal{X}}\leq \frac{Y}{1-Z}.\end{equation}
    Furthermore, denoting $\bar{\varrho} = \bar{\varrho}(t,x)$ given by
    $$ 
    \bar{\varrho}\left(\frac{s}{\omega},\frac{\sqrt{\sigma}y}{\sqrt[4]{2}}\right) =\left(1+(\tilde{\omega} \partial_s  + \mathcal{L})^{-1}\bar{v}(s,y)\right)\dfrac{\sqrt[4]{2}}{\sqrt{\sigma}} \dfrac{e^{-V(y)}}{\mathcal{Z}},
    $$
    we have the estimate
\begin{equation}\label{eqn:res-final-ineq}
    \|e^{V/2}(\varrho - \bar{\varrho})\|_{\mathcal{C}^0((\T/\omega)\times \R)}\leq  \sqrt[4]{2}\sqrt{\frac{C_{\cJ}+1}{\sigma\cZ}} C_P^{3/4}\|v-\bar{v}\|_{\mathcal{X}},
\end{equation}
where $C_{\cJ}$ is as in Lemma~\ref{lem:annoying-bounds} and $C_P$ is the Poincaré constant in~\eqref{eq:defCp}.
Finally,
\begin{align}
\label{eqn:est_Rsigma}
    |\cR(\varrho)-\cR(\bar\varrho)|\leq \frac{\sqrt{\sigma b_1}}{\sqrt[4]{2}} C_P \|v-\bar{v}\|_{\mathcal{X}},
\end{align}
with $b_1$ given by~\eqref{eqn:intro-bread-initial}.
\end{thm}
\begin{proof}By a Neumann series argument, we have that if $ \|T\|_{X}\leq Z<1$, then $AF$ is boundedly invertible and
$$\|(AF)^{-1}\|_{X} = \|(\id - (\id - AF))^{-1}\|_{X}\leq \|(\id - T)^{-1}\|_{X} \leq\frac{1}{1- Z}.$$
Therefore, since $\mathcal{F}v - g=0$
$$\|v - \bar{v}\|_{\mathcal{X}} = \|(AF)^{-1}A\left(F[v]_{\mathcal{E}}-[g]_{\mathcal{E}}-(F[\bar{v}]_{\mathcal{E}}-[g]_{\mathcal{E}}\right)\|_{X}\leq \|(AF)^{-1}\|_{X}\|A(F[\bar{v}]_{\mathcal{E}}-[g]_{\mathcal{E}})\|_{X}\leq \frac{Y}{1- Z}.$$
The embedding estimate~\eqref{eqn:res-final-ineq} is then a direct consequence of Lemma~\ref{lem:regularity}. Finally, noting that
\begin{align*}
    \left\vert \cR(\varrho) - \cR(\bar\varrho) \right\vert \leq \cR(\varrho-\bar\varrho),
\end{align*}
(see Remark~\ref{rem:Rabsolutevalue}), Lemma~\ref{lem:Rsigma} yields~\eqref{eqn:est_Rsigma}.
\end{proof}

Both Theorem~\ref{thm:rho-intro}, Theorem~\ref{thm:stoc-res} and Theorem~\ref{thm:stoc-res-pic} follow readily from Theorem~\ref{thm:linearNK}, provided we can obtain quantitative estimates $Y$ and $Z$. The derivation of such estimates is thus the most challenging part of the proofs of Theorem~\ref{thm:rho-intro}, Theorem~\ref{thm:stoc-res} and Theorem~\ref{thm:stoc-res-pic}.

\subsection{Main difficulties and outline of the strategy for obtaining the $Z$ bound}
\label{sec:outlineproof}
As is common in such computer-assisted proofs, our construction of $A$, which will be detailed below, relies on the fact that $\mathcal{F} = \id + \tilde{\eta} \cos{s}(\mathcal{J}(\tilde{\omega}\partial_s+\mathcal{L})^{-1})$ is a compact perturbation of the identity. 
From a theoretical point of view, the main ingredient required to derive the estimate $Z$ is then to quantify the compactness of $\mathcal{J}(\tilde{\omega} \partial_s +\mathcal{L})^{-1}$. Because we are looking for a periodic solution in time and therefore work with Fourier series, this essentially reduces to quantifying the compactness
on each fibre $\{e^{ims}\}\otimes L^2(\nu)$, that is, to study $\mathcal{J}(i\tilde{\omega} m +\mathcal{L})^{-1}$ on $L^2(\nu)$. Denoting 
\begin{align*}
    L_m :=\left[i\tilde{\omega} m +\mathcal{L}\right]_{\cP_{[m]}} \quad\text{and} \quad J_{[m]} := \left[\cJ\right]_{\cP_{[m]}\to\cP_{[m+1]}},
\end{align*}
these compactness estimates for fixed $m$ will be obtained by first computing a factorisation of $L_m$ the form
$$L_m = U_m^TU_m,$$
where $U_m$ is an upper-triangular infinite matrix. Then, we will prove that $J_{[m]} U_m^{-1}: \ell^2\to \ell^2$ is bounded and that $U_m^{-1}:\ell^2\to \ell^2$ is compact. We sometimes refer to $U_m^TU_m$ as a Cholesky decomposition of $L_m$, even though this is a slight abuse of terminology, as $L_m$ is not self-adjoint (because of the imaginary entries on its diagonal).

Compared to the situation of Section~\ref{sec:GP_eq}, where we dealt with the $m=0$ case, one extra technical difficulty that we have to face here is that we no longer have an explicit expression for the Cholesky factorisation. This will be dealt with by leveraging the tridiagonal structure of $L_m$, using ideas introduced in~\cite{Breden2015RigorousPart}. For any fixed $m\in\Z$, we will then be able to obtain quantitative compactness estimates for $U_m^{-1}$ and therefore for $\mathcal{J}(i\tilde{\omega} m +\mathcal{L})^{-1}$, i.e., compactness in space. Obtaining such estimates with an explicit decay in $m$, i.e., also getting quantitative compactness in time, would in principle be possible, but we can get away with only controlling $\mathcal{J}(i\tilde{\omega} m +\mathcal{L})^{-1}$ for finitely many $m$'s, and then showing some monotonicity with respect to $m$.

Let us now discuss a bit more the structure of the zero finding map $F$, and the associated construction of its approximate inverse $A$. Denoting
\begin{align*}
    B_m := \frac{\tilde{\eta}}{2} J_{[m]}L_m^{-1},
\end{align*}
we have
\begin{align*}
    F = \begin{pNiceMatrix}
        \ddots &\ddots & & & & &\\
        \ddots & &\Block[borders={bottom,left,top,right}]{1-1}{B_{-1}}& & & &\\
        & \Block[borders={bottom,left,top,right}]{1-1}{B_{-2}}& \id &  \Block[borders={bottom,left,top,right}]{1-1}{B_{0}}& & &\\
        & & \Block[borders={bottom,left,top,right}]{1-1}{B_{-1}}& \id &\Block[borders={bottom,left,top,right}]{1-1}{B_{1}} & &\\
        & & & \Block[borders={bottom,left,top,right}]{1-1}{B_{0}}&\id &\Block[borders={bottom,left,top,right}]{1-1}{B_{2}} &\\
        & & & &\Block[borders={bottom,left,top,right}]{1-1}{B_{1}} & &\ddots\\
        & & & & &\ddots &\ddots\\
    \end{pNiceMatrix}.
\end{align*}
That is, $F$ is block-tridiagonal, but each $B_m$ is full. Because we expect each $B_m:\ell^2\to \ell^2$ to be compact (compactness in space), and also $\|B_m\|_{\ell^2}$ to become arbitrarily small for large enough $m$ (compactness in time), we consider an approximate inverse $A$ of $F$ of the form
\begin{equation*}A = \begin{pNiceMatrix}[margin]
    \ddots & & & & & & & &\\
    & \Block[borders={top, bottom, left, right}]{1-1}{\id} & & & & & & & &\\
    & & \Block[borders={top, bottom, left, right}]{1-1}{\id} & & & & & & &\\
    & & & \Block[borders={top, bottom, left, right}]{3-5}{\bar{A}}& & & & & &\\
    & & & & & & & & & &\\
    & & & & & & & & & &\\
    & & & & & & & &\Block[borders={top, bottom, left, right}]{1-1}{\id} & &\\
    & & & & & & & & &\Block[borders={top, bottom, left, right}]{1-1}{\id} &\\
    & & & & & & & & & &\ddots
\end{pNiceMatrix}, \qquad \bar{A} = (A_{ij})_{-M\leq i,j\leq M},\qquad A_{ij} = \begin{pNiceMatrix}[margin]
  \Block[borders={bottom, right}]{3-3}{A^{11}_{ij}} & & & & & & & \\
  & & & & & & & \\
  & & & & & & & \\
  & & & \Block[borders={top, left}]{5-5}{\delta_{ij}\id}& & & &\\
  & & & & & & & \\
  & & & & & & &\\
  & & & & & & & \\
  & & & & & & & 
\end{pNiceMatrix}.
\end{equation*}
Here, $\delta_{ij}$ is the usual Kronecker symbol, $A^{11}_{ij} = \Pi_{\leq N}A_{ij}\Pi_{\leq N}$, and the finite-dimensional matrix $A^{11} = (A^{11}_{ij})_{-M\leq i, j\leq M}$ is taken as an approximate inverse of
\begin{equation}\label{eqn:F11}
\bar{F}^{11} :=\begin{pNiceMatrix}
    \id  &\Block[borders={left, bottom,top,right}]{1-1}{B^{11}_{1-M}} & & & & & & &\\
    \Block[borders={bottom,top,right}]{1-1}{B^{11}_{-M}}&\ddots &\ddots & & & & & &\\
    &\ddots &\Block[borders={left,top}]{1-1}{\id} &\Block[borders={bottom,left,top,right}]{1-1}{B^{11}_{-1}}& & & & &\\
    & & \Block[borders={bottom,left,top,right}]{1-1}{B^{11}_{-2}}&\id &  \Block[borders={bottom,left,top,right}]{1-1}{B^{11}_{0}}& & & &\\
    & & & \Block[borders={bottom,left,top,right}]{1-1}{B^{11}_{-1}}&\id  &\Block[borders={bottom,left,top,right}]{1-1}{B^{11}_{1}} & & &\\
    & & & & \Block[borders={bottom,left,top,right}]{1-1}{B^{11}_{0}}&\id  &\Block[borders={bottom,left,top,right}]{1-1}{B^{11}_{2}} & &\\
    & & & & &\Block[borders={bottom,left,top,right}]{1-1}{B^{11}_{1}} &\Block[borders={bottom,left,top,right}]{1-1}{\id }  &\ddots &\\
    & & & & & &\ddots &\ddots &\Block[borders={bottom,left,top}]{1-1}{B^{11}_{M}}\\
    & & & & & & & \Block[borders={bottom,left,top,right}]{1-1}{B^{11}_{M-1}} & \id 
\end{pNiceMatrix},
\end{equation}
where $B^{11}_m := \Pi_{\leq N}B_m \Pi_{\leq N}$. 
We emphasise that the precise way we construct $A^{11}$ is actually very important in practice, as it can have a major impact on the computational cost of the proof. This will be discussed further in Appendix~\ref{app:A-construction}.

Here and in the sequel, $\bar{\cdot}$ denotes a finite dimensional projection in time (typically, keeping the modes $\vert m\vert \leq M$), whereas $\cdot^{11}$ denotes a finite dimension projection in space. In particular, $\bar{F}^{11}$ is nothing but a finite dimensional projection in space of
$$\bar{F} := \begin{pNiceMatrix}
    \id  &\Block[borders={left, bottom,top,right}]{1-1}{B_{1-M}} & & & & & & &\\
    \Block[borders={bottom,top,right}]{1-1}{B_{-M}}&\ddots &\ddots & & & & & &\\
    &\ddots &\Block[borders={left,top}]{1-1}{\id} &\Block[borders={bottom,left,top,right}]{1-1}{B_{-1}}& & & & &\\
    & & \Block[borders={bottom,left,top,right}]{1-1}{B_{-2}}&\id &  \Block[borders={bottom,left,top,right}]{1-1}{B_{0}}& & & &\\
    & & & \Block[borders={bottom,left,top,right}]{1-1}{B_{-1}}&\id  &\Block[borders={bottom,left,top,right}]{1-1}{B_{1}} & & &\\
    & & & & \Block[borders={bottom,left,top,right}]{1-1}{B_{0}}&\id  &\Block[borders={bottom,left,top,right}]{1-1}{B_{2}} & &\\
    & & & & &\Block[borders={bottom,left,top,right}]{1-1}{B_{1}} &\Block[borders={bottom,left,top,right}]{1-1}{\id }  &\ddots &\\
    & & & & & &\ddots &\ddots &\Block[borders={bottom,left,top}]{1-1}{B_{M}}\\
    & & & & & & & \Block[borders={bottom,left,top,right}]{1-1}{B_{M-1}} & \id 
\end{pNiceMatrix},$$
which is itself a projection of $F$ onto the Fourier modes $\vert m\vert\leq M$.

Now that at least the structure of $A$ is fixed, we can look at $T=\id-AF$, for which we get
\begin{align}
\label{eq:Tmatrix}T = -\begin{pNiceMatrix}[margin]
    \ddots&\ddots &\ddots & & & & & & & &\\
    & \Block[borders={top, bottom, left}]{1-1}{B_{\scriptscriptstyle-M-3}} & \Block[borders={top, bottom, left, right}]{1-1}{0} &\Block[borders={top, right}]{1-1}{B_{\scriptscriptstyle-M-1}} & & & & &\\
    & &\Block[borders={bottom, left}]{1-1}{B_{\scriptscriptstyle-M-2}} & \Block[borders={top, bottom, left, right}]{1-1}{0} &\Block[borders={top, right}]{1-1}{B_{\scriptscriptstyle-M}} & & & & &\\
    & & &\Block[borders={bottom, left}]{1-1}{A_{\scriptscriptstyle -M,-M}B_{\scriptscriptstyle -M-1}} & \Block[borders={top, bottom, left, right}]{5-3}{}& & &\Block[borders={top, right}]{1-1}{A_{\scriptscriptstyle -M,M}B_{\scriptscriptstyle M+1}} & & &\\
    & & &\Block[borders={bottom, left}]{1-1}{A_{\scriptscriptstyle 1-M,-M}B_{\scriptscriptstyle -M-1}} & & & &\Block[borders={top, bottom, right}]{1-1}{A_{\scriptscriptstyle 1-M,M}B_{\scriptscriptstyle M+1}} & & &\\
    & & &\vdots & & \bar{A}\bar{F}-\id & &\vdots & & &\\
    & & &\Block[borders={top,bottom, left}]{1-1}{A_{\scriptscriptstyle M-1,-M}B_{\scriptscriptstyle -M-1}} & & & &\Block[borders={top, bottom, right}]{1-1}{A_{\scriptscriptstyle M-1,M}B_{\scriptscriptstyle M+1}} & & &\\
    & & &\Block[borders={bottom, left}]{1-1}{A_{\scriptscriptstyle M,-M}B_{\scriptscriptstyle -M-1}} & & & &\Block[borders={right}]{1-1}{A_{\scriptscriptstyle M,M}B_{\scriptscriptstyle M+1}} & & &\\
    & & & & & &\Block[borders={bottom, left}]{1-1}{B_{\scriptscriptstyle M}} &\Block[borders={top, bottom, left, right}]{1-1}{0} &\Block[borders={top, right}]{1-1}{B_{\scriptscriptstyle M+2}} & &\\
    & & & & & & &\Block[borders={bottom, left}]{1-1}{B_{\scriptscriptstyle M+1}} &\Block[borders={top, bottom, left, right}]{1-1}{0} &\Block[borders={top, bottom, right}]{1-1}{B_{\scriptscriptstyle M+3}} &\\
    & & & & & & & &\ddots &\ddots &\ddots
\end{pNiceMatrix}.
\end{align}
Equivalently, writing $T=\left(T_{ij}\right)_{i,j\in\Z}$ the decomposition of $T$ in terms of temporal modes, we have
\begin{equation*}T_{ij} = \begin{cases}
    -B_j\quad &\mbox{if $i=j\pm 1$ and $|i|>M$},\\    
    \delta_{ij}\id - (A_{i,j-1}B_j +A_{i,j} + A_{i, j+1}B_j)\quad &\mbox{if $|i|,|j|\leq M-1$},\\
    \delta_{ij}\id - (A_{i,j\mp1}B_j +A_{i,j})\quad &\mbox{if $|i|\leq M$ and $j = \pm M$},\\
    -A_{i,j\mp1}B_j \quad &\mbox{if $|i|\leq M$ and $j = \pm (M+1)$},\\
    0 \quad &\mbox{otherwise}. \\
\end{cases}
\end{equation*}
Note that, if $|i|> M$ or $|j|>M$, then $A_{ij} = \delta_{ij}\id$, therefore we have the following more concise formula: 
\begin{align}
     \label{eq:Tij}
     T_{ij} &=  
    \delta_{ij}\id - (A_{i,j-1}B_j +A_{i,j} + A_{i, j+1}B_j) \nonumber \\
    &=  
    \delta_{ij}\id - A_{i,j} -  (A_{i,j-1}+ A_{i, j+1})B_j,
\end{align}
which is valid for all $i,j\in\Z$. We are going to obtain the estimate $Z$ by estimating each block $T_{i,j}$ independently.

\subsection{Estimates needed for the proof}
\label{sec:estimates-resonance}

We give here computable estimates $Y$ and $Z$ allowing to apply Theorem~\ref{thm:linearNK}. Explicit formulae for some of the constant appearing in these estimates, as well as their proof, can be found in Appendix~\ref{sec:appendix_resonance}.
We note that most of our estimates depend on the value of $\kappa$ in~\eqref{eq:utilde}, via the orthogonal polynomials $p_n$, and more precisely via estimates like~\eqref{eqn:intro-bn_bounds} on the behaviour of the recurrence coefficients $b_n = a_n^2$. Thus, any quantitative statement must be stated for a given value of $\kappa$. We do so below with the $\kappa$ used for proving Theorem~\ref{thm:rho-intro}, but analogous statements can of course be obtained for other values of $\kappa$, which is how we obtain Theorem~\ref{thm:stoc-res-pic}.

\begin{rmk}
    While we consider a single value of $\kappa$ in all our examples and only vary $\sigma$, we recall that the rescaled $\tilde{\kappa}$ from~\eqref{eqn:change-var} depends on $\sigma$, which is why we do need to consider different values of $\tilde{\kappa}$ for proving Theorem~\ref{thm:stoc-res-pic}.
\end{rmk}


The $Y$ estimate itself does not present any difficulty, provided the approximate solution $\bv$ is chosen appropriately. We first find numerically an approximate solution
\begin{equation}
\label{eq:space_ubar}
    \bu \in\mathrm{Span}\left\{ \bigcup_{\vert m\vert \leq M-1} \{e^{ims}\}\otimes \Pi_{\leq N-[m]}\cP_{[m]}\right\}
\end{equation} to equation~\eqref{eqn:resonance-u-eq}, and then define $\bar{v} := (\tilde{\omega}\partial_s+\mathcal{L})\bar{u}$. Thus, $\mathcal{F}(\bar{v}) = (\tilde{\omega}\partial_s + \mathcal{L}) \bu + \tilde{\eta}\cos s(\mathcal{J} \bu - V')$ belongs to 
\begin{equation*}
    \mathrm{Span}\left\{ \bigcup_{\vert m\vert \leq M} \{e^{ims}\}\otimes \Pi_{\leq N}\cP_{[m]}\right\},
\end{equation*}
therefore $F[\bv]_{\cE}$ can be computed exactly using interval arithmetic. Since $A$ is a finite-dimensional perturbation of the identity, $AF[\bv]_{\cE}$ can then also be computed exactly, and we simply take
\begin{equation}
    \label{eq:Yresonance}
    Y = \|AF[\bar{v}]_{\mathcal{E}}\|_{X} .
\end{equation}

Getting a computable $Z$ estimate is significantly more involved, and this is where all the quantitative compactness estimates come into play. According to Lemma~\ref{lem:seq-norm},
\begin{align*}
    \Vert T\Vert_X \leq \sup_{j\in \Z}\sum_{i\in\Z} \Vert T_{ij} \Vert_{\ell^2},
\end{align*}
which is our starting point for getting a computable $Z$ bound. The following lemma allows us to bypass quantitative compactness estimates in time, and to get away with estimating only finitely many of the $\Vert T_{ij} \Vert_{\ell^2}$.
\begin{lem}
    \label{lem:Bmdecreasing}
    For all $m\in\N$ and $k\in\N$, $\left\Vert B_{m+2k}\right\Vert_{\ell^2} \leq \left\Vert B_{m}\right\Vert_{\ell^2}$, and $\left\Vert B_{-m-2k}\right\Vert_{\ell^2} \leq \left\Vert B_{-m}\right\Vert_{\ell^2}$.
\end{lem}
\begin{proof}
We have that
    \begin{align*}\|B_{\pm (m+2k)}\|_{\ell^2\to\ell^2}  &= \|J_{[m]}L_{\pm (m+2k)}^{-1}\|_{\ell^2\to\ell^2} \\
    &\leq\|J_{[m]}L_{\pm m}^{-1}\|_{\ell^2\to\ell^2}\|L_{\pm m}L_{\pm (m+2k)}^{-1}\|_{\ell^2\to\ell^2}\\
    &=\|B_{\pm m}\|_{\ell^2\to\ell^2}\|L_{\pm m}L_{\pm (m+2k)}^{-1}\|_{\ell^2\to\ell^2}.
    \end{align*}
    Without loss of generality, assume that $m$ is even and denote by $(\rho_j, v_j)_{j\in \N}$ the eigenpairs of $L_0$, where the $\rho_j$'s are real positive and the $v_j$'s are orthonormal. Since $L_m =\left[i\tilde{\omega} m +\mathcal{L}\right]_{\cP_{[m]}}$, we have that $L_m v_j = (\rho_j+i\tilde{\omega} m)v_j $ for all $j$. Then for any $w = \sum_{j\in \N}w_j v_j \in \ell^2$,
    \begin{equation*}\|L_{\pm m}L^{-1}_{\pm (m +2k)}w\|_{\ell^2}^2
    = \sum_{j\in \N}\left|\frac{\rho_j \pm i\tilde{\omega} m}{\rho_j\pm i\tilde{\omega} (m +2k)}\right|^2w_j^2\leq \sup_{j\in \N}\frac{\rho_j^2 +\tilde{\omega}^2m^2}{\rho_j^2+\tilde{\omega}^2(m+2k)^2}\sum_{j\in \N}w_j^2\leq\|w\|_{\ell^2}^2. \qedhere
    \end{equation*}
\end{proof}
According to~\eqref{eq:Tmatrix} (or equivalently, to~\eqref{eq:Tij}), for all $j \geq M+2$ we have
\begin{align*}
    \sum_{i\in\Z} \Vert T_{ij} \Vert_{\ell^2}  =  \Vert T_{j-1,j} \Vert_{\ell^2} + \Vert T_{j+1,j} \Vert_{\ell^2} 
     = \Vert B_j \Vert_{\ell^2} + \Vert B_{j+2} \Vert_{\ell^2},
\end{align*}
and similarly, for all $j \leq -M-2$,
\begin{align*}
    \sum_{i\in\Z} \Vert T_{ij} \Vert_{\ell^2} & = \Vert B_j \Vert_{\ell^2} + \Vert B_{j-2} \Vert_{\ell^2}.
\end{align*}
Using Lemma~\ref{lem:Bmdecreasing}, and then once more the structure of $T$ (see again~\eqref{eq:Tmatrix}), we get
\begin{align*}
    \Vert T\Vert_X \leq \max_{\vert j\vert \leq M+3}\sum_{i\in\Z} \Vert T_{ij} \Vert_{\ell^2} = \max_{\vert j\vert \leq M+3}\sum_{\vert i\vert \leq M+4} \Vert T_{ij} \Vert_{\ell^2}.
\end{align*}
We thus reduced the problem of getting a computable $Z$ estimate to that of having to derive finitely many computable estimates $Z_{ij}$ such that
\begin{equation*}
    \|T_{ij}\|_{\ell^2} \leq Z_{ij} \quad \text{for }\vert i\vert \leq M+4,\ \vert j\vert \leq M+3.
\end{equation*}
Once such estimates are obtained, we consider
\begin{equation}
\label{eq:defZ1}
    Z := \max_{\vert j\vert \leq M+3}\sum_{\vert i\vert \leq M+4} Z_{ij},
\end{equation}
which then satisfies $\|T\|_{X} \leq Z$.

\begin{rmk}
\label{rem:Z1vsM}
Let us briefly explain why we can expect such a $Z$ to be less than one, i.e., why we can expect the $\ell^1$ norm of each ``block column'' of $T$ in~\eqref{eq:Tmatrix} to be less than one (where each entry of a block column is in fact an operator on $L^2$), at least when $M$ is taken large enough. For most of the block columns this is clear, as $\|B_j\|_{\ell^2}\to 0$ when $|j|\to \infty$, and $A$ was chosen so that $\bar{A}\bar{F} \approx \id$ (at least provided the truncation level $N$ used for defining $A^{11}$ is also taken large enough, this will be discussed more precisely in Remark~\ref{rem:Z1vsN}). For the columns of $T$ of index $\pm(M+1)$, this might be less obvious at first glance: even if $\|B_{\pm(M+1)}\|_{\ell^2}$ decrease with $M$, the number of non-zero terms on these two columns increases with $M$. However, given the block-tridiagonal structure of $\bar{F}^{11}$ (see Eq.~\eqref{eqn:F11}) with the off-block-diagonal elements $\|B_j\|_{\ell^2}\to 0$ as $|j|\to \infty$, we expect $\|A_{ij}\|_{\ell^2}$ to decay exponentially with $|i-j|$, and $\|A_{jj}\|_{\ell^2}$ to be of order $1$ for large $|j|$. Therefore, we expect the sum $\sum_{\vert i\vert \leq M} \|A_{i,\pm M}\|_{\ell^2}$ to be bounded uniformly in $M$. Using once more that $\|B_j\|_{\ell^2}\to 0$ as $|j|\to \infty$, the norm of the whole column, i.e.,
$$\sum_{|i|\leq M+4}\|T_{i,\pm (M+1)}\|_{\ell^2} = \|B_{\pm (M+1)}\|_{\ell^2} + \sum_{|i|\leq M}\|A_{i, \pm M}B_{\pm (M+1)}\|_{\ell^2},$$
should indeed become less than $1$ for $M$ large enough.

In practice, we may need to take $M$ relatively large for this to hold, as $\|B_{j}\|_{\ell^2}$ is of order $|\tilde{\omega} j|^{-1/2}$, where, for our examples, $\tilde{\omega} = \frac{0.001\sqrt{2}}{\sigma} \ll 1$. 
\end{rmk}
%

Even if we only have finitely many estimates $\|T_{ij}\|_{\ell^2} \leq Z_{ij}$ to obtain, each $T_{ij}$ itself is still an infinite dimensional operator (acting on $L^2(\nu) \simeq \ell^2$). Therefore, we further carve the $T_{ij}$'s in four parts

\begin{equation}\label{eqn:Tcarving}T_{ij} = \begin{pNiceMatrix}[margin]
  \Block[borders={bottom, right}]{2-3}{T^{11}_{ij}} & & &\Block[borders={bottom}]{2-5}{T^{12}_{ij}} & &  & & \\
  & & & & & & & \\
  \Block[borders={}]{3-3}{T^{21}_{ij}}& & & \Block[borders={top, left}]{3-5}{T^{22}_{ij}}& & & &\\
  & & & & & & & \\
  & & & & & & &
\end{pNiceMatrix},\end{equation}
where $T_{ij}^{11} = \Pi_{\leq N} T_{ij}\Pi_{\leq N-[j]}$, $T_{ij}^{12} = \Pi_{\leq N} T_{ij}\Pi_{> N-[j]}$, $T_{ij}^{21} = \Pi_{> N} T_{ij}\Pi_{\leq N-[j]}$ and $T_{ij}^{22} = \Pi_{> N} T_{ij}\Pi_{> N-[j]}$. 

We are going to estimate each of the four blocks separately, i.e., find constants $Z_{ij}^{kl}$ such that $\|T^{kl}_{ij}\|_{\ell^2}\leq Z^{kl}_{ij}$, and then take
\begin{equation}
\label{eq:defZij}
    Z_{ij} := \left\|\begin{pmatrix}
    Z^{11}_{ij} & Z^{12}_{ij}\\
    Z^{21}_{ij} & Z^{22}_{ij}
\end{pmatrix}\right\|_{\ell^2}.
\end{equation}
We emphasise that we did not use the same truncation level on the left (namely $N$) and on the right (namely $N-[j]$) in order to define the four blocks within $T_{i,j}$. This is not mandatory, but will allow us to make some of the bounds $Z_{ij}^{kl}$ simpler and sharper. In particular, the fact that the projection on the right depends on the parity of $j$ is related to the fact that $J_2$ starts with a row of zeros but $J_1$ does not (see~\eqref{eqn:J-def}, and also Remark~\ref{rmk:weird-carving}).

In the next statement, we give computable estimates $Z_{ij}^{kl}$. This constitutes the most technical step of our proof of Theorem~\ref{thm:rho-intro}, and we refer to Appendix~\ref{sec:appendix_resonance} for the explicit formula of the many constants appearing here, for the precise definition of the operator $U_j$, as well as for the proof of the proposition.
\begin{prop}
    \label{prop:Zijkl}
    Let $\kappa = 1$, $\eta = 0.12$, $\omega = 0.001$, $\sigma = 0.287129152$, and $\tilde{\kappa}$, $\tilde{\eta}$ and $\tilde{\omega}$ the corresponding rescaled parameters given by~\eqref{eqn:change-var} and let $C_{\alpha,d}$, $C_{\beta,d}, C_d$ and $\vartheta$ be the constants defined in Lemma~\ref{lem:tridiag_rec} for $m=j$. For all $i,j \in \Z$, with $\vert j\vert \leq M +3= 1004$, define
        \begin{align*}
        \epsilon^{\leq N-[j]}_{ij} := \|T_{ij}^{11}\|_{\ell^2} = \|\delta_{ij}\Pi_{\leq N}  - (A^{11}_{i,j-1}B^{11}_{j-1} +A^{11}_{i,j} + A^{11}_{i, j+1}B^{11}_{j+1}) )\Pi_{\leq N-[j]}\|_{\ell^2},
    \end{align*}
    and
    \begin{align*}
        \epsilon^{> N-[j]}_{ij} := \|(\delta_{ij}\Pi_{\leq N}  - (A^{11}_{i,j-1}B^{11}_{j} +A^{11}_{i,j} + A^{11}_{i, j+1}B^{11}_{j}))\Pi_{> N-[j]}\|_{\ell^2}.
    \end{align*}
Furthermore, let $U_j$ be the upper triangular operator introduced in~\eqref{def:Um} such that $L_j = U_j^TU_j$, and let
\begin{align*}
    Z^{11}_{ij} &:= \epsilon^{\leq N-[j]}_{ij}, \\
    Z^{12}_{ij} &:= \epsilon^{> N-[j]}_{ij} + \frac{\tilde{\eta}}{2}|(U_j)_{N, N+1}|\frac{\|(A^{11}_{i,j-1}+A^{11}_{i,j+1}) J_{[j]}^{11} (U_{j}^{-1})_{:N,N}\|_{\ell^2}}{C_d^2\left(2N(1-\vartheta^2)\right)^{3/2}},\\
    Z^{21}_{ij} &:= (\delta_{i,j+1}+\delta_{i,j-1})\frac{\tilde{\eta}(C_{\alpha, d}+C_{\beta,d})}{2(1-\vartheta)}|(U_j)_{N-[j],N-[j]+1}|\|(U^{-1}_j)_{N-[j]+1:, N-[j]+1}\|_{\ell^2}\|(U^{-1}_{j})_{:N-[j],N-[j]}\|_{\ell^2},\\
    Z^{22}_{ij} &:=(\delta_{i,j+1}+\delta_{i,j-1})\frac{\tilde{\eta}(C_{\alpha, d}+C_{\beta,d})}{2(1-\vartheta)^2C_d(2N-2)^{3/4}}.
\end{align*}
Then $Z_{ij}$ given by~\eqref{eq:defZij} satisfies $\|T_{ij}\|_{\ell^2} \leq Z_{ij}$.
\end{prop}
The proof of this proposition is contained in Appendix~\ref{sec:proofZijkl}, and relies on the following steps:
\begin{enumerate}
    \item Since $L_j$ is a tridiagonal operator, $U_j$ is an upper bidiagonal operator analogous to $D_{> 0}$ in Section~\ref{subsec:compact-est} (recall from Proposition~\ref{prop:L-decomp} that $L_0 = D_{> 0}^TD_{> 0}$). However, the entries of $U_j$ are far from explicit and need to be defined by a recurrence relation (which depends itself on the $a_n$'s defined by~\eqref{eqn:jacobi-rec-rel_intro} which are not explicit). We study the asymptotic behaviour of these entries using ideas from~\cite{Breden2015RigorousPart} to obtain estimates analogous to~\eqref{eqn:alpha-bound}.
    \item Then the derivation of the $Z_{ij}^{kl}$ estimates is reduced to calculations analogous to the proofs of Theorem~\ref{thm:quant-poinc} and Lemma~\ref{lem:annoying-bounds}.
\end{enumerate}

\begin{rmk}\label{rem:Z1vsN}
    By construction of $\bar{A}$, we expect $\epsilon^{\leq N-[j]}_{ij}$ and $\epsilon^{> N-[j]}_{ij}$ to be very small when $|i|,|j|\leq M$, and still small otherwise by virtue of $B_j$ being small for large $j$. Furthermore, since $U$ behaves similarly to the operator $D_{>0}$ from Section~\ref{subsec:compact-est}, we expect
\begin{itemize}
    \item $|(U_j)_{N, N+1}|$ to grow like $N^{3/4}$ and $\|(A^{11}_{i,j-1}+A^{11}_{i,j+1}) J_{[j]}^{11}(U_{j}^{-1})_{:N,N}\|_{\ell^2}$ to be of order $1$ such that $Z^{21}_{ij}$ should decay like $N^{-3/4}$,
    \item $|(U_j)_{N-[j],N-[j]+1}|$ to grow like $N^{3/4}$ while $\|(U^{-1}_j)_{N-[j]+1:, N-[j]+1}\|_{\ell^2}$ and $\|(U^{-1}_{j})_{:N-[j],N-[j]}\|_{\ell^2}$ decay like $N^{-3/4}$ such that $Z^{21}_{ij}$ should decay like $N^{-3/4}$.
\end{itemize}
Overall, for $N$ and $M$ large enough, we can indeed expect each $Z_{ij}$ to be small, but also each $\sum_{i} Z_{ij}$ to be small, because the $Z_{ij}^{kl}$ that are not arbitrarily small, like $Z_{ij}^{22}$, are only non-zero twice for each fixed $j$.
\end{rmk}
The key aspect of Proposition~\ref{prop:Zijkl} is that the given estimates only involve computable constants, as well as norms of finite vectors and finite matrices. Therefore, each $Z_{ij}$ can be rigorously computed, or at least explicitly upper-bounded. Taken together, Proposition~\ref{prop:Zijkl} and the formula~\eqref{eq:defZ1}--\eqref{eq:defZij} give us a computable $Z$ estimate for Theorem~\ref{thm:linearNK}.

\subsection{Proofs of Theorem~\ref{thm:rho-intro}, Theorem~\ref{thm:stoc-res} and Theorem~\ref{thm:stoc-res-pic}}
\label{sec:resonance-proofs}

Let $\kappa = 1$, $\eta = 0.12$, $\omega = 0.001$, $\sigma = \sigma_1 = 0.287129152$, and the corresponding rescaled parameters given by~\eqref{eqn:change-var}. Let $N = 100$ and $M = 1001$. Let $\bu$ be given in~\texttt{stoc\_res/ubar\_plots} at~\cite{Chu2025Huggzz/Freud}, which satisfies~\eqref{eq:space_ubar}, and consider the approximate solution $\bv = (\tilde{\omega}\partial_s+\cL)^{-1} \bu$ of~\eqref{eq:Fresonance}  (corresponding to the density $\bar\varrho$ represented in Figure~\ref{fig:resonance-density-middle}).
We rigorously compute the $Y$ estimate given by~\eqref{eq:Yresonance}, which satisfies $\|AF[\bar{v}]_{\mathcal{E}}\|_{X} \leq  Y$. Then, we rigorously compute the $Z_{i,j}^{k,l}$ estimates given in Proposition~\ref{prop:Zijkl}. Using~\eqref{eq:defZij} and~\eqref{eq:defZ1}, we get a $Z$ estimate that satisfies $\|T\|_{X\to X}\leq Z$. We then check that $Z<1$, and apply Theorem~\ref{thm:linearNK}. In particular, we obtain that the exact solution $v$ of~\eqref{eq:Fresonance} satisfies the error bound $\Vert v -\bv \Vert_\cX \leq \frac{Y}{1-Z}$, and estimate~\eqref{eqn:res-final-ineq} then yields Theorem~\ref{thm:rho-intro}. These estimates are computed rigorously in the file~\texttt{stoc\_res/plots\_proof.jl} for $\sigma = \sigma_1$ as well as for $\sigma = \sigma_0, \sigma_2$.

The same procedure is repeated in~\texttt{stoc\_res/resonance\_proof.jl} for the values of $\sigma$ in Figure~\ref{fig:indicator}. In each case, we also rigorously compute $\cR(\bar\varrho)$ using Remark~\ref{rem:Rabsolutevalue}, and use estimate~\eqref{eqn:est_Rsigma} to obtain a rigorous enclosure of $\cR(\varrho)$, which yields Theorem~\ref{thm:stoc-res-pic}.
Finally, this analysis is again performed in~\texttt{stoc\_res/prec\_proof.jl} for
\begin{equation*}
    \sigma_1^- = 0.287129151,\qquad \sigma_1 = 0.287129152,\qquad \sigma_1^+ = 0.287129153,
\end{equation*}
and the obtained enclosures show that $\cR(\varrho_{\sigma_1}) > \cR(\varrho_{\sigma^-_1})$ and $\cR(\varrho_{\sigma_1}) > \cR(\varrho_{\sigma_1^+})$. Thus, $\sigma \mapsto \cR(\varrho_\sigma)$ admits a local maximum for $\sigma_{SR}\in[\sigma_1^-,\sigma_1^+]$, and $\cR(\varrho_{\sigma_{SR}}) \geq \cR(\varrho_{\sigma_{1}})$, which yields Theorem~\ref{thm:stoc-res}.

\section*{Acknowledgments}

HC thanks Pierre Faugère, Grigorios Pavliotis and Luca Ziviani for valuable discussions. HC is particularly grateful to Sheehan Olver for discussions at the early stage of this research and for his continued encouragements.
HC thanks Olivier Hénot for discussions about the library \texttt{IntervalArithmetic.jl}~\cite{david_p_sanders_2024_10459547}.
MB and HC thank the \textit{Centre de recherches mathématiques}, Montréal for its hospitality during the thematic program \textit{Computational Dynamics: Analysis, Topology and Data}. HC is grateful to G-Research and the Doris Chen mobility fund for making this visit possible. The authors also thank Yanbo Tang for his hospitality during the write-up of the present paper. HC was supported by an Hadamard Lecturer fellowship awarded by the \textit{Fondation Mathématique Jacques Hadamard}. This work was partially completed as part of HC's PhD thesis at Imperial College supported by a scholarship from the Imperial College London EPSRC DTP in Mathematical Sciences (EP/W523872/1) and by the EPSRC Centre for Doctoral Training in Mathematics of Random Systems: Analysis, Modelling and Simulation (EP/S023925/1).
MB and HC were supported by the ANR project CAPPS: ANR-23-CE40-0004-01.
MB and HC thank the CNRS--Imperial \textit{Abraham de Moivre} IRL for supporting this research.

\printbibliography[heading=bibintoc]

\begin{appendix}
\section{Proof of Lemma~\ref{lem:annoying-bounds}}
\label{app:annoying-bounds}

Again, we first consider the action of $\mathcal{J}$ and bound its norm on the subspace of even functions (we then repeat the same procedure for odd functions and take the maximum of the two estimates). Recall that we want to find a constant $C_{\cJ}$ such that
$$\|\mathcal{J}\|_{H^1\to L^2}=\left\|\left[\mathcal{J}\right]_{H^1\to L^2}\right\|_{\ell^2} =  \|D^{T}P^{-1}\|_{\ell^2}\leq C_{\cJ}.$$
Note that from $\{p_n\}_{n\in 2\N}$ to $\{p_n\}_{n\in 2\N+1}$, $\mathcal{J}$ has the representation

$$-D^T = -\begin{pmatrix}
    \alpha_{1} & & & &\\
     \beta_1& \alpha_{3} &  & &\\
    & \beta_3&\ddots &\\
    & & \ddots&& &\\
    & & & &
\end{pmatrix}.$$

Thus,
$$\|D^TP^{-1}\|_{\ell^2} \leq\|\mathrm{Diag}(\alpha_{1:2:})P^{-1}\|_{\ell^2}+\|\mathrm{Diag}(\beta_{1:2:})P^{-1}\|_{\ell^2}.$$

For instance, we have
$$\mathrm{Diag}(\alpha_{1:2:})P^{-1} = \begin{pNiceMatrix}[margin]
  \Block[borders={bottom, right}]{5-1}{\mathrm{Diag}(\alpha_{1:2:N+1})P_{\leq N}^{-1}} &\Block[]{5-1}{-\beta_N\mathrm{Diag}(\alpha_{1:2:N+1})(P_{\leq n}^{-1})_{:,-1}(D_{>N}^{-1})_{1,:}}\\
  &\\
  &\\
  &\\
  &\\
  & \Block[borders={top, left}]{5-1}{\mathrm{Diag}(\alpha_{N+3:2:})D_{>N}^{-1}}\\
  &\\
  &\\
  &\\
  &
\end{pNiceMatrix}.$$

Recall that we have from Section~\ref{sec:bread} that for some $N_0\in \N$, we can find $c^+>1$ and $c^{-}<1$ such that for all $n\geq N_0$ 
$$\frac{\sqrt[4]{3}}{\sqrt{c^+}}n^{3/4}\leq \alpha_n\leq\frac{\sqrt[4]{3}}{\sqrt{c^-}}n^{3/4}\qquad\text{and}\qquad \beta_n\leq\frac{(c^+)^{3/2}}{3^{3/4}}.$$
Mimicking the calculations of Section~\ref{subsec:compact-est}, we then find that
\begin{align*}
    \|\left(\mathrm{Diag}(\alpha_{N+3:2:})D_{>N}^{-1}\right)_{i,:}\|_{\ell^1}&=\alpha_{N+2i+1}\sum_{j = i}^{\infty}\left|\frac{(-1)^{i-j}}{\alpha_{N+2j}}\prod_{k=i}^{j-1}\frac{\beta_{N+2k}}{\alpha_{N+2k}}\right|\\
    &\leq \sqrt{\frac{c^+}{c^-}}\left(\frac{N+2i+1}{N+2i}\right)^{3/4}\sum_{j = i}^{\infty}\theta^{j-i}\\
    &\leq\frac{\sqrt{c^+}}{\sqrt{c^-}(1-\theta)}\left(\frac{N+3}{N+2}\right)^{3/4},
\end{align*}
and
\begin{align*}
    \|\mathrm{Diag}(\alpha_{N+3:2:})(D_{>N}^{-1})_{:,j}\|_{\ell^1}&=\sum_{i = 1}^{j}\left|\alpha_{N+2i+1}\frac{(-1)^{i-j}}{\alpha_{N+2j}}\prod_{k=i}^{j-1}\frac{\beta_{N+2k}}{\alpha_{N+2k}}\right|\\
    &\leq \sqrt{\frac{c^+}{c^-}}\left(\frac{N+2+1}{N+2j}\right)^{3/4}\sum_{i = 1}^{j}\theta^{j-i}\\
    &\leq\frac{\sqrt{c^+}}{\sqrt{c^-}(1-\theta)}\left(\frac{N+3}{N+2}\right)^{3/4},
\end{align*}
such that
$$\|\mathrm{Diag}(\alpha_{N+3:2:})D_{>N}^{-1}\|_{\ell^2}\leq \|\mathrm{Diag}(\alpha_{N+3:2:})D_{>N}^{-1}\|_{\ell^1}^{1/2}\|\mathrm{Diag}(\alpha_{N+3:2:})D_{>N}^{-1}\|_{\ell^\infty}^{1/2}\leq\frac{\sqrt{c^+}}{\sqrt{c^-}(1-\theta)}\left(\frac{N+3}{N+2}\right)^{3/4}.$$
This yields that
$$\|\mathrm{Diag}(\alpha_{1:2:})P^{-1}\|_{\ell^2}\leq c_{\alpha}:=\left\|\begin{pmatrix}
    \|\mathrm{Diag}(\alpha_{1:2:N+1})P_{\leq N}^{-1}\|_{\ell^2} & \beta_n\|\mathrm{Diag}(\alpha_{1:2:N+1})(P_{\leq N}^{-1})_{:,-1}\|_{\ell^2}\|(D_{>N}^{-1})_{1,:}\|_{\ell^2}\\
    0 & \displaystyle \frac{\sqrt{c^+}}{\sqrt{c^-}(1-\theta)}\left(\frac{N+3}{N+2}\right)^{3/4}
\end{pmatrix}\right\|_{\ell^2},$$
where
$$\|(D^{-1}_{>N})_{1,:}\|_{\ell^2}\leq\frac{1}{C_{\alpha}\sqrt{1-\theta^2}}\frac{1}{N^{3/4}}.$$

Similarly, we find that
$$\|\mathrm{Diag}(\beta_{N+3:2:})D_{>N}^{-1}\|_{\ell^2}\leq\frac{(c^+)^2}{3(1-\theta)}\left(\frac{N+3}{N+2}\right)^{3/4},$$
such that
$$\|\mathrm{Diag}(\beta_{1:2:})P^{-1}\|_{\ell^2}\leq c_{\beta}:=\left\|\begin{pmatrix}
    \|\mathrm{Diag}(\beta_{1:2:N+1})P_{\leq N}^{-1}\|_{\ell^2} & \beta_n\|\mathrm{Diag}(\beta_{1:2:N+1})(P_{\leq N}^{-1})_{:,-1}\|_{\ell^2}\|(D_{>N}^{-1})_{1,:}\|_{\ell^2}\\
    0 & \displaystyle \frac{(c^+)^2}{3(1-\theta)}\left(\frac{N+3}{N+2}\right)^{3/4}
\end{pmatrix}\right\|_{\ell^2}.$$
We thus choose $C_\cJ := c_{\alpha} +c_{\beta}$. 

Now, to finalise our estimates, first note that
$$\|e^{-V/2}u\|^2_{L^2(\R)} = \cZ\|u\|^2_{L^2(\nu)}\leq \max(1,C_P)\|u\|_{H^1(\nu)}^2.$$
Then, for all $u\in H^1(\nu)$
$$\|(e^{-V/2}u)'\|_{L^2(\R)} \leq \frac{\sqrt{\cZ}}{2}\left(\|\mathcal{J}u\|_{L^2(\nu)}+\|u\|_{H^1(\nu)}\right)\\
    \leq \frac{\sqrt{\cZ}}{2}\left(\|\mathcal{J}u\|_{H^1\to L^2}+1\right)\|u\|_{H^1(\nu)}\leq \frac{\sqrt{\cZ}}{2}\left(C_{\cJ}+1\right)\|u\|_{H^1(\nu)}.$$
By~\cite[Theorem VIII.7]{Brezis2011FunctionalEquations},
\begin{align*}
    \|e^{-V/2}u\|_{\infty} &\leq \sqrt{2}\|e^{-V/2}u\|_{L^2(\R)}^{1/2}\|(e^{-V/2}u)'\|_{L^2(\R)}^{1/2}\\
    &\leq\sqrt{\cZ(C_{\cJ}+1)}\|u\|_{L^2(\nu)}^{1/2}\|u\|_{H^1(\nu)}^{1/2}\\
    &\leq\sqrt{\cZ(C_{\cJ}+1)}\max(1, C_P)^{1/4}\|u\|_{H^1(\nu)},
\end{align*}
which yields~\eqref{eqn:annoying-bounds1}. Similarly,
\begin{align*}
    \|e^{-V/2}u\|_{H^1(\R)}:&=\left(\|e^{-V/2}u\|_{L^2(\R)}^2+\|(e^{-V/2}u)'\|_{L^2(\R)}^2\right)^{1/2}\\
    &\leq \sqrt{\cZ}\left(\|e^{-V/2}u\|_{L^2(\nu)}^2+\frac{(C_{\cJ}+1)^2}{4}\|u\|_{H^1(\nu)}^2\right)^{1/2}\\
    &\leq \sqrt{\cZ}\left(\max(1, C_P)+\frac{(C_{\cJ}+1)^2}{4}\right)^{1/2}\|u\|_{H^1(\nu)},
\end{align*}
which yields~\eqref{eqn:annoying-bounds2}.

In the case $\kappa = 4$, the calculation leading to an explicit value of $C_{\cJ}$ is performed with $N=3,500$ at~\cite{Chu2025Huggzz/Freud} in the file \texttt{GP\_eq/embedding.ipynb}.

\section{Proof of Proposition~\ref{prop:GP-bounds}}\label{app:GP-bounds}
In this appendix, we prove Proposition~\ref{prop:GP-bounds}, i.e., we derive the necessary bounds for the application of Theorem~\ref{thm:N-K}.
\subsection{The bound $Y$}

We estimate the difference $T(\bar{u}) - \bar{u}$ as follows
\begin{align*}
    \|T(\bar{u}) - \bar{u}\|_{H^1}^2 &=\|AF(\bar{u})\|_{H^1}^2\\
    &=\|A(\bar{u} - \tL^{-1} f(\bar{u}))\|_{H^1}^2\\
    &=\|\fPH A (\bar{u} - \tL^{-1}f(\bar{u}))\|_{H^1}^2+\|\iPH A(\bar{u}-\tL^{-1}f(\bar{u}))\|_{H^1}^2\qquad \mbox{By Parseval's identity}\\
    &=\| A_n(\bar{u}-\fPH\tL^{-1} f(\bar{u}))\|_{H^1}^2+\|\iPH (\bar{u}-\tL^{-1}f(\bar{u}))\|_{H^1}^2\\
    &=\| A_n(\bar{u} -\fPH\tL^{-1}\fPL f(\bar{u}))\|_{H^1}^2+\|\iPH\tL^{-1}f(\bar{u})\|_{H^1}^2 \qquad \mbox{using~\eqref{eqn:tL-zero}}.
\end{align*}
The first term requires a finite computation and can therefore evaluated exactly, while the second term has to be decomposed as follows
$$\|\iPH\tL^{-1}f(\bar{u})\|_{H^1}\leq \|\iPH\tL^{-1}\fPL f(\bar{u})\|_{H^1}+\|\iPH\tL^{-1}\iPL f(\bar{u})\|_{H^1}.$$
Now, we have
\begin{align*}
    \left[\iPH\tL^{-1}\fPL f(\bar{u})\right]_{H^1} = \left[\iPH\tL^{-1}\fPL\right]_{L^2\to H^1}\left[\fPL f(\bar{u})\right]_{L^2} = -\beta_n (D^{-1}_{>n})_{1,:}(P_{\leq n}^{-1})_{:,-1}^{T}\left[\fPL f(\bar{u})\right]_{L^2},
\end{align*}
so that
$$\|\iPH\tL^{-1}\fPL f(\bar{u})\|_{H^1} = \beta_n \|(D^{-1}_{>n})_{1,:}\|_{\ell^2}\left|(P_{\leq n}^{-1})_{:,-1}^{T}\left[\fPL f(\bar{u})\right]_{L^2}\right|,$$
where we recall~\eqref{eqn:Dn-estimate}
\begin{equation}\label{eqn:Dn-estimate-2}
    \|(D^{-1}_{>n})_{1,:}\|_{\ell^2}\leq\frac{1}{C_{\alpha}\sqrt{1-\theta^2}}\frac{1}{n^{3/4}}.
\end{equation}
We also have that
\begin{align*}
    \|\iPH\tL^{-1}\iPL f(\bar{u})\|_{H^1}
    &\leq\|\iPH\tL^{-1}\iPL\|_{L^2\to H^1} \|\iPL f(\bar{u})\|_{L^2}\\
    &\leq\|\iPH\tL^{-1}\iPL\|_{L^2\to H^1} \|\iPL (\omega \bar{u} + \bar{u}_0-e^{-V}\bar{u}^3)\|_{L^2}\\
    &\leq\|\iPH\tL^{-1}\iPL\|_{L^2\to H^1} \|\iPL (e^{-V}\bar{u}^3)\|_{L^2}\\
    &\leq\|\iPH\tL^{-1}\iPL\|_{L^2\to H^1} \left(\| e^{-V}\bar{u}^3\|_{L^2}^2-\|\fPL (e^{-V}\bar{u}^3)\|_{L^2}^2\right)^{1/2}\\
    &\leq \frac{C_{22}}{n^{3/4}} \left(\| e^{-V}\bar{u}^3\|_{L^2}^2-\|\fPL (e^{-V}\bar{u}^3)\|_{L^2}^2\right)^{1/2}\qquad\mbox{by~\eqref{eqn:tL-estimates}}.
\end{align*}
We can thus take

\begin{align*}
    Y^2 :=&\| A_n(\bar{u} -\fPH\tL^{-1}\fPL f(\bar{u}))\|_{H^1}^2\\
    +&\frac{1}{n^{3/2}}\left\{\frac{\beta_n}{C_{\alpha}\sqrt{1-\theta^2}}\left|(P_{\leq n}^{-1})_{:,-1}^{T}\left[\fPL f(\bar{u})\right]_{L^2}\right|+C_{22}\left(\| e^{-V}\bar{u}^3\|_{L^2}^2-\|\fPL (e^{-V}\bar{u}^3)\|_{L^2}^2\right)^{1/2}\right\}^2.
\end{align*}

\subsection{The bound $Z^{11}$}
For $\fh\in \fPH(\mathcal{H})$,
    $$\fPH DT(\bar{u})\fh = \fPH(\id - ADF(\bar{u}))\fh = (\fPH - A_n \fPH DF(\bar{u})\fPH)\fh,$$
    by construction of $A$. Then, we get the estimate
    $$\|\fPH DT(\bar{u})\fPH\|_{H^1, H^1} = \|\fPH - A_n \fPH DF(\bar{u})\fPH\|_{H^1,H^1} = \|[\fPH]_{H^1}- [A_n]_{H^1} [\fPH DF(\bar{u})\fPH]_{H^1}\|_{\ell^2}=: Z^{11}.$$
This quantity is expected to be very small as $A_n$ is constructed as a numerical inverse of $[\fPH DF(\bar{u})\fPH]_{H^1}$. This estimate is only the two-norm of a finite-dimensional matrix $M$ and we bound this quantity via the inequality $\|M\|_{\ell^2} \leq \|M\|_{\ell^1}^{1/2}\|M\|_{\ell^\infty}^{1/2}$.
\subsection{The bound $Z^{21}$}

For $\fh \in \fPH(\mathcal{H})$, we have

\begin{align*}
    \|\iPH DT(\bar{u})\fh\|_{H^1} &=\| \iPH(\fh - A DF(\bar{u})\fh\|_{H^1}\\
    &= \|\iPH DF(\bar{u})\fh\|_{H^1}\\
    &= \|\iPH(\fh - \tL^{-1}Df(\bar{u})\fh\|_{H^1}\\
    &= \|\iPH\tL^{-1}Df(\bar{u})\fh\|_{H^1}\\
    &\leq \|\iPH\tL^{-1}\fPL Df(\bar{u})\fh\|_{H^1} + \|\iPH\tL^{-1}\iPL Df(\bar{u})\fh\|_{H^1}.
\end{align*}
On the one hand, we have 
\begin{align*}
    \left[\iPH\tL^{-1}\fPL Df(\bar{u})\fh\right]_{H^1} &= \left[\iPH\tL^{-1}\fPL\right]_{L^2\to H^1}\left[\fPL Df(\bar{u})\fPH\right]_{H^1\to L^2}\left[\fh\right]_{H^1}\\
    &= -\beta_n (D_{>n}^{-1})_{1,:}((P_{\leq n}^{-1})_{:,n})^T\left[\fPL Df(\bar{u})\fPH\right]_{H^1\to L^2}\left[\fh\right]_{H^1},
\end{align*}
such that
\begin{align*}
    \|\iPH\tL^{-1}\fPL Df(\bar{u})\fh\|_{H^1}&\leq \beta_n \|(D_{>n}^{-1})_{1,:}\|_{\ell^2}\left\|((P_{\leq n}^{-1})_{:,n})^T\left[\fPL Df(\bar{u})\fPH\right]_{H^1\to L^2}\right\|_{\ell^2}\|\fh\|_{H^1}\\
    &\leq \frac{\beta_n}{C_{\alpha}\sqrt{1-\theta^2}}\frac{1}{n^{3/4}}\left\|((P_{\leq n}^{-1})_{:,n})^T\left[\fPL Df(\bar{u})\fPH\right]_{H^1\to L^2}\right\|_{\ell^2}\|\fh\|_{H^1} \qquad\mbox{using~\eqref{eqn:Dn-estimate-2}}.
\end{align*}
On the other hand, we have
$$\iPH\tL^{-1}\iPL Df(\bar{u})\fh = \iPH\tL^{-1}\iPL (\omega \fh + h_0-3e^{-V}\bar{u}^2 \fh) = -3\iPH\tL^{-1}\iPL (e^{-V}\bar{u}^2 \fh).$$
Now, writing $\fh$ as
$$\fh = \sum_{m = 0}^n a_m q_m,$$
we get by linearity
\begin{align*}
    \|\iPL (e^{-V}\bar{u}^2 \fh)\|_{L^2}^2 &= \left\|\sum_{m = 0}^n a_m\iPL(e^{-V}\bar{u}^2 q_m)\right\|_{L^2}^2\\
    &\leq \left(\sum_{m = 0}^n |a_m| \|\iPL(e^{-V}\bar{u}^2 q_m)\|_{L^2}\right)^2\\
    &\leq \left(\sum_{m = 0}^n a_m^2\right)\left(\sum_{m=0}^n\|\iPL(e^{-V}\bar{u}^2 q_m)\|_{L^2}^2\right)\qquad\mbox{By Cauchy--Schwarz inequality}\\
    &\leq \sum_{m=0}^n\|\iPL(e^{-V}\bar{u}^2 q_m)\|_{L^2}^2\qquad\mbox{Since $\|\fh\|_{H^1} = 1$}\\
    &\leq \sum_{m=0}^n\|e^{-V}\bar{u}^2 q_m\|_{L^2}^2-\|\fPL(e^{-V}\bar{u}^2 q_m)\|_{L^2}^2\qquad\mbox{By Parseval's identity.}
\end{align*}
This implies that
\begin{align*}
    \|\iPH\tL^{-1}\iPL Df(\bar{u})\fh\|L^2 &\leq\|\iPH\tL^{-1}\iPL\|_{L^2\to H^1} \|\iPL Df(\bar{u})\fh\|L^2\\
    &\leq\frac{3C_{22}}{n^{3/4}}\left(\sum_{m=0}^n\|e^{-V}\bar{u}^2 q_m\|_{L^2}^2-\|\fPL(e^{-V}\bar{u}^2 q_m)\|_{L^2}^2\right)^{1/2}\|\fh\|_{H^1}\qquad\mbox{using~\eqref{eqn:tL-estimates}.}
\end{align*}
Thus, we can choose
\begin{align*}
Z^{21} := \frac{1}{n^{3/4}}\Bigg\{&\frac{\beta_n}{C_{\alpha}\sqrt{1-\theta^2}} \left\|((P_{\leq n}^{-1})_{:,n})^T\left[\fPL Df(\bar{u})\fPH\right]_{H^1\to L^2}\right\|_{\ell^2} \\
&\quad +3C_{22}\left(\sum_{m=0}^n\|e^{-V}\bar{u}^2 q_m\|_{L^2}^2-\|\fPL(e^{-V}\bar{u}^2 q_m)\|_{L^2}^2\right)^{1/2}\Bigg\}.
\end{align*}

\subsection{The bound $Z^{12}$}

For $\ih\in\iPH(\mathcal{H})$, we have

\begin{align*}
    \| \fPH DT(\bar{u})\ih\|_{H^1}&=\| \fPH(\ih - A DF(\bar{u})\ih)\|_{H^1}\\
    &=\| A_n\fPH DF(\bar{u})\ih\|_{H^1}\\
    &= \|A_n\fPH (\ih - \tL^{-1}Df(\bar{u})\ih)\|_{H^1}\\
    &= \|A_n\fPH\tL^{-1}Df(\bar{u})\ih\|_{H^1}\\
    &= \|A_n\fPH\tL^{-1}\fPL Df(\bar{u})\ih\|_{H^1}\\
    &\leq \|A_n\fPH\tL^{-1}\fPL( Df(\bar{u})\fPL\ih)\|_{H^1}+\|A_n\fPH\tL^{-1}\fPL( Df(\bar{u})\iPL\ih)\|_{H^1}.
\end{align*}
Note that $\ih = \iPH h$, where the projection is the $H^1$ one, therefore $\fPL \ih$ is not equal to $0$. We start with the first term, for which we have
\begin{align*}
    \left[A_n\fPH\tL^{-1}\fPL( Df(\bar{u})\fPL\ih)\right]_{H^1} 
    &=\left[A_n\right]_{H^1}\left[\fPH\tL^{-1}\fPL\right]_{L^2\to H^1}\left[\fPL Df(\bar{u})\fPL\right]_{L^2}\left[\fPL\iota\iPH\right]_{H^1\to L^2}\left[\ih\right]_{H^1}\\
    &=-\left[A_n\right]_{H^1}(P_{\leq n}^{-1})^{T}\left[\fPL Df(\bar{u})\fPL\right]_{L^2}\beta_n(P_{\leq n}^{-1})_{:,-1}(D_{>n}^{-1})_{1,:}\left[\ih\right]_{H^1}.
\end{align*}
where we used that $[\iota]_{L^2\to H^1} = P^{-1}$ and its carving given by~\eqref{eq:carvedPinv}. We thus get
\begin{align*}
    \|A_n\fPH\tL^{-1}\fPL( Df(\bar{u})\fPL\ih)\|_{H^1}&\leq \beta_n\left\|\left[A_n\right]_{H^1}(P_{\leq n}^{-1})^{T}\left[\fPL Df(\bar{u})\fPL\right]_{L^2}(P_{\leq n}^{-1})_{:,-1}\right\|_{\ell^2}\left\|(D^{-1}_{>n})_{1,:}\right\|_{\ell^2}\|\ih\|_{H^1}\\
    &=\frac{\beta_n}{C_{\alpha}\sqrt{1-\theta^2}}\frac{1}{n^{3/4}}\left\|\left[A_n\right]_{H^1}(P_{\leq n}^{-1})^{T}\left[\fPL Df(\bar{u})\fPL\right]_{L^2}(P_{\leq n}^{-1})_{:,-1}\right\|_{\ell^2}\|\ih\|_{H^1},
\end{align*}
where we have used~\eqref{eqn:Dn-estimate-2}. For the second term, we have
$$\fPL Df(\bar{u})\iPL\ih = \fPL ((\omega-3e^{-V}\bar{u}^2)\iPL\ih) =-3 \fPL (e^{-V}\bar{u}^2\iPL\ih),$$
and we now estimate each coefficient of the above term
\begin{align*}
    |\langle p_m, e^{-V}\bar{u}^2\iPL\ih\rangle| &= |\langle p_me^{-V}\bar{u}^2, \iPL\ih\rangle|\\
    &= |\langle \iPL(p_me^{-V}\bar{u}^2), \iPL\ih\rangle|\\
    &\leq \|\iPL(p_me^{-V}\bar{u}^2)\|_{L^2}\|\iPL\ih\|_{L^2}\\
    &\leq \left(\|p_me^{-V}\bar{u}^2\|_{L^2}^2-\|\fPL(p_me^{-V}\bar{u}^2)\|_{L^2}^2\right)^{1/2}\|D_{>n}^{-1}\|_{\ell^2}\|\ih\|_{H^1}\\
    &\leq \frac{C_{22}}{n^{3/4}}\left(\|p_me^{-V}\bar{u}^2\|_{L^2}^2-\|\fPL(p_me^{-V}\bar{u}^2)\|_{L^2}^2\right)^{1/2}\|\ih\|_{H^1}\qquad\mbox{using~\eqref{eqn:C12-C22-def}.}
\end{align*}
Finally, writing $w_m = [-1,1]\left(\|p_me^{-V}\bar{u}^2\|_{L^2}^2-\|\fPL(p_me^{-V}\bar{u}^2)\|_{L^2}^2\right)^{1/2}$, we obtain
\begin{align*}
    \|A_n\fPH\tL^{-1}\fPL( Df(\bar{u})\iPL\ih)\|_{H^1}&\leq\frac{3C_{22}}{n^{3/4}}\left\|\left[A_n\right]_{H^1}\left[\fPH\tL^{-1}\fPL\right]_{L^2\to H^1} w\right\|_{\ell^2}\\
    &\leq \frac{3C_{22}}{n^{3/4}}\left\|\left[A_n\right]_{H^1}(P_{\leq n}^{-1})^T w\right\|_{\ell^2}.
\end{align*}
We can therefore choose
$$Z^{12}:=\frac{1}{n^{3/4}}\left\{\frac{\beta_n}{C_{\alpha}\sqrt{1-\theta^2}}\left\|\left[A_n\right]_{H^1}(P_{\leq n}^{-1})^{T}\left[\fPL Df(\bar{u})\fPL\right]_{L^2}(P_{\leq n}^{-1})_{:,-1}\right\|_{\ell^2}+3C_{22}\left\|\left[A_n\right]_{H^1}(P_{\leq n}^{-1})^T w\right\|_{\ell^2}\right\}.$$


\subsection{The bound $Z^{22}$}

For $\ih\in \iPH(\mathcal{H})$,
\begin{align*}
    \|\iPH DT(\bar{u})\ih\|_{H^1} &=\| \ih -\iPH A DF(\bar{u})\ih\|_{H^1}\\
    &= \|\ih - \iPH DF(\bar{u})\ih\|_{H^1}\\
    &= \|\ih - \iPH (\ih - \tL^{-1}Df(\bar{u})\ih)\|_{H^1}\\
    &= \|\iPH\tL^{-1}Df(\bar{u})\ih\|_{H^1}\\
    &= \|\iPH\tL^{-1}(\omega - 3e^{-V}\bar{u}^2)\ih\|_{H^1}\\
    &\leq \|\iPH\tL^{-1}\|_{L^2\to H^1}\|(\omega-3e^{-V}\bar{u}^2)\ih\|_{L^2}\\
    &\leq \frac{C}{n^{3/4}}\|(\omega-3e^{-V}\bar{u}^2)\|_{\infty}\|\ih\|_{L^2}\qquad\mbox{by H\"older's inequality and~\eqref{eqn:tL-estimates}}\\
    &\leq \frac{C^2}{n^{3/2}}\left\{3\|e^{-V/2}\bar{u}\|_{\infty}^2 +|\omega|\right\}\|\ih\|_{H^1}\qquad\mbox{by Theorem~\ref{thm:quant-compact}.}
\end{align*}
We thus choose

$$Z^{22} := \frac{C^2}{n^{3/2}}\left\{3\|e^{-V/2}\bar{u}\|_{\infty}^2 +|\omega|\right\}.$$
While other methods are possible, in practice, we simply use the estimate~\eqref{eqn:annoying-bounds1} to bound $\|e^{-V/2}\bar{u}\|_{\infty}$.

Note that our estimates are sharp in the sense that they agree with the estimates of the diagonal case (see~\cite[Section 4.1]{Breden2025ConstructiveRd}) and that we observe in practice that $Z^{22}$ is the largest of the $Z^{ij}$'s.

\subsection{The bound $Z_2$}


We have that for $h\in \mathcal{H}$,
\begin{align*}
    \|D^2T(\bar{u})h^2\|_{H^1}&= \|AD^2F(\bar{u})h^2\|_{H^1}\\
    &\leq \|A\|_{H^1}\|D^2F(\bar{u})h^2\|_{H^1}\\
    &\leq \|A\|_{H^1}\|\tL^{-1}(D^2(\bar{u})h^2)\|_{H^1}\\
    &\leq \|A\|_{H^1}\|\tL^{-1}\|_{L^2\to H^1}\|(D^2(\bar{u})h^2)\|_{L^2}\\
    &\leq 6\|A\|_{H^1}\|(P^{-1})^T\|_{\ell^2}\|e^{-V}\bar{u}h^2\|_{L^2}\\
    &\leq 6\|A\|_{H^1}\|P^{-1}\|_{\ell^2}\|\bar{u}\|_{L^2}\|e^{-V/2}h\|_{\infty}^2\qquad\mbox{By H\"older's inequality}\\
    &\leq 6\max(1,C_P)\|A\|_{H^1}\|\bar{u}\|_{L^2}(\sqrt{\cZ(C_{\cJ}+1)}\max(1, C_P)^{1/4}\|h\|_{H^1})^2\qquad \mbox{by~\eqref{eqn:annoying-bounds1}}\\
    &\leq 6Z(C_{\cJ}+1)\max(1,C_P)^{3/2}\|A\|_{H^1}\|\bar{u}\|_{L^2}\|h\|_{H^1}^2.
\end{align*}
We can thus choose
$$Z_2 := 6Z(C_{\cJ}+1)\max(1,C_P)^{3/2}\|A\|_{H^1}\|\bar{u}\|_{L^2},$$
where $\|A\|_{H^1} = \max\{\|A_n\|_{H^1}, 1\}$.

\subsection{The bound $Z_3$}

Directly from the above bound we obtain that, for $h\in \mathcal{H}$,
\begin{align*}
    \|D^3T(\bar{u})h^3\|_{H^1}&\leq 6\|A\|_{H^1}\|(P^{-1})^T\|_{\ell^2}\|e^{-V}h^3\|_{L^2}\qquad \mbox{by~\eqref{eqn:annoying-bounds1}}\\
    &\leq 6Z(C_{\cJ}+1)\max(1,C_P)^{3/2}\|A\|_{H^1}\|h\|_{L^2}\|h\|_{H^1}^2\\
    &\leq 6Z(C_{\cJ}+1)\max(1,C_P)^{5/2}\|A\|_{H^1}\|h\|_{H^1}^3.
\end{align*}
and thus choose
$$Z_3 := 6Z(C_{\cJ}+1)\max(1,C_P)^{5/2}\|A\|_{H^1}.$$

\section{Positivity of solutions}\label{app:positivity}

We explain here how we prove the positivity of the solution $\varphi_1^{*}$ to the Gross--Pitaevskii equation
\begin{equation}\label{eqn:app-GP}
    -\partial_{xx}\varphi + W\varphi +\varphi^3 - \omega \varphi = 0, \qquad W(x) = \frac{x^6}{4} -\kappa\frac{x^4}{2}+cx^2+d,
\end{equation}
in Theorem~\ref{thm:GP1}, which can be proved to be of class $\mathcal{C}^{\infty}$. That is we prove:

\begin{lem}\label{lem:pos-GP}
    Let $\bar{\varphi}_1$ be the function whose precise description is available at~\cite{Chu2025Huggzz/Freud} in the file \texttt{GP\_eq/ubar1} and let $\varphi^{\star}_1$ be a solution of Eq.~\eqref{eqn:app-GP} such that
    $$\|e^{V/2}(\varphi^{\star}_1 - \bar{\varphi}_1)\|_{H^1(\nu)}\leq \delta = 2\times 10^{-102}.$$
    Then $\varphi^{\star}_1$ is strictly positive on $\R$.
\end{lem}

To this end, we use the criterion proposed in~\cite[§4.3]{Breden2025ConstructiveRd} reformulated in the lemma below, which is a simple application of the maximum principle.

\begin{lem}\label{lem:max-princ}
    Let $c:\R\to\R$ be a $\mathcal{C}^0$ function and $\varphi:\R^d\to\R$ a $\mathcal{C}^2$ function such that
    \begin{align}\label{eqn:phi-eq}
        -\partial_{xx}\varphi + c\varphi = 0.
    \end{align}
    Assume there exists $r_0>0$ such that $c(x)>0$ for all $\vert x\vert \geq r_0$. Assume further that $\varphi(x)>0$ for all $\vert x\vert \leq r_0$, and that $\varphi(x)\underset{\vert x\vert \to \infty}{\longrightarrow}0$. Then, $\varphi(x)>0$ for all $x\in\R$.
\end{lem}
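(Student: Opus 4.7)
The plan is to prove Lemma~\ref{lem:max-princ} in two stages: first establish $\varphi\geq 0$ everywhere by a contradiction argument at a would-be negative minimum, then upgrade this to strict positivity via ODE uniqueness.

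For the first stage, suppose for contradiction that there exists some $x_* \in \R$ with $\varphi(x_*) < 0$. Since $\varphi$ is continuous and $\varphi(x) \to 0$ as $|x| \to \infty$, the infimum $m := \inf_{x \in \R}\varphi(x)$ is attained at some point $x_0 \in \R$, and $m \leq \varphi(x_*) < 0$. By hypothesis $\varphi > 0$ on $[-r_0,r_0]$, so necessarily $|x_0| > r_0$, which in particular implies $c(x_0) > 0$. Since $x_0$ is an interior global minimum and $\varphi \in \mathcal{C}^2$, we have $\varphi''(x_0) \geq 0$. Plugging $x_0$ into~\eqref{eqn:phi-eq} yields
\begin{equation*}
    \varphi''(x_0) = c(x_0)\varphi(x_0) < 0,
\end{equation*}
a contradiction. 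Hence $\varphi \geq 0$ on $\R$.

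For the second stage, suppose there exists $x_1 \in \R$ with $\varphi(x_1) = 0$. Again, by the hypothesis $\varphi > 0$ on $[-r_0,r_0]$, we must have $|x_1| > r_0$. Since $\varphi \geq 0$ everywhere, $x_1$ is an interior global minimum, so $\varphi'(x_1) = 0$. Now $\varphi$ solves the second-order linear ODE $-\varphi'' + c\varphi = 0$ with continuous coefficient $c$, so the Cauchy--Lipschitz theorem applies and solutions are uniquely determined by their value and derivative at a single point. Since the zero function also satisfies this ODE with initial data $(0,0)$ at $x_1$, uniqueness forces $\varphi \equiv 0$ on $\R$. This contradicts $\varphi > 0$ on $[-r_0,r_0]$, so $\varphi(x) > 0$ for every $x \in \R$.

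There is no real obstacle here; the only mild subtleties are ensuring the global infimum is attained (which follows from continuity together with the decay at infinity, reducing the problem to a compact set outside of which $\varphi$ is close to $0$) and checking that the problematic minimum lies in the region $|x| > r_0$ where $c$ is known to be positive (guaranteed by the positivity assumption on $[-r_0,r_0]$). The upgrade from nonnegativity to strict positivity uses the one-dimensional specificity of the problem through ODE uniqueness, which is a clean substitute for the more general strong maximum principle.
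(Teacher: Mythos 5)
Your proof is correct. Both stages hold up: since the putative negative infimum is strictly negative and $\varphi$ vanishes at infinity, it is indeed attained at an interior point $x_0$, which must satisfy $|x_0|>r_0$ because $\varphi>0$ on $[-r_0,r_0]$, and the sign contradiction $\varphi''(x_0)=c(x_0)\varphi(x_0)<0$ at a minimum is exactly the standard weak maximum principle argument. The upgrade to strict positivity is also sound: at an interior zero of a nonnegative solution one has $\varphi(x_1)=\varphi'(x_1)=0$, and since the first-order system $(u,v)'=(v,c(x)u)$ is linear in the state with continuous coefficient, Cauchy--Lipschitz uniqueness does apply and forces $\varphi\equiv 0$, contradicting positivity on $[-r_0,r_0]$. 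The paper itself does not spell out an argument: it simply defers to~\cite[Lemma 31]{Breden2024ConstructiveOn}, a maximum-principle-based result. Your first stage reproduces the core of that maximum-principle reasoning, while your second stage replaces the strong maximum principle (or Hopf-type boundary point argument) one would invoke in general dimension by ODE uniqueness, which is a clean and fully elementary substitute available only because the problem is one-dimensional; this is perfectly adequate here, since despite the $\varphi:\R^d\to\R$ typo in the statement, the equation $-\partial_{xx}\varphi+c\varphi=0$ and the conclusion are genuinely one-dimensional. The trade-off is that the cited lemma covers the higher-dimensional case and keeps the paper short, whereas your argument is self-contained but tied to $d=1$.
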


\begin{proof}[Proof of Lemma~\ref{lem:pos-GP}]
    By a standard bootstrap argument, $\varphi^{\star}_1$ is of class $\mathcal{C}^{\infty}$. Furthermore, by Lemma~\ref{lem:annoying-bounds}
    \begin{equation}\label{eqn:infty-bound}
        \|\varphi^{\star}_1 - \bar{\varphi}_1\|_{\infty} \leq \sqrt{\cZ(C_{\cJ}+1)}\max(1, C_P)^{1/4}\delta,
    \end{equation}
    and
    $$\|\varphi^{\star}_1\|_{\infty} \leq \sqrt{\cZ(C_{\cJ}+1)}\max(1, C_P)^{1/4}\|e^{V/2}\varphi^{\star}_1\|_{H^1(\nu)}\leq\sqrt{\cZ(C_{\cJ}+1)}\max(1, C_P)^{1/4}(\|e^{V/2}\bar{\varphi}_1\|_{H^1(\nu)}+\delta).$$
     In the file \texttt{GP\_eq/proof1.ipynb} of~\cite{Chu2025Huggzz/Freud}, we then find a positive real $r_0$ such that $c(x) = W(x) +\varphi^2(x) - \omega\geq W(x) - \omega>0$ for all $|x|\geq r_0$. Then using the estimate~\eqref{eqn:infty-bound} and rigorously evaluating $\bar{\varphi}_1$ on $[-r_0, r_0]$ finishes verifying the assumptions of Lemma~\ref{lem:max-princ}. 
\end{proof}

\section{Quadrature}\label{app:quad}

In order to evaluate the bounds given in Section~\ref{subsec:bounds}, we need to compute rigorously many integrals of the form

$$\frac{1}{\tilde{\cZ}}\int_{\R} \left(\prod_{k=1}^m f_k\right)e^{-v}\d x,$$
where the $f_k$'s are polynomials, $m = 4 $ or $m = 6$, $v$ is a quartic potential of the form $v = mV/2$, and $\tilde{\cZ}$ is the corresponding normalisation constant. Similarly, we denote the orthonormal polynomials with respect to $e^{-v}$ by $\{\tilde{p}_n\}_{n\in \N}$ and the coefficients of their recurrence relation~\eqref{eqn:jacobi-rec-rel_intro} by $\{\tilde{a}_n\}_{n\in\N}$. We achieve this integration by essentially following the method employed in~\cite[§3]{Breden2025ConstructiveRd} (see also~\cite{Cadiot2025ValidatedPDEs} in the case of unweighted polynomial nonlinearities). Given some $N\in\N$ such that $2N-1\geq \sum_{k=1}^m\deg{f_k}$, we denote by $(\tilde{x}_i)_{i = 1}^N$ the roots of $\tilde{p}_N$. Then, given the weights~\cite[p. 390]{Hildebrand1987IntroductionEdition}

$$\tilde{W}_i = \frac{\tilde{a}_N}{\tilde{a}_{N-1}}\frac{\displaystyle\frac{1}{\tilde{\cZ}}\int \tilde{p}_{N-1}^2e^{-v}\d x}{\tilde{p}'_N(\tilde{x}_i)p_{N_1}(\tilde{x}_i)} = \frac{\tilde{a}_N}{\tilde{a}_{N-1}}\frac{1}{\displaystyle\left(\frac{N}{\tilde{a}_N}p_{N-1}(\tilde{x}_i)+\tilde{a}_N \tilde{a}_{N-1}\tilde{a}_{N-2}p_{N-3}(\tilde{x}_i)\right)p_{N_1}(\tilde{x}_i)},\qquad i = 1, \ldots, N,$$
we have that
$$\frac{1}{\tilde{\cZ}}\int_{\R} \left(\prod_{k=1}^m f_k\right)e^{-v}\d x = \sum_{i = 1}^N \tilde{W}_i\left(\prod_{k=1}^m f_k(\tilde{x}_i)\right).$$

The $\tilde{x}_i$'s are enclosed rigorously similarly to~\cite{Breden2025ConstructiveRd} by
\begin{itemize}
    \item finding approximate roots of $\tilde{p}_N$ by numerically finding eigenvalues of the $N$-dimensional (tridiagonal) Jacobi matrix constructed from $\{\tilde{a}_{n}\}_{n=1}^{N-1}$,
    \item refining these approximate roots using Newton's method in~\texttt{BigFloat} arithmetic,
    \item and rigorously enclosing these roots via a combination of the Intermediate Value Theorem and the Fundamental Theorem of Algebra (recall that such orthogonal polynomials always have simple real roots).
\end{itemize}

As in~\cite{Breden2025ConstructiveRd}, in practice we will express a function $u$ in $\fPL(L^2(\nu))$ with respect to the basis $\{p_n\}$ (orthogonal with respect to $\d \nu = e^{-V}\d x/\cZ$ now!), i.e.
$$u = \sum_{j=0}^n c_j p_j, \qquad \mathbf{c} := (c_0, \ldots, c_n)^T,$$
such that it suffices to evaluate $p_j(\tilde{x}_i)$ via the recurrence relation
$$p_{j+1}(\tilde{x}_i) = \frac{\tilde{x}_i p_j(\tilde{x}_i)-a_jp_{j-1}(\tilde{x}_i)}{a_{j+1}}\qquad i=1,\ldots, N.$$
This defines a pseudo-Vandermonde matrix $M = (p_j(\tilde{x}_i))$. As in~\cite{Breden2025ConstructiveRd}, we regularise $M$ by replacing it by $\bar{M} = \mathrm{Diag}(W)^{1/m}M$ which can be stored in lower precision. For instance, we then have that the finite-dimensional projection of the a cubic nonlinearity can be handled by the matrix
$$G_{ij} = \langle p_i, e^{-V}u^{2}p_j\rangle,$$
which can be written as $G = \bar{M}^{T}\mathrm{Diag}(\bar{M}\mathbf{c})^{2}\bar{M}$.

\section{About the computer-assisted proof of stochastic resonance}
\label{sec:appendix_resonance}

We present here in detail some of the technical steps required for the proof of Theorem~\ref{thm:rho-intro} (and thus also of Theorem~\ref{thm:stoc-res} and Theorem~\ref{thm:stoc-res-pic}). In Section~\ref{sec:appendix:LJU}, we study the operators $L_m=\left[i\tilde{\omega} m +\mathcal{L}\right]_{\cP_{[m]}}$, and introduce a suitable Cholesky factorisation $L_m = U^T U$. Quantitative compactness estimates for $U^{-1}$ are then derived in Section~\ref{app:eltwise-est}. In Section~\ref{app:A-construction}, we explain how we construct the approximate inverse $A^{11}$ in a way which reduces the computational cost of the proof. All these ingredients are combined in Section~\ref{sec:proofZijkl}, where we provide a proof of Proposition~\ref{prop:Zijkl}, which is at the heart of the proof of Theorem~\ref{thm:rho-intro}. In Section~\ref{sec:regularity}, we give explicit estimates that enable us to relate error bounds on the solution of~\eqref{eq:Fresonance} to error bounds on the stationary periodic density $\varrho$, in suitable weighted norms, and to error bounds on the stochastic resonance indicator $\cR(\varrho)$. Finally, Section~\ref{sec:opnormblock} contains a basic lemma regarding block-wise computation of operator norms.

\subsection{More details about the structure of some important operators}
\label{sec:appendix:LJU}

We start by precisely describing several operators, introduced in Section~\ref{sec:outlineproof}, that are critical for the proof of Theorem~\ref{thm:rho-intro}.

The operators $L_m=\left[i\tilde{\omega} m +\mathcal{L}\right]_{\cP_{[m]}}$ can be written as
\begin{align}\label{eqn:Lm-even}
    L_m = 
i\tilde{\omega} m\,\id + \begin{pmatrix}
        \lambda_{2} & \mu_{2} & & &\\
        \mu_{2} & \lambda_{4} & \mu_{4} & &\\
        & \mu_{4} &\lambda_{6} &\ddots &\\
        & & \ddots &\ddots & &\\
        & & & &
    \end{pmatrix} \qquad \text{if }m\text{ is even,}
\end{align}
and
\begin{align}\label{eqn:Lm-odd}
   L_m= i\tilde{\omega} m\, \id + \begin{pmatrix}
        \lambda_{1} & \mu_{1} & & &\\
        \mu_{1} & \lambda_{3} & \mu_{3} & &\\
        & \mu_{3} &\lambda_{5} &\ddots &\\
        & & \ddots &\ddots & &\\
        & & & &
    \end{pmatrix} \qquad \text{if }m\text{ is odd,}
\end{align}
where 
\begin{equation}
 \label{eq:deflambdaandmu}
    \lambda_k :=   \frac{k^2}{a_k^2}+a_k^2a_{k-1}^2a_{k-2}^2 = \alpha_k^2 +\beta_{k-2}^2 \qquad\text{and}\qquad\mu_k :=   ka_{k+1}a_{k+2} = \alpha_k\beta_k,
\end{equation}
with $a_k$ the coefficients of the recurrence relation defining the orthogonal polynomials $p_n$, see Section~\ref{sec:Freuddef}, and $\alpha_k$ and $\beta_k$ given in~\eqref{eqn:alpha-beta-def}.
The above formulas for $i\tilde{\omega} m + \cL$ (restricted to even and odd subspaces) follow directly from Proposition~\ref{prop:L-decomp} and from the formula for $D$ given in~\eqref{eq:DandP}.

We can also give explicit expressions for the operators $J_{[m]} = [\cJ]_{\cP_{[m]}\to\cP_{[m+1]}}$. Indeed, recalling that $\cJ$ is equal to minus the adjoint of the derivative operator (see Remark~\ref{rmk:regularity}), and using once more the formula~\eqref{eq:DandP} for the matrix $D = [\partial_x]_{\cP\to\cP}$, we get
\begin{equation}\label{eqn:J-def}
J_1 =- \begin{pmatrix}
    \alpha_{2} & & & &\\
     \beta_{2}& \alpha_{4} &  & &\\
    & \beta_{4}&\ddots &\\
    & & \ddots&& &\\
    & & & &
\end{pmatrix} \qquad \text{and}\qquad J_2  =- \begin{pmatrix}
    0 & & & & \\
    \alpha_{3} & 0 & & &\\
     \beta_{3}& \alpha_{5} & \ddots & &\\
    & \beta_{5}&\ddots &\\
    & & \ddots&& &\\
    & & & &
\end{pmatrix}.\end{equation}

We now turn our attention to the factorisation $L_m = U_m^TU_m$ of each $L_m$, which is instrumental in our proof. In order to lighten the notation, we now fix $m\in\Z$ and remove the $m$-dependence from the notations. That is, we now write $U$ instead of $U_m$, and no longer highlight the dependence with respect to $m$ in the quantities introduced below. 

The operator $U$ will be constructed from the sequence $(d_k)_{k\in\N^*}$ defined by the following recurrence relation
\begin{equation}\label{eqn:defd_k}
\begin{cases}
    d_k  = \sqrt{i\tilde{\omega} m + \lambda_k} &\qquad\mbox{if $k = 1,2$},\\
    d_k  =  \sqrt{i\tilde{\omega} m+\lambda_k -\mu_{k-2}^2/d_{k-2}^2} &\qquad\mbox{if $k >2$}.
\end{cases}
\end{equation}
The choice of square root will not matter, and without loss of generality we can choose the principal one. 
\begin{rmk}
In practice, we do check that each $d_k$ is nonzero, and hence that this sequence is well defined. We do so by rigorously computing finitely many of the $d_k$'s, and then use the upcoming estimate~\eqref{eq:lowerbounddk} for $k$ large enough. 
\end{rmk}
When consider the (unbounded) operator $U:\ell^2\to\ell^2$ given by
\begin{equation}
    \label{def:Um}
    U = \begin{pmatrix}
    d_{[m]} & \mu_{[m]}/d_{[m]} & & &\\
     & d_{[m]+2} & \mu_{[m]+2}/d_{[m]+2} & &\\
    &  &\ddots &\ddots &\\
    & & & &
\end{pmatrix},
\end{equation}
for which one can easily check that $U^T U = L_m$. The proposed formula for $U$ comes from the usual formula for the LU decomposition of a tridiagonal matrix~\cite[Theorem 4.3.2]{Ciarlet1989IntroductionOptimisation}, which generalises to the infinite dimensional case (as already observed for instance in~\cite{Breden2015RigorousPart}).

In order to study $U$, and especially its inverse, it will prove useful to first carve $U$ as follows 
\begin{equation}\label{eqn:U-form}
    U = \begin{pNiceMatrix}[margin]
  \Block[borders={bottom, right}]{3-7}{U_{\leq N}} & & & & & & & & & & & &\\
  & & & & & & & & & & &\\
  & & & & & & &\mu_n/d_n  & & & &\\
  & & & & & & & \Block[borders={top, left}]{4-5}{U_{> N}}& & & &\\
  & & & & & & & & & & &\\
  & & & & & & & & & & &\\
  & & & & & & & & & & &
\end{pNiceMatrix},
\end{equation}
where $N\in\N$ is a given truncation level, $n := [m]+2N-2$ (and thus depends implicitly on $N$ and on the parity of $m$) and $U_{\leq N} :=\Pi_{\leq N}U\Pi_{\leq N}$ and $U_{> N} := \Pi_{> N}U\Pi_{>N}$. That is,
$$U_{\leq N}  = \begin{pmatrix}
    d_{[m]} &\mu_{[m]}/d_{[m]} & & & \\
    & d_{[m]+2}& \ddots& & \\
    & & \ddots& \ddots& \\
    & & & \ddots&\mu_{n-2}/d_{n-2} \\
    & & & &d_n
\end{pmatrix},\qquad U_{> N} = \begin{pmatrix}
    d_{n+2} &\mu_{n+2}/d_{n+2} & & & \\
    & d_{n+4}& \ddots& & \\
    & & \ddots& \ddots& \\
    & & & \ddots& \\
    & & & &
\end{pmatrix}.$$
Then, we formally get
\begin{equation}
\label{eq:Uinv}
U^{-1} = \begin{pNiceMatrix}[margin]
  \Block[borders={bottom, right}]{2-5}{U_{\leq N}^{-1}} & & & & &\Block[]{2-1}{-\mu_n/d_n(U_{\leq N}^{-1})_{:,-1}(U_{> N}^{-1})_{1,:}}\\
  & & & & &\\
  & & & & & \Block[borders={top, left}]{3-1}{U_{> N}^{-1}}\\
  & & & & &\\
  & & & & &
\end{pNiceMatrix},
\end{equation}
where, by backward substitution, the coefficients of $U_{>N}^{-1} = \left((U_{>N}^{-1})_{ij}\right)_{i,j\geq 1}$, are given by
\begin{equation}
\label{eq:Utailinv}
(U_{> N}^{-1})_{ij} = 
\begin{cases}
\displaystyle\frac{(-1)^{i-j}}{d_{n+2j}}\prod_{l=i}^{j-1}\frac{\mu_{n+2l}}{d_{n+2l}^2}, \qquad &\mbox{for $i\leq j$,}\\
0 &\mbox{otherwise.}
\end{cases}
\end{equation}
\begin{rmk}
    Since $U$ is triangular, $\Pi_{\leq N} U^{-1} \Pi_{\leq N} = (\Pi_{\leq N} U \Pi_{\leq N})^{-1}$, hence we can afford to use the somewhat ambiguous notation $U^{-1}_{\leq N}$, as $(U_{\leq N})^{-1} = \Pi_{\leq N} U^{-1} \Pi_{\leq N}$. The same is true for $U^{-1}_{> N}$.
\end{rmk}
The estimates contained in the upcoming Proposition~\ref{prop:boundsUinv} prove that this operator $U^{-1}$ is indeed well defined and bounded on $\ell^2$. In principle, we could have used formulas of the form~\eqref{eq:Utailinv} for getting the inverse of $U$ in its entirety, and not only of $U_{>N}$. The reason for using these different blocks is that we will then need to estimate the entries of $U^{-1}$, and that the estimates we obtain in Proposition~\ref{prop:boundsUinv} are only valid for rows of large enough index, i.e., for $U^{-1}_{>N}$. Another upshot of this block-decomposition is that it makes it immediately clear that the top right block of $U^{-1}$ can be described as a tensor product, which will prove convenient in the sequel.

\subsection{Quantitative compactness estimates for $U^{-1}$}
\label{app:eltwise-est}
We derive here some estimates on $U^{-1}$ that play a central role in the proof of Proposition~\ref{prop:Zijkl}. First, we need some explicit control on the behaviour of the coefficients $a_k$ defining the basis polynomials of $\cP$, which then allows us to control the coefficients $\alpha_k$, $\beta_k$, $\lambda_k$ and $\mu_k$ appearing in the operators $L_m$ and $J_{[m]}$.
\begin{lem}
    \label{lem:ak_resonance}
    Let $\kappa = 1$, $\sigma = 0.287129152$, $\tilde{\kappa} = \sqrt{2}\kappa/\sigma$ as in~\eqref{eqn:change-var},
$$c^-= 0.987 \qquad \text{and}\qquad c^+ = 1.10712,$$
$N_0 = 196$ and $(a_k)_{k\in \N}$ be defined by the recurrence~\eqref{eqn:intro-bread} and initial condition~\eqref{eqn:intro-bread-initial} (with $a_k = \sqrt{b_k}$). Then, for all $k\geq N_0$,
\begin{equation}
    \label{eqn:bn_bounds_resonance}
    c^-\frac{\sqrt{k}}{\sqrt{3}}\leq a_k^2\leq c^+\frac{\sqrt{k}}{\sqrt{3}},
\end{equation}
and therefore, for all $k \geq N_0 + 2$,
\begin{align*}
    \alpha_{k+1}&\leq C_{\alpha} k^{3/4},\qquad C_{\alpha} := \frac{\sqrt[4]{3}}{\sqrt{c^-}}\left(\frac{N_0+1}{N_0}\right)^{3/4},\\
    \beta_{k+1}&\leq C_{\beta}k^{3/4},\qquad C_{\beta} :=\frac{(c^+)^{3/2}[(N_0+1)(N_0+2)(N_0+3)]^{1/4}}{(3N_0)^{3/4}},\\
    \lambda_k&\geq C_\lambda k^{3/2},\qquad C_{\lambda} :=\frac{\sqrt{3}}{c^+}+\frac{\sqrt{N_0-1}\sqrt{N_0-2}}{N_0}\frac{(c^-)^3}{3\sqrt{3}},\\
    \mu_k&\leq C_{\mu}k^{3/2}, \qquad C_{\mu}:=\frac{c^+}{\sqrt{3}}\frac{[(N_0+1)(N_0+2)]^{1/4}}{\sqrt{N_0}},
\end{align*}
where $(\alpha_k)_{k\in\N}$, $(\beta_k)_{k\in\N}$, $(\lambda_{k})_{k\in\N}$ and $(\mu_k)_{k\in \N}$ are defined by~\eqref{eqn:alpha-beta-def} and~\eqref{eq:deflambdaandmu}.
\end{lem}
\begin{proof}
    The estimate~\eqref{eqn:bn_bounds_resonance} is nothing but another instance of Proposition~\ref{prop:bread-bounds}, for a different value of $\kappa$. We refer to Section~\ref{sec:bread} for the theoretical part of the proof and to the file~\texttt{stoc\_res/get\_Painleve\_bounds.jl} at~\cite{Chu2025Huggzz/Freud} for the computational part. 
    Then, the remaining estimates follow directly from~\eqref{eqn:bn_bounds_resonance} and from the definitions of $\alpha_k$, $\beta_k$, $\lambda_k$ and $\mu_k$ in terms of $a_k$ (see~\eqref{eqn:alpha-beta-def} and~\eqref{eq:deflambdaandmu}).
\end{proof}
Thanks to Lemma~\ref{lem:ak_resonance}, we can now control the behaviour of the coefficients of $U$, and in particular show:
\begin{itemize}
    \item how fast $|d_k|$ grows,
    \item that the ratio $\left|\frac{\mu_k}{d_k^2}\right|$ falls below $1$.
\end{itemize}
This is done in Lemma~\ref{lem:tridiag_rec} below, which is inspired from~\cite[Lemma 2.1]{Breden2015RigorousPart}.
Taken together, these two points will then allow us to obtain the required quantitative compactness estimates on $U_{>N}^{-1}$.
\begin{lem}
\label{lem:tridiag_rec}
    Take $\kappa$, $c^-$, $c^+$, $C_{\lambda}$, $C_{\mu}$ and $N_0$ as in Lemma~\ref{lem:ak_resonance}, and define
    \begin{align}
        \label{eq:defchiandgamma}
        \chi := C_{\mu}/C_{\lambda}\qquad\text{and}\qquad \gamma :=\frac{1}{2}+\sqrt{\frac{1}{4}-\chi^2}.
    \end{align}
    Assume that $m\in\Z$ is such that $n := [m] + 2N - 2\geq N_0$ and consider $(d_k)_{k\in \N}$ defined by~\eqref{eqn:defd_k}.
 If
\begin{equation}\label{eqn:stupid-base-case}
    \gamma \leq \left|\frac{d_{n+2}^2}{i\tilde{\omega} m + \lambda_{n+2}}\right|,
\end{equation}
we then have, for all $k\geq 1$,
\begin{equation}
\label{eq:lowerbounddk}
|d_{n+2k}|\geq C_d(n+2k)^{3/4}, \qquad C_d := \sqrt{\gamma C_{\lambda}}.
\end{equation}
Moreover, defining 
\begin{align}
    \label{eq:defconstants}
    \vartheta := \frac{C_{\mu}}{C_d^2},\qquad C_{\alpha, d} := \frac{C_{\alpha}}{C_d}\qquad \textrm{and}\qquad C_{\beta, d} := \frac{C_{\beta}}{C_d},
\end{align}
we also have, for all $k\geq l\geq 1$,
\begin{align}
\label{eq:maj_tridiag_rec}
    \left|\frac{\mu_{n+2k}}{d_{n+2k}^2}\right| \leq \vartheta, \qquad \left|\frac{\alpha_{n+2l+1}}{d_{n+2k}}\right|&\leq C_{\alpha, d} \qquad \textrm{and}\qquad\left|\frac{\beta_{n+2l+1}}{d_{n+2k}}\right|\leq C_{\beta, d} .
\end{align}
\end{lem}
\begin{proof}
According to Lemma~\ref{lem:ak_resonance}, for all $k\geq 1$,
$$\left|\frac{\mu_{n+2k}}{i\tilde{\omega} m+\lambda_{n+2k}}\right|\leq\chi \qquad\text{and} \qquad \left|\frac{\mu_{n+2k}}{i\tilde{\omega} m+\lambda_{n+2k+2}}\right|\leq\chi.$$
We then obtain the following recurrence inequality
$$\left|\frac{d_{n+2k+2}^2}{i\tilde{\omega} m +\lambda_{n+2k+2} }\right| = \left| 1- \frac{\mu_{n+2k}}{i\tilde{\omega} m+\lambda_{n+2k}}\frac{\mu_{n+2k}}{i\tilde{\omega} m+\lambda_{n+2k+2}}\frac{i\tilde{\omega} m + \lambda_{n+2k}}{d_{n+2k}^2}\right|\geq1-\chi^2\left|\frac{i\tilde{\omega} m + \lambda_{n+2k}}{d_{n+2k}^2}\right|.$$
Since $\chi \approx 0.36705 < \frac{1}{2}$, one then readily shows by induction that, for all $k\geq 1$,
\begin{equation*}
    \left|\frac{d_{n+2k}^2}{i\tilde{\omega} m +\lambda_{n+2k} }\right| \geq \gamma,
\end{equation*}
the base case being assumption~\eqref{eqn:stupid-base-case}.
Therefore, for all $k\geq 1$,
\begin{align*}
    \vert d_{n+2k}\vert \geq \sqrt{\gamma \vert \lambda_{n+2k}\vert},
\end{align*}
and the lower bound on $\lambda_k$ from Lemma~\ref{lem:ak_resonance} yields~\eqref{eq:lowerbounddk}. The remaining estimates announced in Lemma~\ref{lem:tridiag_rec} then directly follow from the lower bounds on $\alpha_k$, $\beta_k$ and $\mu_k$ from Lemma~\ref{lem:ak_resonance}.
\end{proof}
We are now finally ready to get quantitative compactness estimates on $U^{-1}$, and a quantitative bound on (the tail part of) $J_{[m]}U^{-1}$, which, as explained in Section~\ref{sec:outlineproof}, are some of the main ingredients of our proof of Theorem~\ref{thm:rho-intro}.

\begin{prop}
\label{prop:boundsUinv}
    Let $m\in\Z$, and $U$ as in~\eqref{def:Um}. Take $c^-$, $c^+$ and $N_0$ as in Lemma~\ref{lem:ak_resonance}, and assume that $n = [m]+2N -2\geq N_0$, and that~\eqref{eqn:stupid-base-case} holds. We have the following bounds
    \begin{align}
        \|U_{>N}^{-1}\|_{\ell^2}&\leq
        \frac{1}{(1-\vartheta)C_d(2N+[m])^{3/4}},\label{eqn:U-bound}\\
        \|(U_{>N}^{-1})_{1,:}(U^{-1}_{>N})^T\|_{\ell^2} &\leq \frac{1}{(1-\vartheta^2)^{3/2}C_d^2(2N+[m])^{3/2}},\label{eqn:U-vec-mat-bound}\\
        \|\Pi_{>N+[m]}J_{[m]}U_{>N}^{-1}\|_{\ell^2}&\leq\frac{C_{\alpha,d}+ C_{\beta, d}}{1-\vartheta},\label{eqn:JU-bound}
    \end{align}
where $C_\lambda$ is defined in Lemma~\ref{lem:ak_resonance}, whereas $\vartheta$, $C_d$, $C_{\alpha,d}$ and $C_{\beta,d}$ are defined in Lemma~\ref{lem:tridiag_rec}.
\end{prop}
\begin{proof}
    We start from~\eqref{eq:Utailinv}, and use~\eqref{eq:lowerbounddk} together with the first bound in~\eqref{eq:maj_tridiag_rec}, which allows to estimate the entries of $U_{>N}^{-1}$:
\begin{equation}
\label{eq:absUtailinv}
\left\vert (U_{> N}^{-1})_{ij}\right\vert \leq 
\begin{cases}
\displaystyle \dfrac{\vartheta^{j-i}}{C_d (n+2)^{3/4}}, \qquad &\mbox{for $i\leq j$,}\\
0 &\mbox{otherwise.}
\end{cases}
\end{equation}
This readily yields that
\begin{align*}
    \Vert U_{>N}^{-1}\Vert_{\ell^1} \leq \dfrac{1}{(1-\vartheta)C_d (n+2)^{3/4}} \qquad \text{and} \qquad \Vert U_{>N}^{-1}\Vert_{\ell^\infty} \leq \dfrac{1}{(1-\vartheta)C_d (n+2)^{3/4}},
\end{align*}
which gives~\eqref{eqn:U-bound}.
Using again~\eqref{eq:absUtailinv}, we also get
\begin{align*}
   \|(U_{>N}^{-1})_{1,:}(U^{-1}_{>N})^T\|_{\ell^2} &\leq \left(\sum_{i=1}^\infty \left(\sum_{j=i}^\infty \frac{\vartheta^{j-i}}{C_d(n+2)^{3/4}}\frac{\vartheta^{j-1}}{C_d(n+2)^{3/4}} \right)^2 \right)^{1/2} \\
   &= \frac{1}{C_d^2(n+2)^{3/2}} \left(\sum_{i=1}^\infty \frac{\vartheta^{2(i-1)}}{(1-\vartheta^2)^2}\right)^{1/2} \\
   &= \frac{1}{(1-\vartheta^2)^{3/2}C_d^2(n+2)^{3/2}} ,
\end{align*}
which gives~\eqref{eqn:U-vec-mat-bound}. Finally, similarly as in Appendix~\ref{app:annoying-bounds}, we use the bidiagonal structure of $J_{[m]}$ to estimate
\begin{equation*}
    \|\Pi_{>N+[m]}J_{[m]}\Pi_{>N}U_{>N}^{-1}\|_{\ell^2} \leq \|\Diag(\alpha_{2N+3+[m]:2:})U^{-1}_{>N+1}\|_{\ell^2}+\|\Diag(\beta_{2N+1+[m]:2:})U^{-1}_{>N}\|_{\ell^2}.
\end{equation*}
Going back to~\eqref{eq:Utailinv}, and this time using the first two estimates in~\eqref{eq:maj_tridiag_rec}, we get that
\begin{align*}
    \left\vert \left(\Diag(\alpha_{2N+3+[m]:2:})U^{-1}_{>N+1}\right)_{i,j}\right\vert \leq
    \begin{cases}
        C_{\alpha,d}\vartheta^{j-i}, \qquad &\mbox{for $i\leq j$,}\\
        0 &\mbox{otherwise,}
    \end{cases}
\end{align*}
and thus
\begin{align*}
    \|\Diag(\alpha_{2N+3+[m]:2:})U^{-1}_{>N+1}\|_{\ell^2} &\leq \left(\|\Diag(\alpha_{2N+3+[m]:2:})U^{-1}_{>N+1}\|_{\ell^1} \|\Diag(\alpha_{2N+3+[m]:2:})U^{-1}_{>N+1}\|_{\ell^\infty}\right)^{1/2} \\ &\leq \frac{C_{\alpha,d}}{1-\vartheta}.
\end{align*}
Similarly, 
\begin{equation*}
    \|\Diag(\beta_{2N+1+[m]:2:})U^{-1}_{>N}\|_{\ell^2} \leq \frac{C_{\beta,d}}{1-\vartheta},
\end{equation*}
which yields~\eqref{eqn:JU-bound}.
\end{proof}

\subsection{Careful construction of the approximate inverse $A^{11}$}
\label{app:A-construction}

We give here some details regarding the construction of the approximate inverse $A$, and more precisely about the finite dimensional part $A^{11}$. As explained in Section~\ref{sec:outlineproof}, $A^{11}$ should be taken as an approximate inverse of $\bar{F}^{11}$ given in~\eqref{eqn:F11}. Indeed, when deriving the $Z$ estimate we then have to compute (rigorously, using interval arithmetic) $\id - A^{11}\bar{F}^{11}$, and would like this term to be very small. However, in practice the matrices $A^{11}$ and $\bar{F}^{11}$ are rather large (of size $(2M+1)(N+1)\times(2M+1)(N+1)$), hence we would like this computation to be done as efficiently as possible. To that end, we will introduce a reformulation of the problem that makes $\bar{F}^{11}$ sparser. We note that $\bar{F}^{11}$ is already somewhat sparse thanks to its block-tridiagonal structure, but each of the blocks $B^{11}_m = \frac{\tilde{\eta}}{2}\Pi_{\leq N}J_{[m]}L_m^{-1} \Pi_{\leq N}$ is still full, and this is what we are going to improve upon.

Since $J_{[m]}$ is lower triangular, $\Pi_{\leq N}J_{[m]}L_m^{-1} \Pi_{\leq N} = \Pi_{\leq N}J_{[m]} \Pi_{\leq N} L_m^{-1} \Pi_{\leq N}$. Defining $\tilde{L}_m^{11} = (\Pi_{\leq N}L_m^{-1}\Pi_{\leq N})^{-1}$ and $\tilde{L}^{11} = \Diag(\tilde{L}^{11}_{-M},\ldots, \tilde{L}^{11}_0, \ldots,\tilde{L}^{11}_M)$, we consider
\begin{equation*}
\tilde{F}^{11} := \bar{F}^{11}\tilde{L}^{11} = \begin{pNiceMatrix}
    \tilde{L}^{11}_{-M}  &\Block[borders={left, bottom,top,right}]{1-1}{\tfrac{\tilde{\eta}}{2}J^{11}_{[1-M]}} & & & & & & &\\
    \Block[borders={bottom,top,right}]{1-1}{\tfrac{\tilde{\eta}}{2}J^{11}_{[-M]}}&\ddots &\ddots & & & & & &\\
    &\ddots &\Block[borders={left,top}]{1-1}{\tilde{L}^{11}_{-2}} &\Block[borders={bottom,left,top,right}]{1-1}{\tfrac{\tilde{\eta}}{2}J^{11}_{1}}& & & & &\\
    & & \Block[borders={bottom,left,top,right}]{1-1}{\tfrac{\tilde{\eta}}{2}J^{11}_{2}}&\tilde{L}^{11}_{-1} &  \Block[borders={bottom,left,top,right}]{1-1}{\tfrac{\tilde{\eta}}{2}J^{11}_{2}}& & & &\\
    & & & \Block[borders={bottom,left,top,right}]{1-1}{\tfrac{\tilde{\eta}}{2}J^{11}_{1}}&\tilde{L}^{11}_{0}  &\Block[borders={bottom,left,top,right}]{1-1}{\tfrac{\tilde{\eta}}{2}J^{11}_{1}} & & &\\
    & & & & \Block[borders={bottom,left,top,right}]{1-1}{\tfrac{\tilde{\eta}}{2}J^{11}_{2}}&\tilde{L}^{11}_{1}  &\Block[borders={bottom,left,top,right}]{1-1}{\tfrac{\tilde{\eta}}{2}J^{11}_{2}} & &\\
    & & & & &\Block[borders={bottom,left,top,right}]{1-1}{\tfrac{\tilde{\eta}}{2}J^{11}_{1}} &\Block[borders={bottom,left,top,right}]{1-1}{\tilde{L}^{11}_{2}}  &\ddots &\\
    & & & & & &\ddots &\ddots &\Block[borders={bottom,left,top}]{1-1}{\tfrac{\tilde{\eta}}{2}J^{11}_{[M]}}\\
    & & & & & & & \Block[borders={bottom,left,top,right}]{1-1}{\tfrac{\tilde{\eta}}{2}J^{11}_{[M-1]}} & \tilde{L}^{11}_{M} 
\end{pNiceMatrix}.
\end{equation*}
We now show that each block $\tilde{L}^{11}_m$ is at most tridiagonal, hence confirming that $\tilde{F}^{11}$ is very sparse. Recalling that $L_m = U^T\, U$, and using the formula~\eqref{eq:Uinv} for $U^{-1}$, we get that
\begin{align*}
    \Pi_{\leq N} L_m^{-1}\Pi_{\leq N} &=  U_{\leq N}^{-1}(U_{\leq N}^{-1})^T+(\mu_n/d_n)^2 \left( (U_{\leq N}^{-1})_{:,-1}(U_{> N}^{-1})_{1,:}\right) \left( \left(U_{\leq N}^{-1}\right)_{:,-1}\left(U_{> N}^{-1}\right)_{1,:}\right)^T\\
    &=U_{\leq N}^{-1}(U_{\leq N}^{-1})^T+\xi_N\left((U_{\leq N}^{-1})_{:,-1}\,((U_{\leq N}^{-1})_{:,-1})^{T}\right),
\end{align*}
where $n := [m]+2N-2$, and the complex number $\xi_N$ is given by
\begin{equation}\label{eqn:xi_N-def}
\xi_N = \left(\frac{\mu_n}{d_n}\right)^2\,(U^{-1}_{>N})_{1,:}\, ((U^{-1}_{>N})_{1,:})^{T}= \left(\frac{\mu_n}{d_n}\right)^2\sum_{k=1}^{\infty}\frac{1}{d_{n+2k}^2}\prod_{l=1}^{k-1}\frac{\mu_{n+2l}^2}{d_{n+2l}^4}.
\end{equation}
Therefore, by the Sherman--Morrison formula,
\begin{equation}
\label{eq:tildeLm}
\tilde{L}_m^{11} = (\Pi_{\leq N}L_m^{-1}\Pi_{\leq N})^{-1} = \Pi_{\leq N}L_m\Pi_{\leq N} -
\frac{\xi_N}{1+\xi_N}d_n^2 
\begin{pmatrix}
    0 & \cdots & \cdots & 0 \\
    \vdots & \ddots & & \vdots \\
    \vdots & & 0 & 0 \\
    0 & \cdots & 0 & 1
\end{pmatrix},
\end{equation}
which is indeed tridiagonal.

We then consider $\tilde{A}^{11} = (\tilde{A}^{11}_{ij})_{-M\leq i,j,\leq M} $ a numerically computed inverse of $\tilde{F}^{11}$, and define $A^{11} := \tilde{L}^{11}\tilde{A}^{11}$. Thanks to this deliberate choice of $A^{11}$, we get
\begin{equation*}
\id - A^{11}\bar{F}^{11} = \id - \tilde{L}^{11}(\tilde{A}^{11}\tilde{F}^{11})(\tilde{L}^{11})^{-1} = \tilde{L}^{11}R^{11}(\tilde{L}^{11})^{-1},
\end{equation*}
where $R^{11} := \id - \tilde{A}^{11}\tilde{F}^{11} = (R_{ij}^{11})_{-M\leq i,j\leq M}$. Because $\tilde{F}_{11}$ is sparser than $\bar{F}^{11}$, the computation of $R^{11}$ is cheaper than that of $\id - A^{11}\bar{F}^{11}$. Moreover, the entries of $R^{11}$ should have close-to-zero entries by construction. Therefore, when we need to estimate the norm of 
\begin{align*}
    (\id - A^{11}\bar{F}^{11})_{ij}  =  \tilde{L}_{i}^{11} R_{ij}^{11} (\tilde{L}_{j}^{11})^{-1},
\end{align*} 
we can get away with using the crude estimate
\begin{align*}
    \Vert \tilde{L}_{i}^{11} R_{ij}^{11} (\tilde{L}_{j}^{11})^{-1} \Vert_{\ell^2} \leq\Vert \tilde{L}_{i}^{11} \Vert_{\ell^2} \Vert  R_{ij}^{11} \Vert_{\ell^2} \Vert (\tilde{L}_{j}^{11})^{-1} \Vert_{\ell^2}.
\end{align*}
Such calculations occur in the estimates needed for Proposition~\ref{prop:Zijkl} (see for instance Appendix~\ref{sec:proofZij11}).

\begin{rmk}
    Even though $\tilde{F}^{11}$ itself is very sparse, its (numerical) inverse $\tilde{A}^{11}$ is not. In order to alleviate memory constraints, we therefore compute $\tilde{A}^{11}$ block-column-wise. That is, we compute $(\tilde{A}_{ij})_{-M\leq i\leq M}$ one $j$ at a time. Since the $(Z_{ij})_{-M\leq i\leq M}$ depends only on $(\tilde{A}_{i,j-1})_{-M\leq i\leq M}$, $(\tilde{A}_{ij})_{-M\leq i\leq M}$ and $(\tilde{A}_{i,j+1})_{-M\leq i\leq M}$, we then only need to keep in memory three block columns at a time. This allows us to use relatively little memory even if we take $M$ to be large. The banded-block-tridiagonal structure of $\tilde{F}^{11}$ allows us to make this procedure efficient, as we compute its block-tridiagonal $LU$ decomposition once, and then use it $2M+1$ times to solve for $(\tilde{A}_{ij})_{-M\leq i\leq M}$.
\end{rmk}
\begin{rmk}
We can furthermore roughly halve the cost of the proof by exploiting the ``real-preserving'' structure of the equation and by choosing $A=(A_{ij})_{i,j, \in \Z}$ such that $A_{-i, -j}  = A_{ij}^*$ (where $\cdot^*$ denotes the conjugate without transposition). Indeed, we then have that the $-j$\textsuperscript{th} block-column of $T = (T_{ij})_{i,j\in \Z}$ has $\ell^1(\ell^2)$-norm
\begin{align*}
    \|(T_{i,-j})_{i\in \Z}\|_{\ell^1(\ell^2)} &= \sum_{i \in \Z}\|T_{i,-j}\|_{\ell^2}\\
    &=\sum_{i \in \Z}\|T_{-i,-j}\|_{\ell^2}\\
    &=\sum_{i \in \Z}\|\delta_{-i, -j}\id - (A_{-i,-j-1}B_{-j} + A_{-i, -j} + A_{-i, -j+1}B_{-j})\|_{\ell^2}\\
    &=\sum_{i \in \Z}\|\delta_{i, j}\id - (A_{i,j+1}^*B_{j}^* + A_{i, j}^* + A_{i, j-1}^*B_{j}^*)\|_{\ell^2}\qquad\mbox{as }B_{-j} = B_j^*\\
    &=\sum_{i\in \Z}\|T_{ij}^*\|_{\ell^2}\\
    &=\sum_{i\in \Z}\|T_{ij}\|_{\ell^2}.
\end{align*}
\end{rmk}

\begin{rmk}
\label{rem:xi}
The real number $\xi_N$ involved in the formula~\eqref{eq:tildeLm} for $\tilde{L}_m^{11}$ is defined as an infinite sum~\eqref{eqn:xi_N-def}. However, since this sums converges geometrically, we can easily get a tight and rigorous enclosure for $\xi_N$. We simply pick some $K\in \N$ large enough, and consider
$$\bar{\xi}_N := \left(\frac{\mu_n}{d_n}\right)^2\sum_{k=1}^{K}\frac{1}{d_{n+2k}^2}\prod_{l=1}^{k-1}\frac{\mu_{n+2l}^2}{d_{n+2l}^4}.$$
Using~\eqref{eq:maj_tridiag_rec} we then have
\begin{align*}
    |\xi_N - \bar{\xi}_N| &\leq \left(\frac{\mu_n}{\vert d_n\vert }\right)^2\sum_{k=K+1}^{\infty}\frac{1}{|d_{n+2k}|^2}\prod_{l=1}^{k-1}\frac{\mu_{n+2l}^2}{|d_{n+2l}|^4}\\
    &\leq \left(\frac{\mu_n}{\vert d_n\vert }\right)^2\sum_{k=K+1}^{\infty}\frac{\vartheta^{2(k-1)}}{C_d^2(n+2k)^{3/2}}\\
    &\leq \left(\frac{\mu_n}{\vert d_n\vert }\right)^2\frac{\vartheta^{2K}}{(1-\vartheta^2)C_d^2(2N+2K)^{3/2}}.
\end{align*}
\end{rmk}

\subsection{Proof of Proposition~\ref{prop:Zijkl}}
\label{sec:proofZijkl}

We give here the proof of Proposition~\ref{prop:Zijkl}, which relies on the estimates from Proposition~\ref{prop:boundsUinv}. Recalling~\eqref{eqn:Tcarving} and~\eqref{eq:defZij}, we have to show that the constants $Z_{ij}^{11}$, $Z_{ij}^{12}$, $Z_{ij}^{21}$ and $Z_{ij}^{22}$ defined in Proposition~\ref{prop:Zijkl} satisfy
\begin{align*}
    \Vert T_{ij}^{11}\Vert_{\ell^2} \leq Z_{ij}^{11},\quad
    \Vert T_{ij}^{12}\Vert_{\ell^2} \leq Z_{ij}^{12},\quad 
    \Vert T_{ij}^{21}\Vert_{\ell^2} \leq Z_{ij}^{21},\quad 
    \Vert T_{ij}^{22}\Vert_{\ell^2} \leq Z_{ij}^{22}.
\end{align*}
To that end, we first split the operators $B_m$ which appear in $T$ as follows:
\begin{equation}
\label{eq:Bsplit}
B_m = \begin{pNiceMatrix}[margin]
  \Block[borders={bottom, right}]{3-4}{B^{11}_{m}} & & & &\Block[borders={}]{3-4}{B^{12}_{m}} & & & \\
  & & & & & & & \\
  & & & & & & & \\
  \Block[borders={}]{5-3}{B^{21}_{m}}& & & \Block[borders={top, left}]{5-5}{B^{22}_m}& & & &\\
  & & & & & & & \\
  & & & & & & &\\
  & & & & & & & \\
  & & & & & & & 
\end{pNiceMatrix},
\end{equation}
where the finite part $B_{m}^{11} = \Pi_{\leq N}B_{m}\Pi_{\leq N}$ was already introduced before, and forces us to take $B_{m}^{12} := \Pi_{\leq N} B_{m}\Pi_{> N}$. However,  we emphasise that we use a slightly different projection for the other two terms, namely $B_{m}^{21} := \Pi_{> N} B_{m}\Pi_{\leq N-[m]}$ and $B_{m}^{22} := \Pi_{> N} B_{m}\Pi_{> N-[m]}$. The rationale behind this slightly odd-looking splitting is that it allows us to better take advantage of the structure of $U^{-1}$ and to obtain sharper estimates (see Remark~\ref{rmk:weird-carving}).
 Then, recalling~\eqref{eq:Tij} and~\eqref{eqn:Tcarving}, and using that $A^{12}_{ij}=0$ and $A^{21}_{ij}=0$ for all $i$ and $j$, we have
\begin{equation}
\label{eq:Tklij}
\begin{cases}
    T^{11}_{ij} &= (\delta_{ij}\Pi_{\leq N}  - (A^{11}_{i,j-1}B^{11}_{j} +A^{11}_{i,j} + A^{11}_{i, j+1}B^{11}_{j}))\Pi_{\leq N-[j]}\\
    T^{12}_{ij} &=(\delta_{ij}\Pi_{\leq N}  - (A^{11}_{i,j-1}B^{11}_{j} +A^{11}_{i,j} + A^{11}_{i, j+1}B^{11}_{j}))\Pi_{> N-[j]}  - (A^{11}_{i,j-1} + A^{11}_{i, j+1})B^{12}_{j}\\
    T^{21}_{ij} &= - (\delta_{i,j-1}  + \delta_{i, j+1})B^{21}_{j}\\
    T^{22}_{ij} &= - (\delta_{i,j-1} + \delta_{i, j+1})B^{22}_{j}.
\end{cases}
\end{equation}
We derive the corresponding estimates $Z^{11}_{ij}$, $Z^{12}_{ij}$, $Z^{21}_{ij}$ and $Z^{22}_{ij}$ in separate subsections below.

\subsubsection{The bound $Z^{11}_{ij}$}
\label{sec:proofZij11}

According to~\eqref{eq:Tklij}, and to the definition of $Z_{ij}^{11}$ in Proposition~\ref{prop:Zijkl}, we simply have $Z_{ij}^{11} = \epsilon_{ij}^{\leq N-[j]}= \Vert T_{ij}^{11}\Vert_{\ell^2} $, and there is nothing left to prove.

\begin{rmk}
\label{rem:Rij}
In practice, the only downside of that estimate is that $\epsilon_{ij}^{\leq N-[j]}$ involves the rigorous computation of the $\ell^2$ operator norm of a matrix. As explained in Section~\ref{sec:quant-poinc}, rigorously enclosing the $\ell^2$ operator norm of a matrix is possible, but somewhat costly. In practice, we therefore sometimes use a coarser but cheaper-to-compute estimate. Indeed, when $|i|,|j|\leq M$, the construction of $A^{11}$ detailed in Section~\ref{app:A-construction} yields
$$T^{11}_{ij} = (\delta_{ij}\Pi_{\leq N}  - (A^{11}_{i,j-1}B^{11}_{j-1} +A^{11}_{i,j} + A^{11}_{i, j+1}B^{11}_{j+1}) )\Pi_{\leq N-[j]} = \tilde{L}_i^{11}R_{ij}^{11}(\tilde{L}_j^{11})^{-1}\Pi_{\leq N-[j]},$$
where we expect $R_{ij}^{11}$ to be very close to $0$. Hence, we make use of the bound
\begin{align*}
\|T^{11}_{ij}\|_{\ell^2}&\leq\|\tilde{L}_i^{11}R_{ij}^{11}(\tilde{L}_j^{11})^{-1}\Pi_{\leq N-[j]}\|_{\ell^2}\\
    &\leq \|\tilde{L}_i^{11}\|_{\ell^2}\|R_{ij}^{11}\|_{\ell^2}\|(\tilde{L}_j^{11})^{-1}\Pi_{\leq N-[j]}\|_{\ell^2}\\
    &\leq \|\tilde{L}_i^{11}\|_{\ell^2}\|(\tilde{L}_j^{11})^{-1}\Pi_{\leq N-[j]}\|_{\ell^2}\|R_{ij}^{11}\|_{\ell^1}^{1/2}\|R_{ij}^{11}\|_{\ell^{\infty}}^{1/2}.
\end{align*}
We then consider, 
\begin{align*}
    \tilde{\epsilon}_{ij}^{\leq N-[j]} := \|\tilde{L}_i^{11}\|_{\ell^2}\|(\tilde{L}_j^{11})^{-1}\Pi_{\leq N-[j]}\|_{\ell^2}\|R_{ij}^{11}\|_{\ell^1}^{1/2}\|R_{ij}^{11}\|_{\ell^{\infty}}^{1/2},
\end{align*}
and, for $\vert i\vert,\, \vert j\vert \leq M$, use $Z_{ij}^{11} := \tilde{\epsilon}_{ij}^{\leq N-[j]}$ instead of $Z_{ij}^{11} = \epsilon_{ij}^{\leq N-[j]}$. This new estimate reduces the number of $\ell^2$ operator norm computations, and we still expect it to be tight enough, as the entries of $R_{ij}$ should be close to machine precision.
\end{rmk}



\subsubsection{The bound $Z^{12}_{ij}$}

We now focus on the formula for $T_{ij}^{12}$ in~\eqref{eq:Tklij}. The $\ell^2$ operator norm of the first term is exactly given by $\epsilon_{ij}^{> N-[j]}$ from Proposition~\ref{prop:Zijkl}, and Remark~\ref{rem:Rij} also applies here.

In order to estimate the second term, namely $(A^{11}_{i,j-1}+A^{11}_{i,j+1})B^{12}_{j}$, we use the definition $B_j = \frac{\tilde{\eta}}{2}J_{[j]}L_j^{-1}$ and the formula~\eqref{eq:Uinv} for $U^{-1}$ to get
\begin{align*}
    B^{12}_j &= \frac{\tilde{\eta}}{2}\Pi_{\leq N}J_{[j]}L_j^{-1}\Pi_{>N}\\
    &= \frac{\tilde{\eta}}{2}J^{11}_{[j]}\Pi_{\leq N}U^{-1} (U^{-1})^T\Pi_{>N}\\
    &=  \frac{\tilde{\eta}}{2}J^{11}_{[j]}\left(-\frac{\mu_n}{d_n}(U_{\leq N}^{-1})_{:,-1}(U_{> N}^{-1})_{1,:}\right)\, U^{-1}_{>N} \\
    &=-\frac{\tilde{\eta}\mu_{[j]+2N-2}}{2d_{[j]+2N-2}} \left(J^{11}_{[j]}(U_{\leq N}^{-1})_{:,-1}\right)\left((U_{> N}^{-1})_{1,:}\, U^{-1}_{>N}\right).
\end{align*}
Therefore, we get
\begin{align*}
    \|(A^{11}_{i,j-1}+A^{11}_{i,j+1})B^{12}_{j}\|_{\ell^2} 
    &\leq\frac{\tilde{\eta}\vert\mu_{[j]+2N-2}\vert}{2\vert d_{[j]+2N-2}\vert}\frac{\|(A^{11}_{i,j-1}+A^{11}_{i,j+1}) J_{[j]}^{11}(U_{\leq N}^{-1})_{:,-1}\|_{\ell^2}}{(1-\vartheta^2)^{3/2}C_d^2(2N+[j])^{3/2}}\qquad \mbox{using~\eqref{eqn:U-vec-mat-bound}}\\
    &\leq\frac{\tilde{\eta}\vert\mu_{[j]+2N-2}\vert}{2\vert d_{[j]+2N-2}\vert}\frac{\|(A^{11}_{i,j-1}+A^{11}_{i,j+1}) J_{[j]}^{11}(U_{\leq N}^{-1})_{:,-1}\|_{\ell^2}}{C_d^2\left(2N(1-\vartheta^2)\right)^{3/2}}\\
    &= \frac{\tilde{\eta}}{2}|U_{N, N+1}|\frac{\|(A^{11}_{i,j-1}+A^{11}_{i,j+1}) J_{[j]}^{11}(U^{-1})_{:N,N}\|_{\ell^2}}{C_d^2\left(2N(1-\vartheta^2)\right)^{3/2}}
\end{align*}
since $\dfrac{\mu_{[j]+2N-2}}{d_{[j]+2N-2}}=\dfrac{\mu_n}{d_n}=U_{N, N+1}$ (from~\eqref{eqn:U-form}) and therefore
$$\|T^{12}_{ij}\|\leq \epsilon^{> N-[j]}_{ij} +\frac{\tilde{\eta}}{2}|U_{N, N+1}|\frac{\|(A^{11}_{i,j-1}+A^{11}_{i,j+1}) J_{[j]}^{11}U^{-1}_{:N,N}\|_{\ell^2}}{C_d^2\left(2N(1-\vartheta^2)\right)^{3/2}} = Z_{ij}^{12}.$$

\subsubsection{The bound $Z^{21}_{ij}$}

Going back to~\eqref{eq:Tklij}, we now estimate $\|T^{21}_{ij}\|_{\ell^2} = (\delta_{i,j+1}+\delta_{i,j-1})\|B^{21}_{j}\|_{\ell^2}$.
We have that 
\begin{align}
\label{eq:B21}
    B^{21}_{j} &= \Pi_{>N}B_{j}\Pi_{\leq N-[j]}\nonumber\\
    &=\frac{\tilde{\eta}}{2}\Pi_{>N}J_{[j]}L^{-1}_j\Pi_{\leq N-[j]}\nonumber\\
    &=\frac{\tilde{\eta}}{2}\Pi_{>N}J_{[j]}\Pi_{>N-[j]}L^{-1}_j\Pi_{\leq N-[j]}\qquad \mbox{given the structure of $J_{[j]}$ defined in~\eqref{eqn:J-def}}\nonumber\\
    &=\frac{\tilde{\eta}}{2}\Pi_{>N}J_{[j]}\Pi_{>N-[j]}U^{-1}(U^{-1})^T\Pi_{\leq N-[j]}\nonumber\\
    &=\frac{\tilde{\eta}}{2}\Pi_{>N}J_{[j]}\Pi_{>N-[j]}U^{-1}\Pi_{> N-[j]}(U^{-1})^T\Pi_{\leq N-[j]}\nonumber\\
    &=\frac{\tilde{\eta}}{2}\Pi_{>N}J_{[j]}U^{-1}_{>N-[j]}\Pi_{> N-[j]}(U^{-1})^T\Pi_{\leq N-[j]}\nonumber\\
    &=\frac{\tilde{\eta}}{2}\Pi_{>N}J_{[j]}U^{-1}_{>N-[j]}\left(-\frac{\mu_{2N-[j]-2}}{d_{2N-[j]-2}}((U_{> N-[j]}^{-1})_{1,:})^T((U_{\leq N - [j]}^{-1})_{:,-1})^T\right),
\end{align}
where we crucially used that the top-right block of $U^{-1}$ can be written as an outer product (see~\eqref{eq:Uinv}). Indeed, this will allow for a sharp estimate below.
\begin{rmk}\label{rmk:weird-carving}
    This reformulation is the main reason why we use a different projection on the right and on the left in~\eqref{eqn:Tcarving} when splitting $T_{ij}$ into four parts, and why we split $B_j$ as in~\eqref{eq:Bsplit}. Indeed, otherwise we would have had to control $\Pi_{>N} B_j \Pi_{\leq N}$, and hence $\Pi_{> N-[j]}(U^{-1})^T\Pi_{\leq N} = \left( \Pi_{\leq N}(U^{-1})\Pi_{> N-[j]}\right)^T$, which cannot be written as a single outer-product.
\end{rmk}
Applying~\eqref{eqn:JU-bound} to~\eqref{eq:B21}, we get
\begin{align*}
    \|B^{21}_{j}\|_{\ell^2}&\leq \frac{\tilde{\eta}}{2}\|\Pi_{>N}J_{[j]}U^{-1}_{>N-[j]}\|_{\ell^2} \left\Vert\frac{\mu_{2N-[j]-2}}{d_{2N-[j]-2}}(U_{> N-[j]}^{-1})_{1,:}\right\Vert_{\ell^2} \Vert(U_{\leq N - [j]}^{-1})_{:,-1}\Vert_{\ell^2}\\
    &\leq\frac{\tilde{\eta}}{2}\sqrt{|\xi_{N-[j]}|}\|\Pi_{>N}J_{[j]}U^{-1}_{>N-[j]}\|_{\ell^2}\|(U^{-1}_{\leq N-[j]})_{:,-1}\|_{\ell^2}\\
    &=\frac{\tilde{\eta}}{2}\sqrt{|\xi_{N-[j]}|}\|\Pi_{>N}J_{[j]}U^{-1}_{>N-[j]}\|_{\ell^2}\|(U^{-1})_{:N-[j],N-[j]}\|_{\ell^2},
\end{align*}
where $\xi_{N-[j]}$ is as in~\eqref{eqn:xi_N-def}, and thus we indeed get
$$
\Vert T^{21}_{ij}\Vert_{\ell^2} \leq (\delta_{i,j+1}+\delta_{i,j-1})\frac{\tilde{\eta}(C_{\alpha, d}+C_{\beta,d})}{2(1-\vartheta)}\sqrt{|\xi_{N-[j]}|}\|(U^{-1}_{j})_{:N-[j],N-[j]}\|_{\ell^2} = Z^{21}_{ij}.$$
Combining~\eqref{eqn:xi_N-def} and the fact that $\dfrac{\mu_{2N-[j]-2}}{d_{2N-[j]-2}}=U_{N-[j], N-[j]+1}$ yields the estimate in Proposition~\ref{prop:Zijkl}.


\subsubsection{The bound $Z^{22}_{ij}$}

For the last term in~\eqref{eq:Tklij}, namely $T^{22}_{ij} = (\delta_{i,j+1}+\delta_{i,j-1})B^{22}_{j}$, the calculation is very similar to the one made just above:
\begin{align*}
    B^{22}_{j} &= \Pi_{>N}B_{j}\Pi_{> N-[j]}\\
    &=\frac{\tilde{\eta}}{2}\Pi_{>N}J_{[j]}L^{-1}_j\Pi_{> N-[j]}\\
    &=\frac{\tilde{\eta}}{2}\Pi_{>N}J_{[j]}\Pi_{>N-[j]}L^{-1}_j\Pi_{> N-[j]}\qquad \mbox{given the structure of $J_{[j]}$ defined in~\eqref{eqn:J-def}}\\
    &=\frac{\tilde{\eta}}{2}\Pi_{>N}J_{[j]}\Pi_{>N-[j]}U^{-1}(U^{-1})^T\Pi_{> N-[j]}\\
    &=\frac{\tilde{\eta}}{2}\Pi_{>N}J_{[j]}\Pi_{>N-[j]}U^{-1}\Pi_{> N-[j]}(U^{-1})^T\Pi_{> N-[j]}\\
    &=\frac{\tilde{\eta}}{2}\Pi_{>N}J_{[j]}U^{-1}_{>N-[j]}(U^{-1}_{>N-[j]})^T,
\end{align*}
so that
$$\|B^{22}_{j}\|_{\ell^2} \leq\frac{\tilde{\eta}}{2}\|\Pi_{>N}J_{[j]}U^{-1}_{>N-[j]}\|_{\ell^2}\|U^{-1}_{>N-[j]}\|_{\ell^2}.$$
Using~\eqref{eqn:JU-bound} and~\eqref{eqn:U-bound}, we therefore get
$$\Vert T_{ij}^{22}\Vert_{\ell^2} \leq (\delta_{i,j+1}+\delta_{i,j-1})\frac{\tilde{\eta}(C_{\alpha, d}+C_{\beta,d})}{2(1-\vartheta)^2C_d(2N-2)^{3/4}} = Z^{22}_{ij}.$$
This finishes the proof of Proposition~\ref{prop:Zijkl}. \hfill$\qed$

\subsection{Some embedding constants}
\label{sec:regularity}

The first part of Theorem~\ref{thm:linearNK} provides us with a computable upper bound for $\|v - \bar{v}\|_{\mathcal{X}}$. In order to obtain the second part of Theorem~\ref{thm:linearNK} (and thus Theorem~\ref{thm:rho-intro}, Theorem~\ref{thm:stoc-res} and Theorem~\ref{thm:stoc-res-pic}), we need to control
$$e^{V/2}(\rho-\bar{\rho}) = \frac{\sqrt[4]{2}}{\sqrt{\sigma}}\frac{e^{-V/2}}{\mathcal{Z}}(\tilde{\omega}\partial_s +\mathcal{L})^{-1}(v-\bar{v}),$$
and $\cR(\varrho)-\cR(\bar\varrho)$, which is done thanks to the two following lemmas.

\begin{lem}\label{lem:regularity}
    Suppose that $u\in (\tilde{\omega} \partial_s +\cL)^{-1}\mathcal{X}$, then
    $$\left\|\frac{e^{-V/2}}{\cZ}u\right\|_{\mathcal{C}^0(\T\times\R)}\leq\sqrt{\frac{C_{\cJ}+1}{\cZ}} C_P^{3/4}\|(\tilde{\omega} \partial_s +\cL)u\|_{\mathcal{X}}.$$
\end{lem}

\begin{proof} Writing $u(s, y) = \sum_{m\in \Z}e^{ims}u_m(y)$ and using Lemma~\ref{lem:annoying-bounds}, we have that
    \begin{equation*}
    \left\|\frac{e^{-V/2}}{\cZ}u\right\|_{\mathcal{C}^0(\T\times \R)}\leq \sqrt{\frac{C_{\cJ}+1}{\cZ}}C_P^{1/4}\|u\|_{\mathcal{C}^0(H^1(\nu))}\leq \sqrt{\frac{C_{\cJ}+1}{\cZ}}C_P^{1/4}\sum_{m \in \Z}\|u_m\|_{H^1(\nu)},
\end{equation*}
where we replaced $\max(C_P,1)$ by $C_P$ in Lemma~\ref{lem:annoying-bounds}, because $u$ has mean zero (against $\nu$). Since $u_m \in H^1_0(\nu)$, reusing the operators and notations introduced in Section~\ref{subsec:diff-op}, we have
\begin{align*}
    \|u_m\|_{H^1(\nu)} &=\|[u_m]_{\{q_n\}_{n\in \N^*}}\|_{\ell^2}\\
    & =\|[u_m]_{H^1}\|_{\ell^2} \\
    &=\|(P^{-1})^TP^T[u_m]_{H^1}\|_{\ell^2}\\
    &=\|(D_{>0}^{-1})^TD_{>0}^T[u_m]_{H^1}\|_{\ell^2} \qquad \mbox{as $(u, q_0) = \langle u, p_0\rangle = 0$}\\
    &\leq\|(D_{>0}^{-1})^T\|_{\ell^2}\|P^T[u_m]_{H^1}\|_{\ell^2}\\
    &= C_P^{1/2}\|[\mathcal{L}]_{H^1\to L^2}[u_m]_{H^1}\|_{\ell^2} \qquad \mbox{thanks to~\eqref{eq:CP} and~\eqref{eqn:tL-decomp}}\\
    &=C_P^{1/2}\|[\mathcal{L}u_m]_{L^2}\|_{\ell^2}\\
    &=C_P^{1/2}\|\mathcal{L}u_m\|_{L^2(\nu)},
\end{align*}
and thus
\begin{align}
\label{eq:C0toLuX}
    \left\|\frac{e^{-V/2}}{\cZ}u\right\|_{\mathcal{C}^0(\T\times \R)}\leq \sqrt{\frac{C_{\cJ}+1}{\cZ}}C_P^{3/4}\sum_{m \in \Z}\|\cL u_m\|_{L^2(\nu)} = \sqrt{\frac{C_{\cJ}+1}{\cZ}}C_P^{3/4} \Vert \cL u\Vert_{\cX}.
\end{align}
Finally, note that
\begin{equation}
    \label{eq:LVSdtpL}
    \Vert \cL u\Vert_{\cX} \leq \Vert (\tilde{\omega}\partial_s +\cL) u\Vert_{\cX}.
\end{equation}
Indeed, since $\cL$ is diagonalisable in an orthonormal basis of $L^2(\nu)$, for all $m\in \Z$ we have
$$\left\|(i\tilde{\omega} m +\mathcal{L})u_m\right\|^2_{L^2(\nu)} = \tilde{\omega}^2m^2\|u_m\|_{L^2(\nu)}^2 + \|\mathcal{L}u_m\|^2_{L^2(\nu)} \geq\|\mathcal{L}u_m\|^2_{L^2(\nu)},$$
hence
\begin{align*}
    \Vert \cL u\Vert_{\cX} = \sum_{m\in\Z} \Vert \cL u_m\Vert_{\cX} \leq \sum_{m \in \Z}\left\|(i\tilde{\omega} m +\mathcal{L})u_m\right\|^2_{L^2(\nu)} = \Vert (\tilde{\omega}\partial_s +\cL) u\Vert_{\cX}.
\end{align*}
Combining~\eqref{eq:C0toLuX} and~\eqref{eq:LVSdtpL} finishes the proof.
\end{proof}


\begin{lem}
\label{lem:Rsigma}
Let $u\in (\tilde{\omega} \partial_s +\cL)^{-1}\mathcal{X}$, let $v = (\tilde{\omega}\partial_s + \cL)u$, let $c \in\R$ and let 
\begin{equation}
\label{eq:rhochi}
    \varrho = \varrho(t, x) = (c+u(s, y))\frac{\sqrt[4]{2}}{\sqrt{\sigma}}\frac{e^{-y^4/4+\tilde{\kappa} y^2/2}}{\cZ}, \qquad \cZ = \int_\R e^{-y^4/4+\tilde{\kappa} y^2/2} \d y.
\end{equation}
Then, the quantity $\cR(\varrho)$ defined in~\eqref{eq:defR} satisfies
\begin{equation*}
    \cR(\varrho) \leq C_P\frac{\sqrt{\sigma b_1}}{\sqrt[4]{2}} \Vert v\Vert_{\cX}.
\end{equation*}
\end{lem}
\begin{proof}
    First, from change of variable~\eqref{eqn:change-var} we have
\begin{align*}
    \cR(\varrho) &= \sup_{t\in \T/\omega}\left|\int_{\R} x\varrho(t, x)\d x\right| \\
    &= \sup_{s\in \T}\frac{\sqrt{\sigma}}{\sqrt[4]{2}}\left|\int_{\R} y(c+u(s,y))\frac{e^{-V(s)}}{\cZ}\d y\right|\qquad \mbox{where } V(y) = \dfrac{y^4}{4}-\tilde{\kappa}\dfrac{y^2}{2} \\
    &= \sup_{s\in \T}\frac{\sqrt{\sigma}}{\sqrt[4]{2}}\left|\int_{\R} yu(s,y)\frac{e^{-V(y)}}{\cZ}\d y\right|\qquad \mbox{since }y \mbox{ is odd and } e^{-V(y)} \mbox{ even}\\
    &= \sup_{s\in \T}\frac{\sqrt{\sigma b_1}}{\sqrt[4]{2}}\left|\int_{\R} p_1(y)u(s,y)\frac{e^{-V(y)}}{\cZ}\d y\right|\qquad \mbox{since }\sqrt{b_1} = \dfrac{y}{p_1(y)} \\
    &=\frac{\sqrt{\sigma b_1}}{\sqrt[4]{2}}\sup_{s\in \T}|\langle p_1, u(s, \cdot)\rangle|.
\end{align*}
Thus, using the Cauchy--Schwarz inequality and the fact that $\Vert p_1\Vert_{L^2(\nu)} = 1$, we get
\begin{align*}
    \cR(\varrho) \leq \frac{\sqrt{\sigma b_1}}{\sqrt[4]{2}} \sup_{s\in \T} \Vert u(s,\cdot)\Vert_{L^2(\nu)} \leq \frac{\sqrt{\sigma b_1}}{\sqrt[4]{2}}\Vert u\Vert_{\cX}.
\end{align*}
Then, we reuse~\eqref{eq:LVSdtpL}, or more precisely the fact that
\begin{equation*}
    \Vert (\tilde{\omega}\partial_s + \cL)^{-1} \Vert_{\cX\to\cX} \leq \Vert  \cL^{-1} \Vert_{L^2_0(\nu)\to L^2_0(\nu)},
\end{equation*}
where $L^2_0(\nu) = \overline{\mathrm{Span}\{p_n\}_{n\in \N^*}}$ is the subspace of $L^2(\nu)$ of zero mean functions with respect to $\nu$ (on which $\mathcal{L}$ is invertible). Finally, since $[\mathcal{L}]_{L^2_0(\nu)} = D_{>0}^TD_{>0}$ (see Proposition~\ref{prop:L-decomp}), we have from~\eqref{eq:defCp} that $\|\mathcal{L}^{-1}\|_{L^2_0(\nu)} = \|D_{>0}^{-1}\|_{\ell^2}^2= C_P$, and thus
\begin{align*}
    \Vert u\Vert_{\cX} = \Vert (\tilde{\omega}\partial_s + \cL)^{-1} v\Vert_{\cX} \leq C_P \Vert v\Vert_{\cX},
\end{align*}
which finishes the proof.
\end{proof}
When $c = 1$, the $\varrho$ defined in~\eqref{eq:rhochi} is nothing but the usual $\varrho$ from~\eqref{eqn:resonance-u-def}. However, for the proof of Theorem~\ref{thm:linearNK}, we need to apply Lemma~\ref{lem:Rsigma} to 
\begin{align*}
    \varrho-\bar\varrho = (u(s, y)-\bu(s,y))\frac{\sqrt[4]{2}}{\sqrt{\sigma}}\frac{e^{-y^4/4+\tilde{\kappa} y^2/2}}{\cZ},
\end{align*}
for which we need $c = 0$.

\begin{rmk}
\label{rem:Rabsolutevalue}
Writing 
$$u(s, y) = \sum_{m\in2\Z}\sum_{n\in\N^*}a_{m, n}e^{i m s}p_{2n}(y)+\sum_{m\in(2\Z+1)}\sum_{n\in\N^*} a_{m,n}e^{i m s}p_{2n-1}(y),$$
and using the orthogonality of the $p_n$, we get
$$\langle p_1, u(s, \cdot)\rangle =\sum_{m \in 2\Z+1}a_{m,1}e^{ims}.$$
Since there are only odd $m$'s in the sum, $\langle p_1, u(s+\pi, \cdot)\rangle = -\langle p_1, u(s, \cdot)\rangle$. Thus,
\begin{equation*}
    \sup_{s\in \T}|\langle p_1, u(s, \cdot)\rangle| = \sup_{s\in \T} \langle p_1, u(s, \cdot)\rangle,
\end{equation*}
and $\cR(\rho)$ is simply given by
\begin{align*}
    \cR(\rho) = \frac{\sqrt{\sigma b_1}}{\sqrt[4]{2}}\sup_{s\in \T} \sum_{m \in 2\Z+1}a_{m,1}e^{ims}.
\end{align*}
\end{rmk}

\subsection{Block-wise computation of operator norms}
\label{sec:opnormblock}

\begin{lem}\label{lem:seq-norm}Let $\mathcal{X}$ and $\mathcal{Y}$ be Banach spaces. For each $i,j \in \Z$, let $M_{ij} : \mathcal{X}\to\mathcal{Y}$ be a bounded operator. Consider the operator $M = (M_{ij})_{i,j, \in \Z}$. If $(\|M_{ij}\|_{\mathcal{X}\to \mathcal{Y}})_{i,j\in \Z} : \ell^p(\Z, \R) \to \ell^q(\Z, \R)$ is bounded for some $p, q\in [1, \infty]$, then $M : \ell^p(\Z,\mathcal{X}) \to \ell^q(\Z,\mathcal{Y})$ is bounded.
Furthermore,
$$\|M\|_{\ell^p(\mathcal{X})\to \ell^q(\mathcal{Y})}\leq \| (\|M_{ij}\|_{\mathcal{X}\to \mathcal{Y}})_{i,j\in \Z}\|_{\ell^p\to \ell^q}.$$
\end{lem}
\begin{proof}
We prove the statement for $p, q<\infty$, the other cases being similar. Let $y = (y_{j})_{j\in \Z} \in \ell^p(\mathcal{X})$, then
\begin{align*}
    \|My\|_{\ell^q(\mathcal{Y}) }^q &= \sum_{i\in \Z}\Big\|\sum_{j\in\Z}M_{ij}y_j\Big\|^q_{\mathcal{Y}}\\
    &\leq \sum_{i\in \Z}\left(\sum_{j\in\Z}\Big\|M_{ij}y_j\Big\|_{\mathcal{Y}}\right)^q\\
    &\leq \sum_{i\in \Z}\left(\sum_{j\in\Z}\|M_{ij}\|_{\mathcal{X}\to\mathcal{Y}}\|y_j\|_{\mathcal{X}}\right)^q\\
    &\leq\Big\|(\|M_{ij}\|_{\mathcal{X}\to\mathcal{Y}})_{i,j\in\Z}\Big\|^q_{\ell^p\to\ell^q}\Big\|(\|y_j\|_{\mathcal{X}})_{j\in \Z}\Big\|^q_{\ell^p}\\
    &\leq\Big\|(\|M_{ij}\|_{\mathcal{X}\to\mathcal{Y}})_{i,j\in \Z}\Big\|^q_{\ell^p\to\ell^q}\|y\|^q_{\ell^p(\mathcal{X})}. \qedhere
\end{align*}
\end{proof}

\end{appendix}

\end{document}